\title[Sur les $\ell$-blocs de niveau zéro des groupes $p$-adiques II]{Sur les $\ell$-blocs de niveau zéro des groupes $p$-adiques II}
\author{Thomas Lanard}
\email{thomas.lanard@univie.ac.at}
\begin{document}

%% Résumé
\begin{abstracts}
\abstractin{french}
Soient $G$ un groupe $p$-adique se déployant sur une extension non-ramifiée et $\rep[\ld][0]{\G}$ la catégorie abélienne des représentations lisses de $\G$ de niveau $0$ à coefficients dans $\ld=\Ql$ ou $\Zl$. Nous étudions la plus fine décomposition de $\rep[\ld][0]{\G}$ en produit de sous-catégories que l'on peut obtenir par la méthode introduite dans \cite{lanard}, la seule méthode connue à ce jour lorsque $\ld=\Zl$ et $G$ n'est pas forme intérieure de $\gl{n}$. Nous en donnons deux descriptions, une première du côté du groupe à la Deligne-Lusztig, puis une deuxième du côté dual à la Langlands. Nous prouvons plusieurs propriétés fondamentales comme la compatibilité à l'induction et la restriction parabolique ou à la correspondance de Langlands locale. Les facteurs de cette décomposition ne sont pas des blocs, mais on montre comment les regrouper pour obtenir les blocs "stables".

\abstractin{english}
Let $\G$ be a $p$-adic group which splits over an unramified extension and $\rep[\ld][0]{\G}$ the abelian category of smooth level $0$ representations of $\G$ with coefficients in $\ld=\Ql$ or $\Zl$. We study the finest decomposition of $\rep[\ld][0]{\G}$ into a product of subcategories that can be obtained by the method introduced in \cite{lanard}, which is currently the only one available when $\ld=\Zl$ and $G$ is not an inner form of $\gl{n}$. We give two descriptions of it, a first one on the group side à la Deligne-Lusztig, and a second one on the dual side à la Langlands. We prove several fundamental properties, like for example the compatibility with parabolic induction and restriction or the compatibility with the local Langlands correspondence. The factors of this decomposition are not blocks, but we show how to group them to obtain "stable" blocks.
\end{abstracts}

\maketitle

%% Introduction
\section{Introduction}

Soient $\kk$ un corps $p$-adique et $\gpalg{\G}$ un groupe réductif connexe défini sur $\kk$. Notons $\G:=\gpalg{\G}[\kk]$. Soit $\lprime$ un nombre premier, $\lprime \neq p$, et posons $\ld=\Ql$ ou $\Zl$. On appelle $\rep[\ld]{\G}$ la catégorie abélienne des représentations lisses de $\G$ à coefficients dans $\ld$ et $\rep[\ld][0]{\G}$ la sous-catégorie pleine des représentations de niveau 0.
\bigskip

Nous nous intéressons à la décomposition de $\rep[\ld]{\G}$ en un produit de sous-catégories. Le cas $\ld=\Ql$ est bien connu puisque le théorème de décomposition de Bernstein fournit une décomposition de $\rep[\Ql]{\G}$ en blocs (c'est à dire en sous-catégories indécomposables). Le cas réellement intéressant est donc $\Zl$. Les travaux de Vignéras \cite{vigneras} et Helm \cite{helm} (voir également les travaux de Sécherre et Stevens \cite{SecherreStevens}) ont permis d'obtenir une décomposition en blocs de $\rep[\Zl]{\gl{n}(\kk)}$. L'inconvénient de cette méthode est qu'elle s'appuie sur "l'unicité du support supercuspidal" qui n'est pas vraie en général (un contre exemple a été trouvé par Dudas dans $\sp{8}$ sur un corps fini puis relevé en $p$-adique par Dat dans \cite{datParabolic}). Pour pallier ce problème, Dat propose dans \cite{dat_equivalences_2014} une nouvelle méthode, basée sur la théorie de Deligne-Lusztig et des systèmes d'idempotents sur l'immeuble de Bruhat-Tits semi-simple (comme dans \cite{meyer_resolutions_2010}), permettant de reconstruire les blocs de $\gl{n}(\kk)$ dans le cas du niveau 0. Nous avons généralisé cette méthode dans \cite{lanard} au cas d'un groupe réductif connexe défini sur $\kk$ et déployé sur une extension non-ramifiée de $\kk$. Néanmoins cette décomposition n'est en général pas la décomposition en blocs, ni même la décomposition la plus fine que puisse donner la méthode. Dans cet article, nous explicitons cette décomposition la plus fine, et nous en donnons deux interprétations, une première du côté du groupe à la Deligne-Lusztig grâce à des paires $(\gpalg{S},\theta)$ (que l'on définit ci-dessous) puis une deuxième du côté dual à la Langlands.

\subsection{Décompositions grâce aux paires \texorpdfstring{$(\gpalg{S},\theta)$}{(S,theta)}}

\label{secintrost}

Supposons que $\gpalg{G}$ se déploie sur $\knr$ l'extension non-ramifiée maximale de $\kk$. Définissons $\pairesSt=\{(\gpalg{S},\theta)\}$ comme l'ensemble des paires $(\gpalg{S},\theta)$, où $\mathbf{S}$ est un $\kk$-tore maximal non-ramifié de $\gpalg{\G}$ et $\theta : {}^{0}\mathbf{S}(\kk)/\mathbf{S}(\kk)^{+} \to \Ql^{*}$ est un caractère d'ordre inversible dans $\ld$ (${}^{0}\mathbf{S}(\kk)$ est le sous-groupe borné maximal de $\gpalg{S}(\kk)$ et $\mathbf{S}(\kk)^{+}$ est son pro-$p$ radical).

Nous définissons sur $\pairesSt$ trois relations d'équivalence $\sim_\infty$, $\sim_r$ et $\sim_e$ de la façon suivante. On a $(\gpalg{S},\theta) \sim_\infty (\gpalg{S}',\theta')$ s'il existe $g\in \gpalg{G}[\knr]$ et $m \in \mathbb{N}^{*}$ tels que ${}^{g} \mathbf{S}(\kk_m)=\mathbf{S}'(\kk_{m})$ et $g( \theta \circ N_{\kk_m/\kk}) = \theta'\circ N_{\kk_m/\kk}$ (où $\kk_m$ est l'extension non-ramifiée de degré $m$ de $\kk$ et $N_{\kk_m/\kk}$ est la norme). On définit $\sim_r$ (resp. $\sim_e$) en rajoutant de plus la condition $g^{-1}\fr(g) \in \NSte$ (resp. et $g^{-1}\fr(g) \in \NSta$) (où $\NSta \subseteq \NSte$ sont certains sous-groupes du normalisateur de $\gpalg{S}$ définis en \ref{secreleqpairesst}). Notons que $\sim_e \Rightarrow \sim_r \Rightarrow \sim_\infty$.

Soit $(\gpalg{S},\theta) \in \pairesSt$, notons $\rep[\ld][(\gpalg{S},\theta)]{G}$ la sous-catégorie pleine de $\rep[\ld][0]{G}$ des objets dont chacun des sous-quotients irréductibles $\pi$ vérifie qu'il existe $(\gpalg{S}',\theta') \sim_e (\gpalg{S},\theta)$ et $\sigma$ une facette $\fr$-stable de l'appartement de $\gpalg{S}'$ tels que  $\langle {}^{*}\mathcal{R}_{\gpfinialg{S}'}^{\quotredalg{G}{\sigma}}(\pi^{\radpara{G}{\sigma}}),\theta'\rangle_{\gpfini{S}'_{\ld}} \neq 0$, où $\radpara{G}{\sigma}$ est le pro-$p$ radical du sous-groupe parahorique de $G$ en $\sigma$, $ {}^{*}\mathcal{R}$ est la restriction de Deligne-Lusztig et $\gpfini{S}'_{\ld}$ est le sous-groupe maximal de $\gpfini{S}'$ d'ordre inversible dans $\ld$. Notons que $\rep[\ld][(\gpalg{S},\theta)]{G}$ ne dépend que de la $\sim_e$-classe d'équivalence de $(\gpalg{S},\theta)$.

On obtient alors le théorème suivant, démontré dans la section \ref{secPairesStheta},

\begin{The}
Les catégories $\rep[\ld][(\gpalg{S},\theta)]{G}$ sont non nulles et fournissent une décomposition
\[\rep[\ld][0]{G} = \prod_{[\gpalg{S},\theta]_e \in \pairesSt/{\sim_e}} \rep[\ld][(\gpalg{S},\theta)]{G}.\]
\end{The}

\begin{Rem}
\begin{enumerate}
\item $\rep[\ld][(\gpalg{S},\theta)]{G}$ est "minimale pour la méthode utilisée", c'est à dire en utilisant Deligne-Lusztig et des systèmes d'idempotents. Cependant, ce n'est pas un bloc en général.
\item Lorsque $G=\gl{n}[\kk]$ nous avons $\sim_e=\sim_r=\sim_\infty$ et $\rep[\ld][(\gpalg{S},\theta)]{G}$ est un bloc.
\end{enumerate}
\end{Rem}

Notons que si $\sim$ est une relation d'équivalence sur $\pairesSt$ plus faible que $\sim_e$ (en particulier $\sim_r$ et $\sim_\infty$) alors en regroupant les catégories $\rep[\ld][(\gpalg{S},\theta)]{G}$ selon la $\sim$-équivalence, nous obtenons une décomposition de $\rep[\ld][0]{G}$. Comme dans la théorie de Deligne-Lusztig, nous pouvons associer à une paire $(\gpalg{S},\theta)$ une classe de conjugaison semi-simple dans le dual sur $\res$, le corps résiduel de $\kk$, et nous montrons au paragraphe \ref{secpairestconjss} qu'on obtient ainsi un diagramme commutatif
\[ \xymatrix{
\pairesSt/{\sim_r} \ar@{^{(}->}[r] \ar@{->}[d] & \ss*{\gpfini*{\G}} \ar@{->}[d]  \\
\pairesSt/{\sim_\infty} \ar@{^{(}->}[r]  & (\ss*{\gpfinialg*{\G}})^{\fr}  }\]
où $\gpfinialg*{\G}$ désigne le dual de $\gpalg{G}$ sur $\res$, le corps résiduel de $\kk$, $\gpfini*{\G} = \gpfinialg*{\G}[\res]$ et la notation $\ss*{(\cdot)}$ désigne l'ensemble des classes de conjugaison semi-simples d'ordre inversible dans $\ld$. Notons que lorsque le groupe $\gpalg{G}$ est quasi-déployé, les injections horizontales sont des bijections. Ce diagramme nous fournit en particulier des décompositions de $\rep[\ld][0]{G}$ indexées par $(\ss*{\gpfinialg*{\G}})^{\fr}$ et $\ss*{\gpfini*{\G}}$ dont nous énoncerons les propriétés dans la section suivante. Pour étudier la relation de $\sim_e$-équivalence, nous ne pouvons pas rester du côté des groupes finis et nous aurons besoin de rajouter des données cohomologiques provenant du groupe $p$-adique.

\subsection{Interprétation duale de la décomposition associée à \texorpdfstring{$\sim_\infty$}{infini}}

\label{secintrosiminf}

Rappelons la définition de $\Lpm{\Ild}$ l'ensemble des paramètres inertiels modérés de \cite{lanard}. Notons $\Ldual{\G}:=\gpalg*{\G}[\Ql]  \rtimes \langle\widehat{\vartheta}\rangle$ le groupe de Langlands dual de $\gpalg{G}$, où $\widehat{\vartheta}$ est un automorphisme de $\gpalg*{\G}$ (le dual de $\gpalg{\G}$ sur $\Ql$) induit par l'action d'un Frobenius inverse. Posons $\Ild$ le sous-groupe fermé maximal de l'inertie de pro-ordre inversible dans $\ld$. Alors $\Lpm{\Ild}$ est l'ensemble des classes de $\gpalg*{\G}$-conjugaison de morphismes continus $\Ild \rightarrow \Ldual{\G}$ triviaux sur l'inertie sauvage qui admettent une extension à un $L$-morphisme admissible $\varphi : \weil' \rightarrow \Ldual{\G}$. 

Construisons à partir de $(\gpalg{S},\theta) \in \pairesSt$ un paramètre inertiel $\phi \in \Lpm{\Ild}$. Notons $\tilde{\theta}$ un caractère de $\gpalg{S}(\kk)$ qui relève $\theta$ et $\varphi : \weil \to \Ldual{S}$ le paramètre de Langlands associé à $\tilde{\theta}$ via la correspondance de Langlands locale pour les tores. Choisissons $\iota$ un plongement (non-canonique) $\iota : \Ldual{S} \hookrightarrow \Ldual{G}$ qui nous permet d'obtenir un paramètre de Langlands pour $\gpalg{G}$ : $\iota \circ \varphi : \weil \to \Ldual{G}$. On pose alors $\phi:=(\iota \circ \varphi)_{|\Ild}$ ce qui nous donne une application $\pairesSt \to \Lpm{\Ild}$. Nous démontrons dans la partie \ref{secSimr} que cette application induit une injection
\[ \pairesSt/{\sim_\infty} \hookrightarrow \Lpm{\Ild},\]
d'image les paramètres "relevants", donnant à son tour une décomposition
\[\rep[\ld][0]{\G} = \prod_{\phi \in \Lpm{\Ild}} \rep[\ld][\phi]{\G}\]
qui est celle de \cite{lanard} Théorème 3.4.5.

\subsection{Interprétation duale de la décomposition associée à \texorpdfstring{$\sim_r$}{~r}}

Introduisons l'ensemble $\Lpbm{\Ild}$ des paramètres inertiels "augmentés". Soit $\phi \in \Lpm{\Ild}$. Notons $C_{\gpalg*{\G}}(\phi)$ le centralisateur de $\phi(\Ild)$ dans $\gpalg*{\G}$ qui est un groupe réductif, possiblement non connexe. Par définition $\phi$ peut s'étendre en un paramètre $\varphi : \weil \longrightarrow \Ldual{\G}$ qui permet de définir $\widetilde{C}_{\gpalg*{\G}}(\phi):=C_{\gpalg*{\G}}(\phi) \varphi(\weil)$, un sous-groupe de $\Ldual{\G}$ indépendant du choix de $\varphi$. Notons maintenant $\pi_{0}(\phi):=\pi_{0}(C_{\gpalg*{\G}}(\phi))$ et $\widetilde{\pi}_{0}(\phi):=\widetilde{C}_{\gpalg*{\G}}(\phi)/C_{\gpalg*{\G}}(\phi)^{\circ}$. On obtient alors une suite exacte scindée
\[ 0 \longrightarrow \pi_{0}(\phi) \longrightarrow \widetilde{\pi}_{0}(\phi) \longrightarrow  \langle \widehat{\vartheta} \rangle \longrightarrow 0.\]
Appelons $\Lpbm{\Ild}$ l'ensemble des couples $(\phi,\sigma)$ à $\gpalg*{\G}$-conjugaison près, où $\phi : \Ild \to \Ldual{\G}$ est un paramètre inertiel modéré et $\sigma : \langle \widehat{\vartheta} \rangle \to \widetilde{\pi}_{0}(\phi)$ est une section à la suite exacte précédente.

Avec les mêmes notations que dans la section \ref{secintrosiminf}, on construit une application $\pairesSt \to \Lpbm{\Ild}$ en envoyant  $(\gpalg{S},\theta)$ sur $(\phi,\sigma)$ où $\phi:=(\iota \circ \varphi)_{|\Ild}$ et $\sigma(\widehat{\vartheta}):=(\iota \circ \varphi)(\text{Frob})$. Nous démontrons alors dans la partie \ref{secSimr} le théorème suivant.

\begin{The}
\label{thedecompps}
L'application $\pairesSt \to \Lpbm{\Ild}$ induit une injection
\[ \pairesSt/{\sim_r} \hookrightarrow \Lpbm{\Ild}\]
dont l'image est le sous-ensemble des paramètres "relevants" (en particulier elle est bijective lorsque $\gpalg{G}$ est quasi-déployé). La décomposition obtenue
\[ \rep[\ld][0]{\G}  =\prod_{(\phi,\sigma) \in \Lpbm{\Ild}} \rep[\ld][(\phi,\sigma)]{\G}\]
possède les propriétés suivantes
\begin{enumerate}
\item Elle est compatible à l'induction et la restriction parabolique (voir le théorème \ref{theInductionParaboliquephisigma} pour un énoncé plus précis).
\item Compatibilité entre les décompositions sur $\Zl$ et $\Ql$ :
\[\rep[\Zl][(\phi,\sigma)]{G} \cap \rep[\Ql]{G} = \prod_{(\phi',\sigma')} \rep[\Ql][(\phi',\sigma')]{G}\]
où le produit est pris sur les $(\phi',\sigma') \in \Lpbm{\iner}$ s'envoyant sur $(\phi,\sigma)$ par l'application naturelle $\Lpbm{\iner} \to \Lpbm{\inerl}$ (obtenue en restreignant à $\inerl$, voir section \ref{seclienzlqlr} pour plus de détails).
\end{enumerate}

Par ailleurs, soit $\mathcal{B}^{st}_{m,\ld}$ l'ensemble des paramètres modérés $\varphi : \weil \rightarrow \Ldual{\G}$ à équivalence inertielle près comme dans \cite{haines} définition 5.3.3 (voir aussi section \ref{secEqinert}). Alors l'application qui à $\varphi$ associe $\phi=\varphi_{|\Ild}$ et $\sigma(\widehat{\vartheta})=\varphi(\text{Frob})$ induit une bijection $\mathcal{B}^{st}_{m,\ld} \tosim \Lpbm{\Ild}$ (voir section \ref{secEqinert}), et nous avons alors une nouvelle interprétation de la décomposition précédente 
\[\rep[\ld][0]{\G}  =\prod_{(\phi,\sigma) \in \Lpbm{\Ild}} \rep[\ld][(\phi,\sigma)]{\G}=\prod_{ [\varphi] \in \mathcal{B}_{m,\ld}^{st}} \rep[\ld][[\varphi]]{\G}.\]
\end{The}

Cette décomposition est indexée par des paramètres à la Langlands mais est construite indépendamment de la correspondance de Langlands locale. Il est alors souhaitable de vérifier la compatibilité à cette dernière. Lorsque $\G$ est un groupe classique non-ramifié, c'est à dire $\gl{n}$, $\sp{2n}$, $\so{2n+1}$, $\so{2n}$, $\so*{2n}$ (groupe spécial orthogonal quasi-déployé associé à une extension quadratique non-ramifiée $\kk'/\kk$) ou $\un{n}[\kk'][\kk]$, et $\ld=\Ql$ alors on a la correspondance de Langlands locale (\cite{HarrisTaylor} \cite{henniart} \cite{arthur} \cite{mok} \cite{KMSW}). Nous montrons alors dans la partie \ref{secSimr} le théorème suivant.

\begin{The}
\label{thelanglnadsps}
Soit $\G$ un groupe classique non-ramifié, $\ld=\Ql$ et $p \neq 2$. 

\begin{enumerate}
\item Alors la décomposition $\rep[\Ql][0]{\G}=\prod_{[\varphi] \in \mathcal{B}^{st}_{\Ql}} \rep[\Ql][[\varphi]]{\G}$ est compatible à la correspondance de Langlands locale. C'est-à-dire que si $\pi$ est une représentation irréductible, alors $\pi \in \rep[\Ql][[\varphi]]{\G}$ si et seulement si $\varphi_{\pi|\weil} \in [\varphi]$, où $\varphi_{\pi}$ est le paramètre de Langlands associé à $\pi$.

\item Cette décomposition est la décomposition de $\rep[\Ql][0]{\G}$ en "blocs stables". C'est à dire que ces facteurs correspondent à des idempotents primitifs du centre de Bernstein stable.

\end{enumerate}
\end{The}

Remarquons que le lien avec les classes de conjugaisons semi-simples données dans la partie \ref{secintrost} est donné par un diagramme commutatif
\[ \xymatrix{
\ss*{\gpfini*{\G}} \ar@{<->}[r]^-{\sim} \ar@{->}[d] &  \Lpbm{\Ild} \ar@{->}[d]\\
 (\ss*{\gpfinialg*{\G}})^{\fr} \ar@{<->}[r]^-{\sim}  &  \Lpm{\Ild} }.\]

\subsection{Interprétation duale de la décomposition associée à \texorpdfstring{$\sim_e$}{~e}}

Pour paramétrer $\pairesSt/{\sim_e}$ nous avons besoin de rajouter des donnés cohomologiques. Nous ne le faisons que pour les formes intérieures pures des groupes non-ramifiés. Supposons donc que $\gpalg{G}$ est non-ramifié (c'est à dire on rajoute l'hypothèse quasi-déployé). Soit $\omega \in H^1(\kk,\gpalg{G})$, à qui correspond $\gpalg{G}_{\omega}$, une forme intérieure pure de $\gpalg{G}$. L'isomorphisme de Kottwitz identifie $\omega$ à un élément de $\Irr[\pi_{0}(Z(\gpalg*{\G})^{\widehat{\vartheta}})]$. À partir de $(\phi,\sigma) \in \Lpbm{\Ild}$, nous définissons une application
\[\hps : \Irr[\pi_{0}(Z(C_{\gpalg*{\G}}(\phi)^{\circ})^{\sigma(\widehat{\vartheta})})]/{\pi_{0}(\phi)^{\sigma(\widehat{\vartheta})}} \to \Irr[\pi_{0}(Z(\gpalg*{\G})^{\widehat{\vartheta}})].\]
La partie \ref{secsime} nous donne alors le théorème suivant.

\begin{The}
\label{thedecompsc}
Nous avons une bijection (qui dépend du choix d'un sommet hyperspécial dans l'immeuble de Bruhat-Tits de $G$)
\[\mathcal{C}^{r}(\phi,\sigma,\omega)/{\sim_{e}} \overset{\sim}{\longrightarrow} \hps^{-1}(\omega),\]
où $\mathcal{C}^{r}(\phi,\sigma,\omega)$ est la classe de $\sim_r$-équivalence image réciproque de $(\phi,\sigma)$ par l'injection $\pairesSt(G_{\omega})/{\sim_r} \hookrightarrow \Lpbm{\Ild}$, pour $G_{\omega}$ la forme intérieure pure de $G$ associée à $\omega$. Celle-ci nous fournit une décomposition
\[\rep[\ld][(\phi,\sigma)]{G_{\omega}} = \prod_{\alpha \in \hps^{-1}(\omega)} \rep[\ld][(\phi,\sigma,\alpha)]{G_{\omega}}.\]
De plus
\begin{enumerate}
\item Cette décomposition est compatible à l'induction et la restriction parabolique.
\item Nous avons $\rep[\Zl][(\phi,\sigma,\alpha)]{G_{\omega}} \cap \rep[\Ql]{G_{\omega}} = \prod_{(\phi',\sigma',\alpha')} \rep[\Ql][(\phi',\sigma',\alpha')]{G_{\omega}}$, où le produit est pris sur les $(\phi',\sigma') \in \Lpbm{\iner}$ s'envoyant sur $(\phi,\sigma)$ par $\Lpbm{\iner} \to \Lpbm{\inerl}$ et $\alpha' \in \hps^{-1}(\omega)$ s'envoyant sur $\alpha$ par l'application naturelle $\Irr[\pi_{0}(Z(C_{\gpalg*{\G}}(\phi')^{\circ})^{\sigma'(\widehat{\vartheta})})] \to \Irr[\pi_{0}(Z(C_{\gpalg*{\G}}(\phi)^{\circ})^{\sigma(\widehat{\vartheta})})]$.
\end{enumerate}

\end{The}

\begin{Rem}
\begin{enumerate}
\item Cette décomposition est la plus fine que l'on puisse obtenir avec "les méthodes de cet article" (c'est à dire en utilisant Deligne-Lusztig).
\item Elle démontre, dans le cas du niveau zéro, une décomposition prédite par Dat dans \cite{datFunctoriality} 2.1.2 et 2.1.4.
\end{enumerate}
\end{Rem}

Lorsque $\ld=\Ql$, nous nous attendons à une compatibilité de ce théorème à la correspondance de Langlands locale enrichie (à un paramètre de Langlands enrichi $(\varphi,\eta)$ où $\eta \in \Irr(\pi_{0}(C_{\gpalg*{G}}(\varphi)))$, on associe un $\alpha$ en restreignant $\eta$ à $\pi_{0}(Z(C_{\gpalg*{\G}}(\phi)^{\circ})^{\sigma(\widehat{\vartheta})})$). Nous le vérifions partiellement pour $\varphi$ un paramètre de Langlands modéré elliptique en position générale (elliptique signifie que l'image de $\varphi$ n'est pas contenue dans un sous-groupe de Levi propre de $\Ldual{\G}$ et en position générale que le centralisateur de $\varphi(\iner)$ dans $\gpalg*{G}$ est un tore) et la correspondance de Langlands locale de DeBacker-Reeder (\cite{DebackerReeder}).

\bigskip

Malgré le fait que les sous-catégories du théorème \ref{thedecompsc} soient minimales pour la méthode utilisée, celles-ci ne sont pas des blocs. En effet la représentation supercuspidale $\Theta_{10}$ de $\sp{4}[\mathbb{Q}_{p}]$, induite depuis l'inflation à $\sp{4}[\mathbb{Z}_{p}]$ de la représentation cuspidale unipotente $\theta_{10}$ de $\sp{4}[\mathbb{F}_{p}]$, est dans $\rep[\ld][1]{\G}$, le facteur direct associé au paramètre inertiel trivial, qui ne se décompose pas davantage par le théorème \ref{thedecompsc}.

\subsection{Plan de l'article}

L'exposition des résultats principaux de cet article faite dans l'introduction, ne reflète pas la méthode de démonstration utilisée. Notamment, nous commencerons avec une définition très différente de $\rep[\ld][(\gpalg{S},\theta)]{G}$. La partie \ref{secSystCoh} traite des systèmes cohérents d'idempotents et des systèmes de classes de conjugaison 0-cohérents (comme introduit dans \cite{lanard}). Ce sont ces systèmes qui permettent de construire des sous-catégories de Serre de $\rep[\ld][0]{G}$. Nous nous intéressons plus particulièrement à la notion de système minimal (les plus petits systèmes cohérents que l'on puisse construire avec Deligne-Lusztig). Dans la partie \ref{secPairesStheta}, nous construisons un procédé qui à une paire $(\gpalg{S},\theta)$ associe un système d'idempotents cohérent minimal et donc une sous-catégorie $\rep[\ld][(\gpalg{S},\theta)]{G}$. Nous caractérisons également la relation d'équivalence sur les paires $(\gpalg{S},\theta)$ induite par ce procédé. Ceci nous permet d'obtenir les résultats énoncés en \ref{secintrost}. Nous réinterprétons ces résultats en des termes duaux à la Langlands dans les parties \ref{secSimr} et \ref{secsime}. La partie \ref{secSimr} a pour but l'obtention des théorèmes \ref{thedecompps} et \ref{thelanglnadsps}. La partie \ref{secsime}, quand à elle, se concentre sur le théorème \ref{thedecompsc}.

\bigskip

\paragraph{\textbf{Remerciements}}
Je tiens à remercier Jean-François Dat pour son aide précieuse concernant la rédaction de cet article. Je remercie également le rapporteur qui a permis de simplifier les preuves de l'annexe \ref{secCohomologie}.

\tableofcontents

%% Contenu principal
\section*{Notations}

  Soit $\kk$ un corps $p$-adique et $\res$ son corps résiduel. Notons $q=|\res|$. On fixe une clôture algébrique $\kalg$ de $\kk$ et on note $\knr$ l'extension non-ramifiée maximale de $\kk$ dans $\kalg$. On appellera $\mathfrak{o}_{\kk}$ (resp. $\mathfrak{o}_{\knr}$) l'anneau des entiers de $\kk$ (resp. $\knr$). Notons également $\resalg$ le corps résiduel de $\knr$ qui est alors une clôture algébrique de $\res$.
  
 \medskip

  On adopte les conventions d'écriture suivantes. $\gpalg{\G}$ désignera un groupe réductif connexe défini sur $\kk$ que l'on identifiera avec $\gpalg{\G}[\kalg]$ et l'on notera $\G:=\gpalg{\G}[\kk]$ et $G^{nr}:=\gpalg{\G}[\knr]$. On appellera $\gpalg*{\G}$ son groupe dual sur $\Ql$. Pour les groupes réductifs connexes sur $\res$, nous utiliserons la police d'écriture $\gpfinialg{\G}$ et l'on identifiera $\gpfinialg{\G}$ avec $\gpfinialg{\G}[\resalg]$ et de même on note $\gpfini{\G}:=\gpfinialg{\G}[\res]$. Le groupe dual de $\gpalg{\G}$ sur $\resalg$ sera noté $\gpfinialg*{\G}$.
  
\medskip

On prend $\lprime$ un nombre premier différent de $p$, et on pose $\ld= \Ql$ ou $\Zl$. On note $\rep[\ld]{\G}$ la catégorie abélienne des représentations lisses de $\G$ à coefficients dans $\ld$ et $\rep[\ld][0]{\G}$ la sous-catégorie pleine des représentations de niveau 0.

\medskip

 On note $\bt[L]$ l'immeuble de Bruhat-Tits semi-simple associé à $\G$ sur $L$ une extension non-ramifiée de $\kk$. Nous voyons ce dernier comme un complexe polysimplicial et on note $\bts[L]$ pour l'ensemble des polysimplexes de dimension 0, c'est à dire les sommets.  Nous désignerons généralement les sommets par des lettres latines $x$,$y$,$\cdots$ et les polysimplexes par des lettres grecques $\sigma$,$\tau$, $\cdots$. On munit $\bt$ de la relation d'ordre $\sigma \leq \tau$ si $\sigma$ est une facette de $\tau$. Deux sommets $x$ et $y$ sont dit adjacent s'il existe un polysimplexe $\sigma$ tel que $x \leq \sigma$ et $y \leq \sigma$. Dans ce cas, on notera $[x,y]$ le plus petit polysimplexe contenant $x\cup y$. Notons également, pour $\sigma$,$\tau$ deux polysimplexes, $H(\sigma,\tau)$ l'enveloppe polysimpliciale de $\sigma$ et $\tau$, c'est à dire l'intersection de tous les appartements contenant $\sigma \cup \tau$.
 
 Soit $\sigma \in \bt$. On note $\para{\G}{\sigma}$ le sous-groupe parahorique en $\sigma$ ("fixateur connexe" de $\sigma$) et $\radpara{\G}{\sigma}$ son pro-$p$-radical. Le quotient, $\quotred{\G}{\sigma}$, est le groupe des $\res$-points d'un groupe réductif connexe $\quotredalg{\G}{\sigma}$ défini sur $\res$. 

\medskip

  Si $H$ désigne l'ensemble des points d'un groupe algébrique à valeur dans un corps alors $\ss{H}$ désigne l'ensemble des classes de conjugaison semi-simples dans $H$ et $\ss*{H}$ le sous-ensemble de $\ss{H}$ des éléments d'ordre inversible dans $\ld$.

\medskip

  On fixe dans tout ce papier des systèmes compatibles de racines de l'unité $(\mathbb{Q}/\mathbb{Z})_{p'} \overset{\backsim}{\rightarrow} \resalg^{\times}$ et $(\mathbb{Q}/\mathbb{Z})_{p'} \hookrightarrow \Zl^{\times}$.

\medskip

  Dans la suite $\gpalg{\G}$ désignera un groupe réductif connexe défini sur $\kk$. On supposera de plus, à partir de la section \ref{secPairesStheta}, que $\gpalg{\G}$ se déploie sur $\knr$.

\section{Systèmes cohérents d'idempotents}

\label{secSystCoh}
  
  Toutes les constructions de sous-catégories de $\rep[\ld][0]{\G}$ dans cet article se basent sur des systèmes de classes de conjugaison $0$-cohérents, comme introduit dans \cite{lanard}. Dans cette partie nous faisons quelques rappels sur ces derniers et nous nous intéressons particulièrement aux systèmes "minimaux". Ceux-ci sont les plus petits systèmes constructibles et ils permettent d'obtenir, en les réunissant, tous les systèmes de classes de conjugaison $0$-cohérents.

  \subsection{Rappels sur les systèmes cohérents}
  
  \label{secSystcoh}

  Commençons pas rappeler la définition des systèmes de classes de conjugaison $0$-cohérents ainsi que leur intérêt.

\sautintro

    On fixe une mesure de Haar sur $\G$ et on note $\hecke{\G}{\ld}$ l'algèbre de Hecke à coefficients dans $\ld$ (on rappelle que $\ld=\Ql$ ou $\Zl$).

\begin{Def}
    \label{defcoherent}
    On dit qu'un système d'idempotents $e=(e_{x})_{x\in \bts}$ de $\hecke{\G}{\ld}$ est cohérent si les propriétés suivantes sont satisfaites :
    \begin{enumerate}
    \item $e_{x}e_{y}=e_{y}e_{x}$ lorsque $x$ et $y$ sont adjacents.
    \item $e_{x}e_{z}e_{y}=e_{x}e_{y}$ lorsque $z$ est adjacent à $x$ et appartient à l'enveloppe polysimpliciale de $x$ et $y$.
    \item $e_{gx}=ge_{x}g^{-1}$ quel que soit $x\in \bts$ et $g\in \G$.
    \end{enumerate}
\end{Def}

Si $e=(e_{x})_{x\in \bts}$ est un système d'idempotents cohérent, on peut alors pour tout $\sigma \in \bt$ définir l'idempotent $e_{\sigma}:=\prod_{x} e_x$, où le produit est pris sur les sommets $x$ tels que $x \leq \sigma$.

\begin{The}[\cite{meyer_resolutions_2010}, Thm 3.1]
    \label{themoyersolleveld}
    Soit $e=(e_{x})_{x\in \bts}$ un système cohérent d'idempotents, alors la sous-catégorie pleine $\rep[\ld][e]{\G}$ des objets $V$ de $\rep[\ld]{\G}$ tels que $V=\sum_{x\in \bts}e_{x}V$ est une sous-catégorie de Serre.
\end{The}

    Voyons comment construire des idempotents sur l'immeuble. Pour une classe de conjugaison semi-simple $s \in \ss{\quotred*{\G}{\sigma}}$, on désigne par $\dl{\quotred*{\G}{\sigma}}{s}$ la série de Deligne-Lusztig rationnelle associée à $s$, et $e_{s,\Ql}^{\quotred{\G}{\sigma}} \in \Ql[\quotred{\G}{\sigma}]$ l'idempotent central la sélectionnant. Lorsque $s$ se compose d'éléments $\lprime$-réguliers, on peut former $e_{s,\Zl}^{\quotred{\G}{\sigma}}$, un idempotent dans $\Zl[\quotred{\G}{\sigma}]$, défini par $e_{s,\Zl}^{\quotred{\G}{\sigma}}:=\sum_{s' \backsim_{\lprime} s} e_{s',\Ql}^{\quotred{\G}{\sigma}}$, où $s' \backsim_{\lprime} s$ signifie que $s$ est la partie $\lprime$-régulière de $s'$ (\cite{bonnafe_rouquier} théorème A' et remarque 11.3). Ainsi pour $s \in \ss*{\quotred*{\G}{\sigma}}$ (c'est à dire d'ordre inversible dans $\ld$) on a un idempotent $e_{s,\ld}^{\quotred{\G}{\sigma}} \in \ld[\quotred{\G}{\sigma}]$ que l'on peut tirer en arrière, grâce à l'isomorphisme $\para{\G}{\sigma}/\radpara{\G}{\sigma} \to \quotred{\G}{\sigma}$, en un idempotent $e_{\sigma}^{s,\ld} \in \ld[ \para{\G}{\sigma}/\radpara{\G}{\sigma} ] \subset \hecke{\fix{\G}{\sigma}}{\ld}$.

    \bigskip
    
	Nous pouvons de la sorte construire des idempotents à partir de classes de conjugaison semi-simples. On appellera donc un système de classes de conjugaison, une famille $\systConj*{S}$ avec  $\systConjx{S}{\sigma} \subseteq \ss*{\quotred*{\G}{\sigma}}$ . Et on notera
	\[ \ensSystConj := \{  \text{systèmes de classes de conjugaison } \systConj*{S} \}. \]

    Pour traduire les propriétés de cohérence au niveau des classes de conjugaison, nous avons besoin de définir deux applications $\transss*{\sigma}{\tau}$ et $\transss*{g}{\sigma}$.

    La première est définie pour $\tau,\sigma \in \bt$ deux facettes telles que $\tau \leq \sigma$. Alors, le groupe $\quotred*{\G}{\sigma}$ peut se relever en un Levi de $\quotred*{\G}{\tau}$ et on obtient $\transss*{\sigma}{\tau}: \ss{\quotred*{\G}{\sigma}} \rightarrow \ss{\quotred*{\G}{\tau}}$, qui à une classe de conjugaison semi-simple $t$ de $\quotred*{\G}{\sigma}$, associe sa classe de conjugaison dans $\quotred*{\G}{\tau}$. La deuxième est définie pour $g \in \G$ et $\sigma \in \bt$. La conjugaison par $g$ induit un isomorphisme $\transss{g}{\sigma}: \quotred{\G}{\sigma} \overset{\sim}{\rightarrow} \quotred{\G}{g\sigma}$ ainsi qu'un isomorphisme sur les classes de conjugaison semi-simples des groupes duaux $\transss*{g}{\sigma}:\ss{\quotred*{\G}{\sigma}} \overset{\sim}{\longrightarrow} \ss{\quotred*{\G}{g\sigma}}$.

    \begin{Def}
    \label{defsystemsurcoherentconju}
    Soit $\systConj{S} \in \ensSystConj$ un système d'ensembles de classes de conjugaison. On dit que $\systConj{S}$ est 0-cohérent si
    \begin{enumerate}
    \item $\transss*{g}{x}(\systConjx{S}{x})=\systConjx{S}{gx}$ quel que soit $x\in \bts$ et $g\in \G$.
    \item $(\transss*{\sigma}{x})^{-1}(\systConjx{S}{x})=\systConjx{S}{\sigma}$ pour $x\in \bts$ et $\sigma \in \bt$ tels que $x \leq \sigma$.
    \end{enumerate}
    On note
    \[ \ensSystConjc :=\{ \systConj{S} \in \ensSystConj, \text{ 0-cohérent}\}\]
    \end{Def}

    Soient $\systConj{S} \in \ensSystConjc$ un système 0-cohérent et $\sigma \in \bt$. On définit $e_{\sigma}^{\systConj{S},\ld}:=\sum_{s \in \systConjx{S}{\sigma}} e_{\sigma}^{s,\ld}$.

    \begin{Pro}[\cite{lanard} 2.3.2]
    \label{prosystemeidempotent}
    Le système $(e_{x}^{\systConj{S},\ld})_{x \in \bts}$ est cohérent.
    \end{Pro}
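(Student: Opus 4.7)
The plan is to verify the three axioms of Definition \ref{defcoherent} for the system $(e_x^{\systConj{S},\ld})_{x \in \bts}$. Each axiom is reduced to a statement about Deligne--Lusztig idempotents in the finite reductive quotient at a relevant facet, where the $0$-coherence conditions on $\systConj{S}$ translate into the required algebraic relations.

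Axiom (3) is handled first. For $g \in \G$ and $x \in \bts$, conjugation by $g$ induces $\transss{g}{x} : \quotred{\G}{x} \tosim \quotred{\G}{gx}$, which at the level of dual central idempotents sends $e_{s,\ld}^{\quotred{\G}{x}}$ to $e_{\transss*{g}{x}(s),\ld}^{\quotred{\G}{gx}}$; pulling back to $\hecke{\G}{\ld}$ via the projection $\para{\G}{x} \twoheadrightarrow \quotred{\G}{x}$ yields $g e_x^{s,\ld} g^{-1} = e_{gx}^{\transss*{g}{x}(s),\ld}$. Summing over $s \in \systConjx{S}{x}$ and invoking the first $0$-coherence condition $\transss*{g}{x}(\systConjx{S}{x}) = \systConjx{S}{gx}$ gives $g e_x^{\systConj{S},\ld} g^{-1} = e_{gx}^{\systConj{S},\ld}$.

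The core of the proof is the following key identity, from which axioms (1) and (2) will both follow: for every pair $x \leq \sigma$ in $\bt$,
$$e_x^{\systConj{S},\ld} \cdot e_{\radpara{\G}{\sigma}} \ =\  e_{\radpara{\G}{\sigma}} \cdot e_x^{\systConj{S},\ld} \ =\  e_\sigma^{\systConj{S},\ld} \ :=\ \sum_{t \in \systConjx{S}{\sigma}} e_\sigma^{t,\ld},$$
where $e_{\radpara{\G}{\sigma}}$ is the normalized idempotent of the pro-$p$ radical $\radpara{\G}{\sigma}$. The proof is by reduction to the finite group $\quotred{\G}{x}$: the image of $\para{\G}{\sigma}$ in $\quotred{\G}{x} = \para{\G}{x}/\radpara{\G}{x}$ is a parabolic $\overline{P}$ with Levi $\quotred{\G}{\sigma}$ and unipotent radical $\overline{U} := \radpara{\G}{\sigma}/\radpara{\G}{x}$. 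The compatibility of Deligne--Lusztig series with parabolic restriction (valid for $\ld=\Zl$ by \cite{bonnafe_rouquier}, as was already invoked in the very construction of the $e_\sigma^{s,\ld}$) gives in $\ld[\quotred{\G}{x}]$
$$e_{s,\ld}^{\quotred{\G}{x}} \cdot e_{\overline{U}} \ =\  e_{\overline{U}} \cdot \sum_{\substack{t \in \ss*{\quotred*{\G}{\sigma}} \\ \transss*{\sigma}{x}(t) = s}} e_{t,\ld}^{\quotred{\G}{\sigma}},$$
where the idempotents $e_{t,\ld}^{\quotred{\G}{\sigma}}$ on the right are seen inside $\ld[\overline{P}] \subset \ld[\quotred{\G}{x}]$ via the Levi decomposition. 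Summing over $s \in \systConjx{S}{x}$ and collapsing the double sum using the second $0$-coherence condition $\systConjx{S}{\sigma} = (\transss*{\sigma}{x})^{-1}(\systConjx{S}{x})$ produces exactly $e_\sigma^{\systConj{S},\ld}$; pulling back along $\para{\G}{x} \twoheadrightarrow \quotred{\G}{x}$ translates this into the key identity in $\hecke{\G}{\ld}$.

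From the key identity, axiom (1) follows by Bruhat--Tits combinatorics: for adjacent vertices $x, y$ with $\sigma = [x, y]$, both $e_x^{\systConj{S},\ld} e_y^{\systConj{S},\ld}$ and $e_y^{\systConj{S},\ld} e_x^{\systConj{S},\ld}$ reduce to $e_\sigma^{\systConj{S},\ld}$, since the remaining factor on one side can be absorbed using the idempotent character of $e_\sigma^{\systConj{S},\ld}$ combined with the key identity applied to the other vertex. Axiom (2) is treated analogously by inserting a facet of $H(x,y)$ containing both $x$ and $z$, where the same reduction shows that the extra factor $e_z^{\systConj{S},\ld}$ does not alter the product $e_x^{\systConj{S},\ld} e_y^{\systConj{S},\ld}$. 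The most delicate step, requiring particular care, is the transfer of the finite-group parabolic restriction identity into an identity in the $p$-adic Hecke algebra $\hecke{\G}{\ld}$: one must carefully track the embedding $\ld[\quotred{\G}{\sigma}] = \ld[\para{\G}{\sigma}/\radpara{\G}{\sigma}] \hookrightarrow \hecke{\fix{\G}{\sigma}}{\ld}$ and the multiplicative interactions between the idempotents of the various pro-$p$ radicals along the face relations of $\bt$.
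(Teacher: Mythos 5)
Your overall strategy --- equivariance for axiom (3), the key identity $e_{x}^{\systConj{S},\ld}\,e_{\radpara{\G}{\sigma}}=e_{\sigma}^{\systConj{S},\ld}$ for $x\leq\sigma$ obtained from the compatibility of Deligne--Lusztig idempotents with parabolic restriction (Bonnafé--Rouquier over $\Zl$), then deduction of the remaining axioms --- is exactly the route of the proof this paper cites (\cite{lanard} 2.3.2, itself modelled on \cite{dat_equivalences_2014} §2.1.4). Axiom (3), the key identity, and the deduction of axiom (1) are correct as sketched.

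The treatment of axiom (2), however, has a genuine gap. Unwinding your reduction, what must be shown is
$e_{x}^{\systConj{S},\ld}\,e_{\radpara{\G}{[x,z]}}\,e_{y}^{\systConj{S},\ld}=e_{x}^{\systConj{S},\ld}\,e_{y}^{\systConj{S},\ld}$,
and since each $e_{v}^{\systConj{S},\ld}$ absorbs $e_{\radpara{\G}{v}}$ on either side, this amounts to the identity
$e_{\radpara{\G}{x}}\,e_{\radpara{\G}{[x,z]}}\,e_{\radpara{\G}{y}}=e_{\radpara{\G}{x}}\,e_{\radpara{\G}{y}}$
in $\hecke{\G}{\ld}$ --- that is, to axiom (2) for the system of pro-$p$-radical idempotents $(e_{\radpara{\G}{\sigma}})_{\sigma}$ itself. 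This is \emph{not} a formal consequence of your key identity: it is a separate, nontrivial building-theoretic statement (an Iwahori-type factorization of $\radpara{\G}{z}$ relative to $\radpara{\G}{x}$ and $\para{\G}{y}$, proved in \cite{meyer_resolutions_2010}), and it is the \emph{only} place in the whole proof where the hypothesis $z\in H(x,y)$ is used; without that hypothesis the identity is false. Your sketch ("inserting a facet of $H(x,y)$... the same reduction shows that the extra factor does not alter the product") never engages with this, so as written the argument for axiom (2) would not close. By contrast, the step you single out as most delicate (transporting the finite-group identity into $\hecke{\G}{\ld}$) is routine once the coherence of $(e_{\radpara{\G}{\sigma}})_{\sigma}$ is available. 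To complete the proof you should either invoke the coherence of the pro-unipotent system as a prior input, or prove the factorization $\radpara{\G}{z}\subseteq\radpara{\G}{x}\,(\radpara{\G}{z}\cap\para{\G}{y})$ for $z$ adjacent to $x$ in $H(x,y)$.
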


    Le théorème \ref{themoyersolleveld} nous montre donc que $\systConj{S}$ définit une sous-catégorie $\rep[\ld][\systConj{S}]{\G}$ de $\rep[\ld]{\G}$.

On définit les opérations suivantes sur les systèmes de classes de conjugaison.

    \begin{Def}
    Soient $\systConj{S^1}, \systConj{S^2} \in \ensSystConj$ deux systèmes de classes de conjugaison. On définit $\systConj{S^1} \cup \systConj{S^2}:=(\systConjx{S^1}{\sigma} \cup \systConjx{S^2}{\sigma})_{\sigma \in \bt}$ et $\systConj{S^1} \cap \systConj{S^2}:=(\systConjx{S^1}{\sigma} \cap \systConjx{S^2}{\sigma})_{\sigma \in \bt}$. On dit que $\systConj{S^2} \subseteq \systConj{S^1}$ si pour tout $\sigma \in \bt$ $\systConjx{S^2}{\sigma} \subseteq \systConjx{S^1}{\sigma}$. Enfin, si $\systConj{S^2} \subseteq \systConj{S^1}$, on note $\systConj{S^1} \setminus \systConj{S^2}:=(\systConjx{S^1}{\sigma} \setminus \systConjx{S^2}{\sigma})_{\sigma \in \bt}$.
    \end{Def}

    \begin{Pro}[\cite{lanard} 2.3.5]
    \label{prodecompocategorie}
    Soient $\systConj{S^1},\cdots,\systConj{S^n} \in \ensSystConjc$ des systèmes 0-cohérents tels que $\systConj{S^i} \cap \systConj{S^j} = \emptyset$ si $i \neq j$ et $\bigcup_{i=1}^{n} \systConj{S^i} = (\ss*{\quotred*{\G}{\sigma}})_{\sigma \in \bt}$. Alors la catégorie de niveau $0$ se décompose en
\[ \rep[\ld][0]{\G} = \prod_{i=1}^{n} \rep[\ld][\systConj{S^i}]{\G} \]
    \end{Pro}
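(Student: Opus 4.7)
The plan is to assemble the decomposition from two ingredients already in the excerpt: Proposition~\ref{prosystemeidempotent} attaches to each $\systConj{S^i}$ a coherent system of idempotents $(e_x^{\systConj{S^i},\ld})_{x\in\bts}$, and Theorem~\ref{themoyersolleveld} then produces a Serre subcategory $\rep[\ld][\systConj{S^i}]{\G}$ of $\rep[\ld][0]{\G}$. What remains is to upgrade this family of Serre subcategories to a product decomposition, for which one needs pairwise orthogonality and joint covering of $\rep[\ld][0]{\G}$.

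Both of these reduce to a local statement at each facette $\sigma \in \bt$. The central idempotents $(e_{s,\ld}^{\quotred{\G}{\sigma}})_{s \in \ss*{\quotred*{\G}{\sigma}}}$ are pairwise orthogonal and sum to $1$ in $\ld[\quotred{\G}{\sigma}]$: for $\ld=\Ql$ because every irreducible representation of $\quotred{\G}{\sigma}$ lies in exactly one rational Deligne--Lusztig series, and for $\ld=\Zl$ by Bonnafé--Rouquier since every semisimple class has a unique $\ell$-regular part, so the $\Zl$-idempotents regroup the $\Ql$-ones into a partition of unity. Pulling back through $\para{\G}{\sigma}/\radpara{\G}{\sigma} \cong \quotred{\G}{\sigma}$, the disjointness hypothesis yields $e_\sigma^{\systConj{S^i},\ld} e_\sigma^{\systConj{S^j},\ld} = 0$ for $i \neq j$, while the covering hypothesis yields $\sum_i e_\sigma^{\systConj{S^i},\ld} = \sum_s e_\sigma^{s,\ld}$, which acts as the identity on $V^{\radpara{\G}{\sigma}}$ for every level-zero representation $V$.

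Given these local facts, for $V \in \rep[\ld][0]{\G}$ I would set $V_i := \sum_{x\in \bts} e_x^{\systConj{S^i},\ld} V$; this is a $\G$-subrepresentation by the equivariance axiom of Definition~\ref{defcoherent} and lies in $\rep[\ld][\systConj{S^i}]{\G}$ by construction. From $V = \sum_x V^{\radpara{\G}{x}}$ together with the local partition of unity at each vertex, one obtains $V = \sum_i V_i$. To turn this into a direct sum and to get $\Hom_\G(V_i, V_j)=0$ for $i \neq j$, one propagates local orthogonality across the building: if $e_x^{\systConj{S^i},\ld} u$ lies in $V_j$, then it equals a finite sum $\sum_k e_x^{\systConj{S^i},\ld} e_{y_k}^{\systConj{S^j},\ld} w_k$, and by repeatedly applying the commutation axiom~(1) and the collapse axiom~(2) of Definition~\ref{defcoherent} along a sequence of adjacent vertices going from $x$ to each $y_k$ through the polysimplicial enveloppe $H(x,y_k)$, one reduces each product $e_x^{\systConj{S^i},\ld} e_{y_k}^{\systConj{S^j},\ld}$ to a product of idempotents living at a common vertex, where local orthogonality applies.

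The main obstacle is precisely this last propagation step: local (per-vertex) orthogonality is immediate from the algebra of the finite reductive quotients, but transporting it to a genuine $\G$-module orthogonality requires careful use of the coherence axioms all along the building. This is the same type of gluing that lies at the heart of the Meyer--Solleveld theorem and is the reason their framework was developed; once it is carried out the product decomposition follows formally.
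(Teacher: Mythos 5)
Your architecture is the standard one (and is that of the cited proof in \cite{lanard}): local orthogonality and partition of unity on $V^{\radpara{\G}{x}}$ at each vertex, the subspaces $V_i:=\sum_x e_x^{\systConj{S^i},\ld}V$, and a propagation of orthogonality along the building. The local statements are correct as you state them, and once one knows the operator identity $e_x^{\systConj{S^i},\ld}e_y^{\systConj{S^j},\ld}=0$ for all vertices $x,y$ and all $i\neq j$, everything else (directness of the sum, vanishing of $\Hom$ between the factors) is formal.

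The gap is in how you justify that identity. The case of adjacent $x\neq y$ is \emph{not} a consequence of axioms (1) and (2) of Definition \ref{defcoherent} together with same-vertex orthogonality: one can write down, already on a single edge, commuting families $(e_x^i)$, $(e_y^j)$ satisfying all those constraints with $e_x^1e_y^2\neq 0$ (take everything in a commutative algebra). What makes the adjacent case work is the specific structure of the Deligne--Lusztig idempotents. Setting $\tau=[x,y]$ and letting $e_\tau^{+}$ be the projector onto $\radpara{\G}{\tau}$-invariants, one has the Brou\'e--Michel type identity $e_x^{s,\ld}e_\tau^{+}=\sum_{u}e_\tau^{u,\ld}e_\tau^{+}$, the sum running over the $u\in\ss*{\quotred*{\G}{\tau}}$ with $\transss*{\tau}{x}(u)=s$; consequently
\[ e_x^{\systConj{S^i},\ld}\,e_y^{\systConj{S^j},\ld}\;=\;\sum_{u\in(\transss*{\tau}{x})^{-1}(\systConjx{S^i}{x})\,\cap\,(\transss*{\tau}{y})^{-1}(\systConjx{S^j}{y})}e_\tau^{u,\ld}e_\tau^{+}, \]
and this index set equals $\systConjx{S^i}{\tau}\cap\systConjx{S^j}{\tau}=\emptyset$ by condition (2) of Definition \ref{defsystemsurcoherentconju} and the hypothesis that the systems are disjoint \emph{as systems}, i.e.\ at every facet and not only at vertices --- a part of the hypothesis your sketch never invokes. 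Once the adjacent case is in place, your gallery argument does close: writing $e_xe_y=e_xe_{z_1}\cdots e_{z_{m-1}}e_y$ along a gallery in $H(x,y)$ by iterating axiom (2) for the total system, and absorbing from the left, $e_x^{\systConj{S^i},\ld}e_y^{\systConj{S^j},\ld}=e_x^{\systConj{S^i},\ld}e_{z_1}^{\systConj{S^i},\ld}\cdots e_y^{\systConj{S^i},\ld}e_y^{\systConj{S^j},\ld}=0$ by orthogonality at $y$. So the proof is complete only after this facet-level computation is supplied; as written, the propagation step rests on axioms that do not suffice by themselves.
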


  \subsection{Systèmes minimaux}
  
 Dans cette section, nous construisons une application $\ensSystConj \to \ensSystConjc$, $\systConj{S} \mapsto \clotco{S}$, de sorte que $\clotco{S}$ soit le plus petit système 0-cohérent contenant $\systConj{S}$. Cette application nous permettra de construire les systèmes 0-cohérents minimaux en prenant l'image de systèmes minimaux.

    \label{sectionSystMin}
\sautintro

Il est aisé de vérifier que si $\systConj{S^1}$ et $\systConj{S^2}$ sont deux systèmes de classes de conjugaison 0-cohérents, alors $\systConj{S^1} \cup \systConj{S^2}$, $\systConj{S^1} \cap \systConj{S^2}$ et $\systConj{S^1} \setminus \systConj{S^2}$ (si $\systConj{S^2} \subseteq \systConj{S^1}$) sont aussi 0-cohérents.

    \begin{Def}
On définit une application  $\ensSystConj \to \ensSystConjc$, $\systConj{S} \mapsto \clotco{S}$, en posant pour $\systConj{S} \in \ensSystConj$
    \[\clotco{S} := \bigcap_{\underset{\systConj{S} \subseteq \systConj{T}}{\systConj{T} \in \ensSystConjc}} \systConj{T}.\]
    Alors $\clotco{S}$ est le plus petit système 0-cohérent contenant $\systConj{S}$.
    \end{Def}

    Nous allons construire des systèmes 0-cohérents minimaux. Fixons $\sigma \in \bt$ et $s \in \ss*{\quotred*{\G}{\sigma}}$. Définissons le système $\systConj{S}$ de classes de conjugaison par $\systConjx{S}{\sigma}=\{s\}$ et $\systConjx{S}{\tau}=\emptyset$ si $\tau\neq \sigma$. On pose alors $\systEng{\sigma}{s}:=\clotco{S}$.

    \begin{Def}
    \label{defJss}
    On appelle $\Jss$ l'ensemble des couples $(\sigma,s)$ où $\sigma \in \bt$ et $s \in \ss*{\quotred*{\G}{\sigma}}$ et on définit une relation d'équivalence $\simJss$ sur $\Jss$ par $(\sigma,s) \simJss (\tau,t)$ si et seulement si $\systEng{\sigma}{s}=\systEng{\tau}{t}$.
    \end{Def}

On obtient aisément le lemme suivant

    \begin{Lem}
    \label{lemSystEng}
    Soient $\sigma, \tau \in \bt$, et $s \in \ss*{\quotred*{\G}{\sigma}}$, $t \in \ss*{\quotred*{\G}{\tau}}$. Alors, soit $\systEng{\sigma}{s}=\systEng{\tau}{t}$, soit $\systEng{\sigma}{s} \cap \systEng{\tau}{t} = \emptyset$.
    \end{Lem}

    Exprimé en termes de classes d'équivalence, ce lemme nous dit que $[\sigma,s]$ la classe d'équivalence de la paire $(\sigma,s)$ est donnée par $[\sigma,s]=\{(\tau,t)\ |\ t \in \systEngx{\sigma}{s}{\tau}\}$.

    \bigskip

  Posons $\Smin:=\{ \systEng{\sigma}{s}, [\sigma,s] \in \Jss/{\simJss}\}$. L'ensemble $\Smin$ est constitué des ensembles 0-cohérents minimaux et par la proposition \ref{prodecompocategorie} on a

    \begin{Pro}
    \label{proDecompoMinimal}
    La catégorie de niveau $0$ se décompose en

    \[ \rep[\ld][0]{\G} = \prod_{\systConj{S} \in \Smin} \rep[\ld][\systConj{S}]{\G}.\]
    \end{Pro}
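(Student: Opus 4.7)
The plan is to deduce this proposition directly from Proposition \ref{prodecompocategorie} once we verify that $\Smin$ provides a partition of the total system of semi-simple classes $(\ss*{\quotred*{\G}{\sigma}})_{\sigma \in \bt}$, together with a cardinality check so that Proposition \ref{prodecompocategorie} (stated for finitely many factors) can be applied in our potentially infinite setting.

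First, I verify the partition property. By Lemma \ref{lemSystEng}, any two elements $\systEng{\sigma}{s}$ and $\systEng{\tau}{t}$ of $\Smin$ are either equal or disjoint; since the elements of $\Smin$ are indexed by $\Jss/{\simJss}$ (pairs of the form $[\sigma,s]$), distinct elements of $\Smin$ are disjoint by construction. For covering, note that for any $(\sigma,s) \in \Jss$, the seed system with $\systConjx{S}{\sigma}=\{s\}$ and empty otherwise is contained in $\systEng{\sigma}{s}$ by the very definition of $\clotco{S}$; hence $s \in \systEngx{\sigma}{s}{\sigma}$, which shows that every class $s \in \ss*{\quotred*{\G}{\sigma}}$ belongs to some (unique) minimal system in $\Smin$. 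Together these two facts say exactly that $\bigsqcup_{\systConj{S} \in \Smin} \systConj{S} = (\ss*{\quotred*{\G}{\sigma}})_{\sigma \in \bt}$.

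Next, I address the passage from a finite product (as in Proposition \ref{prodecompocategorie}) to the possibly infinite indexing set $\Smin$. The key point is local finiteness: for each $\sigma \in \bt$, the set $\ss*{\quotred*{\G}{\sigma}}$ is finite since $\quotred*{\G}{\sigma}$ is a finite group of Lie type, so only finitely many $\systConj{S} \in \Smin$ contribute a nonempty term $\systConjx{S}{\sigma}$. Given $V \in \rep[\ld][0]{\G}$, the idempotent $e_{\sigma}^{\systConj{S},\ld} = \sum_{s \in \systConjx{S}{\sigma}} e_{\sigma}^{s,\ld}$ is zero outside these finitely many $\systConj{S}$, and the identity $V = \sum_{x \in \bts} e_x V$ combined with the finite orthogonal decomposition of each $e_x V$ indexed by $\ss*{\quotred*{\G}{x}}$ shows that every $v \in V$ lies in the sum of finitely many subcategory-projections $\rep[\ld][\systConj{S}]{\G}$. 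One then runs the same argument as in Proposition \ref{prodecompocategorie} (orthogonality of the central idempotents $e_{\sigma}^{\systConj{S},\ld}$ for distinct $\systConj{S}$, which follows from disjointness of the indexing sets of classes and orthogonality of the $e_{\sigma}^{s,\ld}$) to conclude the product decomposition.

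The main obstacle, modest as it is, is precisely this infinite-product issue: one has to make sure that the argument of \cite{lanard} giving Proposition \ref{prodecompocategorie} extends without change once local finiteness is invoked — which it does, since all computations needed to check that a level $0$ representation splits canonically as a direct sum indexed by $\Smin$ take place inside a single parahoric quotient $\quotred{\G}{\sigma}$ at a time, where the relevant sum is finite.
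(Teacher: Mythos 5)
Your argument is correct and follows essentially the same route as the paper, which simply observes that the elements of $\Smin$ are pairwise disjoint (Lemme \ref{lemSystEng}), cover $(\ss*{\quotred*{\G}{\sigma}})_{\sigma \in \bt}$, and then invokes the proposition \ref{prodecompocategorie}. The only point where you go beyond the paper — the passage to a possibly infinite product — is in fact unnecessary: since $\G$ acts transitively on the chambers, la proposition \ref{proCaractC} shows that a $0$-coherent system is determined by its restriction to a fixed chamber $C$, whose finitely many facets each carry only finitely many semisimple classes, so $\Smin$ is finite and la proposition \ref{prodecompocategorie} applies verbatim.
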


    Notons également le fait suivant. Les systèmes minimaux permettent de construire tous les systèmes $0$-cohérents. Si $\systConj{S}$ est un système de classes de conjugaison $0$-cohérent alors il existe une partie $\mathscr{I} \subseteq \Smin$ telle que $\systConj{S}=\cup_{\systConj{T} \in \mathscr{I}} \systConj{T}$.

  \subsection{Clôture transitive}

    \label{sectionResChambre}
    
    Dans cette section, nous souhaitons comprendre davantage la relation d'équivalence $\simJss$. Pour cela nous allons montrer qu'elle peut s'exprimer comme la clôture transitive d'une autre relation que l'on définira.

    \begin{Def}
    On dit qu'un système $(e_{\sigma})_{\sigma \leq C}$ est $C$-0-cohérent si
    \begin{enumerate}
    \item $e_{gx}=ge_{x}g^{-1}$ quel que soit $x\in \bts$ et $g\in \G$ tels que $x \leq C$ et $gx \leq C$.
    \item $e_{\sigma}=e_{\sigma}^{+}e_{x}=e_{x}e_{\sigma}^{+}$ pour $x \in \bts$ et  $\sigma \in \bt$ tels que $x \leq \sigma \leq C$.
    \end{enumerate}
    \end{Def}

    \begin{Pro}
    \label{proCaractC}
    Un système 0-cohérent $(e_{\sigma})_{\sigma\in \bt}$ est complètement caractérisé par le système $C$-0-cohérent $(e_{\sigma})_{\sigma \leq C}$.
    \end{Pro}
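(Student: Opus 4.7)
La stratégie générale est de reconstruire $e_\tau$ pour toute facette $\tau \in \bt$ à partir des données sur $\bar{C}$ en utilisant l'équivariance par $G$. Puisque $G$ agit transitivement sur les chambres de $\bt$, pour toute facette $\tau$ il existe $g \in G$ tel que $g\tau \leq C$. L'axiome d'équivariance (3) de la définition \ref{defcoherent} impose alors $e_\tau = g^{-1} e_{g\tau} g$, ce qui montre l'unicité. La proposition revient donc à établir que cette formule est bien définie et produit effectivement un système 0-cohérent, mais pour l'énoncé tel quel, seule la caractérisation (unicité) est requise ; le gros du travail est de voir que l'ambiguïté sur $g$ n'affecte pas la valeur de $e_\tau$.

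L'étape cruciale que je rédigerais soigneusement est le caractère bien défini de la formule. Si $g_1, g_2 \in G$ vérifient $g_1 \tau, g_2 \tau \leq C$, posons $h := g_1 g_2^{-1}$, de sorte que $h(g_2 \tau) = g_1 \tau$, avec $g_2 \tau$ et $h(g_2 \tau)$ tous deux sous $C$. Il s'agit de montrer $e_{g_1 \tau} = h\, e_{g_2 \tau}\, h^{-1}$. L'axiome (1) de $C$-0-cohérence fournit cette égalité sommet par sommet : pour chaque sommet $x$ de $g_2 \tau$, on a $x \leq C$ et $hx \leq C$, donc $e_{hx} = h e_x h^{-1}$. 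Puis l'axiome (2), itéré sur les sommets de $g_2 \tau$ via la relation $e_\sigma = e_\sigma^+ e_x = e_x e_\sigma^+$, exprime $e_{g_2 \tau}$ comme un produit d'idempotents sommetaux, expression préservée par la conjugaison par $h$, d'où l'identité voulue. Ceci montre que $\tilde e_\tau := g^{-1} e_{g\tau} g$ ne dépend pas du choix de $g$, et prolonge le système $C$-0-cohérent donné.

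Pour la partie existence (si l'on souhaite la vérifier au-delà de la caractérisation), je contrôlerais que $(\tilde e_\tau)_{\tau \in \bt}$ satisfait les trois axiomes de la définition \ref{defcoherent}. L'équivariance (3) est immédiate. Pour la commutativité (1) et l'identité triangulaire (2), j'utiliserais qu'un nombre fini de sommets situés dans une configuration polysimpliciale donnée appartient toujours à un appartement commun, donc à un $G$-translaté d'un appartement contenant $\bar{C}$ ; quitte à remplacer la configuration par un $G$-translaté, on se ramène au cas où tous les sommets sont dans $\bar{C}$, et les identités découlent alors des relations analogues restreintes à $\bar C$. L'obstacle principal reste le passage, dans l'étape de bien-défini, de l'équivariance au niveau des sommets à l'équivariance au niveau des facettes : ce passage repose de façon essentielle sur la relation produit de l'axiome (2) du système $C$-0-cohérent, qui garantit que $e_\sigma$ est entièrement déterminé par les $e_x$ pour $x \leq \sigma$ un sommet.
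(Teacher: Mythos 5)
Votre preuve suit exactement la même démarche que celle de l'article : on utilise la transitivité de $\G$ sur les chambres pour poser $\tilde e_\tau := g^{-1}e_{g\tau}g$, puis on vérifie que cette formule est indépendante du choix de $g$ et définit un système $0$-cohérent. Là où l'article se contente d'un « on vérifie aisément », vous explicitez correctement le point clé (l'ambiguïté sur $g$ se résout sommet par sommet via l'axiome d'équivariance, puis remonte aux facettes parce que $e_\sigma$ est déterminé par les idempotents sommetaux via l'axiome produit), donc la proposition est établie.
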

    
\begin{proof}
Soit $(e_{\sigma})_{\sigma \leq C}$ un système $C$-0-cohérent. Soit $\sigma \in \bt$, $\sigma$ appartient à une chambre $C'$ et $\G$ agit transitivement sur l'ensemble des chambres donc il existe $g \in \G$ tel que $C'=gC$. Ainsi si l'on note $\sigma_{1}:= g^{-1}\sigma$, alors $\sigma_{1} \leq C$. On définit alors $\tilde{e}_{\sigma}:=ge_{\sigma_{1}}g^{-1}$. On vérifie aisément que $(\tilde{e}_{\sigma})_{\sigma \in \bt}$ ne dépend pas des choix effectués et est $0$-cohérent.
\end{proof}

    De manière analogue on peut restreindre les systèmes de classes de conjugaison 0-cohérents à $C$.

 \begin{Def}
    Soit $\systConjC{S}$ un système d'ensembles de classes de conjugaison avec $\systConjx{S}{\sigma} \subseteq \ss*{\quotred*{\G}{\sigma}}$. On dit que $\systConj{S}$ est $C$-0-cohérent si
    \begin{enumerate}
    \item $\transss*{g}{x}(\systConjx{S}{x})=\systConjx{S}{gx}$ quels que soient $x\in \bts$ et $g\in \G$ tels que $x \leq C$ et $gx \leq C$.
    \item $(\transss*{\sigma}{x})^{-1}(\systConjx{S}{x})=\systConjx{S}{\sigma}$ pour $x \in \bts$ et $\sigma \in \bt$ tels que $x \leq \sigma \leq C$.
    \end{enumerate}
    \end{Def}

    Et comme dans la section \ref{sectionSystMin}, on définit des systèmes $C$-0-cohérents minimaux.

  \begin{Def}
    On définit une relation $\simJsst$ sur $\Jss$ par $(\sigma,s) \simJsst (\tau,t)$ si et seulement si l'une des conditions suivantes est réalisée :
    \begin{enumerate}
    \item Il existe $g \in \G$ tel que $\tau = g\sigma$ et $t = \transss{g}{\sigma}^{*}(s)$.
    \item Il existe $x \in \bts$ tel que $\sigma \geq x \leq \tau$ et $\varphi_{\sigma,x}^{*}(s)=\varphi_{\tau,x}^{*}(t)$.
    \end{enumerate}
    \end{Def}

    La relation $\simJsst$ est réflexive et symétrique sur $\Jss$.

    \begin{Pro}
    	\label{proClotureTransitive}
    La relation d'équivalence $\simJss$ sur $\Jss$ est la clôture transitive de $\simJsst$.
    \end{Pro}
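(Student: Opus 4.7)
Le plan consiste à démontrer les deux inclusions entre $\simJss$ et la clôture transitive $\approx$ de $\simJsst$ sur $\Jss$. Pour l'inclusion $\approx\,\subseteq\,\simJss$, comme $\simJss$ est déjà une relation d'équivalence (donc transitive), il suffit de vérifier $\simJsst\,\subseteq\,\simJss$. Supposons d'abord $(\sigma,s) \simJsst (\tau,t)$ via la première condition : $\tau = g\sigma$ et $t = \transss*{g}{\sigma}(s)$. La propriété (1) de 0-cohérence du système $\systEng{\sigma}{s}$ impose alors $t \in \systEngx{\sigma}{s}{\tau}$, d'où $\systEng{\tau}{t} \subseteq \systEng{\sigma}{s}$ par minimalité, et la symétrie fournit l'égalité. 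Si la seconde condition est vérifiée via un sommet commun $x \leq \sigma$, $x \leq \tau$ avec $\transss*{\sigma}{x}(s) = \transss*{\tau}{x}(t)$, la propriété (2) de 0-cohérence place $\transss*{\sigma}{x}(s)$ dans $\systEngx{\sigma}{s}{x}$, puis la même propriété appliquée cette fois en $\tau$ force $t$ à appartenir à $\systEngx{\sigma}{s}{\tau}$, et l'on conclut de même.

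Pour l'inclusion réciproque $\simJss\,\subseteq\,\approx$, fixons $(\sigma_{0},s_{0}) \in \Jss$ et définissons un système de classes de conjugaison $\systConj{T}$ par
\[\systConjx{T}{\rho} := \{u \in \ss*{\quotred*{\G}{\rho}} \, : \, (\rho,u) \approx (\sigma_{0},s_{0})\}, \quad \rho \in \bt.\]
Le cœur de la preuve est alors la vérification que $\systConj{T}$ est $0$-cohérent au sens de la définition \ref{defsystemsurcoherentconju}. La propriété (1) résulte immédiatement de la première condition définissant $\simJsst$. Pour la propriété (2), fixons $x \in \bts$ et $\rho \in \bt$ avec $x \leq \rho$ ; la seconde condition de $\simJsst$, appliquée avec $\tau = x$ et le sommet commun $x$ lui-même, est tautologiquement satisfaite pour toute $u \in \ss*{\quotred*{\G}{\rho}}$ (l'égalité $\transss*{\rho}{x}(u) = \transss*{x}{x}(\transss*{\rho}{x}(u))$ étant triviale). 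On a donc $(\rho,u) \simJsst (x,\transss*{\rho}{x}(u))$, ce qui entraîne $u \in \systConjx{T}{\rho}$ si et seulement si $\transss*{\rho}{x}(u) \in \systConjx{T}{x}$, i.e.\ l'égalité $(\transss*{\rho}{x})^{-1}(\systConjx{T}{x}) = \systConjx{T}{\rho}$ voulue.

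Puisque $s_{0} \in \systConjx{T}{\sigma_{0}}$ par réflexivité de $\approx$, la minimalité de $\systEng{\sigma_{0}}{s_{0}}$ donne $\systEng{\sigma_{0}}{s_{0}} \subseteq \systConj{T}$. Par conséquent, si $(\tau,t) \simJss (\sigma_{0},s_{0})$, alors $t \in \systEngx{\sigma_{0}}{s_{0}}{\tau} \subseteq \systConjx{T}{\tau}$, soit $(\tau,t) \approx (\sigma_{0},s_{0})$. L'unique étape délicate est ainsi la vérification de la propriété (2) pour $\systConj{T}$ : l'observation décisive est que la seconde condition de $\simJsst$, spécialisée à $\tau = x$ un sommet, fournit précisément la comparaison "bilatérale" nécessaire pour transporter l'appartenance à $\systConj{T}$ entre $\rho$ et chacun de ses sommets.
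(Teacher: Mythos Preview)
Your proof is correct and takes a genuinely different, more direct route than the paper's. Both proofs establish the hard direction $\simJss\subseteq\approx$ by exhibiting the $\approx$-class of a fixed pair as a $0$-cohérent system and invoking minimality of $\systEng{\sigma_0}{s_0}$. The difference lies in how this system is produced: you define $\systConj{T}$ globally on all of $\bt$ and verify the two axioms of Definition~\ref{defsystemsurcoherentconju} directly, the key observation being that condition~(2) of $\simJsst$ with $\tau=x$ gives $(\rho,u)\simJsst(x,\transss*{\rho}{x}(u))$ for any $x\leq\rho$, which is exactly what is needed. The paper instead restricts to a single chamber $C$, runs an iterative saturation process $\systConj{S^0}\subseteq\systConj{S^1}\subseteq\cdots$ inside $C$, uses finiteness to obtain a fixed point $\systConj{S^n}$, checks that $\systConj{S^n}$ is $C$-$0$-cohérent, and finally invokes Proposition~\ref{proCaractC} to identify it with the restriction of $\systEng{\sigma}{s}$. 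Your argument bypasses the entire $C$-$0$-cohérent machinery of Section~\ref{sectionResChambre} and the stabilisation step; the paper's approach, on the other hand, makes the finitary nature of the construction visible and motivates the introduction of $C$-$0$-coherence, which is not otherwise used.

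One small remark on the easy direction: when you invoke ``la propriété~(1) de $0$-cohérence'' to handle condition~(1) of $\simJsst$, note that Definition~\ref{defsystemsurcoherentconju}~(1) is stated only for sommets $x\in\bts$, whereas $\simJsst$~(1) involves general facettes. This is harmless (one reduces to a vertex $x\leq\sigma$ via property~(2), applies~(1) at $x$, then goes back up to $g\sigma$), and the paper itself treats this direction as déjà acquise, but it is worth a half-line.
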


\begin{proof}
Nous avons déjà que si deux paires sont $\simJsst$-équivalentes alors elles sont $\simJss$-équivalentes. Prenons donc $(\sigma,s)$ et $(\tau,t)$ tels que $(\sigma,s) \simJss (\tau,t)$.

Comme $\G$ agit transitivement sur les chambres de $\bt$, on peut supposer que $\sigma,\tau \in C$. Posons $\systConj{S^0}$ le système de classes de conjugaison sur $C$ définit par $\systConj{S^0}_{\sigma}=\{s\}$ et $\systConj{S^0}_{\omega}=\emptyset$ si $\omega \in C$ et $\omega \neq \sigma$. Pour simplifier les notations, on identifiera un système $\systConj{S}$ de classes de conjugaison avec le sous-ensemble de $\Jss$ des paires $(\omega,u)$ telles que $u \in \systConjx{S}{\omega}$ (ainsi on a par exemple $\systConj{S^0} = \{ (\sigma,s) \}$). On construit alors par récurrence les systèmes $\systConj{S^i}$, pour $i\geq 1$, par $\systConj{S^i}:=\systConj{S^{i-1}} \cup \systConj{T^i}$, où $ \systConj{T^i}$ est l'ensemble des couples $(\omega,u) \in \Jss$ avec $\omega \in C$ tels qu'il existe $(\lambda,v) \in \systConj{S^{i-1}}$ vérifiant $(\lambda,v) \simJsst (\omega,u)$. Cette suite de systèmes est croissante et comme ils ne sont définis que sur $C$ (donc il n'y a qu'un nombre fini de systèmes possibles), elle est stationnaire à partir d'un certain rang $n$. Par définition, les éléments de $\systConj{S^n}$ sont des paires qui sont obtenues par une suite d'au plus $n$ éléments de paires qui sont deux à deux $\simJsst$-équivalentes à partir de $(\sigma,s)$. Nous souhaitons donc montrer que $(\tau,t) \in \systConj{S^n}$.

Comme $\systConj{S^n} = \systConj{S^{n+1}}$, on a que $\systConj{T^{n+1}} \subseteq \systConj{S^n}$. Soit $x\in C$ un sommet et $g \in \G$ tel que $gx \in C$. Soit $u \in \systConjx{S^n}{x}$, alors comme $(x,u) \simJsst (gx,\transss{g}{x}^{*}(u))$ on a $(gx,\transss{g}{x}^{*}(u)) \in \systConj{T^{n+1}} \subseteq \systConj{S^n}$. On vient de montrer que $\transss{g}{x}^{*}(\systConjx{S^n}{x}) \subseteq \systConjx{S^n}{gx}$. Le même raisonnement montre que $\transss{g^{-1}}{gx}^{*}(\systConjx{S^n}{gx}) \subseteq \systConjx{S^n}{x}$ et donc que $\transss{g}{x}^{*}(\systConjx{S^n}{x}) = \systConjx{S^n}{gx}$. On montre de façon analogue que $(\transss*{\sigma}{x})^{-1}(\systConjx{S^n}{x})=\systConjx{S^n}{\sigma}$ et donc que $\systConj{S^n}$ est $C$-0-cohérent. Comme $(\sigma,s) \in \systConj{S^n}$, on a que $\systEng{\sigma}{s} \subseteq \systConj{S^n}$ (dans cette dernière inclusion on identifie systèmes 0-cohérents et $C$-0-cohérents grâce à la proposition \ref{proCaractC}).

Montrons maintenant que $\systConj{S^n} \subseteq \systEng{\sigma}{s}$. Faisons pour cela une récurrence. On a tout d'abord $\systConj{S^0} \subseteq \systEng{\sigma}{s}$. Supposons que $\systConj{S^i} \subseteq \systEng{\sigma}{s}$. Pour montrer que $\systConj{S^{i+1}} \subseteq \systEng{\sigma}{s}$, il faut montrer que $\systConj{T^{i+1}} \subseteq \systEng{\sigma}{s}$. Or si $(\omega,u) \in \systConj{T^{i+1}}$, alors par définition $(\omega,u)$ est $\simJsst$-équivalent à un élément de $\systConj{S^{i}}$, donc est $\simJss$-équivalent à un élément de $\systEng{\sigma}{s}$. Le lemme \ref{lemSystEng} montre que $(\omega,u) \in \systEng{\sigma}{s}$, d'où le résultat. Ainsi $\systConj{S^n} \subseteq \systEng{\sigma}{s}$ et donc $\systConj{S^n} = \systEng{\sigma}{s}$.

Pour conclure, comme $(\sigma,s) \simJss (\tau,t)$, par définition $\systEng{\sigma}{s}=\systEng{\tau}{t}$ et donc $(\tau,t) \in \systConj{S^n}$.
\end{proof}

  \subsection{Compatibilité à l'induction et à la restriction parabolique pour les systèmes cohérents}

    \label{secInductionParaboSystem}
    
    Étudions une dernière propriété, qui nous servira par la suite, qui est la compatibilité des systèmes cohérents vis à vis de l'induction et de la restriction parabolique.
    
  \sautintro

    Comme dans la section 4.4 de \cite{lanard}, nous allons utiliser dans cette partie l'immeuble de  Bruhat-Tits "étendu", que l'on notera $\bte[\G]$, qui est plus adapté pour traiter les questions d'induction et de restriction. Cela ne change pas vraiment la définition des idempotents puisque l'on traite la structure polysimpliciale de l'immeuble semi-simple de façon combinatoire.

    \bigskip

    Soit $\mathbf{P}$ un $\kk$-sous-groupe parabolique de $\gpalg{\G}$ de quotient de Levi $\gpalg{M}$ défini sur $\kk$. Prenons $\mathbf{S}$ un tore déployé maximal de $\gpalg{\G}$ contenu dans $\gpalg{P}$ et notons $\mathbf{T}$ son centralisateur dans $\gpalg{\G}$. Il existe alors un unique relèvement de $\gpalg{M}$ en un sous-groupe de $\gpalg{\G}$ contenant $\gpalg{T}$. Soit $x \in \mathcal{A}^{e}$, alors $\para{\gp{M}}{x}=M \cap \para{\G}{x}$ et $\radpara{\gp{M}}{x}=M \cap \radpara{\G}{x}$ (voir \cite{MoyPrasad} section 4.3). Alors $\quotred{P}{x}$, l'image de $P_{x}:=P \cap \para{\G}{x}$ dans $\quotred{\G}{x}$, est un sous-groupe parabolique de $\quotred{\G}{x}$ de quotient de Levi $\quotred{M}{x}$ (voir par exemple \cite{lanard} lemme 4.4.1).

    \bigskip

    Soit $\systConj{S}=(\systConjx{S}{x})_{x \in \btes[\gp{M}]}$ un système 0-cohérent de classes de conjugaison pour $M$ ($\systConjx{S}{x} \subseteq \ss*{\quotred*{M}{x}}$). Le groupe $\quotred*{M}{x}$ est un Levi de $\quotred*{\G}{x}$, on a donc une application naturelle
    \[ \varphi_{M,\G}^{x} : \ss*{\quotred*{M}{x}} \rightarrow \ss*{\quotred*{\G}{x}}\]

    Posons alors $\varphi_{M,\G}(\systConj{S})=(\varphi_{M,\G}^{x}(\systConjx{S}{x}))_{x \in \bte[\G]}$, un système de classes de conjugaison pour $\G$, défini par $\varphi_{M,\G}^{x}(\systConjx{S}{x})=\{\varphi_{M,\G}^{x}(s), s \in \systConjx{S}{x} \}$ si $x \in \bte[\gp{M}]$ et $\varphi_{M,\G}^{x}(\systConjx{S}{x})=\emptyset$ si $x \notin \bte[\gp{M}]$. On pose alors
    \[ i_{M}^{\G}(\systConj{S}):=\overline{\varphi_{M,\G}(\systConj{S})}\]
    la clôture cohérente de $\varphi_{M,\G}(\systConj{S})$.

\medskip

    Soit $\ensSystConjcM=\{\systConj{T^1},\cdots,\systConj{T^m} \}$ un ensemble de systèmes de classes de conjugaison 0-cohérents pour $M$, vérifiant $\systConj{T^i} \cap \systConj{T^j} = \emptyset$ si $i \neq j$ et $\bigcup_{i=1}^{n} \systConj{T^i} = (\ss*{\quotred*{M}{\sigma}})_{\sigma \in \bt}$, de sorte que $\rep[\ld][0]{M} = \prod_{\systConj{T} \in \ensSystConjcM} \rep[\ld][\systConj{T}]{M}$.

    \begin{Pro}
    	\label{proResParaSyst}
    	Soit $\systConj{S}$ un système de classes de conjugaison 0-cohérent pour $\G$. Alors
    	\[ \rp( \rep[\ld][\systConj{S}]{\G} ) \subseteq \prod_{\substack{\systConj{T} \in  \ensSystConjcM \\ i_{M}^{\G}(\systConj{T}) \cap \systConj{S} \neq \emptyset}} \rep[\ld][\systConj{T}]{M}\]
    	où $\rp$ désigne la restriction parabolique.
    \end{Pro}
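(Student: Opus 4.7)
The plan is to reduce the statement to a vertex-wise computation on the finite reductive quotients, and then invoke the compatibility of Harish-Chandra restriction with rational Deligne-Lusztig series.

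Take $\pi \in \rep[\ld][\systConj{S}]{\G}$ and decompose $\rp(\pi) = \bigoplus_{\systConj{T} \in \ensSystConjcM} V_{\systConj{T}}$ with $V_{\systConj{T}} \in \rep[\ld][\systConj{T}]{M}$. At any vertex $x$ the idempotents $e_x^{\systConj{T'},\ld}$, $\systConj{T'} \in \ensSystConjcM$, are pairwise orthogonal (their defining sums in $\ld[\quotred{M}{x}]$ pick out disjoint subsets of $\ss*{\quotred*{M}{x}}$), so $V_{\systConj{T}} = \sum_{x} e_x^{\systConj{T},\ld}\rp(\pi)$. It thus suffices to prove that $e_x^{\systConj{T},\ld}\rp(\pi) = 0$ at every vertex $x$ of $\bte[\gp{M}]$, assuming $i_M^G(\systConj{T}) \cap \systConj{S} = \emptyset$.

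Fix such $x$. The idempotent $e_x^{\systConj{T},\ld}$ is the inflation of $e_{\systConjx{T}{x},\ld}^{\quotred{M}{x}} := \sum_{s \in \systConjx{T}{x}} e_{s,\ld}^{\quotred{M}{x}} \in \ld[\quotred{M}{x}]$, so the condition amounts to $e_{\systConjx{T}{x},\ld}^{\quotred{M}{x}} \cdot \rp(\pi)^{\radpara{M}{x}} = 0$. The standard identification of Jacquet modules with Harish-Chandra restriction on the reductive quotients, based on the equality $\radpara{M}{x} = M \cap \radpara{\G}{x}$ and on the fact that $\quotred{P}{x}$ is a parabolic of $\quotred{\G}{x}$ with Levi $\quotred{M}{x}$ (recalled at the start of this subsection, cf.\ \cite{lanard} Lemme 4.4.1), yields a canonical isomorphism of $\quotred{M}{x}$-modules
\[ \rp(\pi)^{\radpara{M}{x}} \;\cong\; {}^{*}\mathcal{R}^{\quotred{\G}{x}}_{\quotred{M}{x}}\!\bigl(\pi^{\radpara{\G}{x}}\bigr),\]
where ${}^{*}\mathcal{R}$ denotes Harish-Chandra restriction along $\quotred{P}{x}$.

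One concludes via the classical fact that Harish-Chandra restriction respects rational Deligne-Lusztig series: for $s \in \ss*{\quotred*{M}{x}}$ and $t \in \ss*{\quotred*{\G}{x}}$, the composition $e_{s,\ld}^{\quotred{M}{x}} \cdot {}^{*}\mathcal{R}^{\quotred{\G}{x}}_{\quotred{M}{x}} \cdot e_{t,\ld}^{\quotred{\G}{x}}$ vanishes unless $\varphi_{M,\G}^x(s) = t$. Since $\pi \in \rep[\ld][\systConj{S}]{\G}$ gives $\pi^{\radpara{\G}{x}} = \sum_{t \in \systConjx{S}{x}} e_{t,\ld}^{\quotred{\G}{x}}\pi^{\radpara{\G}{x}}$, non-vanishing of $e_{\systConjx{T}{x},\ld}^{\quotred{M}{x}} \cdot {}^{*}\mathcal{R}^{\quotred{\G}{x}}_{\quotred{M}{x}}(\pi^{\radpara{\G}{x}})$ would force the existence of $s \in \systConjx{T}{x}$ and $t \in \systConjx{S}{x}$ with $\varphi_{M,\G}^x(s) = t$, placing $t$ in $\varphi_{M,\G}^x(\systConjx{T}{x}) \cap \systConjx{S}{x} \subseteq (i_M^G(\systConj{T}))_x \cap \systConjx{S}{x}$ and contradicting the hypothesis. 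The principal obstacle is the Jacquet-module identification in the previous paragraph, which rests on the extended-building setup and the Moy-Prasad compatibility; the remaining steps follow directly from the construction of the $e_{s,\ld}^{\quotred{M}{x}}$ and the defining property of $\rep[\ld][\systConj{S}]{\G}$.
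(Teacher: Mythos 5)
Your argument is correct and follows essentially the same route as the paper: reduce to the vanishing of $e_{x}^{\systConj{T},\ld}\rp(\pi)$ at each vertex $x$ of $\bte[\gp{M}]$, identify $\rp(\pi)^{\radpara{\gp{M}}{x}}$ with the Harish-Chandra restriction of $\pi^{\radpara{\G}{x}}$ (the paper cites \cite{datFinitude}, propositions 3.1 et 6.2, for this), and conclude from the compatibility of the Deligne-Lusztig idempotents with Harish-Chandra restriction (\cite{dat_equivalences_2014}, section 2.1.4) together with $\varphi_{M,\G}^{x}(\systConjx{T}{x}) \cap \systConjx{S}{x} = \emptyset$. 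The only differences are cosmetic (explicit orthogonality of the $e_x^{\systConj{T},\ld}$, the ${}^{*}\mathcal{R}$ notation).
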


    \begin{proof}
    Soit $V \in  \rep[\ld][\systConj{S}]{\G}$. La restriction parabolique préserve le niveau donc $\rp[V] \in \rep[\ld][0]{M}$. Soit $T \in \ensSystConjcM$ tel que $i_{M}^{\G}(\systConj{T}) \cap \systConj{S} = \emptyset$.
    Soient $x \in \mathcal{A}^{e}_{M}$ et $t \in \systConjx{T}{x}$. Il nous suffit alors de montrer que $e_{t,\ld}^{\quotred{M}{x}} \rp[V]^{\radpara{\gp{M}}{x}} =0$ pour obtenir le résultat voulu.

    Nous avons $\rp[V]^{\radpara{\gp{M}}{x}} \simeq  r_{\quotred{P}{x}}^{\quotred{\G}{x}} (V^{\radpara{\G}{x}})$ (voir \cite{datFinitude} propositions 3.1 et 6.2), donc $e_{t,\ld}^{\quotred{M}{x}} \rp[V]^{\radpara{\gp{M}}{x}}  \simeq e_{t,\ld}^{\quotred{M}{x}} r_{\quotred{P}{x}}^{\quotred{\G}{x}} (V^{\radpara{\G}{x}}) \simeq e_{t,\ld}^{\quotred{M}{x}} r_{\quotred{P}{x}}^{\quotred{\G}{x}}  ( e_{t,\ld}^{\quotred{\G}{x}}(V^{\radpara{\G}{x}}))$ (pour la dernière égalité voir \cite{dat_equivalences_2014} section 2.1.4 ). Or par définition $t \notin \systConjx{S}{x}$ donc $e_{t,\ld}^{\quotred{\G}{x}}(V^{\radpara{\G}{x}})=0$ d'où le résultat.
    \end{proof}

    \begin{Pro}
    \label{proInducParaSyst}
    Soit $\systConj{T}$ un système de classes de conjugaison 0-cohérent pour $M$. Alors
    \[ \ip[ \rep[\ld][\systConj{T}]{M} ] \subseteq \rep[\ld][i_{M}^{\G}(\systConj{T})]{\G}\]
    où $\ip$ désigne l'induction parabolique.
    \end{Pro}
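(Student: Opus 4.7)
The plan is to reduce the statement to a vertex-by-vertex check and then use the compatibility of parabolic induction with Deligne-Lusztig series on the finite reductive quotients, dually to the proof of Proposition \ref{proResParaSyst}.

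First I would use Proposition \ref{proDecompoMinimal} to write $\rep[\ld][0]{\G}$ as a product indexed by minimal 0-cohérents systems. Since $i_{M}^{\G}(\systConj{T})$ is 0-cohérent, it is a union of minimal systems, so by Lemma \ref{lemSystEng} every minimal $\systConj{S}$ is either contained in $i_{M}^{\G}(\systConj{T})$ or disjoint from it. It is then enough to show that the $\systConj{S}$-component of $\ip[V]$ vanishes for each minimal $\systConj{S}$ with $\systConj{S} \cap i_{M}^{\G}(\systConj{T}) = \emptyset$, and by the Serre subcategory property this reduces to proving $e_{x}^{\systConj{S},\ld}\,\ip[V] = 0$ for every $x \in \bts$. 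The $\G$-equivariance $e_{gx}^{\systConj{S},\ld} = g\,e_{x}^{\systConj{S},\ld}\,g^{-1}$ shows that this vanishing only depends on the $\G$-orbit of $x$. Choosing a maximal $\kk$-split torus $\mathbf{S}$ of $\gpalg{M}$ (which is also maximal $\kk$-split in $\gpalg{\G}$), the extended apartment $\mathcal{A}^{e}_{M}$ is a full apartment of $\bte[\G]$, so by transitivity of $\G$ on chambers every orbit of vertices meets $\mathcal{A}^{e}_{M} \subseteq \bte[\gp{M}]$, and I may assume $x \in \bte[\gp{M}]$.

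For such $x$, I would invoke the dual analogue of the isomorphism used in the proof of \ref{proResParaSyst}, namely
\[ \ip[V]^{\radpara{\G}{x}} \;\cong\; i_{\quotred{P}{x}}^{\quotred{\G}{x}}\bigl(V^{\radpara{\gp{M}}{x}}\bigr), \]
a standard consequence of the Iwasawa-type decomposition of $\para{\G}{x}$ along $P$ (see \cite{datFinitude}). Combined with the compatibility of Harish-Chandra induction with rational Deligne-Lusztig series, $i_{\quotred{P}{x}}^{\quotred{\G}{x}}(\dl{\quotred*{M}{x}}{t}) \subseteq \dl{\quotred*{\G}{x}}{\varphi_{M,\G}^{x}(t)}$, this yields
\[ e_{s,\ld}^{\quotred{\G}{x}} \cdot \ip[V]^{\radpara{\G}{x}} \;\cong\; i_{\quotred{P}{x}}^{\quotred{\G}{x}}\!\Bigl( \sum_{\varphi_{M,\G}^{x}(t)=s} e_{t,\ld}^{\quotred{M}{x}} V^{\radpara{\gp{M}}{x}} \Bigr). \]
For $s \in \systConjx{S}{x}$, the hypothesis $\systConj{S} \cap i_{M}^{\G}(\systConj{T}) = \emptyset$ together with $\varphi_{M,\G}^{x}(\systConjx{T}{x}) \subseteq i_{M}^{\G}(\systConj{T})_{x}$ forces every $t$ in the sum to lie outside $\systConjx{T}{x}$; orthogonality of the product decomposition $\rep[\ld][0]{M} = \rep[\ld][\systConj{T}]{M} \times \rep[\ld][\systConj{T}^{c}]{M}$ then yields $e_{t,\ld}^{\quotred{M}{x}} V^{\radpara{\gp{M}}{x}} = 0$, and summing over $s \in \systConjx{S}{x}$ gives $e_{x}^{\systConj{S},\ld}\ip[V] = 0$.

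The main obstacle is the Iwasawa-type isomorphism $\ip[V]^{\radpara{\G}{x}} \cong i_{\quotred{P}{x}}^{\quotred{\G}{x}}(V^{\radpara{\gp{M}}{x}})$, which is the precise technical counterpart of the identification $\rp[V]^{\radpara{\gp{M}}{x}} \cong r_{\quotred{P}{x}}^{\quotred{\G}{x}}(V^{\radpara{\G}{x}})$ invoked in the proof of \ref{proResParaSyst}; it relies on $x$ lying in $\bte[\gp{M}]$, which is precisely what the orbit-reduction step guarantees. The passage from $\Ql$- to $\Zl$-idempotents is then harmless since, by definition, $e_{s,\Zl}^{\quotred{\G}{x}}$ is a sum of $e_{s',\Ql}^{\quotred{\G}{x}}$ over $\lprime$-equivalent classes, and $\varphi_{M,\G}^{x}$ respects this equivalence.
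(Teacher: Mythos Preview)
Your argument is reasonable in spirit, but the paper takes a much shorter route that sidesteps the one genuinely delicate step in your proof. The paper's proof is a two-line adjunction argument: apply Proposition~\ref{proResParaSyst} with $\systConj{S} = \compl{(i_{M}^{\G}(\systConj{T}))}$ and $\ensSystConjcM=\{\systConj{T},\compl{\systConj{T}}\}$ to obtain $\rp(\rep[\ld][\compl{(i_{M}^{\G}(\systConj{T}))}]{\G}) \subseteq \rep[\ld][\compl{\systConj{T}}]{M}$, then use that $\rp$ is left adjoint to $\ip$. Any summand $W$ of $\ip(V)$ lying in $\rep[\ld][\compl{(i_{M}^{\G}(\systConj{T}))}]{\G}$ gives, by adjunction, a nonzero map $\rp(W)\to V$; but the source lives in $\rep[\ld][\compl{\systConj{T}}]{M}$ and the target in $\rep[\ld][\systConj{T}]{M}$, forcing $W=0$.

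The weak point of your approach is the isomorphism
\[
\ip[V]^{\radpara{\G}{x}} \;\cong\; i_{\quotred{P}{x}}^{\quotred{\G}{x}}\bigl(V^{\radpara{\gp{M}}{x}}\bigr),
\]
which you describe as ``a standard consequence of the Iwasawa-type decomposition of $\para{\G}{x}$ along $P$''. The Iwahori factorization of $\para{\G}{x}$ and $\radpara{\G}{x}$ is \emph{not} enough to prove this: one also needs $G = P\,\para{\G}{x}$, so that restriction to $\para{\G}{x}$ sees the whole induced representation. That Iwasawa decomposition is guaranteed only when $x$ is special. At a general point of the apartment the identification fails; already for $G=\gl{2}$ at the barycenter of an edge one computes $\dim i_B^G(\chi)^{\radpara{G}{x}}=2$ while $i_{\quotred{B}{x}}^{\quotred{G}{x}}(\bar\chi)$ is $1$-dimensional. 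Your orbit reduction brings $x$ into $\mathcal{A}^e_M$, but nothing forces it to be special, so for groups with non-special vertices (e.g.\ $\sp{4}$) the displayed isomorphism needs a separate argument that is not the Iwahori decomposition. The restriction version used in Proposition~\ref{proResParaSyst} is genuinely easier because the Jacquet functor is computed by coinvariants and there is no global Iwasawa decomposition to worry about; this asymmetry is exactly what the paper exploits by proving restriction first and then invoking adjunction.
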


      \begin{proof}
      Cela découle de la proposition \ref{proResParaSyst} (prendre $\systConj{S}=\compl{(i_{M}^{\G}(\systConj{T}))}$ et $\ensSystConjcM=\{\systConj{T},\compl{\systConj{T}}\}$)  et du fait que $\rp$ est adjoint à gauche de $\ip$.
      \end{proof}

\section{Construction des systèmes \texorpdfstring{$0$}{0}-cohérents grâce aux paires \texorpdfstring{$(\mathbf{S},\theta)$}{(S,theta)}}

\label{secPairesStheta}

Dans cette section on suppose que le groupe $\gpalg{\G}$ se déploie sur $\knr$. Dans la section \ref{secSystCoh} nous avons construit les systèmes 0-cohérents minimaux. Ces derniers sont construits "localement" en regardant des classes de conjugaison semi-simples sur l'immeuble de Bruhat-Tits. Nous souhaitons dans cette partie trouver un procédé "global" permettant de construire ces sytèmes minimaux. Pour ce faire nous allons introduire un ensemble 
\[\pairesSt = \{ (\gpalg{S},\theta) \}\]
où $\mathbf{S}$ est un tore maximal non-ramifié de $\gpalg{\G}$ et $\theta \in X^{*}(\textbf{S})/(\fr_{\mathbf{S}} -1)X^{*}(\textbf{S})$ est un élément d'ordre inversible dans $\ld$, et définir une application
\[ \pairesSt \to \Smin. \]
On montre que cette application est surjective, ainsi tout les systèmes minimaux peuvent être construit à partir des paires $(\gpalg{S},\theta)$. Cependant elle n'est pas injective. On définira donc $\sim_e$, une relation d'équivalence sur $\pairesSt$, telle que l'on ait la bijection
\[ \pairesSt/{\sim_e} \tosim \Smin. \]

\subsection{Définition de \texorpdfstring{$\pairesSt$}{P} et construction de \texorpdfstring{$\pairesSt \to \ensSystConjc$}{P->S}}

\label{secintroStheta}

Nous dirons qu'un tore $\mathbf{S}$ est non-ramifié si $\mathbf{S}$ est un $\kk$-tore $\knr$-déployé maximal de $\gpalg{\G}$, ou de façon équivalente, si $\mathbf{S}$ est un $\knr$-tore déployé maximal de $\gpalg{\G}$ et $\mathbf{S}$ est défini sur $\kk$. Pour un tel $\mathbf{S}$, notons $\vartheta_{\mathbf{S}}$ l'automorphisme de $X^{*}(\textbf{S})$ induit par l'action du Frobenius inverse et posons $\fr_{\mathbf{S}}:=\vartheta_{\mathbf{S}}\circ \psi$, où $\psi$ correspond à la multiplication par $q$.

\begin{Def}
	On définit $\pairesSt$ comme l'ensemble des paires $(\mathbf{S},\theta)$ où $\mathbf{S}$ est un tore maximal non-ramifié de $\gpalg{\G}$ et $\theta \in X^{*}(\textbf{S})/(\fr_{\mathbf{S}} -1)X^{*}(\textbf{S})$ est un élément d'ordre inversible dans $\ld$.
\end{Def}

\begin{Rem}
Notons que par l'annexe \ref{secIsoTores}, $X^{*}(\textbf{S})/(\fr_{\mathbf{S}} -1)X^{*}(\textbf{S})$ est en bijection avec $\Irr({}^{0}(\mathbf{S}^{\fr})/(\mathbf{S}^{\fr})^{+})$ (on utilise ici le système compatible de racines de l'unité fixé au début de cet article). Nous retrouvons ainsi la définition de l'introduction.
\end{Rem}

Notons que $X^{*}(\textbf{S})/(\fr_{\mathbf{S}} -1)X^{*}(\textbf{S})$ est un groupe fini. En effet pour tout $m \in \mathbb{N}^{*}$, nous pouvons injecter $X^{*}(\textbf{S})/(\fr_{\mathbf{S}} -1)X^{*}(\textbf{S})$ dans $X^{*}(\textbf{S})/(\fr_{\mathbf{S}}^m -1)X^{*}(\textbf{S})$ grâce à l'application $\Tr_{\fr^m/\fr} : \lambda  \mapsto \lambda+\fr_{\mathbf{S}}(\lambda)+ \cdots + \fr_{\mathbf{S}}^{m-1}(\lambda)$ (voir lemme \ref{lemTrace}). Il suffit alors de prendre $m$ tel que $\fr_{\mathbf{S}}^m=q^m$.

Soit $g \in \G$. La conjugaison par $g$ envoie naturellement un élément $\theta \in X^{*}(\textbf{S})/(\fr_{\mathbf{S}} -1)X^{*}(\textbf{S})$ sur un élément $g\theta \in X^{*}({}^{g}\textbf{S})/(\fr_{{}^{g}\mathbf{S}} -1)X^{*}({}^{g}\textbf{S})$. Ceci munit $\pairesSt$ d'une action de $\G$ et on note $\sim_{\G}$ la relation d'équivalence sur $\pairesSt$ induite par la $\G$-conjugaison.

\medskip

Nous construisons maintenant une application $\pairesSt \to \ensSystConj$, où l'on rappelle que $\ensSystConj$ désigne l'ensemble des systèmes de classes de conjugaison, définie en \ref{secSystcoh}. Pour ce faire nous allons passer par deux intermédiaires, un ensemble de triplets $\triplets=\{(\sigma,\gpfinialg{S},\theta)\}$ (que l'on va définir) et $\Jss$, l'ensemble des couples $(\sigma,s)$ (défini en \ref{defJss}). Cette application sera définie de la façon suivante. À partir d'une paire $(\mathbf{S},\theta) \in \pairesSt$ "globale", on va associer pour certain polysimplexes $\sigma$ dans l'appartement de $\mathbf{S}$, une paire $(\gpfinialg{S},\theta)$ "locale" sur $\quotredalg{\G}{\sigma}$. Ces paires donne des classe de conjugaison dans le dual grâce à la proposition \ref{proBijPairesSt}, et donc on obtient un système de classes de conjugaison.

Choisissons $\text{Frob}$, un Frobenius inverse dans $\Gal(\kalg/\kk)$. Ce dernier induit un automorphisme $\fr \in \Aut(G^{nr})$ qui agit naturellement sur $\bt[\knr]$ et tel que $\bt = \bt[\knr]^\fr$.

Soit $(\mathbf{S},\theta) \in \pairesSt$. Nommons $\mathcal{A}(\mathbf{S},\knr)$ l'appartement de $\bt[\knr]$ associé à $\gpalg{S}$. D'après \cite{debacker} lemme 2.2.1 (1), $\mathcal{A}(\mathbf{S},\knr)^{\fr}$ est un sous-ensemble de $\bt$ convexe, fermé , non-vide et est l'union des facettes de $\bt$ qu'il rencontre. Prenons $\sigma$ une facette de $\mathcal{A}(\mathbf{S},\knr)^{\fr}$ et posons $\textbf{\textsf{S}}$ l'image de $S^{nr} \cap \paranr{\G}{\sigma}$ dans $\quotredalg{\G}{\sigma}$ qui est alors un $\res$-tore maximal. Nous avons une identification naturelle entre $X^{*}(\gpfinialg{S})$ et $X:=X^{*}(\textbf{S})$. L'annexe B dans \cite{lanard} montre que l'action de $\fr$ sur $X$ correspond à l'action de $\fr_{\mathbf{S}}$. On a donc une bijection $X^{*}(\textbf{S})/(\fr_{\mathbf{S}} -1)X^{*}(\textbf{S}) \simeq X/(\fr-1)X$. Notons $\Irr(\gpfinialg{S}^{\fr})$ l'ensemble des caractères linéaires $\gpfinialg{S}^{\fr} \to \Ql^{\times}$. Nous avons également une bijection $X/(\fr-1)X \tosim \Irr(\gpfinialg{S}^{\fr})$ (voir annexe \ref{secIsoTores}), de sorte que $\theta$ s'identifie à un élément de $\Irr(\gpfinialg{S}^{\fr})$ que l'on note encore $\theta$.

\begin{Def}
\label{deftriplets}
	On définit $\triplets$ comme l'ensemble des triplets $(\sigma,\gpfinialg{S},\theta)$ où $\sigma \in \bt$, $\gpfinialg{S}$ est un $\res$-tore maximal de $\quotredalg{\G}{\sigma}$ et $\theta \in \Irr(\gpfinialg{S}^{\fr})$ est d'ordre inversible dans $\ld$. 
\end{Def}

On vient donc de définir une application
\[ \pairesSt \to \parties{\triplets} \]
où la notation $\parties{\cdot}$ désigne l'ensemble des parties, qui envoie $(\mathbf{S},\theta) \in \pairesSt$ sur l'ensemble des triplets $(\sigma,\gpfinialg{S},\theta)$ où $\sigma$ une facette de $\mathcal{A}(\mathbf{S},\knr)^{\fr}$ et $(\gpfinialg{S},\theta)$ est construit comme dans la discussion précédente.

\medskip

On va maintenant définir une application $\triplets \to \Jss$. Soit $(\sigma,\gpfinialg{S},\theta) \in \triplets$. Considérons $\gpfinialg*{S}$ un tore maximal $\fr$-stable de $\quotredalg*{\G}{\sigma}$ en dualité avec $\gpfinialg{S}$ sur $\res$ (comme rappelé dans l'annexe \ref{secIsoTores}). Rappelons nous que l'on a fixé au début un système compatible de racines de l'unité. Alors la proposition \ref{proBijPairesSt}, montre qu'il y a une bijection entre les classes de $\quotred{\G}{\sigma}$-conjugaison de paires $(\gpfinialg{S},\theta)$ et les classes de $\quotred*{\G}{\sigma}$-conjugaison de paires $(\gpfinialg*{S},s)$. Ainsi à un triplet $(\sigma,\gpfinialg{S},\theta)$ nous pouvons associer une classe de conjugaison semi-simple $s \in \ss{\quotred*{\G}{\sigma}}$, ce qui nous définit une application
\[ \triplets \to \Jss,\]
qui envoie $(\sigma,\gpfinialg{S},\theta)$ sur $(\sigma,s)$.
On obtient donc également une application $\parties{\triplets} \to \parties{\Jss}$.

\medskip

Notons que l'on a une bijection canonique $\parties{\Jss} \tosim \ensSystConj$, $A \mapsto \systConj{S}_{A}$, où $\systConj{S}_{A,\sigma} =\{ s, (\sigma,s) \in A \}$. On obtient ainsi
\[ \pairesSt \to \parties{\triplets} \to \parties{\Jss} \tosim \ensSystConj. \]

En composant avec l'application $ \ensSystConj \to  \ensSystConjc$ de la partie \ref{sectionSystMin} (qui on rappelle envoie un système de classes de conjugaison sur le plus petit système 0-cohérent le contenant), on construit l'application voulue
\[ \pairesSt \to  \ensSystConjc \]
Notons que comme les systèmes cohérents sont $\G$-équivariants, cette application passe au quotient pour fournir $\pairesSt/{\sim_{\G}} \to  \ensSystConjc$.

\begin{Def}
\label{defrepSt}
Soit $(\gpalg{S},\theta) \in\pairesSt$. On définit alors $\rep[\ld][(\gpalg{S},\theta)]{G}$ comme la sous-catégorie de Serre de $\rep[\ld][0]{G}$ découpée par le système cohérent d'idempotents, image de $(\gpalg{S},\theta)$ par $\pairesSt \to \ensSystConjc$.
\end{Def}

\subsection{Inclusion dans un système minimal}
\label{secLienIeJss}

Nous souhaitons démontrer dans cette section que l'application $\pairesSt \to  \ensSystConjc$ de la section \ref{secintroStheta} se factorise en $\pairesSt \to  \Smin$, où $\Smin$ est l'ensemble des systèmes 0-cohérents minimaux de la section \ref{sectionSystMin}. Pour définir $\pairesSt \to  \ensSystConjc$, nous sommes passés par deux intermédiaires $\triplets=\{(\sigma,\gpfinialg{S},\theta)\}$ et $\Jss=\{(\sigma,s)\}$. Pour montrer le résultat recherché, nous allons devoir étudier plus finement les liens entres les ensembles $\pairesSt$, $\triplets$, $\Jss$ et $\ensSystConjc$. En particulier nous allons définir $\tripletsm$ un sous-ensemble de $\triplets$ muni d'une relation d'équivalence $\sim$ et montrer que l'on a le diagramme commutatif
\[ \xymatrix{
		\pairesSt/{\sim_{\G}}  \ar@{->}[r] \ar@{->}[d]^-{\sim}& \Smin \ar@{<-}[d]^-{\sim}\\
		\tripletsm/{\sim}  \ar@{->}[r] &  \Jss/{\simJss}}.\]

\sautintro

Commençons par étudier le lien entre $\pairesSt$ et $\triplets$. Nous avons associé à $(\gpalg{S},\theta) \in \pairesSt$ un ensemble de triplets $(\sigma,\gpfinialg{S},\theta) \in \triplets$. Réciproquement si $(\sigma,\gpfinialg{S},\theta) \in \triplets$ et soit $\mathbf{S}$ un tore maximal non-ramifié de $\gpalg{\G}$ tel que l'image de $S^{nr} \cap \paranr{\G}{\sigma}$ dans $\quotredalg{\G}{\sigma}$ soit $\gpfinialg{S}$. La bijection $X^{*}(\textbf{S})/(\fr_{\mathbf{S}} -1)X^{*}(\textbf{S}) \tosim \Irr(\gpfinialg{S}^{\fr})$ de l'annexe \ref{secIsoTores}, identifie $\theta$ à un élément $\theta \in X^{*}(\textbf{S})/(\fr_{\mathbf{S}} -1)X^{*}(\textbf{S})$ et on obtient de la sorte une paire $(\mathbf{S},\theta)$ (qui est non-unique).

Le lemme 2.2.2 de \cite{debacker} (que l'on adapte ici en rajoutant les caractères $\theta$) montre que la classe de $\G$-conjugaison d'une paire $(\mathbf{S},\theta)$, obtenue à partir de $(\sigma,\gpfinialg{S},\theta)$, est indépendante des choix effectués, de sorte que l'on ait le lemme suivant

\begin{Lem}
\label{lemAppIt}
L'application $\triplets \longrightarrow \pairesSt/{\sim_{\G}}$ est bien définie.
\end{Lem}

On va alors introduire une relation d'équivalence sur $\triplets$ de sorte que l'application précédente devienne une bijection.

\medskip

Soit $\mathcal{A}$ un appartement de $\bt$. Pour $\Omega \subseteq \mathcal{A}$, on note $A(\mathcal{A},\Omega)$ le plus petit sous-espace affine de $\mathcal{A}$ contenant $\Omega$. Soient $\sigma_{1},\sigma_{2} \in \bt$. On dit que $\sigma_{1}$ et $\sigma_{2}$ sont fortement associées si pour tout appartement $\mathcal{A}$ contenant $\sigma_{1}$ et $\sigma_{2}$ (ou de façon équivalente pour un appartement $\mathcal{A}$ contenant $\sigma_{1}$ et $\sigma_{2}$) $A(\mathcal{A},\sigma_{1})=A(\mathcal{A},\sigma_{2})$. Alors, si $\sigma_{1}$ et $\sigma_{2}$ sont fortement associées, on a d'après \cite{debacker} section 3.1 une identification naturelle entre $\quotredalg{\G}{\sigma_{1}}$ et $\quotredalg{\G}{\sigma_{2}}$ que l'on note $\quotredalg{\G}{\sigma_{1}} \overset{id}{=} \quotredalg{\G}{\sigma_{2}}$. Celle-ci est définie grâce à l'application naturelle suivante, où  $\paranr{\G}{\sigma_i}$ désigne le parahorique de l'image canonique de $\sigma_{i}$ dans $\bt[\knr]$,
\[  \paranr{\G}{\sigma_1} \cap \paranr{\G}{\sigma_2} \longrightarrow \quotredalg{\G}{\sigma_{i}}(\mathfrak{\fr})\]
qui est surjective, de noyau
\[ \paranr{\G}{\sigma_1} \cap \radparanr{\G}{\sigma_2} = \radparanr{\G}{\sigma_1} \cap \paranr{\G}{\sigma_2}=\radparanr{\G}{\sigma_1} \cap \radparanr{\G}{\sigma_2}\]
($\radparanr{\G}{\sigma_i}$ est le pro-$p$-radical de $\paranr{\G}{\sigma_i}$).

\bigskip

Soient $(\sigma,\gpfinialg{S},\theta) \in \triplets$ et $g \in \G$. Alors il existe $\mathbf{S}$ un tore maximal non-ramifié de $\gpalg{\G}$ tel que l'image de $S^{nr}\cap \paranr{\G}{\sigma}$ dans $\quotredalg{\G}{\sigma}$ soit $\gpfinialg{S}$. Définissons alors ${}^{g}\textbf{\textsf{S}}$, un $\res$-tore maximal de $\quotredalg{\G}{g\sigma}$, par l'image de ${}^{g}S^{nr}\cap \paranr{\G}{g\sigma}$ dans $\quotredalg{\G}{g\sigma}$. Le tore ${}^{g}\textbf{\textsf{S}}$ correspond également à l'image de $\textbf{\textsf{S}}$ par l'isomorphisme $\transss{g}{\sigma}:\quotredalg{\G}{\sigma} \overset{\sim}{\rightarrow}\quotredalg{\G}{g\sigma}$. On définit également $g\theta$ par $g \theta := \theta \circ \transss{g}{\sigma}^{-1}$.

\begin{Def}
	Soient  $(\sigma_{1},\gpfinialg{S}_{1},\theta_{1}),(\sigma_{2},\gpfinialg{S}_{2},\theta_{2})\in \triplets$. On dit que $(\sigma_{1},\gpfinialg{S}_{1},\theta_{1})\sim (\sigma_{2},\gpfinialg{S}_{2},\theta_{2})$ si et seulement s'il existe un appartement $\mathcal{A}$ de $\bt$ et un $g \in \G$ tels que
	\begin{itemize}
		\item $\emptyset \neq A(\mathcal{A},\sigma_{1})=A(\mathcal{A},g\sigma_{2})$
		\item $\gpfinialg{S}_{1} \overset{id}{=} {}^{g}\gpfinialg{S}_{2}$ dans $\quotredalg{\G}{\sigma_{1}} \overset{id}{=} \quotredalg{\G}{g\sigma_{2}}$
		\item $\theta_{1} \overset{id}{=} g\theta_{2}$
	\end{itemize}
	
\end{Def}

Le lemme 3.2.2 de \cite{debacker} (en rajoutant les caractères $\theta_{1}$ et $\theta_{2}$) montre que $\sim$ est bien une relation d'équivalence.

\begin{Def}
\label{deftripletsm}
	Soit $(\sigma,\gpfinialg{S},\theta) \in \triplets$. On dira que $(\sigma,\gpfinialg{S},\theta)$ est minisotrope si $\gpfinialg{S}$ est $\res$-minisotrope dans $\quotredalg{\G}{\sigma}$ au sens de \cite{debacker}, c'est à dire que le tore $\res$-déployé maximal de $\textbf{\textsf{S}}$ coïncide avec le tore $\res$-déployé maximal du centre de $\quotredalg{\G}{\sigma}$. On notera alors $\tripletsm$ le sous-ensemble de $\triplets$ des triplets minisotropes.
\end{Def}

Notons que si $(\gpalg{S},\theta) \in \pairesSt$ et si $\sigma$ est une facette maximale de $\mathcal{A}(\mathbf{S},\knr)^{\fr}$ alors le triplet $(\sigma,\gpfinialg{S},\theta)$ associé est minisotrope (\cite{debacker} lemme 2.2.1 (3)).

\begin{Pro}
	\label{proBijCTIss}
	L'application du lemme \ref{lemAppIt} passe au quotient et fournit $\triplets/{\sim} \longrightarrow \pairesSt/{\sim_{\G}}$. De plus, cette dernière induit une bijection $\tripletsm/{\sim} \overset{\sim}{\longrightarrow} \pairesSt/{\sim_{\G}}$.
\end{Pro}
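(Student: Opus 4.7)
The plan is to bootstrap from the character-free version of the bijection, which follows from the analysis in \cite{debacker}, and to verify that the characters $\theta$ transport correctly through every identification involved. The argument naturally divides into three steps: well-definedness on quotients, surjectivity onto $\pairesSt/{\sim_{\G}}$ via minisotropic triplets, and injectivity on minisotropic triplets.

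First I would show that the map of Lemma \ref{lemAppIt} factors through $\triplets/{\sim}$. Suppose $(\sigma_1,\gpfinialg{S}_1,\theta_1) \sim (\sigma_2,\gpfinialg{S}_2,\theta_2)$ is witnessed by an apartment $\mathcal{A}$ and an element $g \in G$. Choose a non-ramified maximal torus $\mathbf{S}_1$ of $\gpalg{G}$ whose reduction at $\sigma_1$ is $\gpfinialg{S}_1$. The strong association $A(\mathcal{A},\sigma_1) = A(\mathcal{A},g\sigma_2)$, combined with the identification $\gpfinialg{S}_1 \overset{id}{=} {}^g\gpfinialg{S}_2$, forces $g^{-1}\mathbf{S}_1$ to be a valid non-ramified lift of $\gpfinialg{S}_2$ at $\sigma_2$, so setting $\mathbf{S}_2 := g^{-1}\mathbf{S}_1$ exhibits the associated pairs as $G$-conjugate at the level of tori. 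The remaining equality of characters reduces to the $G$-equivariance of the bijection $X^*(\mathbf{S})/(\fr_\mathbf{S}-1)X^*(\mathbf{S}) \simeq \Irr(\gpfinialg{S}^\fr)$ from Appendix \ref{secIsoTores}, a direct check on the generators described there. For surjectivity of $\tripletsm/{\sim} \to \pairesSt/{\sim_{\G}}$, given $(\mathbf{S},\theta) \in \pairesSt$, the set $\mathcal{A}(\mathbf{S},\knr)^\fr$ is non-empty and a union of facets by \cite{debacker} lemma 2.2.1(1), so it contains a maximal facet $\sigma$; the construction of Section \ref{secintroStheta} applied at $\sigma$ yields a triplet $(\sigma,\gpfinialg{S},\theta)$ that is minisotropic by the remark following Definition \ref{deftripletsm}, and this triplet visibly lifts the class of $(\mathbf{S},\theta)$.

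The real content is the injectivity of $\tripletsm/{\sim} \to \pairesSt/{\sim_{\G}}$, which is where the minisotropy hypothesis becomes essential. Let $(\sigma_i,\gpfinialg{S}_i,\theta_i)$ for $i=1,2$ be minisotropic triplets whose associated pairs are $G$-conjugate, say via $h \in G$. The torus-only analogue of the bijection, proved in \cite{debacker} using minisotropy to control the normalizer in the spirit of \emph{loc. cit.} lemmas 3.2.2 and 3.2.3, furnishes an apartment $\mathcal{A}$ and an element $g \in G$ realizing the equivalence $(\sigma_1,\gpfinialg{S}_1) \sim (\sigma_2,\gpfinialg{S}_2)$ of the underlying triplets without characters. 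The main obstacle is then to check that $g$ can be chosen so as also to satisfy $\theta_1 \overset{id}{=} g\theta_2$: the difference between $h$ and $g$ lies in a subgroup that normalizes $\mathbf{S}_1$ modulo the residue-level identifications, and minisotropy forces the induced action on $X^*(\mathbf{S}_1)/(\fr_{\mathbf{S}_1}-1)X^*(\mathbf{S}_1)$ to fix the image of $\theta_1$. Combined with the naturality from Appendix \ref{secIsoTores}, this produces a $g$ witnessing the full equivalence, completing the proof.
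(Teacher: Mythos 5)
Your overall strategy is the one the paper uses: Proposition \ref{proBijCTIss} is proved by invoking DeBacker's lemme 3.3.3 and théorème 3.4.1 for the character-free objects and observing that the argument adapts when the characters $\theta$ are carried along. Your well-definedness and surjectivity steps are fine and match the intended adaptation.

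However, the way you organize the injectivity step introduces a genuine gap. You first extract from the character-free statement a witness $g$ for the equivalence $(\sigma_1,\gpfinialg{S}_1)\sim(\sigma_2,\gpfinialg{S}_2)$ and then try to upgrade it, claiming that the discrepancy between $g$ and the conjugating element $h$ of the pairs normalizes $\mathbf{S}_1$ and that ``minisotropy forces the induced action on $X^{*}(\mathbf{S}_1)/(\fr_{\mathbf{S}_1}-1)X^{*}(\mathbf{S}_1)$ to fix the image of $\theta_1$.'' That implication is false: minisotropy is a condition on the split part of $\gpfinialg{S}_1$ and says nothing about the action of $N(\mathbf{S}_1)$ on characters; a nontrivial Weyl element normalizing $\mathbf{S}_1$ will in general move $\theta_1$ (already for $\gl{2}$ and an unramified elliptic torus, the nontrivial Weyl element sends $\theta$ to its Galois conjugate). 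So an arbitrary character-free witness $g$ need not satisfy $\theta_1\overset{id}{=}g\theta_2$, and your argument does not show that one exists. The correct adaptation does not go through an a posteriori correction: one reruns DeBacker's construction of the witness starting from the element $h$ that conjugates the pairs \emph{with} their characters (adjusting $h$ only by elements of pro-unipotent radicals, via \cite{debacker} lemme 2.2.2, which act trivially on the reductive-quotient data), so that the equality of characters is carried through every identification rather than recovered at the end. With that reorganization the proof closes; as written, the injectivity step does not.
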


\begin{proof}
	Le lemme 3.3.3 et le théorème 3.4.1 de \cite{debacker} montrent le résultat pour les paires $(\sigma,\gpfinialg{S})$ à équivalence près (la même que celle définie sur les triplets en prenant des caractères triviaux) et les classes de $\G$-conjugaison de tores maximaux non-ramifiés. Cette démonstration s'adapte, en rajoutant les caractères $\theta$, pour obtenir le résultat voulu.
\end{proof}

Passons maintenant au lien entre $\triplets=\{(\sigma,\gpfinialg{S},\theta)\}$ et $\Jss=\{(\sigma,s)\}$. Nous avons déjà défini en \ref{secintroStheta} une application $\triplets \to \Jss$. En particulier pour obtenir le diagramme commutatif annoncé au début de cette section, il nous reste à montrer que cette application passe au quotient et fournit $\tripletsm/{\sim} \to \Jss/{\simJss}$.

\begin{Lem}
	\label{lemEgaliteid}
	Soient $\sigma_{1},\sigma_{2} \in \bt$ deux facettes fortement associées, $x \in \bts$ tels que $\sigma_{1} \geq x \leq \sigma_{2}$, et $(\sigma_{1},\gpfinialg{S}_{1},\theta_{1}),(\sigma_{2},\gpfinialg{S}_{2},\theta_{2}) \in \triplets$. Notons $t_{1} \in \ss{\quotred*{\G}{\sigma_{1}}}, t_{2} \in \ss{\quotred*{\G}{\sigma_{2}}}$ les classes de conjugaison rationnelles associées respectivement aux triplets $(\sigma_{1},\gpfinialg{S}_{1},\theta_{1})$ et $(\sigma_{2},\gpfinialg{S}_{2},\theta_{2})$. Supposons que $(\gpfinialg{S}_{1},\theta_{1}) \overset{id}{=} (\gpfinialg{S}_{2},\theta_{2})$ alors $\varphi^{*}_{\sigma_{1},x}(t_{1})=\varphi^{*}_{\sigma_{2},x}(t_{2})$.
\end{Lem}

\begin{proof}
	
	Les polysimplexes $\sigma_1$, $\sigma_2$ et $x$ sont dans un même appartement $\mathcal{A}$ associé à un tore $\gpalg{T}$. Notons $\textsf{\textbf{T}}$ le tore induit par $\gpalg{T}$ sur $\quotredalg{\G}{x}$. Comme $\sigma_{1} \geq x \leq \sigma_{2}$, les groupes $\quotredalg{\G}{\sigma_{1}}$ et $\quotredalg{\G}{\sigma_{2}}$ peuvent se relever de façon unique en des Levi $\textsf{\textbf{M}}_{\sigma_1}$ et $\textsf{\textbf{M}}_{\sigma_2}$ de $\quotredalg{\G}{x}$ contenant $\textsf{\textbf{T}}$. Comme $A(\mathcal{A},\sigma_{1})=A(\mathcal{A},\sigma_{2})$, les Levis $\gpfinialg{M}_{\sigma_i}$ ont même système de racines donc sont égaux. Notons $\gpfinialg{U}_{\sigma_i}$ le radical unipotent du parabolique de $\quotredalg{G}{x}$ image de $\paranr{\G}{\sigma_i}$ par l'application $\paranr{\G}{\sigma_i} \to \paranr{\G}{x} \to \quotredalg{G}{x}$. L'image de $\paranr{\G}{\sigma_1} \cap \paranr{\G}{\sigma_2}$ par l'application $\paranr{\G}{\sigma_1} \cap \paranr{\G}{\sigma_2} \to \paranr{\G}{x} \to \quotredalg{G}{x}$ est un sous-groupe algébrique  de radical unipotent $\gpfinialg{U}_{\sigma_1} \cap \gpfinialg{U}_{\sigma_2}$ et ayant pour facteur de Levi $\gpfinialg{M}_{\sigma_1}=\gpfinialg{M}_{\sigma_2}$. Comme $id$ est défini grâce à la surjection $\paranr{\G}{\sigma_1} \cap \paranr{\G}{\sigma_2} \longrightarrow \quotredalg{\G}{\sigma_{i}}$, la discussion précédente montre que $id$ correspond à l'égalité entre $\textsf{\textbf{M}}_{\sigma_1}$ et $\textsf{\textbf{M}}_{\sigma_2}$.

	Maintenant, les paires $(\gpfinialg{S}_{1},\theta_{1})$ et $(\gpfinialg{S}_{2},\theta_{2})$ induisent des paires $(\gpfinialg{S}_{1,x},\theta_{1,x})$ et $(\gpfinialg{S}_{2,x},\theta_{2,x})$ avec $\gpfinialg{S}_{1,x}$ et $\gpfinialg{S}_{2,x}$ des tores de $\quotredalg{\G}{x}$. Si $(\gpfinialg{S}_{i},\theta_{i})$ correspond à la classe de conjugaison semi-simple $t_{i} \in \ss{\quotred*{\G}{\sigma_{i}}}$ alors $(\gpfinialg{S}_{i,x},\theta_{i,x})$ correspond à $\varphi^{*}_{\sigma_{i},x}(t_{i}) \in \ss{\quotred*{\G}{x}}$. L'identification $(\gpfinialg{S}_{1},\theta_{1}) \overset{id}{=} (\gpfinialg{S}_{2},\theta_{2})$ revient à dire que $(\gpfinialg{S}_{1,x},\theta_{1,x}) = (\gpfinialg{S}_{2,x},\theta_{2,x})$, et donc que $\varphi^{*}_{\sigma_{1},x}(t_{1})=\varphi^{*}_{\sigma_{2},x}(t_{2})$.
\end{proof}

\begin{Pro}
\label{protripletsmjss}
	L'application $\tripletsm \to \Jss$ passe au quotient et donne $\tripletsm/{\sim} \to \Jss/{\simJss}$.
\end{Pro}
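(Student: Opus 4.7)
Mon plan consiste à utiliser la proposition \ref{proClotureTransitive} pour décomposer la relation $\simJss$ en chaînes de $\simJsst$-équivalences, puis à transformer la relation $\sim$ sur les triplets en une telle chaîne. Soient $(\sigma_1, \gpfinialg{S}_1, \theta_1), (\sigma_2, \gpfinialg{S}_2, \theta_2) \in \tripletsm$ deux triplets équivalents, d'images respectives $(\sigma_1, s_1)$ et $(\sigma_2, s_2)$ dans $\Jss$. Il s'agit de montrer que $(\sigma_1, s_1) \simJss (\sigma_2, s_2)$.

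Par définition de $\sim$, il existe un appartement $\mathcal{A}$ et $g \in \G$ tels que $\emptyset \neq A(\mathcal{A},\sigma_1) = A(\mathcal{A}, g\sigma_2)$, $\gpfinialg{S}_1 \overset{id}{=} {}^g \gpfinialg{S}_2$ et $\theta_1 \overset{id}{=} g\theta_2$. La première étape consiste à absorber $g$ grâce à la clause (1) de $\simJsst$ : l'équivariance de l'application $\triplets \to \Jss$ pour l'action de $\G$ implique que le triplet conjugué $(g\sigma_2, {}^g\gpfinialg{S}_2, g\theta_2)$ a pour image $(g\sigma_2, \varphi^*_{g,\sigma_2}(s_2))$, et par définition $(\sigma_2, s_2) \simJsst (g\sigma_2, \varphi^*_{g,\sigma_2}(s_2))$. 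Quitte à remplacer $(\sigma_2, \gpfinialg{S}_2, \theta_2)$ par son $g$-conjugué, je peux donc supposer $g = 1$, de sorte que $\sigma_1$ et $\sigma_2$ sont contenus dans $\mathcal{A}$ avec la même enveloppe affine $A$.

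La deuxième étape est géométrique : je construis une chaîne finie de polysimplexes $\sigma_1 = \tau_0, \tau_1, \dots, \tau_n = \sigma_2$ contenus dans $\mathcal{A}$, d'enveloppe affine égale à $A$, telle que chaque paire consécutive $\tau_i, \tau_{i+1}$ partage un sommet $x_i$. Les polysimplexes de $\mathcal{A}$ d'enveloppe affine $A$ constituent les cellules de dimension maximale d'une tesselation polysimpliciale de $A$ induite par $\mathcal{A}$, et un argument de galerie dans cette tesselation fournit la chaîne voulue : deux cellules adjacentes y partagent une face de codimension $1$ dans $A$, qui contient un sommet dès que $\dim A \geq 1$ (le cas $\dim A = 0$ étant trivial puisqu'alors $\sigma_1 = \sigma_2$). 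Les identifications canoniques $\quotredalg{\G}{\tau_i} \overset{id}{=} \quotredalg{\G}{\tau_{i+1}}$ entre polysimplexes fortement associés d'un même appartement, rappelées au début de la section \ref{secLienIeJss}, sont transitives (les intersections $\paranr{\G}{\tau_0} \cap \dots \cap \paranr{\G}{\tau_n}$ se surjectent simultanément sur chaque $\quotredalg{\G}{\tau_i}$, rendant les identifications compatibles) et transportent $(\gpfinialg{S}_1, \theta_1)$ en un triplet $(\tau_i, \gpfinialg{S}_{\tau_i}, \theta_{\tau_i}) \in \triplets$ à chaque étape, avec $(\tau_n, \gpfinialg{S}_{\tau_n}, \theta_{\tau_n}) = (\sigma_2, \gpfinialg{S}_2, \theta_2)$.

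Notant $s_{\tau_i}$ la classe de conjugaison semi-simple associée à $(\tau_i, \gpfinialg{S}_{\tau_i}, \theta_{\tau_i})$ (avec $s_{\tau_0} = s_1$ et $s_{\tau_n} = s_2$), le lemme \ref{lemEgaliteid} appliqué à $\tau_i, \tau_{i+1}$ fortement associés, au sommet $x_i$ et aux paires $(\gpfinialg{S}_{\tau_i}, \theta_{\tau_i}) \overset{id}{=} (\gpfinialg{S}_{\tau_{i+1}}, \theta_{\tau_{i+1}})$, fournit $\varphi^*_{\tau_i, x_i}(s_{\tau_i}) = \varphi^*_{\tau_{i+1}, x_i}(s_{\tau_{i+1}})$, autrement dit $(\tau_i, s_{\tau_i}) \simJsst (\tau_{i+1}, s_{\tau_{i+1}})$. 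En chaînant ces équivalences et en appliquant la proposition \ref{proClotureTransitive}, on conclut $(\sigma_1, s_1) \simJss (\sigma_2, s_2)$. L'étape la plus délicate me semble être la construction de la galerie à l'intérieur de la tesselation de $A$, couplée à la vérification de la transitivité des identifications $\overset{id}{=}$ le long de cette galerie ; ces deux points reposent sur la description combinatoire des parahoriques associés à des polysimplexes fortement associés d'un même appartement.
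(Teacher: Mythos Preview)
Your proof is correct and follows essentially the same route as the paper's: reduce to $g=1$ via the first clause of $\simJsst$, construct a gallery of maximal polysimplexes $\tau_0=\sigma_1,\dots,\tau_n=\sigma_2$ in the common affine hull $A$ with shared vertices $x_i$, transport $(\gpfinialg{S}_1,\theta_1)$ along via the $\overset{id}{=}$ identifications, and apply Lemma~\ref{lemEgaliteid} at each step. The paper organizes the transport slightly differently---it defines $(\gpfinialg{S}^{(i)},\theta^{(i)})$ directly via $\sigma_1 \overset{id}{=} \tau_i$ rather than chaining $\tau_i \overset{id}{=} \tau_{i+1}$---but this is a cosmetic difference, since all these identifications agree once one notes (as in the proof of Lemma~\ref{lemEgaliteid}) that for polysimplexes in a common apartment with the same affine hull, each $\quotredalg{\G}{\tau_i}$ is the unique Levi of $\quotredalg{\G}{x}$ containing the torus induced by the apartment's torus.
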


\begin{proof}
	Soient $(\sigma_{1},\gpfinialg{S}_{1},\theta_{1}),(\sigma_{2},\gpfinialg{S}_{2},\theta_{2})\in \tripletsm$ deux triplets tels que $(\sigma_{1},\gpfinialg{S}_{1},\theta_{1}) \sim (\sigma_{2},\gpfinialg{S}_{2},\theta_{2})$. Notons $(\sigma_1,s_1),(\sigma_{2},s_2) \in \Jss$ les images respectives de $(\sigma_{1},\gpfinialg{S}_{1},\theta_{1}),(\sigma_{2},\gpfinialg{S}_{2},\theta_{2})$ par l'application $\tripletsm \to \Jss$. Il s'agit de montrer que  $(\sigma_1,s_1) \simJss (\sigma_{2},s_2)$.

Comme $(\sigma_{1},\gpfinialg{S}_{1},\theta_{1}) \sim (\sigma_{2},\gpfinialg{S}_{2},\theta_{2})$, il existe un appartement $\mathcal{A}$ de $\bt$ et un $g \in \G$ tels que
	\begin{itemize}
		\item $\emptyset \neq A(\mathcal{A},\sigma_{1})=A(\mathcal{A},g\sigma_{2})$
		\item $\gpfinialg{S}_{1} \overset{id}{=} {}^{g}\gpfinialg{S}_{2}$ dans $\quotredalg{\G}{\sigma_{1}} \overset{id}{=} \quotredalg{\G}{g\sigma_{2}}$
		\item $\theta_{1} \overset{id}{=} g\theta_{2}$
	\end{itemize}
	
	Notons $s_{1} \in (\quotred*{\G}{\sigma_{1}})_{ss,\ld}$ (resp. $s_{2} \in (\quotred*{\G}{\sigma_{2}})_{ss,\ld}$) la classe de conjugaison semi-simple associée à $(\gpfinialg{S}_{1},\theta_{1})$ (resp. à $(\gpfinialg{S}_{2},\theta_{2})$). Alors $\varphi^{*}_{g,\sigma_{2}}(s_{2})$ est la classe de conjugaison associée à $({}^{g}\gpfinialg{S}_{2},g\theta_{2})$. Or nous savons que $(\sigma_{2},s_{2}) \sim (g\sigma_{2},\varphi^{*}_{g,\sigma_{2}}(s_{2}))$ et donc nous pouvons supposer que $g=1$.
	
	Les polysimplexes $\sigma_{1}$ et $\sigma_{2}$ sont maximaux dans $A(\mathcal{A},\sigma_{1})=A(\mathcal{A},\sigma_{2})$. Prenons une suite $\tau_{1}=\sigma_{1},\cdots,\tau_{m}=\sigma_{2}$ de polysimplexes maximaux de $A(\mathcal{A},\sigma_{1})$ et une suite de sommets $x_{1},\cdots,x_{m-1}$ telle que pour tout $i \in [\![1,m-1]\!]$, $\tau_{i} \geq x_{i} \leq \tau_{i+1}$. Pour chaque $i \in [\![1,m]\!]$, $\tau_{i}$ est un polysimplexe maximal de $A(\mathcal{A},\sigma_{1})$. En particulier $A(\mathcal{A},\sigma_{1})=A(\mathcal{A},\tau_{i})$ et donc $\sigma_{1}$ et $\tau_{i}$ sont fortement associés. On a donc une identification naturelle  $\quotredalg{\G}{\sigma_{1}} \overset{id}{=} \quotredalg{\G}{\tau_{i}}$ qui nous permet de définir $(\gpfinialg{S}^{(i)},\theta^{(i)})$ tel que $(\gpfinialg{S}^{(i)},\theta^{(i)}) \overset{id}{=} (\gpfinialg{S}_{1},\theta_{1})$. Appelons $t_{i} \in \ss*{\quotred*{\G}{\tau_{i}}}$ la classe de conjugaison semi-simple associée à $(\gpfinialg{S}^{(i)},\theta^{(i)})$.

	Fixons un $i \in [\![1,m-1]\!]$. Alors $\tau_{i} \geq x_{i} \leq \tau_{i+1}$, $A(\mathcal{A},\tau_{i})=A(\mathcal{A},\tau_{i+1})$ et $(\gpfinialg{S}^{(i)},\theta^{(i)}) \overset{id}{=} (\gpfinialg{S}^{(i+1)},\theta^{(i+1)})$ donc par le lemme \ref{lemEgaliteid} $\varphi^{*}_{\tau_{i},x_{i}}(t_{i})=\varphi^{*}_{\tau_{i+1},x_{i}}(t_{i+1})$. Ceci nous montre que $(\tau_{i},t_{i}) \sim (\tau_{i+1},t_{i+1})$ et donc que $(\sigma_{1}, s_{1}) = (\tau_{1},t_{1}) \sim (\tau_{m},t_{m})=(\sigma_{2},s_{2})$ ce qui achève la preuve.
\end{proof}

On peut maintenant regrouper toutes les applications que l'on vient de construire dans cette section pour obtenir les résultats que l'on recherchait.

\begin{Pro}
\label{proDecompoAppPS}
Les applications précédentes rendent le diagramme suivant commutatif
	\[ \xymatrix{
		\pairesSt/{\sim_{\G}}  \ar@{->}[r] \ar@{<-}[d]^-{\sim}& \ensSystConjc \ar@{<-}[d]\\
		\tripletsm/{\sim}  \ar@{->}[r]  &  \Jss/{\simJss}}.\]
\end{Pro}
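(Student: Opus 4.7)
The right-hand vertical arrow in the diagram is the injection $\Jss/{\simJss} \hookrightarrow \ensSystConjc$ sending a class $[\sigma,s]$ to its minimal 0-cohérent system $\systEng{\sigma}{s}$; this is a bijection between $\Jss/{\simJss}$ and $\Smin$. My plan is therefore to prove that, given $(\sigma,\gpfinialg{S},\theta)\in\tripletsm$ with image $[(\gpalg{S},\theta)]\in\pairesSt/{\sim_{\G}}$ on the left (via Proposition \ref{proBijCTIss}) and $[(\sigma,s)]\in\Jss/{\simJss}$ on the bottom (via Proposition \ref{protripletsmjss}), the system in $\ensSystConjc$ attached to $(\gpalg{S},\theta)$ by the construction of Section \ref{secintroStheta} coincides with $\systEng{\sigma}{s}$.

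By construction this system equals $\clotco{\systConj{S}}$, where $\systConj{S}\in\ensSystConj$ is supported on the facets $\tau$ of $\mathcal{A}(\gpalg{S},\knr)^{\fr}$ with $\systConjx{S}{\tau}=\{s_{\tau}\}$, the semisimple class attached to the triplet $(\tau,\gpfinialg{S}_{\tau},\theta_{\tau})$ obtained from $(\gpalg{S},\theta)$ at $\tau$. Since $(\sigma,s)\in\systConj{S}$ and $\clotco{\systConj{S}}$ is 0-cohérent, the minimality of $\systEng{\sigma}{s}$ gives the inclusion $\systEng{\sigma}{s}\subseteq\clotco{\systConj{S}}$. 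For the reverse, Lemma \ref{lemSystEng} reduces matters to verifying that each pair $(\tau,s_{\tau})\in\systConj{S}$ satisfies $(\tau,s_{\tau})\simJss(\sigma,s)$: this would force $\systConj{S}\subseteq\systEng{\sigma}{s}$, whence the two closures agree.

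The key local input is that for any vertex $x\leq\tau$ contained in $\mathcal{A}(\gpalg{S},\knr)^{\fr}$, the Levi $\quotredalg{\G}{\tau}$ of $\quotredalg{\G}{x}$ contains $\gpfinialg{S}_{x}=\gpfinialg{S}_{\tau}$ and the characters $\theta_{\tau}$ and $\theta_{x}$ agree under this identification; by Proposition \ref{proBijPairesSt} (applied at $\sigma$ and at $x$) this yields $\transss*{\tau}{x}(s_{\tau})=s_{x}$. Consequently, whenever two facets $\tau_{1},\tau_{2}$ of $\mathcal{A}(\gpalg{S},\knr)^{\fr}$ share a common vertex $x$, clause (2) in the definition of $\simJsst$ gives $(\tau_{1},s_{\tau_{1}})\simJsst(\tau_{2},s_{\tau_{2}})$. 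Proposition \ref{proClotureTransitive} then promotes a chain of such relations into a $\simJss$-equivalence, so the statement reduces to the purely combinatorial claim: any two facets of $\mathcal{A}(\gpalg{S},\knr)^{\fr}$ can be joined by a chain of facets of $\mathcal{A}(\gpalg{S},\knr)^{\fr}$ in which consecutive terms share a vertex.

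This last point is the main obstacle, but it is handled by convex geometry. By \cite{debacker} Lemme 2.2.1(1), $\mathcal{A}(\gpalg{S},\knr)^{\fr}$ is non-empty, convex, closed and equal to the union of the facets of $\bt$ it meets. Given $\tau_{1},\tau_{2}\subseteq\mathcal{A}(\gpalg{S},\knr)^{\fr}$, the straight segment between interior points lies in $\mathcal{A}(\gpalg{S},\knr)^{\fr}$; the finitely many facets it crosses are therefore also contained in $\mathcal{A}(\gpalg{S},\knr)^{\fr}$, and two consecutive ones have meeting closures, hence share at least a vertex by the polysimplicial structure. Combined with Propositions \ref{proBijCTIss} and \ref{protripletsmjss}, which set up the other two arrows of the square, this completes the commutativity.
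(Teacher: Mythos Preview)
Your argument is correct and complete. Both you and the paper reduce to showing that all the pairs $(\tau,s_\tau)$, for $\tau$ a facet of $\mathcal{A}(\gpalg{S},\knr)^{\fr}$, lie in a single $\simJss$-class, using the Levi compatibility $\varphi^{*}_{\tau,x}(s_\tau)=s_x$ for $x\leq\tau$.

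The route differs in how you connect these pairs. The paper, for each facet $\sigma$ of $\mathcal{A}(\gpalg{S},\knr)^{\fr}$, passes to a maximal facet $\tau'\geq\sigma$ and notes that $(\sigma,s_\sigma)\simJss(\tau',s_{\tau'})$ directly from $s_\sigma=\varphi^{*}_{\tau',\sigma}(s_{\tau'})$; then, since all maximal facets yield minisotrope triplets mapping to the same class $[(\gpalg{S},\theta)]$ under the bijection of Proposition~\ref{proBijCTIss}, they are $\sim$-equivalent, and a second invocation of Proposition~\ref{protripletsmjss} gives that their images in $\Jss/{\simJss}$ agree. You instead chain across $\mathcal{A}(\gpalg{S},\knr)^{\fr}$ by a convex-geometry argument through shared vertices. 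Your approach is slightly more self-contained (it does not re-invoke Proposition~\ref{protripletsmjss} for the commutativity, only for the well-definedness of the bottom arrow), while the paper's is shorter because it recycles the chaining already hidden in the proof of Proposition~\ref{protripletsmjss}.
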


\begin{proof}

Soit $(\tau,\gpfinialg{S}_{\tau},\theta) \in \tripletsm$ et $(\mathbf{S},\theta) \in \pairesSt$ un représentant de la classe de conjugaison qui lui est associée par $\tripletsm/{\sim} \to \pairesSt/{\sim_{\G}}$. L'application $\pairesSt \to \ensSystConjc$ de la partie \ref{secintroStheta} est également définie à partir de $\triplets \to \Jss \to \ensSystConjc$. Prenons $\sigma$ une facette de $\mathcal{A}(\mathbf{S},\knr)^{\fr}$ et nommons $(\sigma,\gpfinialg{S}_{\sigma},\theta) \in \triplets$ le triplet associé. Prenons également $\tau'$ une facette maximale de $\mathcal{A}(\mathbf{S},\knr)^{\fr}$ telle que $\sigma \leq \tau'$. Nous avons un triplet $(\tau',\gpfinialg{S}_{\tau'},\theta)$ associé à $(\mathbf{S},\theta)$. Comme $\tau'$ est une facette maximale de $\mathcal{A}(\mathbf{S},\knr)^{\fr}$, $(\tau',\gpfinialg{S}_{\tau'},\theta) \in \tripletsm$ (\cite{debacker} lemme 2.2.1 (3)). De plus, $(\tau',\gpfinialg{S}_{\tau'},\theta) \sim (\tau,\gpfinialg{S}_{\tau},\theta)$ et on peut donc supposer que $(\tau',\gpfinialg{S}_{\tau'},\theta) = (\tau,\gpfinialg{S}_{\tau},\theta)$. Il s'agit donc de montrer que $(\sigma,\gpfinialg{S}_{\sigma},\theta)$ et $(\tau,\gpfinialg{S}_{\tau},\theta)$ ont même image par  $\triplets \to \Jss \to \ensSystConjc$.

Notons $(\sigma,s_\sigma)$ et $(\tau,s_{\tau})$ les images respectives de $(\sigma,\gpfinialg{S}_{\sigma},\theta)$ et $(\tau,\gpfinialg{S}_{\tau},\theta)$ par $\triplets \to \Jss$. Comme $\sigma \leq \tau$, le groupe $\quotredalg{\G}{\tau}$ peut se relever en un unique Levi de $\quotredalg{\G}{\sigma}$ contenant $\gpfinialg{S}_{\sigma}$. Ceci nous permet d'identifier $\gpfinialg{S}_{\tau}$ à $\gpfinialg{S}_{\sigma}$. Donc si $(\gpfinialg*{S},t)$ est une paire duale à $(\gpfinialg{S}_{\tau},\theta)$ elle l'est également pour $(\gpfinialg{S}_{\sigma},\theta)$. On en déduit que $s_{\sigma}=\varphi^{*}_{\tau,\sigma}(s_{\tau})$. Ainsi $(\sigma,s_\sigma) \simJss (\tau,s_{\tau})$ et on a le résultat.
\end{proof}

\begin{Pro}
\label{proInclusSmin}
L'application $\pairesSt/{\sim_{\G}} \to \ensSystConjc$ se factorise en  $\pairesSt/{\sim_{\G}} \to \Smin$.
\end{Pro}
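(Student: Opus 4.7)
The proof will be an immediate consequence of Proposition \ref{proDecompoAppPS} together with the very definition of $\Smin$.

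The plan is as follows. Recall from Section \ref{sectionSystMin} that $\Smin=\{\systEng{\sigma}{s}\mid [\sigma,s]\in\Jss/{\simJss}\}$, so the right vertical arrow $\Jss/{\simJss}\to \ensSystConjc$ of the diagram in Proposition \ref{proDecompoAppPS} has image exactly $\Smin$. Given a pair $(\gpalg{S},\theta)\in\pairesSt$, I will produce an element of $\tripletsm/{\sim}$ lifting its class under the bijection $\tripletsm/{\sim}\tosim \pairesSt/{\sim_{\G}}$: it suffices to choose a maximal facette $\tau$ of $\mathcal{A}(\mathbf{S},\knr)^{\fr}$ and form the triplet $(\tau,\gpfinialg{S},\theta)$, which is minisotrope by \cite{debacker} Lemme 2.2.1 (3). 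Then I chase this element through the commutative diagram: its image in $\Jss/{\simJss}$ is some class $[\tau,s]$, and its image in $\ensSystConjc$ is therefore $\systEng{\tau}{s}$, which lies in $\Smin$ by definition.

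There is essentially no obstacle: the bijection on the left and the commutativity of the square were already proved in Proposition \ref{proDecompoAppPS}, and the identification of the image of $\Jss/{\simJss}\to \ensSystConjc$ with $\Smin$ is immediate from the definition of $\Smin$. The only (minor) point to verify is that for every $(\gpalg{S},\theta)$ the set $\mathcal{A}(\mathbf{S},\knr)^{\fr}$ does contain a maximal facette, which follows from the fact that this $\fr$-fixed set is a non-empty, closed, convex subset of $\bt$ that is a union of the facettes of $\bt$ it meets (\cite{debacker} Lemme 2.2.1 (1)).

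Putting it together, the composition
\[
\pairesSt/{\sim_{\G}} \xleftarrow{\sim} \tripletsm/{\sim} \longrightarrow \Jss/{\simJss} \longrightarrow \Smin \hookrightarrow \ensSystConjc
\]
coincides with the given application $\pairesSt/{\sim_{\G}}\to \ensSystConjc$ by the commutativity of Proposition \ref{proDecompoAppPS}, and this composition manifestly factors through $\Smin$. This provides the desired factorization $\pairesSt/{\sim_{\G}}\to \Smin$.
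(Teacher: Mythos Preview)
Your proof is correct and follows essentially the same approach as the paper: both use the commutative diagram of Proposition \ref{proDecompoAppPS} together with the identification of the image of $\Jss/{\simJss}\to\ensSystConjc$ with $\Smin$. You spell out the construction of a minisotrope lift via a maximal facette of $\mathcal{A}(\mathbf{S},\knr)^{\fr}$, but this is already encapsulated in the bijection $\tripletsm/{\sim}\tosim\pairesSt/{\sim_{\G}}$ of Proposition \ref{proBijCTIss}, so the paper simply invokes the diagram and concludes.
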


\begin{proof}
Nous savons que l'on a la bijection $\Jss/{\simJss} \tosim \Smin$ (section \ref{sectionSystMin}). Ainsi la proposition \ref{proDecompoAppPS} montre que l'on a le diagramme commutatif
\[ \xymatrix{
		\pairesSt/{\sim_{\G}}  \ar@{->}[r] \ar@{->}[d]^-{\sim}& \Smin \ar@{<-}[d]^-{\sim}\\
		\tripletsm/{\sim}  \ar@{->}[r] &  \Jss/{\simJss}}\]
d'où le résultat.
\end{proof}

En particulier on en déduit le corollaire suivant.

\begin{Cor}
Soient $(\gpalg{S},\theta),(\gpalg{S}',\theta') \in \pairesSt$. Alors $\rep[\ld][(\gpalg{S},\theta)]{G}=\rep[\ld][(\gpalg{S}',\theta')]{G}$ ou $\rep[\ld][(\gpalg{S},\theta)]{G} \perp \rep[\ld][(\gpalg{S}',\theta')]{G}$.
\end{Cor}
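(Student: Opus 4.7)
Mon plan est très court, car l'essentiel du travail a déjà été accompli dans les propositions préparatoires. Je partirais de la Proposition \ref{proInclusSmin} qui est le résultat-clé établi juste avant : elle garantit que l'application $\pairesSt/{\sim_{\G}} \to \ensSystConjc$ se factorise en fait à travers $\Smin$, l'ensemble des systèmes $0$-cohérents minimaux. Ainsi, en notant $\systConj{S}$ et $\systConj{S}'$ les images respectives de $(\gpalg{S},\theta)$ et $(\gpalg{S}',\theta')$ par l'application $\pairesSt \to \ensSystConjc$, la Définition \ref{defrepSt} identifie les sous-catégories en jeu à $\rep[\ld][\systConj{S}]{G}$ et $\rep[\ld][\systConj{S}']{G}$, où $\systConj{S},\systConj{S}' \in \Smin$.

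J'invoquerais alors le Lemme \ref{lemSystEng}, qui traduit le fait que les systèmes minimaux $\systEng{\sigma}{s}$ forment une partition de $\Jss$ : deux systèmes minimaux sont soit égaux, soit disjoints. On a donc une dichotomie $\systConj{S}=\systConj{S}'$ ou $\systConj{S} \cap \systConj{S}' = \emptyset$.

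Pour conclure, dans le premier cas l'égalité $\rep[\ld][(\gpalg{S},\theta)]{G}=\rep[\ld][(\gpalg{S}',\theta')]{G}$ est immédiate. Dans le second cas, ces deux sous-catégories apparaissent comme des facteurs directs distincts de la décomposition $\rep[\ld][0]{G} = \prod_{\systConj{T} \in \Smin} \rep[\ld][\systConj{T}]{G}$ donnée par la Proposition \ref{proDecompoMinimal}, d'où leur orthogonalité.

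Le seul point non trivial est donc déjà derrière nous : c'est la Proposition \ref{proInclusSmin}, dont la démonstration mobilise le diagramme commutatif reliant $\pairesSt/{\sim_{\G}}$, $\tripletsm/{\sim}$, $\Jss/{\simJss}$ et $\Smin$. Au niveau du corollaire lui-même, aucune difficulté nouvelle n'apparaît.
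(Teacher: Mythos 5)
Votre argument est correct et coïncide avec celui du papier, qui déduit ce corollaire directement de la proposition \ref{proInclusSmin} : les images dans $\Smin$ sont deux à deux égales ou disjointes (lemme \ref{lemSystEng}), et la proposition \ref{prodecompocategorie} (ou \ref{proDecompoMinimal}) donne l'orthogonalité dans le cas disjoint. Rien à redire.
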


\subsection{Paires minimales}

Dans la partie \ref{secLienIeJss} nous avons montré que l'on a une application $\pairesSt/{\sim_{\G}} \to \Smin$, qui envoie une classe d'équivalence de paires $(\gpalg{S},\theta)$ sur un système 0-cohérent minimal. Cette application est surjective mais n'est pas injective. Nous proposons ici de construire $\pairesStmin$, un sous-ensemble de $\pairesSt$, tel que  $\pairesStmin/{\sim_{\G}} \tosim \Smin$.

\sautintro

Nous rappelons que l'on a défini en \ref{deftriplets} $\triplets$ l'ensemble des triplets $(\sigma,\gpfinialg{S},\theta)$, et en \ref{deftripletsm} $\tripletsm$ le sous-ensemble des triplets minisotropes. Nous allons maintenant définir les triplets elliptiques.
	
\begin{Def}
	On dira qu'un triplet $(\sigma,\gpfinialg{S},\theta) \in \triplets$ est elliptique si la classe de conjugaison $s$ associée est elliptique dans $\quotred*{G}{\sigma}$, au sens où elle ne rencontre aucun sous-groupe de Levi propre, et on note $\tripletse$ l'ensemble des triplets $(\sigma,\gpfinialg{S},\theta)$ elliptiques. 
\end{Def}

Notons que si $(\sigma,\gpfinialg{S},\theta)$ est elliptique alors $\gpfinialg{S}$ est $\res$-minisotrope dans $\quotredalg{\G}{\sigma}$, d'où les inclusions
\[ \tripletse \subseteq \tripletsm \subseteq \triplets.\]

\begin{Def}
\label{defpairesmin}
	On dira alors qu'une paire $(\mathbf{S},\theta) \in \pairesSt$ est minimale si elle est dans l'image de $\tripletse/{\sim}$, par l'application de la proposition \ref{proBijCTIss}. On notera $\pairesStmin$ l'ensemble des paires $(\mathbf{S},\theta)$ minimales.
\end{Def}

Avant de passer à la démonstration de la bijection $\pairesStmin/{\sim_{\G}} \tosim \Smin$ décrivons un peu plus précisément l'ensemble $\pairesStmin$. Soit $(\mathbf{S},\theta) \in \pairesStmin$ une paire minimale. Par définition si $\sigma \in \mathcal{A}(\mathbf{S},\knr)^{\fr}$ est une facette maximale alors le triplet $(\sigma,\gpfinialg{S},\theta)$ (obtenu par le procédé de la section \ref{secintroStheta}) est elliptique. Mais qu'en est-il lorsque $\sigma$ n'est pas maximal? Nous allons décrire dans la suite les triplets $(\sigma,\gpfinialg{S},\theta)$ que l'on peut obtenir à partir d'une paire $(\mathbf{S},\theta)$ minimale.

\begin{Def}
	Soit $(\sigma,\gpfinialg{S},\theta) \in \triplets$. Notons $(\sigma,\gpfinialg*{S},t)$ un triplet qui lui est associé (qui est défini à $\quotred*{\G}{\sigma}$-conjugaison près, comme dans la proposition \ref{proBijPairesSt}). On dit que $(\sigma,\gpfinialg{S},\theta)$ est minimal si et seulement si le $\res$-rang de $\gpfinialg{S}$ est égal à celui de $C_{\quotredalg*{\G}{\sigma}}(t)$, le centralisateur de $t$ dans $\quotredalg*{\G}{\sigma}$, c'est à dire si $\gpfinialg*{S}$ est maximalement déployé  dans ce centralisateur.
\end{Def}

Soient $\sigma,\tau \in \bt$ tels que $\tau \leq \sigma$. Prenons une classe de $\quotred*{\G}{\sigma}$-conjugaison de paires $(\gpfinialg*{S},t)$, où $\gpfinialg*{S}$ est un $\res$-tore maximal de $\quotredalg*{\G}{\tau}$ et $t \in (\gpfinialg*{S})^{\fr}$. Le choix d'un relèvement de $\quotredalg*{\G}{\sigma}$ en un Levi de $\quotredalg*{\G}{\tau}$ permet de relever $\gpfinialg*{S}$ en $\gpfinialg{S}'^{*}$ un $\res$-tore maximal de $\quotredalg*{\G}{\tau}$ et $t$ en $t' \in (\gpfinialg{S}'^{*})^{\fr}$. Cette nouvelle paire $(\gpfinialg{S}'^{*},t')$ est alors définie à $\quotred*{\G}{\tau}$-conjugaison près, et on note $\varphi_{\sigma,\tau}^{*}$ l'application qui à la classe de conjugaison de $(\gpfinialg*{S},t)$ associe celle de $(\gpfinialg{S}'^{*},t')$.

Notons que si $s$ est la classe de conjugaison de $t$ dans $\quotred*{\G}{\sigma}$ alors $\varphi_{\sigma,\tau}^{*}(s)$ est celle de $t'$.

\begin{Lem}
	\label{lemMaxiDeploye}
	Soient $\sigma,\tau \in \bt$ tels que $\tau \leq \sigma$ et $(\sigma,\gpfinialg{S},\theta),(\tau,\gpfinialg{S}',\theta') \in \triplets$ tels que $\varphi_{\sigma,\tau}^{*}(\gpfinialg*{S},t)=(\gpfinialg{S}'^{*},t')$, où $(\sigma,\gpfinialg*{S},t)$ (resp. $(\tau,\gpfinialg{S}'^{*},t')$) est associé à $(\sigma,\gpfinialg{S},\theta)$ (resp. $(\tau,\gpfinialg{S}',\theta')$). Alors
	
	\begin{enumerate}
		\item $(\sigma,\gpfinialg{S},\theta)$ est minimal et $\gpfinialg{S}$ est $\res$-minisotrope dans $\quotredalg{G}{\sigma}$ si et seulement si $(\sigma,\gpfinialg{S},\theta)$ est elliptique
		\item $(\sigma,\gpfinialg{S},\theta)$ est minimal si et seulement si $(\tau,\gpfinialg{S}',\theta')$ est minimal.
	\end{enumerate}
	
\end{Lem}

\begin{proof}
	Notons $\gpfinialg{S}^{*d}$ le sous-tore déployé maximal de $\gpfinialg*{S}$.
	
	Commençons par montrer 1. Supposons que $(\sigma,\gpfinialg{S},\theta)$ est minimal et que $\gpfinialg{S}$ est $\res$-minisotrope dans $\quotredalg{G}{\sigma}$. Prenons $\textsf{\textbf{L}}$ un Levi minimal de $\quotredalg*{\G}{\sigma}$ tel que $t \in \textsf{\textbf{L}}$, de sorte que $t$ soit elliptique dans $\textsf{\textbf{L}}$. Comme $t$ est un élément semi-simple, il existe $\gpfinialg{S}_{\textsf{\textbf{L}}}$ un tore maximal $\fr$-stable de $\textsf{\textbf{L}}$ tel que $t \in \gpfinialg{S}_{\textsf{\textbf{L}}}$. Appelons $\gpfinialg{S}_{\textsf{\textbf{L}}}^{d}$ sa composante déployée. Comme $t$ est elliptique dans $\textsf{\textbf{L}}$, $\gpfinialg{S}_{\textsf{\textbf{L}}}$ est $\res$-minisotrope dans $\textsf{\textbf{L}}$ et donc $\gpfinialg{S}_{\textsf{\textbf{L}}}^{d}$ coïncide avec la composante déployée du centre de $\textsf{\textbf{L}}$. Maintenant $\gpfinialg{S}$ est $\res$-minisotrope, donc $\gpfinialg{S}^{*d}$ est la composante déployée de $Z(\quotredalg*{\G}{\sigma})$, le centre de $\quotredalg*{\G}{\sigma}$. Et comme $Z(\quotredalg*{\G}{\sigma}) \subseteq Z(\textsf{\textbf{L}})$, on en déduit par maximalité de $\gpfinialg{S}_{\textsf{\textbf{L}}}^{d}$ que $\gpfinialg{S}^{*d}\subseteq \gpfinialg{S}_{\textsf{\textbf{L}}}^{d}$. Enfin, $\gpfinialg{S}^{*d}\subseteq \gpfinialg{S}_{\textsf{\textbf{L}}}^{d} \subset C_{\quotredalg*{\G}{\sigma}}(t)$, et comme $(\sigma,\gpfinialg{S},\theta)$ est minimal, $\gpfinialg{S}^{*d} = \gpfinialg{S}_{\textsf{\textbf{L}}}^{d}$ et donc $\textsf{\textbf{L}}=\quotredalg*{\G}{\sigma}$.
	
	Réciproquement, supposons $t$ elliptique. Nous savons déjà que $\gpfinialg{S}$ est $\res$-minisotrope. Il reste à montrer que $\gpfinialg{S}^{*d}$ est maximal parmi les tores déployés de $C_{\quotredalg*{\G}{\sigma}}(t)$. Prenons alors $\textsf{\textbf{T}}^{d}$ un tore déployé de $C_{\quotredalg*{\G}{\sigma}}(t)$ contenant $\gpfinialg{S}^{*d}$. Notons $\textsf{\textbf{M}}$ le centralisateur de $\textsf{\textbf{T}}^{d}$ dans $\quotredalg*{\G}{\sigma}$ qui est un Levi de $\quotredalg*{\G}{\sigma}$ contenant $t$. Comme $t$ est elliptique $\textsf{\textbf{M}}=\quotredalg*{\G}{\sigma}$, et comme $\gpfinialg{S}$ est $\res$-minisotrope, $\textsf{\textbf{T}}^{d}=\gpfinialg{S}^{*d}$.
	
	\medskip
	
	Démontrons maintenant 2. Commençons par relever $\gpfinialg*{S}$ en un tore de $\quotredalg*{\G}{\tau}$, que l'on note encore $\gpfinialg*{S}$, de sorte que l'on puisse identifier $(\gpfinialg{S}'^{*},t')$ à $(\gpfinialg*{S},t)$ (après conjugaison). Notons $\textsf{\textbf{M}}_{\sigma}$ l'unique relèvement de $\quotredalg*{\G}{\sigma}$ en un Levi de $\quotredalg*{\G}{\tau}$ contenant $\gpfinialg*{S}$.
	
	Il faut alors montrer que $\gpfinialg{S}^{*d}$ est maximal parmi les tores déployés de $C_{\quotredalg*{\G}{\tau}}(t)$ si et seulement s'il l'est parmi ceux de $C_{\textsf{\textbf{M}}_{\sigma}}(t)$. Il est clair que si $\gpfinialg{S}$ est maximalement déployé dans $C_{\quotredalg*{\G}{\tau}}(t)$ il l'est aussi dans $C_{\textsf{\textbf{M}}_{\sigma}}(t)$.
	
	Supposons donc $\gpfinialg{S}^{*d}$ maximal parmi les tores déployés de $C_{\textsf{\textbf{M}}_{\sigma}}(t)$. Notons $\textsf{\textbf{M}}$ le centralisateur de $\gpfinialg{S}^{*d}$ dans $\textsf{\textbf{M}}_{\sigma}$ qui est un Lévi de $\textsf{\textbf{M}}_{\sigma}$ contenant $t$. Le tore $\gpfinialg{S}^{*d}$ est alors $\res$-minisotrope dans $\textsf{\textbf{M}}$ et il est maximal parmi les tores déployé de $C_{\textsf{\textbf{M}}}(t)$ donc par la propriété 1, $t$ est elliptique dans $\textsf{\textbf{M}}$. Comme $\gpfinialg{S}^{*d}$ est la composante déployée de $Z(\textsf{\textbf{M}})$, on a également que $\textsf{\textbf{M}}$ est le centralisateur de $\gpfinialg{S}^{*d}$ dans $\quotredalg*{\G}{\tau}$. Prenons $\textsf{\textbf{T}}^{d}$ un tore déployé de $C_{\quotredalg*{\G}{\tau}}(t)$ contenant $\gpfinialg{S}^{*d}$. Son centralisateur dans $\quotredalg*{\G}{\tau}$ est un Lévi $\textsf{\textbf{L}}$ contenant $t$ et contenu dans $\textsf{\textbf{M}}$. Mais comme $t$ est elliptique dans $\textsf{\textbf{M}}$, $\textsf{\textbf{L}}=\textsf{\textbf{M}}$ et $\gpfinialg{S}^{*d}=\textsf{\textbf{T}}^{d}$.
\end{proof}

\begin{Cor}
	Soit $(\mathbf{S},\theta) \in \pairesSt$. Alors $(\mathbf{S},\theta)$ est minimale si et seulement si pour tout $\sigma \in \mathcal{A}(\mathbf{S},\knr)^{\fr}$ le triplet associé $(\sigma,\gpfinialg{S},\theta)$ est minimal, si et seulement si pour un $\sigma \in \mathcal{A}(\mathbf{S},\knr)^{\fr}$ le triplet $(\sigma,\gpfinialg{S},\theta)$ est minimal.
	
	Notons que dans ce cas, si $\sigma$ est une facette maximale alors $(\sigma,\gpfinialg{S},\theta)$ est elliptique.
\end{Cor}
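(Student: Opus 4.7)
Je procéderai par le cycle d'implications \emph{(b)} $\Rightarrow$ \emph{(c)} $\Rightarrow$ \emph{(a)} $\Rightarrow$ \emph{(b)}, où \emph{(a)} désigne la minimalité de $(\mathbf{S},\theta)$, \emph{(b)} la minimalité de tous les triplets associés $(\sigma,\gpfinialg{S}_{\sigma},\theta)$ pour $\sigma \in \mathcal{A}(\mathbf{S},\knr)^{\fr}$, et \emph{(c)} celle d'au moins un tel triplet. L'outil essentiel sera le lemme \ref{lemMaxiDeploye}. L'implication \emph{(b)} $\Rightarrow$ \emph{(c)} est immédiate, $\mathcal{A}(\mathbf{S},\knr)^{\fr}$ étant non-vide.

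Pour \emph{(c)} $\Rightarrow$ \emph{(a)}, partant d'un $\sigma_{0}$ réalisant \emph{(c)}, nous choisirons $\tau$ une facette maximale de $\mathcal{A}(\mathbf{S},\knr)^{\fr}$ contenant $\sigma_{0}$. La discussion figurant dans la preuve de la proposition \ref{proDecompoAppPS} fournit l'égalité $s_{\sigma_{0}}=\varphi_{\tau,\sigma_{0}}^{*}(s_{\tau})$, de sorte que le lemme \ref{lemMaxiDeploye} (2) transfère la minimalité à $(\tau,\gpfinialg{S}_{\tau},\theta)$. Comme $\tau$ est maximal, \cite{debacker} lemme 2.2.1 (3) assure que $\gpfinialg{S}_{\tau}$ est $\res$-minisotrope dans $\quotredalg{\G}{\tau}$, et le lemme \ref{lemMaxiDeploye} (1) identifie alors minimalité et ellipticité : ce triplet appartient à $\tripletse$, son image dans $\pairesSt/{\sim_{\G}}$ étant par construction $[(\mathbf{S},\theta)]$, laquelle est donc minimale.

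Pour \emph{(a)} $\Rightarrow$ \emph{(b)}, étant donné $\sigma \in \mathcal{A}(\mathbf{S},\knr)^{\fr}$, nous choisirons à nouveau $\tau$ une facette maximale contenant $\sigma$. Par la bijection $\tripletsm/{\sim} \tosim \pairesSt/{\sim_{\G}}$ de la proposition \ref{proBijCTIss}, la classe de $\sim$-équivalence du triplet minisotrope $(\tau,\gpfinialg{S}_{\tau},\theta)$ correspond à $[(\mathbf{S},\theta)]$, laquelle admet par hypothèse un représentant elliptique. Le point clé sera d'observer que la relation $\sim$ préserve l'ellipticité : ses deux ingrédients (la $\G$-conjugaison et l'identification $\overset{id}{=}$ entre facettes fortement associées) induisent des isomorphismes canoniques entre les groupes réductifs duaux correspondants, préservant donc le fait qu'une classe semi-simple soit contenue ou non dans un sous-groupe de Levi propre. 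On obtient alors que $(\tau,\gpfinialg{S}_{\tau},\theta)$ est elliptique, puis les parties (1) et (2) du lemme \ref{lemMaxiDeploye} donnent la minimalité en $\tau$ puis en $\sigma$.

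L'assertion finale est immédiate par le lemme \ref{lemMaxiDeploye} (1), puisque pour $\sigma$ maximale le triplet est à la fois minimal et minisotrope. La principale difficulté technique est la vérification de la stabilité de l'ellipticité sous $\sim$ utilisée dans \emph{(a)} $\Rightarrow$ \emph{(b)}; elle se ramène cependant, de façon essentiellement mécanique, au fait que les identifications $\overset{id}{=}$ entre facettes fortement associées proviennent d'un isomorphisme de groupes réductifs induit par les sous-groupes parahoriques, et préservent donc toute la structure rationnelle pertinente.
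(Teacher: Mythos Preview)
Your argument is correct and follows the approach the paper leaves implicit (the corollary is stated without proof, as an immediate consequence of Lemme~\ref{lemMaxiDeploye}). Your identification of the one point requiring care --- that the relation $\sim$ on $\tripletsm$ preserves ellipticity, needed for \emph{(a)} $\Rightarrow$ \emph{(b)} --- is precisely what must be checked, and your justification via the underlying isomorphisms of reductive quotients is valid.
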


Revenons à la démonstration de $\pairesStmin/{\sim_{\G}} \tosim \Smin$. La proposition \ref{proInclusSmin} montre que l'on peut identifier l'application $\pairesSt/{\sim_{\G}} \to \Smin$ à $\tripletsm/{\sim} \to \Jss/{\simJss}$. Comme par construction des triplets elliptique on a une bijection $\pairesStmin/{\sim_{\G}} \tosim \tripletse/{\sim}$, il nous suffit donc de montrer que l'on a une bijection $\tripletse/{\sim} \tosim \Jss/{\simJss}$.

\begin{Pro}
\label{proEqTripletse}
	L'application $\triplets \to \Jss$, de la section \ref{secintroStheta}, induit une bijection $\tripletse/{\sim} \tosim \Jss/{\simJss}$.
\end{Pro}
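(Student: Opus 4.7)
Mon plan est de démontrer séparément la surjectivité et l'injectivité de $\tripletse/{\sim} \to \Jss/{\simJss}$, dont la bonne définition résulte de $\tripletse \subseteq \tripletsm$ et de la proposition \ref{protripletsmjss}.

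Pour la surjectivité, soit $(\sigma,s) \in \Jss$. Je commence par choisir un Lévi rationnel minimal $\textsf{\textbf{L}}$ de $\quotredalg*{\G}{\sigma}$ contenant un représentant de $s$, de sorte que $s$ soit elliptique dans $\textsf{\textbf{L}}$. Quitte à conjuguer $s$ dans $\quotred*{\G}{\sigma}$, on peut supposer $\textsf{\textbf{L}}=\quotred*{\G}{\tau}$, plongé comme Lévi, pour une certaine facette $\tau$ vérifiant $\sigma \leq \tau$. Soit $t$ la classe elliptique de $\quotred*{\G}{\tau}$ correspondant à $s$, et $\gpfinialg*{S}$ un tore maximal $\fr$-stable de $\quotredalg*{\G}{\tau}$ maximalement $\res$-déployé dans $C_{\quotredalg*{\G}{\tau}}(t)^{\circ}$. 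Le triplet $(\tau,\gpfinialg{S},\theta)$ dual à $(\gpfinialg*{S},t)$ est alors elliptique par le lemme \ref{lemMaxiDeploye}. Pour tout sommet $x$ de $\sigma$, on a $x \leq \tau$, et la fonctorialité de $\transss*{\cdot}{x}$ par rapport aux inclusions de Lévis donne $\transss*{\tau}{x}(t) = \transss*{\sigma}{x}(s)$, d'où $(\sigma,s) \simJsst (\tau,t)$.

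Pour l'injectivité, soient $T_i = (\tau_i,\gpfinialg{S}_i,\theta_i) \in \tripletse$, $i=1,2$, avec $(\tau_1,s_1) \simJss (\tau_2,s_2)$. D'après la proposition \ref{proClotureTransitive}, on dispose d'une chaîne de $\simJsst$-étapes connectant ces deux paires. La construction de la surjectivité relie toute paire non elliptique à une paire elliptique par un unique pas $\simJsst$; un raisonnement combinatoire sur la longueur de la chaîne permet de la remplacer par une chaîne ne passant que par des paires elliptiques. Il suffit alors de montrer que si deux triplets elliptiques $T,T'$ ont des images $(\omega,u) \simJsst (\omega',u')$, alors $T \sim T'$. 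Le cas $\omega'=g\omega$, $u'=\transss*{g}{\omega}(u)$ est immédiat. Pour le cas d'un sommet commun $x$ avec $\omega \geq x \leq \omega'$ et $\transss*{\omega}{x}(u) = \transss*{\omega'}{x}(u')$: par ellipticité, $\quotred*{\G}{\omega}$ et $\quotred*{\G}{\omega'}$ sont tous deux des Lévis rationnels minimaux de $\quotred*{\G}{x}$ contenant une même classe, et sont donc $\quotred*{\G}{x}$-conjugués. En relevant cette conjugaison en un élément de $\para{\G}{x}$ (qui agit sur les facettes de l'étoile de $x$ via le groupe de Weyl de $\quotred{\G}{x}$), on se ramène au cas $\omega=\omega'$ avec $(\gpfinialg*{S}_1,t_1)=(\gpfinialg*{S}_2,t_2)$ à $\quotred*{\G}{\omega}$-conjugaison près. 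La proposition \ref{proBijPairesSt} (correspondance duale) et le lemme \ref{lemEgaliteid} permettent alors de conclure $T \sim T'$.

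Le point le plus délicat sera l'étape d'injectivité concernant le sommet commun: il faut relever la conjugaison des Lévis dans le groupe fini $\quotred{\G}{x}$ en une conjugaison dans $\para{\G}{x} \subset \G$ réalisant une équivalence $\sim$ entre les triplets elliptiques, ce qui nécessite de suivre les caractères $\theta_i$ à travers la correspondance duale. Cet argument prolonge celui du lemme 3.2.2 de \cite{debacker}, rédigé pour les paires sans caractère, en y incorporant le suivi des $\theta_i$.
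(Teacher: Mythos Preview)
Your surjectivity matches the paper's. For injectivity in the common-vertex case, however, your Levi-conjugation route has a genuine gap. You claim that since the dual Levis $\quotredalg*{\G}{\omega}, \quotredalg*{\G}{\omega'}$ are both minimal for the common class at $x$ and hence $\quotred*{\G}{x}$-conjugate, one can lift to $g \in \para{\G}{x}$ and ``reduce to $\omega=\omega'$ with $(\gpfinialg*{S}_1,t_1)$ and $(\gpfinialg*{S}_2,t_2)$ conjugate in $\quotred*{\G}{\omega}$''. But conjugacy of the Levis only yields conjugacy of the group-side Levis $L_\omega, L_{\omega'}$, not of the parabolics---two parabolics sharing a Levi are in general not conjugate (e.g.\ a parabolic of block type $(2,1)$ in $\gl{3}$ and its opposite). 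So you cannot force $g\omega'=\omega$; you only get that $g\omega'$ and $\omega$ are strongly associated. Worse, your $g$ was chosen to match Levis, not to carry $(\gpfinialg{S}_2,\theta_2)$ to the $\quotred{\G}{\omega}$-class of $(\gpfinialg{S}_1,\theta_1)$; the asserted conjugacy of dual pairs inside $\quotred*{\G}{\omega}$ is therefore unjustified, and arranging it forces you back to a torus argument anyway.

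The paper avoids this detour by working with tori from the start. By Lemma~\ref{lemMaxiDeploye}(2) (minimality is preserved under $\varphi^*_{\sigma,\tau}$), both $\gpfinialg*{S}_1$ and $\gpfinialg*{S}_2$, lifted to $\quotredalg*{\G}{x}$, are maximally split in the centralizer of the common class there; hence the pairs $(\gpfinialg*{S}_i,t_i)$ are directly $\quotred*{\G}{x}$-conjugate. Proposition~\ref{proBijPairesSt} transfers this to $\quotred{\G}{x}$-conjugacy of $(\gpfinialg{S}_i,\theta_i)$, one lifts the conjugating element to $g \in \para{\G}{x}$ with ${}^g\gpalg{S}_2=\gpalg{S}_1$ via \cite{debacker}~2.2.2, and then \cite{debacker}~2.2.1(4) (maximal facettes of $\mathcal{A}(\gpalg{S}_1,\knr)^\fr$ are strongly associated) gives $T_1 \sim T_2$ immediately. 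No Levi-matching step is needed.
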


\begin{proof}
Nous savons déjà par la proposition \ref{protripletsmjss} que l'on a une application $\tripletsm/{\sim} \to \Jss/{\simJss}$ et donc $\tripletse/{\sim} \to \Jss/{\simJss}$.

L'application $\tripletse/{\sim} \to \Jss/{\simJss}$ est surjective. En effet soit $(\sigma,s) \in \Jss$. Il existe un polysimplexe maximal $\tau \geq \sigma$ tel que $\varphi^{*-1}_{\tau,\sigma}(s) \neq \emptyset$. Prenons alors $t \in \varphi^{*-1}_{\tau,\sigma}(s)$. La classe de conjugaison $t$ est alors elliptique dans $\quotred*{\G}{\tau}$. Il suffit alors de prendre un triplet $(\tau,\gpfinialg{S},\theta)$ qui donne $(\tau,t)$. Ce dernier est elliptique puisque $t$ est elliptique et il convient car $(\sigma,s) \simJss (\tau,t)$.

\medskip

Il reste à montrer l'injectivité.	Soient $(\sigma_{1},\gpfinialg{S}_{1},\theta_{1}), (\sigma_{2},\gpfinialg{S}_{2},\theta_{2}) \in \tripletse$ deux triplets auxquels sont associés respectivement $(\sigma_1,s_1), (\sigma_2,s_2) \in \Jss$. On suppose que $(\sigma_1,s_1) \simJss (\sigma_2,s_2)$ et on veut montrer que $(\sigma_{1},\gpfinialg{S}_{1},\theta_{1}) \sim (\sigma_{2},\gpfinialg{S}_{2},\theta_{2})$. Nous savons par la proposition \ref{proClotureTransitive} que $\sim$ est la clôture transitive de $\simJsst$, on peut donc supposer que $(\sigma_1,s_1)\simJsst (\sigma_2,s_2)$. Prenons un triplet $(\sigma_{1}, \gpfinialg{S}_{1}^{*},t_{1})$ (resp. $(\sigma_{2}, \gpfinialg{S}_{2}^{*},t_{2})$) avec $t_{1} \in (\gpfinialg{S}_{1}^{*})^{\fr}$ (resp. $t_{2} \in (\gpfinialg{S}_{2}^{*})^{\fr}$) en dualité avec $(\sigma_{1},\gpfinialg{S}_{1},\theta_{1})$ (resp. $(\sigma_{2},\gpfinialg{S}_{2},\theta_{2})$), tel que $t_{1}$ (resp. $t_{2}$) a pour classe de conjugaison $s_{1}$ (resp. $s_{2}$).

	Commençons par traiter le cas $\sigma_{1}=\sigma_{2}$ et $s_{1}=s_{2}$. 
 Il existe donc $g \in \quotred*{\G}{\sigma_{1}}$ tel que $t_{1}=\Ad(g)(t_{2})$. Par le lemme \ref{lemMaxiDeploye}, $\gpfinialg{S}_{1}^{*}$ et ${}^{g}\gpfinialg{S}_{2}^{*}$ sont maximalement déployés dans $C_{\quotredalg*{\G}{\sigma_{1}}}(t_{1})$, donc quitte à modifier $g$ on peut supposer que $\gpfinialg{S}_{1}^{*} = {}^{g}\gpfinialg{S}_{2}^{*}$.
	On en déduit que les paires $(\gpfinialg{S}_{1}^{*},t_{1})$ et $(\gpfinialg{S}_{2}^{*},t_{2})$ sont conjuguées sous $\quotred*{\G}{\sigma_{1}}$ et donc que les paires $(\gpfinialg{S}_{1},\theta_{1})$ et $(\gpfinialg{S}_{2},\theta_{2})$ sont conjuguées sous $\quotred{\G}{\sigma_{1}}$. On a bien que $(\sigma_{1},\gpfinialg{S}_{1},\theta_{1})\sim(\sigma_{2},\gpfinialg{S}_{2},\theta_{2})$.

	Revenons au cas général. Comme $(\sigma_{1},s_{1}) \simJsst (\sigma_{2},s_{2})$, deux cas se présentent :
	\begin{enumerate}
		\item Il existe $g \in \G$ tel que $g\sigma_{2}=\sigma_{1}$ et $s_{1}=\varphi^{*}_{g,\sigma_{2}}(s_{2})$.
		
		Alors $(\gpfinialg{S}_{1},\theta_{1})$ et $g(\gpfinialg{S}_{2},\theta_{2})$ donnent la même classe de conjugaison dans $\ss{\quotred*{\G}{\sigma_{1}}}$ et par le premier cas $(\sigma_{1},\gpfinialg{S}_{1},\theta_{1})\sim(\sigma_{2},\gpfinialg{S}_{2},\theta_{2})$.
		
		\item Il existe $x \in \bts$ tel que $\sigma_{1} \geq x \leq \sigma_{2}$ et $\varphi_{\sigma_{2},x}^{*}(s_{2}) = \varphi_{\sigma_{1},x}^{*}(s_{1})$.

		Choisissons un appartement contenant $\sigma_1$ et $\sigma_2$. Cet appartement est associé à un tore $\gpalg{T}$ et l'on note $\gpfinialg{T}$ le tore induit sur $\quotredalg{G}{x}$ et $\gpfinialg*{T}$ un dual sur $\quotredalg*{G}{x}$. Le tore $\gpfinialg{T}$ permet de relever de façon unique $\quotredalg{G}{\sigma_1}$ et $\quotredalg{G}{\sigma_2}$ en des sous-groupes de Levi de $\quotredalg{G}{x}$. De même $\gpfinialg*{T}$ permet de relever $\quotredalg*{G}{\sigma_1}$ et $\quotredalg*{G}{\sigma_2}$. Ceci nous permet de relever $\gpfinialg{S}_1$, $\gpfinialg{S}_2$ en des tores de $\quotredalg{G}{x}$ et $\gpfinialg*{S}_1$, $\gpfinialg*{S}_2$ en des tores de $\quotredalg*{\G}{x}$. Dans ce cas, $(\gpfinialg{S}_{1},\theta_{1})$ correspond à $(\gpfinialg*{S}_{1},t_{1})$ et $(\gpfinialg{S}_{2},\theta_{2})$ à $(\gpfinialg*{S}_{2},t_{2})$ avec $t_{1}$ et $t_{2}$ conjugués dans $\quotred*{\G}{x}$ ($t_{i}$ est un représentant de la classe de conjugaison $\varphi_{\sigma_{i},x}^{*}(s_{i})$). Il existe donc $g^{*} \in \quotred*{\G}{x}$ tel que $t_{1}=\Ad(g^{*})(t_{2})$. Par le lemme \ref{lemMaxiDeploye}, $\gpfinialg*{S}_{1}$ et ${}^{g^{*}}\gpfinialg*{S}_{2}$ sont maximalement déployés dans $C_{\quotred*{\G}{x}}(t_{1})$. Deux tores maximalement déployés sont conjugués donc quitte à modifier $g^{*}$ on peut supposer que $\gpfinialg*{S}_{1} = {}^{g^{*}}\gpfinialg*{S}_{2}$. Par conséquent les paires $(\gpfinialg*{S}_{1},t_{1})$ et $(\gpfinialg*{S}_{2},t_{2})$ sont conjuguées sous $\quotred*{\G}{x}$ donc les paires $(\gpfinialg{S}_{1},\theta_{1})$ et $(\gpfinialg{S}_{2},\theta_{2})$ sont conjuguées sous $\quotred{\G}{x}$.

		Il existe $\overline{g} \in \quotred{\G}{x}$ tel que $(\gpfinialg{S}_{1},\theta_{1}) = \overline{g}(\gpfinialg{S}_{2},\theta_{2})$. Prenons $g \in \para{\G}{x}$ un relèvement de $\overline{g} \in \quotred{\G}{x} \simeq \para{\G}{x} / \radpara{\G}{x}$. Considérons également $\mathbf{S}_{1}$ et $\mathbf{S}_{2}$ des tores maximaux non-ramifiés de $\gpalg{G}$ qui relèvent $\gpfinialg{S}_{1}$ et $\gpfinialg{S}_{2}$. Alors $\mathbf{S}_{1}$ et ${}^{g}\mathbf{S}_{2}$ ont même image dans $\quotredalg{\G}{x}$ donc par \cite{debacker} lemme 2.2.2, $\mathbf{S}_{1}$ et ${}^{g}\mathbf{S}_{2}$ sont $\radpara{\G}{x}$-conjugués. Quitte à changer le relèvement de $\overline{g}$ choisi, on peut supposer que $\mathbf{S}_{1}={}^{g}\mathbf{S}_{2}$. Le polysimplexe $\sigma_{1}$ est une facette de dimension maximale dans $\mathcal{A}(\gpalg{S}_1,\knr)^\fr$ et $g\sigma_{2}$ est une facette de dimension maximale dans $\mathcal{A}({}^{g}\gpalg{S}_2,\knr)^\fr=\mathcal{A}(\gpalg{S}_1,\knr)^\fr$. Prenons $\mathcal{A}$ un appartement contenant $\sigma_{1}$ et $g\sigma_{2}$. Par \cite{debacker} lemme 2.2.1 (4) nous avons $A(\mathcal{A},\sigma_{1})=A(\mathcal{A},g\sigma_{2})$. On a bien que $\gpfinialg{S}_{1} \overset{id}{=} {}^{g} \gpfinialg{S}_{2}$. De plus dans $\quotredalg{\G}{x}$ on a $\gpfinialg{S}_{1} = {}^{\overline{g}} \gpfinialg{S}_{2}$ et $\theta_{1}=\overline{g}\theta_{2}$ donc $\theta_{1} \overset{id}{=} g\theta_{2}$. On a bien $(\sigma_{1},\gpfinialg{S}_{1},\theta_{1})\sim(\sigma_{2},\gpfinialg{S}_{2},\theta_{2})$.
	\end{enumerate}
\end{proof}

\begin{Pro}
\label{proBijMinSyst}
L'application $\pairesSt \to \ensSystConjc$ induit une bijection $\pairesStmin/{\sim_{\G}} \tosim \Smin$.
\end{Pro}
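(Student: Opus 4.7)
The plan is to assemble the bijection by chaining together the identifications already established in the preceding subsections; essentially all the real work has been done in Propositions \ref{proBijCTIss}, \ref{proDecompoAppPS} and \ref{proEqTripletse}, and this statement is a repackaging.

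First I would invoke Proposition \ref{proDecompoAppPS}, which gives the commutative square
\[ \xymatrix{
\pairesSt/{\sim_{\G}}  \ar@{->}[r] \ar@{<-}[d]^-{\sim} & \ensSystConjc \ar@{<-}[d] \\
\tripletsm/{\sim}  \ar@{->}[r]  &  \Jss/{\simJss}
}\]
whose left vertical arrow is the bijection of Proposition \ref{proBijCTIss} and whose right vertical arrow is (via the canonical identification $\Smin \simto \Jss/{\simJss}$ of Section \ref{sectionSystMin}) an inclusion. Proposition \ref{proInclusSmin} refines this to a commutative square in which both vertical arrows become bijections and both horizontal arrows land in $\Smin$ resp.\ $\Jss/{\simJss}$, so the induced top arrow $\pairesSt/{\sim_\G}\to\Smin$ is identified with the bottom arrow $\tripletsm/{\sim}\to\Jss/{\simJss}$.

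Next I would use Definition \ref{defpairesmin}: the subset $\pairesStmin\subseteq\pairesSt$ is exactly the preimage of $\tripletse/{\sim}\subseteq\tripletsm/{\sim}$ under the bijection $\tripletsm/{\sim}\simto\pairesSt/{\sim_\G}$. Hence the left vertical arrow restricts to a bijection $\pairesStmin/{\sim_\G}\simto\tripletse/{\sim}$. Consequently the claim is reduced to proving that the restriction of $\tripletsm/{\sim}\to\Jss/{\simJss}$ to $\tripletse/{\sim}$ is still a bijection onto the full target $\Jss/{\simJss}$—which is precisely the content of Proposition \ref{proEqTripletse}.

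Putting the pieces together, the composite
\[ \pairesStmin/{\sim_\G}\;\simto\;\tripletse/{\sim}\;\simto\;\Jss/{\simJss}\;\simto\;\Smin \]
unwinds to the restriction of the map $\pairesSt\to\ensSystConjc$ from Section \ref{secintroStheta}, and this composite is a bijection. There is no serious obstacle: the delicate steps (the existence of the square, the reduction to $C$-$0$-cohérent systems, and the surjectivity/injectivity of $\tripletse/{\sim}\to\Jss/{\simJss}$ via the transitive closure lemma \ref{proClotureTransitive} and the maximally split torus argument of Lemma \ref{lemMaxiDeploye}) are all already carried out upstream, and what remains here is bookkeeping of the diagram.
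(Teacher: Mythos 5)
Your proof is correct and follows essentially the same route as the paper, which simply combines Propositions \ref{proInclusSmin} and \ref{proEqTripletse} into the commutative square with $\pairesStmin/{\sim_\G}\simeq\tripletse/{\sim}\simeq\Jss/{\simJss}\simeq\Smin$. Your extra tracing through Propositions \ref{proBijCTIss} and \ref{proDecompoAppPS} and Definition \ref{defpairesmin} is just an unpacking of those two ingredients, so there is nothing to add.
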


\begin{proof}
Les propositions \ref{proInclusSmin} et \ref{proEqTripletse} montrent que l'on a le diagramme commutatif
\[ \xymatrix{
		\pairesStmin/{\sim_{\G}}  \ar@{->}[r] \ar@{->}[d]^-{\sim}& \Smin \ar@{<-}[d]^-{\sim}\\
		\tripletse/{\sim}  \ar@{->}[r]^-{\sim} &  \Jss/{\simJss}}\]
d'où le résultat.
\end{proof}

\subsection{Relations d'équivalence sur \texorpdfstring{$\pairesSt$}{CTss,Lambda}}

\label{secreleqpairesst}

Nous avons obtenu dans les sections précédentes une application $\pairesSt/{\sim_{\G}} \to \Smin$ ainsi que $\pairesStmin$, un sous-ensemble de $\pairesSt$, tel que $\pairesStmin/{\sim_{\G}} \tosim \Smin$. Cependant, lorsque les paires ne sont pas minimales, nous ne savons pas encore quand elles donnent un même système cohérent. Nous cherchons donc une relation d'équivalence sur $\pairesSt$ qui le caractériserait. Le but de cette partie est d'introduire les relations d'équivalences $\sim_{\infty}$, $\sim_r$ et $\sim_e$ sur $\pairesSt$.  Nous verrons par la suite que $\sim_e$ est la relation d'équivalence recherchée qui fournira une bijection $\pairesSt/{\sim_e} \tosim \Smin$. Les deux autres relations  $\sim_{\infty}$ et $\sim_r$ sont tout de même intéressantes et produiront également des décompositions remarquables de $\rep[\ld][0]{\G}$.

\sautintro

Soit $m\in \mathbb{N}^{*}$, alors l'application
\[\begin{array}{ccccc}
\Tr_{\fr^m/\fr} & : &  X^{*}(\textbf{S}) & \to & X^{*}(\textbf{S}) \\
& & \lambda & \mapsto & \lambda+\fr_{\mathbf{S}}(\lambda)+ \cdots + \fr_{\mathbf{S}}^{m-1}(\lambda) \\
\end{array}\]
induit une application, que l'on note encore $\Tr_{\fr^m/\fr}$ :
\[\Tr_{\fr^m/\fr}  :   X^{*}(\textbf{S})/(\fr_{\mathbf{S}} -1)X^{*}(\textbf{S})  \longrightarrow  X^{*}(\textbf{S})/(\fr_{\mathbf{S}}^m -1)X^{*}(\textbf{S}).\]
On note alors pour $\theta \in X^{*}(\textbf{S})/(\fr_{\mathbf{S}} -1)X^{*}(\textbf{S})$, $\theta \langle m \rangle := \Tr_{\fr^m/\fr}(\theta)$.

\begin{Lem}
\label{lemTrace}
L'application $\Tr_{\fr^m/\fr}$ induit une bijection 
\[\Tr_{\fr^m/\fr}: X^{*}(\textbf{S})/(\fr_{\mathbf{S}} -1)X^{*}(\textbf{S})  \overset{\sim}{\longrightarrow}  \left[X^{*}(\textbf{S})/(\fr_{\mathbf{S}}^m -1)X^{*}(\textbf{S})\right]^{\fr_{\gpalg{S}}}.\]
\end{Lem}

\begin{proof}
Commençons par l'injectivité. Notons que $\Tr_{\fr^m/\fr} = (\fr_{\gpalg{S}}^m-1)/(\fr_{\gpalg{S}}-1)$. Comme pour tout $m \in \mathbb{N}^{*}$, $\fr_{\gpalg{S}}^m-1$ est injectif, l'application $\Tr_{\fr^m/\fr} : X^{*}(\textbf{S})  \to  X^{*}(\textbf{S})$ est injective. Mais alors s'il existe $\theta \in X^{*}(\textbf{S})$ et $\lambda \in X^{*}(\textbf{S})$ tels que $\Tr_{\fr^m/\fr}(\lambda)=(\fr_{\gpalg{S}}^m-1)(\theta)$ alors $\Tr_{\fr^m/\fr}(\lambda)=\Tr_{\fr^m/\fr}((\fr_{\gpalg{S}}-1)(\theta))$ et donc $\lambda=(\fr_{\gpalg{S}}-1)(\theta)$.

Passons maintenant à la surjectivité. Nous avons déjà que $\Tr_{\fr^m/\fr}$ est à valeurs dans $\left[X^{*}(\textbf{S})/(\fr_{\mathbf{S}}^m -1)X^{*}(\textbf{S})\right]^{\fr_{\gpalg{S}}}$. Prenons donc $\lambda \in X^{*}(\textbf{S})$ tel qu'il existe $\theta$ vérifiant $(\fr_{\gpalg{S}}-1)(\lambda)=(\fr_{\gpalg{S}}^m-1)(\theta)$. Alors $(\fr_{\gpalg{S}}-1)(\lambda)=(\fr_{\gpalg{S}}-1)(\Tr_{\fr^m/\fr}(\theta))$ et par injectivité de $(\fr_{\gpalg{S}}-1$), $\lambda = \Tr_{\fr^m/\fr}(\theta)$.
\end{proof}

\medskip

Soient $\mathbf{S}_{1}$ et $\mathbf{S}_{2}$ deux tores non-ramifiés et $g\in \G^{nr}$. Pour simplifier les écritures nous noterons $\gpalg{S}_{1}^{\fr^m}$ pour $(S_{1}^{nr})^{\fr^m}=\gpalg{S}_{1}(\kk_m)$, où $\kk_m$ est l'extension non-ramifiée de degré $m$ de $\kk$. Rappelons (voir annexe \ref{secIsoTores}) que $X^{*}(\textbf{S}_{1})/(\fr_{\mathbf{S}_{1}}^m -1)X^{*}(\textbf{S}_{1})$ est en bijection avec $\Irr({}^{0}(\mathbf{S}_{1}^{\fr^m})/(\mathbf{S}_{1}^{\fr^m})^{+})$ (notons que l'on utilise ici le système compatible de racines de l'unité fixé au début de cet article). Ainsi si ${}^{g} (\mathbf{S}_{1}^{\fr^{m}})=\mathbf{S}_{2}^{\fr^{m}}$ (ce qui est équivalent à dire que $\Ad(g)$ est défini sur $\kk_m$),  $\Ad(g)$ induit une bijection $\Irr({}^{0}(\mathbf{S}_{1}^{\fr^m})/(\mathbf{S}_{1}^{\fr^m})^{+}) \tosim \Irr({}^{0}(\mathbf{S}_{2}^{\fr^m})/(\mathbf{S}_{2}^{\fr^m})^{+})$ et donc une bijection $\Ad(g):X^{*}(\textbf{S}_{1})/(\fr_{\mathbf{S}_{1}}^m -1)X^{*}(\textbf{S}_{1}) \tosim X^{*}(\textbf{S}_{2})/(\fr_{\mathbf{S}_{2}}^m -1)X^{*}(\textbf{S}_{2})$. On définit alors la relation d'équivalence suivante

\begin{Def}
	Soit $m \in \mathbb{N}^{*}$. On dit que $(\mathbf{S}_{1},\theta_{1}) \sim_{m} (\mathbf{S}_{2},\theta_{2})$ si et seulement s'il existe $g\in \G^{nr}$ tel que
	\begin{enumerate}
		\item ${}^{g} (\mathbf{S}_{1}^{\fr^{m}})=\mathbf{S}_{2}^{\fr^{m}}$
		\item $g \theta_{1}\langle m \rangle = \theta_{2}\langle m \rangle$
	\end{enumerate}
	
	On dit que $(\mathbf{S}_{1},\theta_{1}) \sim_{\infty} (\mathbf{S}_{2},\theta_{2})$ si et seulement s'il existe un $m \in \mathbb{N}^{*}$ tel que $(\mathbf{S}_{1},\theta_{1}) \sim_{m} (\mathbf{S}_{2},\theta_{2})$.
\end{Def}

Nous pouvons remarquer qu'il existe un entier $d$ tel que pour tout tore non-ramifié $\mathbf{S}$ de $\G$, $\fr_{\mathbf{S}}^d=\psi^{d}$ est la multiplication par $q^{d}$. Ainsi $\sim_{\infty}=\sim_{d}$.

\bigskip

On note  $N^{nr}(\mathbf{S})$ le normalisateur de $S^{nr}$ dans $\G^{nr}$. Le groupe de Weyl étendu de $S^{nr}$ dans $\G^{nr}$ est le quotient $\We[S]:=N^{nr}(\mathbf{S})/{}^{0}S^{nr}$, où ${}^{0}S^{nr}$ est le sous-groupe borné maximal de $S^{nr}$. On note $\Wa[S]$, le groupe de Weyl affine, qui est le sous-groupe de $\We[S]$ engendré par les réflexions des murs des chambres de $\mathcal{A}(\mathbf{S},\knr)$. Le groupe de Weyl $\Wf[S]$ est défini par $\Wf[S]:=N^{nr}(\mathbf{S})/S^{nr}$.

Notons $\NSte*:=\{n \in N^{nr}(\mathbf{S}), {}^{n} (\mathbf{S}^{\fr^{m}})=\mathbf{S}^{\fr^{m}} \text{ et } n \theta\langle m \rangle = \theta\langle m \rangle \text{ pour un } m \in \mathbb{N}^{*}\}$ qui est un sous-groupe  de $N^{nr}(\mathbf{S})$. Définissons aussi $\WSte*:=\NSte*/{}^0S^{nr} \leq \We[S]$ et $\WStf* := \NSte*/S^{nr} \leq \Wf[S]$.

Considérons un $d$ tel que $\sim_{\infty}=\sim_{d}$. En particulier on a $(\fr_{\mathbf{S}}^{d} -1)X^{*}(\textbf{S}) = (q^{d} -1)X^{*}(\textbf{S})$. Alors l'application naturelle $\langle\cdot,\cdot\rangle:X_{*}(\textbf{S}) \times X^{*}(\textbf{S}) \rightarrow \mathbb{Z}$ induit une application
\[\langle\cdot,\cdot\rangle:X_{*}(\textbf{S}) \times X^{*}(\textbf{S})/(\fr_{\mathbf{S}}^{d} -1)X^{*}(\textbf{S})  \rightarrow \mathbb{Z}/(q^d-1)\mathbb{Z}.\]

Posons $\WStf$ le sous-groupe de $\WStf*$ engendré par les $s_{\alpha}$ où $\alpha$ est une co-racine vérifiant $\langle \alpha, \theta \langle d \rangle \rangle = 0$. Les $\alpha$ tels que $\langle \alpha, \theta \langle d \rangle \rangle = 0$ forment un système de racines $\fr$-stable. Une vérification directe est possible, mais on peut également montrer que $\WStf$ s'identifie au groupe de Weyl de $C_{\gpfinialg*{\G}}(t)^{\circ}$, où $t\in \gpfini*{\G}$, ce que nous ferons plus tard (lemme \ref{lemIdentifWeyl}). Notons $\mathcal{A}_{(\mathbf{S},\theta)}$ l'appartement $\mathcal{A}(\mathbf{S},\knr)$ mais dont la structure polysimpliciale est déduite de celle de $\mathcal{A}(\mathbf{S},\knr)$ en ne gardant que les murs correspondants aux co-racines affines dont la partie vectorielle correspond aux $\alpha$ tels que $\langle \alpha, \theta \langle d \rangle \rangle = 0$. Désignons $\WSta$ le sous-groupe de $\Wa$ engendré par les réflexions des murs de $\mathcal{A}_{(\mathbf{S},\theta)}$. Ce dernier est alors un groupe de Weyl affine comme défini dans \cite{bourbaki} chapitre VI, §2.

Enfin définissons $\NSte$ (resp. $\WSte$) le sous-groupe de $\NSte*$ (resp. $\WSte*$) image réciproque de $\WStf$ par l'application $\NSte* \twoheadrightarrow \WStf*$ (resp. $\WSte* \twoheadrightarrow \WStf*$). Notons que $\WSta$ est un sous-groupe de $\WSte$ et posons $\NSta$ l'image réciproque de $\WSta$ dans $\NSte$.

Notons que tous les ensembles définis précédemment sont $\fr$-stables.

\medskip

Notons que si  $(\mathbf{S}_{1},\theta_{1})$ et $(\mathbf{S}_{2},\theta_{2})$ sont $\sim_{\infty}$-équivalentes, alors il existe $g$ tel que ${}^{g} (\mathbf{S}_{1}^{\fr^{m}})=\mathbf{S}_{2}^{\fr^{m}}$ et $g \theta_{1}\langle m \rangle = \theta_{2}\langle m \rangle$ pour un certain $m$. Comme ${}^{g} S_{1}^{nr}=S_{2}^{nr}$ (par densité de Zariski de $\gpalg{S}_{1}^{\fr^m}$), $g^{-1}\fr(g) \in  N^{nr}(\mathbf{S}_1)$. De plus $\theta_{1}\langle m \rangle$ et $\theta_{2}\langle m \rangle$ sont $\fr$-stables donc $g^{-1}\fr(g)  \theta_{1}\langle m \rangle = \theta_{1}\langle m \rangle$. On en déduit que $g^{-1}\fr(g) \in \NStecomplet*{\mathbf{S}_1}{\theta_1}$. On peut alors raffiner $\sim_{\infty}$ de la façon suivante.

\begin{Def}
	On dit que $(\mathbf{S}_{1},\theta_{1}) \sim_{r} (\mathbf{S}_{2},\theta_{2})$ (resp. $(\mathbf{S}_{1},\theta_{1}) \sim_{e} (\mathbf{S}_{2},\theta_{2})$) si et seulement s'il existe $m\in \mathbb{N}^{*}$ et $g \in \G^{nr}$ tels que
	\begin{enumerate}
		\item ${}^{g} (\mathbf{S}_{1}^{\fr^{m}})=\mathbf{S}_{2}^{\fr^{m}}$
		\item $g \theta_{1}\langle m \rangle = \theta_{2}\langle m \rangle$
		\item $g^{-1}\fr(g) \in \NStecomplet{\gpalg{S}_1}{\theta_1}$ (resp. $g^{-1}\fr(g) \in \NStacomplet{\gpalg{S}_1}{\theta_1}$)
	\end{enumerate}
\end{Def}
Expliquons les notations précédentes. Nous notons $\sim_r$ car cette relation d'équivalence va correspondre à des classes de conjugaison rationnelles. Et $\sim_e$ est notée ainsi car cette relation va caractériser le fait que deux paires $(\gpalg{S},\theta)$ définissent le même système d'idempotents cohérent $e=(e_x)$.

	Il faut vérifier que $\sim_{r}$ et $\sim_{e}$ sont bien des relations d'équivalence. Cela découle de la remarque suivante. Soit $g \in \G^{nr}$ tel que ${}^{g} (\mathbf{S}_{1}^{\fr^{m}})=\mathbf{S}_{2}^{\fr^{m}}$ et $g \theta_{1}\langle m \rangle = \theta_{2}\langle m \rangle$. Alors $Ad(g)$ induit une bijection $\Ad(g):\NStecomplet*{\gpalg{S}_1}{\theta_1} \overset{\sim}{\rightarrow} \NStecomplet*{\gpalg{S}_2}{\theta_2}$. Si $\alpha$ est une co-racine telle que $\langle \alpha, \theta_1 \langle d \rangle \rangle = 0$ alors $\langle g \cdot \alpha, g \cdot \theta_1 \langle d \rangle \rangle = 0$ donc $\Ad(g):\NStecomplet{\gpalg{S}_1}{\theta_1} \overset{\sim}{\rightarrow} \NStecomplet{\gpalg{S}_2}{\theta_2}$. De plus $\Ad(g)$ envoie les murs de l'appartement relatif à $\mathbf{S}_{1}$ sur les murs de l'appartement relatif à $\mathbf{S}_{2}$ donc $\Ad(g):\NStacomplet{\gpalg{S}_1}{\theta_1} \overset{\sim}{\rightarrow} \NStacomplet{\gpalg{S}_2}{\theta_2}$. Pour conclure, il suffit de remarquer que l'application $x \mapsto g x \fr(g)^{-1} = (gxg^{-1})(g\fr(g)^{-1})$ induit bien une bijection $\NStecomplet{\gpalg{S}_1}{\theta_1} \overset{\sim}{\rightarrow} \NStecomplet{\gpalg{S}_2}{\theta_2}$ (resp. $\NStacomplet{\gpalg{S}_1}{\theta_1} \overset{\sim}{\rightarrow} \NStacomplet{\gpalg{S}_2}{\theta_2}$) si $g\fr(g)^{-1} \in \NStecomplet{\gpalg{S}_2}{\theta_2}$ (resp. $g\fr(g)^{-1} \in \NStacomplet{\gpalg{S}_2}{\theta_2}$).

\medskip

On a alors les liens suivants :
\[ \sim_{\G} \Rightarrow \sim_{e} \Rightarrow \sim_{r} \Rightarrow \sim_{\infty}\]

\subsection{Caractérisation des systèmes de classes de conjugaison \texorpdfstring{$0$}{0}-cohérents minimaux dans \texorpdfstring{$\pairesSt$}{CTss,Lambda}}

Nous souhaitons montrer dans cette section que l'application $\pairesSt \to \Smin$ de la section \ref{secLienIeJss} induit une bijection $\pairesSt/{\sim_e} \tosim \Smin$.

\sautintro

Considérons une paire $(\mathbf{S},\theta)$ et $\sigma \in \mathcal{A}(\mathbf{S},\knr)^\fr$. Notons $(\sigma,\gpfinialg{S},\theta) \in \triplets$ le triplet induit par $(\mathbf{S},\theta)$ comme dans la section \ref{secintroStheta}. Le groupe des caractères d'un $\res$-tore est aussi muni d'applications  $\Tr_{\fr^m/\fr}$, et celles-ci sont compatibles avec l'identification naturelle $X^*(\mathbf{S}) \simeq X^*(\gpfinialg{S})$. Nous rappelons que $\Wx{\sigma}[S]$ est le sous-groupe de $\We[S]$ engendré par les réflexions des murs passant par $\sigma$, qui s'identifie au groupe de Weyl de $\quotredalg{\G}{\sigma}$ relatif à $\gpfinialg{S}$. On notera $\Wxd{\sigma}[\gpalg{S}]$ le stabilisateur de $\sigma$ dans $\We[S]$. Soit $d$ un entier tel que $\sim_{\infty}=\sim_{d}$. On note alors $\WStx{\sigma}$ le sous-groupe de $\Wx{\sigma}[S]$ engendré par les $s_{\alpha}$ tels que $s_{\alpha} \theta\langle d \rangle = \theta\langle d \rangle$  et $\langle \alpha, \theta \langle d \rangle \rangle = 0$. Le lemme suivant découle de \cite{bourbaki}, chapitre V, §3, proposition 1.

\begin{Lem}
	\label{lemWeylts}
	
	Soient $(\mathbf{S},\theta) \in \pairesSt$ et $\sigma \in \mathcal{A}(\mathbf{S},\knr)$. Alors $\WSta \cap \Wxd{\sigma}[\gpalg{S}] = \WStx{\sigma}$.
\end{Lem}

Rappelons que la classe de $\quotred{\G}{\sigma}$-conjugaison de la paire $(\gpfinialg{S},\theta)$ correspond à une classe de $\quotred*{\G}{\sigma}$-conjugaison de paires $(\gpfinialg*{S},t)$ (proposition \ref{proBijPairesSt}). Fixons $(\gpfinialg*{S},t)$ un représentant de la classe associée à $(\gpfinialg{S},\theta)$. Le groupe de Weyl de $\quotredalg*{\G}{\sigma}$ relatif à $\gpfinialg*{S}$ s'identifie au groupe de Weyl $\Wx{\sigma}[S]$ de $\quotredalg{\G}{\sigma}$ relatif à $\gpfinialg{S}$.

\begin{Lem}
	\label{lemIdentifWeyl}
	
	L'isomorphisme entre $\Wx{\sigma}[S]$ et $\Wf(\gpfinialg*{S},\quotredalg*{\G}{\sigma})$ envoie $\WStx{\sigma}$ sur $\Wf(\gpfinialg*{S},C_{\quotredalg*{\G}{\sigma}}(t)^{\circ})$.
\end{Lem}

\begin{proof}
 D'après \cite{carter} théorème 3.5.4, $\Wf(\gpfinialg*{S},C_{\quotredalg*{\G}{\sigma}}(t)^{\circ})$ correspond au sous-groupe de $\Wx{\sigma}[S]$ engendré par les $s_{\alpha}$ où $\alpha$ est une racine (pour $\gpfinialg*{S}$) vérifiant $\alpha(t)=1$.

	La bijection reliant $\theta$ et $t$ est donnée par la formule $\theta(N_{\fr^d/\fr}(\alpha(\zeta))=\kappa(\alpha(t))$ (ici $\zeta$ est une racine de l'unité et on a identifié les caractères de $\gpfinialg*{S}$ avec les co-caractères de $\gpfinialg{S}$, voir annexe \ref{secIsoTores} pour plus de détails sur la formule et les notations). Or $\theta(N_{\fr^d/\fr}(x))=\Tr_{\fr^d/\fr}(\theta)(x).$ On a alors $\alpha(t)=1$ si et seulement si $\langle \alpha, \theta \langle d \rangle \rangle =0$ d'où le résultat.
\end{proof}

Soit $(\sigma,\gpfinialg{S},\theta) \in \triplets$. Nous savons que l'on peut associer à ce triplet une classe de conjugaison rationnelle semi-simple $t \in \ss*{\quotred*{\G}{\sigma}}$ (voir section \ref{secintroStheta}). On dira alors que deux triplets $(\sigma,\gpfinialg{S}_{1},\theta_{1}),(\sigma,\gpfinialg{S}_{2},\theta_{2}) \in \triplets$ sont rationnellement équivalents si la classe de conjugaison rationnelle associée est la même.

\begin{Lem}
	\label{lemEquivRatio}
	Soient $\sigma \in \bt$ et $(\sigma,\gpfinialg{S}_{1},\theta_{1}),(\sigma,\gpfinialg{S}_{2},\theta_{2}) \in \triplets$. Alors les conditions suivantes sont équivalentes
	\begin{enumerate}
		\item $(\sigma,\gpfinialg{S}_{1},\theta_{1})$ et $(\sigma,\gpfinialg{S}_{2},\theta_{2})$ sont rationnellement équivalents.
		\item Il existe $g \in \quotredalg{\G}{\sigma}$  et $m \in \mathbb{N}^{*}$ tels que $g(\gpfinialg{S}_{1}^{\fr^m})=\gpfinialg{S}_{2}^{\fr^m}$, $g\theta_{1}\langle m \rangle=\theta_{2}\langle m \rangle$ et $g^{-1}\fr(g)$ se projette sur $\WStxcomplet{\sigma}{\gpalg{S}_1}{\theta_1}$.
	\end{enumerate}
\end{Lem}

\begin{proof}
	Notons $(\gpfinialg*{S}_{1},t_{1})$ et $(\gpfinialg*{S}_{2},t_{2})$ des paires associées à $(\gpfinialg{S}_{1},\theta_{1})$ et $(\gpfinialg{S}_{2},\theta_{2})$.
	
	Supposons 1. Les triplets $(\sigma,\gpfinialg{S}_{1},\theta_{1})$ et $(\sigma,\gpfinialg{S}_{2},\theta_{2})$ étant rationnellement équi\-valents, on a par définition qu'il existe $g^{*}\in (\quotredalg*{\G}{\sigma})^{\fr}$ tel que $t_{1}=\Ad(g^{*})t_{2}$. Les deux tores $\gpfinialg{S}_{1}^{*}$ et ${}^{g^{*}}\gpfinialg{S}_{2}^{*}$ contiennent $t_{1}$ donc sont conjugués sous $C_{\quotredalg*{\G}{\sigma}}(t_{1})^{\circ}$. Il existe donc un $h^{*} \in \quotredalg*{\G}{\sigma}$ tel que $h^{*}(\gpfinialg*{S}_{1},t_{1}) = (\gpfinialg*{S}_{2},t_{2})$ et $w=(h^{*})^{-1}\fr(h^{*}) \in \Ws{t_{1}}[\gpfinialg{S}_{1}]$. Rappelons que l'on peut voir la correspondance entre les paires $(\gpfinialg{S},\theta)$ et les paires $(\gpfinialg*{S},t)$ en fixant une paire de tores de référence en dualité sur $\res$ (comme rappelé dans le paragraphe après la proposition \ref{proBijPairesSt}). Fixons ici, $\gpfinialg{S}_1$ et $\gpfinialg*{S}_1$. La paire $(\gpfinialg*{S}_{2},t_{2})$ correspond alors à $(w,t_1)$ où $t_1 \in (\gpfinialg{S}_{1}^{*})^{\fr_w}$. Considérons $\tilde{\theta}_{1} \in X^{*}(\gpfinialg{S}_{1})/(w\fr -1)X^{*}(\gpfinialg{S}_{1})$ l'élément qui correspond à $(w,t_1)$. Prenons également $g \in \quotredalg{G}{\sigma}$ tel que $g^{-1}\fr(g)$ relève $w$. Alors la paire $g(\gpfinialg{S}_{1},\tilde{\theta}_{1})$ est une paire en dualité avec $(\gpfinialg*{S}_{2},t_{2})$ et est donc $\G$-conjuguée à $(\gpfinialg{S}_{2},\theta_2)$. Le lemme \ref{lemIdentifWeyl} nous permet d'identifier $w$ à un élément de $\WStxcomplet{\sigma}{\gpalg{S}_1}{\theta_1}$. Enfin, pour un $m$ tel que $(w\fr)^m=\fr^m$, en remarquant que $\tilde{\theta}_1$ et $\theta_1$ correspondent tous les deux à $t_1$, on a que $\tilde{\theta}_{1} \langle m \rangle = \theta_1 \langle m \rangle$, d'où 2.

	Supposons dorénavant 2. Nommons $w \in \WStxcomplet{\sigma}{\gpalg{S}_1}{\theta_1}$ la projection de $g^{-1}\fr(g)$ qui s'identifie à $w \in \Ws{t_{1}}[\gpfinialg{S}_{1}]$ par le lemme \ref{lemIdentifWeyl}. Posons $(\gpfinialg{S}_{1},\tilde{\theta}_1)=g^{-1}(\gpfinialg{S}_{2},\theta_2)$, où $\tilde{\theta}_1 \in X^{*}(\gpfinialg{S}_{1})/(w\fr -1)X^{*}(\gpfinialg{S}_{1})$. Alors $\tilde{\theta}_{1} \langle m \rangle = \theta_1 \langle m \rangle$ et donc $\tilde{\theta}_1$ s'envoie sur $t_1 \in (\gpfinialg{S}_{1}^{*})^{w\fr}$. Prenons alors $g^{*} \in C_{\quotredalg*{\G}{\sigma}}(t_{1})^{\circ}$ tel que $(g^{*})^{-1}\fr(g^{*})$ relève $w$. La paire $(\gpfinialg*{S}_{2},t_{2})$ est alors $\gpfini*{\G}$ conjuguée à $g^{*}(\gpfinialg*{S}_{1},t_{1})$. Et comme $\Ad(g^{*})t_{1}=t_1$, $t_{1}$ et $t_{2}$ sont rationnellement conjugués et on a 1.
\end{proof}

\begin{Lem}
	\label{lemEqRatio}
	Soient $(\mathbf{S}_{1},\theta_{1})$ et $(\mathbf{S}_{2},\theta_{2})$ deux éléments de $\pairesSt$. Supposons qu'il existe $\sigma \in \bt \cap \mathcal{A}(\mathbf{S}_{1},\knr)\cap \mathcal{A}(\mathbf{S}_{2},\knr)$. Notons $(\sigma,\gpfinialg{S}_{1},\theta_{1})$ et $(\sigma,\gpfinialg{S}_{2},\theta_{2})$ les triplets induits sur $\quotredalg{\G}{\sigma}$. Alors les assertions suivantes sont équivalentes
	\begin{enumerate}
		\item $(\sigma,\gpfinialg{S}_{1},\theta_{1})$ et $(\sigma,\gpfinialg{S}_{2},\theta_{2})$ sont rationnellement équivalents.
		\item Il existe $g_{\sigma} \in \paranr{\G}{\sigma}$ et $m\in \mathbb{N}^{*}$ tel que ${}^{g_{\sigma}} (\mathbf{S}_{1}^{\fr^{m}})=\mathbf{S}_{2}^{\fr^{m}}$, $g_{\sigma} \theta_{1}\langle m \rangle = \theta_{2}\langle m \rangle$ et $g_{\sigma}^{-1}\fr(g_\sigma) \in \NStacomplet{\gpalg{S}_1}{\theta_1}$.
		
	\end{enumerate}
\end{Lem}

\begin{proof}

	Par le lemme \ref{lemEquivRatio}, 1. est équivalent à l'existence d'un $\bar{g}_{\sigma} \in \quotredalg{\G}{\sigma}$ et d'un $m$ tels que ${}^{\bar{g}_{\sigma}}(\gpfinialg{S}_{1}^{\fr^{m}})=\gpfinialg{S}_{2}^{\fr^{m}}$, $\bar{g}_{\sigma}\theta_{1}\langle m \rangle=\theta_{2}\langle m \rangle$ et $\bar{g}_{\sigma}^{-1}\fr(\bar{g}_{\sigma}) \in \WStxcomplet{\sigma}{\gpalg{S}_1}{\theta_1}$.
	
	Supposons l'existence d'un tel $\bar{g}_{\sigma}$. On peut alors relever celui-ci en un élément $g_{\sigma} \in \paranr{\G}{\sigma}$ tel que ${}^{g_{\sigma}} (\mathbf{S}_{1}^{\fr^{m}})=\mathbf{S}_{2}^{\fr^{m}}$ et $g_{\sigma} \theta_{1}\langle m \rangle = \theta_{2}\langle m \rangle$ (voir le lemme 2.2.2 de \cite{debacker} pour les tores). Par le lemme \ref{lemWeylts} $w_{\sigma}=g_{\sigma}^{-1}\fr(g_\sigma) \in \WStxcomplet{\sigma}{\gpalg{S}_1}{\theta_1} \subseteq \WStacomplet{\gpalg{S}_1}{\theta_1}$.
	
	Supposons maintenant 2. Notons $\bar{g}_{\sigma}$ la réduction de $g_{\sigma}$ dans $\quotredalg{\G}{\sigma}$. Alors ${}^{\bar{g}_{\sigma}}\gpfinialg{S}_{1}=\gpfinialg{S}_{2}$ et $\bar{g}_{\sigma}\theta_{1}\langle m \rangle=\theta_{2}\langle m \rangle$. Et par le lemme \ref{lemWeylts} $w_{\sigma}$, la projection de $g_{\sigma}^{-1}\fr(g_\sigma)$, vérifie $w_{\sigma} \in \WStacomplet{\gpalg{S}_1}{\theta_1} \cap \Wxd{\sigma}[\gpalg{S}_1] = \WStxcomplet{\sigma}{\gpalg{S}_1}{\theta_1}$ d'où 1.
\end{proof}

\begin{Lem}
	\label{lemSommetStable}
	Soit $\mathbf{S}$ un tore maximal non-ramifié de $\gpalg{\G}$, alors il existe $x \in \bts \cap \mathcal{A}(\mathbf{S},\knr)$.
\end{Lem}

\begin{proof}
	D'après \cite{debacker} Lemme 2.2.1, $\mathcal{A}(\mathbf{S},\knr)^{\fr}$ est non-vide et est l'union des facettes de $\bt$ qui le rencontrent.
\end{proof}

\begin{Lem}
	\label{lemxcommun}
	Soient $(\mathbf{S}_{1},\theta_{1})$ et $(\mathbf{S}_{2},\theta_{2})$ deux éléments de $\pairesSt$ avec $(\mathbf{S}_{1},\theta_{1})$ minimale (définition \ref{defpairesmin}). Supposons que $(\mathbf{S}_{1},\theta_{1})$ et $(\mathbf{S}_{2},\theta_{2})$ définissent le même système de classes de conjugaison 0-cohérent. Alors, quitte à conjuguer l'une des paires par un élément de $\G$, il existe un sommet $x \in \bts$ vérifiant :
	\begin{enumerate}
		\item $x \in \mathcal{A}(\mathbf{S}_{1},\knr) \cap \mathcal{A}(\mathbf{S}_{2},\knr)$
		\item $(x,\gpfinialg{S}_{1},\theta_{1})$ et $(x,\gpfinialg{S}_{2},\theta_{2})$, les éléments de $\triplets$ correspondant respectivement à $(\mathbf{S}_{1},\theta_{1})$ et $(\mathbf{S}_{2},\theta_{2})$, sont rationnellement équivalents.
	\end{enumerate}
\end{Lem}

\begin{proof}
	Prenons $x \in \bts \cap \mathcal{A}(\mathbf{S}_{2},\knr)$ par le lemme \ref{lemSommetStable}, et considérons $(x,\gpfinialg{S}_{2},\theta_{2}) \in \triplets$ correspondant  à $(\mathbf{S}_{2},\theta_{2})$. Notons $t$ la classe de conjugaison semi-simple dans $(\quotredalg*{\G}{x})^{\fr}$ associée à $(x,\gpfinialg{S}_{2},\theta_{2})$. Soit $\gpfinialg*{S}$ un tore maximalement déployé du centralisateur de $t$ de sorte qu'il existe un triplet minimal $(x,\gpfinialg{S},\theta)$ associé à $t$. Choisissons $(\textbf{S},\theta) \in \pairesSt$ correspondant à $(x,\gpfinialg{S},\theta)$. Les triplets $(x,\gpfinialg{S},\theta)$ et $(x,\gpfinialg{S}_{2},\theta_{2})$ sont rationnellement équivalents par construction, donc $(\mathbf{S}_{2},\theta_{2})$ et $(\mathbf{S},\theta)$ définissent le même système de classes de conjugaison 0-cohérent. Ainsi, par hypothèse, $(\mathbf{S}_{1},\theta_{1})$ et $(\mathbf{S},\theta)$ définissent également le même système. Or ces deux paires sont minimales, donc d'après la proposition \ref{proBijMinSyst}, $(\mathbf{S}_{1},\theta_{1})$ est $\G$-conjuguée à $(\mathbf{S},\theta)$ ce qui démontre le résultat.
\end{proof}

\begin{Lem}
	\label{lemMemeSystemeMemePI}
	Soient $(\mathbf{S}_{1},\theta_{1}),(\mathbf{S}_{2},\theta_{2}) \in \pairesSt$. Alors si $(\mathbf{S}_{1},\theta_{1})$ et $(\mathbf{S}_{2},\theta_{2})$ définissent le même système de classes de conjugaison 0-cohérent, $(\mathbf{S}_{1},\theta_{1}) \sim_{e} (\mathbf{S}_{2},\theta_{2})$.
\end{Lem}

\begin{proof}
	Tous les systèmes de classes de conjugaison 0-cohérents sont décrits par une paire minimale par le théorème \ref{proBijMinSyst}. Ainsi on peut supposer, sans perte de généralité, que $(\mathbf{S}_{1},\theta_{1})$ est minimale. Le résultat découle alors des lemmes \ref{lemEqRatio} et \ref{lemxcommun}.
\end{proof}

Si $n \in N^{nr}(\mathbf{S})$ alors $n$ induit une action sur $X^{*}(\mathbf{S})$ et on note $\fr_{n,\mathbf{S}}:=n\vartheta_{\mathbf{S}} \circ \psi$. Notons que $\fr_{n,\mathbf{S}}$ ne dépend que de l'image de $n$ dans $\Wf[S]$.

\begin{Lem}
	\label{lemgStheta}
	Soient $(\mathbf{S},\theta) \in \pairesSt$ et $g\in \G^{nr}$ tels que $g^{-1}\fr(g) \in \NSte$. Alors il existe $(\mathbf{S}',\theta') \in \pairesSt$ et $m \in \mathbb{N}^{*}$ tels que ${}^{g} (\mathbf{S}^{\fr^{m}})=\mathbf{S}'^{\fr^{m}}$ et $g \theta\langle m \rangle = \theta'\langle m \rangle$.
\end{Lem}

\begin{proof}
	Posons $n:=g^{-1}\fr(g) \in \NSte$ et $w\in \WSte$ sa réduction. Soit $m$ tel que $n \theta\langle m \rangle = \theta\langle m \rangle$ et $\fr_{\mathbf{S}}^m=\fr_{n,\mathbf{S}}^{m}$. Alors $\theta\langle m \rangle \in \left[X^{*}(\textbf{S})/(\fr_{n,\mathbf{S}}^m -1)X^{*}(\textbf{S})\right]^{\fr_{n,\mathbf{S}}}$ et par le lemme \ref{lemTrace} il existe $\tilde{\theta} \in X^{*}(\mathbf{S})/(\fr_{n,\mathbf{S}}-1)X^{*}(\mathbf{S})$ tel que $\tilde{\theta}\langle m \rangle = \theta \langle m \rangle$. Définissons $\mathbf{S}':={}^{g}\mathbf{S}$ et $\theta':=g\tilde{\theta} \in X^{*}(\textbf{S}')/(\fr_{\mathbf{S}'} -1)X^{*}(\textbf{S}')$ de sorte que $\theta'\langle m \rangle = g \theta \langle m \rangle$.
\end{proof}

\begin{Lem}
	\label{lemChambreStableSime}
	Dans chaque classe de $\sim_{e}$-équivalence il existe un représentant $(\mathbf{S},\theta)$ tel que l'appartement $\mathcal{A}_{(\mathbf{S},\theta)}$ contienne une chambre $\fr$-stable.
\end{Lem}

\begin{proof}
	Soit $(\mathbf{S},\theta) \in \pairesSt$. Prenons $C$ une chambre de $\mathcal{A}_{(\mathbf{S},\theta)}$. L'automorphisme $\fr$ agit de façon polysimpliciale sur l'appartement donc $\fr(C)$ est encore une chambre. Le groupe $\WSta$ agit transitivement sur l'ensemble des chambres (\cite{bourbaki} Chapitre VI, §2, Proposition 2), donc il existe $w \in \WSta$ tel que $w\fr(C)=C$.
	En particulier $w \in \Wa$ et la proposition \ref{proCommuteNW} montre alors qu'il existe $g \in \G^{nr}$ tel que $g^{-1}\fr(g) \in \NSta$ soit un relèvement de $w$. Prenons $(\mathbf{S}',\theta') \in \pairesSt$ comme dans le lemme \ref{lemgStheta}. On a bien, par définition, que $(\mathbf{S}',\theta') \sim_{e} (\mathbf{S},\theta)$. Posons $C':=gC$ une chambre de $\mathcal{A}_{(\mathbf{S}',\theta')}$. Comme $w\fr(C)=C$, c'est à dire $g^{-1}\fr(g)\fr(C)=C$, on obtient $\fr(C')=C'$ d'où le résultat.
\end{proof}

\begin{Lem}
	\label{lemPointfixew} Soient $(\mathbf{S},\theta) \in \pairesSt$ et $g\in \G^{nr}$. On suppose que $\mathcal{A}_{(\mathbf{S},\theta)}$ contient $C$ une chambre $\fr$-stable et que $g^{-1}\fr(g) \in \NSta$. Alors il existe $\sigma \in \bt \cap \mathcal{A}(\mathbf{S},\knr)$ et $h \in \NSta$ tels que $h^{-1}g^{-1}\fr(g)\fr(h)$ fixe $\sigma$.
\end{Lem}

\begin{proof}
	Notons $w \in \WSta$ la réduction de $g^{-1}\fr(g)$. Posons $\mathbf{S}':={}^{g}\mathbf{S}$. Par le lemme \ref{lemSommetStable} il existe $y$ un point $\fr$-stable de $\mathcal{A}(\mathbf{S}',\knr)$. Le point $z:=g^{-1}y$ est alors un point $w\fr$-stable de $\mathcal{A}_{(\mathbf{S},\theta)}$.

	Le groupe $\WSta$ agit transitivement sur les chambres de $\mathcal{A}_{(\mathbf{S},\theta)}$ donc il existe $u \in \WSta$ tel que $x := u \cdot z \in C$. De l'égalité $w\fr(z)=z$ on déduit que $w'\fr(x)=x$ avec $w'=uw\fr(u)^{-1}$. Posons $h \in \NSta$ un relèvement de $u$.

	Nommons $C'$ la chambre $w'\cdot C$. Ainsi $x=w'\fr(x)\in w'\fr(C)=w'C=C'$ et on a aussi $x \in C$ donc il existe $v \in \WStx{x}$ tel que $v\cdot C'=C$ ($\WStx{x}$ agit transitivement sur les chambres de $\mathcal{A}_{(\mathbf{S},\theta)}$ qui contiennent $x$, \cite{bourbaki} Chapitre VI, §2, Proposition 2 et \cite{bourbaki} Chapitre V, §3, Proposition 1). Alors $vw' \cdot C = C$ et comme $vw' \in \WSta$, $vw'=1$. De plus $x=w'\fr(x)$ donc $x=\fr(x)$. On prend alors $\sigma$ le plus petit polysimplexe de $\mathcal{A}(\mathbf{S},\knr)$ qui contient $x$. Comme $x=\fr(x)$ on a également $\sigma=\fr(\sigma)$ et $\sigma$ convient.
\end{proof}

\begin{Lem}
	\label{lemMemePiMemeSysteme}
	Soient $(\mathbf{S}_{1},\theta_{1})$ et $(\mathbf{S}_{2},\theta_{2})$ deux paires telles que $(\mathbf{S}_{1},\theta_{1}) \sim_{e} (\mathbf{S}_{2},\theta_{2})$. Alors $(\mathbf{S}_{1},\theta_{1})$ et $(\mathbf{S}_{2},\theta_{2})$ définissent le même système de classes de conjugaison 0-cohérent.
\end{Lem}

\begin{proof}
	D'après le lemme \ref{lemChambreStableSime} il existe une paire $(\mathbf{S},\theta)$ telle que $\mathcal{A}_{(\mathbf{S},\theta)}$ contienne une chambre $\fr$-stable et $(\mathbf{S}_{1},\theta_{1}) \sim_{e} (\mathbf{S}_{2},\theta_{2}) \sim_{e} (\mathbf{S},\theta)$. Il nous suffit donc de montrer par exemple que $(\mathbf{S}_{1},\theta_{1})$ et $(\mathbf{S},\theta)$ définissent le même système de classes de conjugaison 0-cohérent. Prenons $g\in \G^{nr}$ et $m\in\mathbb{N}^{*}$ tel que ${}^{g} (\mathbf{S}^{\fr^{m}})=\mathbf{S}_{1}^{\fr^{m}}$, $g \theta\langle m \rangle = \theta_{1}\langle m \rangle$ et $g^{-1}\fr(g) \in \NSta$. Par le lemme précédent \ref{lemPointfixew} on peut supposer qu'il existe $\sigma \in \bt \cap \mathcal{A}(\mathbf{S},\knr)$ tel que $g^{-1}\fr(g)$ fixe $\sigma$. Par le lemme 2.3.1 de \cite{DebackerReeder} nous savons que $H^{1}(\fr,\paranr{\G}{\sigma})=1$ donc il existe $g_{\sigma} \in \paranr{\G}{\sigma}$ tel que $g^{-1}\fr(g)=g_{\sigma}^{-1}\fr(g_{\sigma})$ et donc $g_{\sigma}g^{-1} \in \G$. Ainsi, quitte à remplacer $(\mathbf{S},\theta)$ par un de ses $\G$-conjugués on peut supposer que $g=g_{\sigma}$. On conclut par le lemme \ref{lemEqRatio}.
\end{proof}

On déduit alors du lemme \ref{lemMemeSystemeMemePI} et du lemme \ref{lemMemePiMemeSysteme} le théorème suivante

\begin{The}
\label{theBijSime}
L'application $\pairesSt \to \Smin$ induit une bijection 
\[\pairesSt/{\sim_e} \tosim \Smin.\]
On obtient donc une décomposition de $\rep[\ld][0]{G}$
\[ \rep[\ld][0]{G} = \prod_{[\gpalg{S},\theta]_e \in \pairesSt/{\sim_e}} \rep[\ld][[\gpalg{S},\theta]_e]{G},\]
où $\rep[\ld][[\gpalg{S},\theta]_e]{G} = \rep[\ld][(\gpalg{S},\theta)]{G}$, pour $(\gpalg{S},\theta)$ n'importe quel représentant de la classe de $\sim_e$-équivalence $[\gpalg{S},\theta]_e$.
\end{The}

Cette décomposition est minimale pour la méthode utilisée.

\begin{Rem}
\label{remdecompo}
Toute relation d'équivalence $\sim$ sur $\pairesSt$ plus faible que $\sim_e$ (en particulier $\sim_{\infty}$ et $\sim_r$) conduit à une décomposition de $\rep[\ld][0]{\G}$ : 
\[ \rep[\ld][0]{G} = \prod_{\mathcal{C} \in \pairesSt/{\sim}} \rep[\ld][\mathcal{C}]{G},\]
où $\rep[\ld][\mathcal{C}]{G}=\prod \rep[\ld][[\gpalg{S},\theta]_e]{G}$, le produit étant pris sur les classes de $\sim_e$-équivalence $[\gpalg{S},\theta]_e$ telles que $(\gpalg{S},\theta) \in \mathcal{C}$.
\end{Rem}

Notons également que par construction les objets simples de $\rep[\ld][(\gpalg{S},\theta)]{G}$ sont décrits par : $\pi \in \Irr_{\ld}(G) \cap \rep[\ld][(\gpalg{S},\theta)]{G}$ si et seulement s'il existe $(\gpalg{S}',\theta') \in \pairesSt$ et $\sigma \in \mathcal{A}(\gpalg{S}')^{\fr}$ tels que 
\begin{itemize}
\item $(\gpalg{S}',\theta') \sim_e (\gpalg{S},\theta)$
\item $\langle {}^{*}\mathcal{R}_{\gpfinialg{S}'}^{\quotredalg{G}{\sigma}}(\pi^{\radpara{G}{\sigma}}),\theta'\rangle_{\gpfini{S}'_{\ld}} \neq 0$
\end{itemize}
où $\gpfini{S}'_{\ld}$ est  $\gpfini{S}'$ si $\ld=\Ql$ ou le sous-groupe maximal de $\gpfini{S}'$ d'ordre premier à $\lprime$ si $\ld=\Zl$.
\section{La relation d'équivalence \texorpdfstring{$\sim_{r}$}{~r}}

\label{secSimr}

On suppose toujours que $\gpalg{\G}$ se déploie sur une extension non-ramifiée de $\kk$. Le but de cette section est d'étudier les décompositions produites par $\sim_r$ et $\sim_\infty$, comme expliqué dans la remarque \ref{remdecompo} (on traitera la relation $\sim_e$ dans la section suivante).

Dans \cite[Théorème 3.4.5]{lanard} nous avons obtenu une décomposition de $\rep[\ld][0]{G}$ indexée par $\Lpm{\Ild}$, l'ensemble des paramètres de Langlands inertiels modérés,
\[ \rep[\ld][0]{\G} = \prod_{\phi \in \Lpm{\Ild}} \rep[\ld][\phi]{\G}.\]
Celle ci est obtenue en identifiant $\Lpm{\Ild}$ à $(\ss*{\gpfinialg*{G}})^{\fr}$. Nous procédons de même ici en introduisant un ensemble $\Lpbm{\Ild}$, et en montrant que l'on a un diagramme commutatif
\[ \xymatrix{
\pairesSt/{\sim_r} \ar@{^{(}->}[r] \ar@{->}[d] & \ss*{\gpfini*{\G}} \ar@{->}[d] \ar@{<->}[r]^-{\sim} & \Lpbm{\Ild} \ar@{->}[d]\\
\pairesSt/{\sim_\infty} \ar@{^{(}->}[r]  & (\ss*{\gpfinialg*{\G}})^{\fr} \ar@{<->}[r]^-{\sim} & \Lpm{\Ild} }\]
La décomposition donnée par $\sim_\infty$ est la même que celle de \cite{lanard} (l'objet principal de cette section est donc $\sim_r$) et $\sim_r$ donne une décomposition plus fine
\[ \rep[\ld][0]{\G} = \prod_{(\phi, \sigma) \in \Lpbm{\Ild}} \rep[\ld][(\phi,\sigma)]{\G}.\]

Haines définit dans \cite{haines} définition 5.3.3, une notion d'équivalence inertielle pour des paramètres $\lambda : \weil \to \Ldual{G}[\Ql]$. Nous étendrons cette dernière en une notion de $\lprime$-équivalence inertielle. On notera  $\mathcal{B}_{m,\ld}^{st}$ l'ensemble des paramètres de Weil modérés à équivalence inertielle près ($\lprime$-équivalence inertielle si $\ld=\Zl$). On construira une bijection $\mathcal{B}^{st}_{m,\ld} \tosim \Lpbm{\Ild}$, permettant de réinterpréter la décomposition précédente en
\[ \rep[\ld][0]{\G} = \prod_{(\phi, \sigma) \in \Lpbm{\Ild}} \rep[\ld][(\phi,\sigma)]{\G} = \prod_{[\varphi] \in \mathcal{B}_{m,\ld}^{st}} \rep[\ld][[\varphi]]{\G}.\]
Dans le cas où $\G$ est un groupe classique non-ramifié, $\ld=\Ql$ et $p \neq 2$, cette décomposition est compatible à la correspondance de Langlands locale et est la décomposition de $\rep[\ld][0]{\G}$ en "blocs stables" (c'est à dire que ces facteurs correspondent à des idempotents primitifs du centre de Bernstein stable).

\subsection{Interprétation sur\texorpdfstring{ $\gpfinialg*{\G}$}{G*}}

\label{secpairestconjss}

Dans cette partie, nous allons expliquer le lien entre les $\sim_r$-classes (resp. les $\sim_\infty$-classes, resp. les $\sim_1$-classes) et les classes de conjugaison dans $\gpfinialg*{\G}$. De façon plus précise, nous introduisons un ensemble $\pairesSt*=\{(\gpfinialg*{S},t)\}$, où $\textsf{\textbf{S}}^{*}$ est un $\res$-tore maximal de $\gpfinialg*{\G}$ et $t \in (\textsf{\textbf{S}}^{*})^{\fr}$ est d'ordre inversible dans $\ld$ ainsi qu'une application $\pairesSt \to \pairesSt*/{\sim_{\gpfini*{G}}}$, où $\sim_{\gpfini*{G}}$ est la conjugaison par $\gpfini*{G}$. Nous montrerons alors que cette application induit le diagramme commutatif suivant
\[ \xymatrix{
	\pairesSt/{\sim_1} \ar@{^{(}->}[d] \ar@{->}[r] &  \pairesSt/{\sim_r} \ar@{^{(}->}[d] \ar@{->}[r] & \pairesSt/{\sim_\infty} \ar@{^{(}->}[d]\\
	\pairesSt*/{\sim_{\gpfini*{G}}} \ar@{->}[r] & \ss*{\gpfini*{G}} \ar@{->}[r] & (\ss*{\gpfinialg*{G}})^{\fr} }\] 
Si $\G$ est de plus quasi-déployé, les injections verticales sont alors des bijections.

\begin{Def}
On note $\pairesSt*$ l'ensemble des paires $(\gpfinialg*{S},t)$ où $\textsf{\textbf{S}}^{*}$ est un $\res$-tore maximal de $\gpfinialg*{\G}$ et $t \in (\textsf{\textbf{S}}^{*})^{\fr}$ est d'ordre inversible dans $\ld$.
\end{Def}

Définissons une application $\pairesSt \to \pairesSt*/{\sim_{\gpfini*{G}}}$, où $\sim_{\gpfini*{G}}$ désigne la conjugaison par $\gpfini*{G}$.

Soit $(\gpalg{S},\theta) \in \pairesSt$. Prenons $\gpfinialg*{S}$ un $\res$-tore maximal de $\gpfinialg*{\G}$ dual de $\gpalg{S}$ (la dualité ici signifie que la donnée radicielle de $\gpalg{S}$ munie de l'action du générateur de $\Gal(\knr/\kk)$ est la duale de celle de $\gpfinialg*{S}$ munie de l'action du générateur de $\Gal(\resalg/\res)$). Notons que ce dernier est bien défini à $\gpfini*{G}$-conjugaison près. L'identification $X^{*}(\gpalg{S}) \simeq X_{*}(\gpfinialg*{S})$ fournit une bijection $X^{*}(\mathbf{S})/(\fr_{\mathbf{S}}-1)X^{*}(\mathbf{S}) \tosim X_{*}(\gpfinialg*{S})/(\fr-1)X_{*}(\gpfinialg*{S})$. Composée avec la bijection $X_{*}(\gpfinialg*{S})/(\fr-1)X_{*}(\gpfinialg*{S}) \tosim (\gpfinialg*{S})^{\fr}$ de l'annexe \ref{secIsoTores}, on obtient la bijection
\[X^{*}(\mathbf{S})/(\fr_{\mathbf{S}}-1)X^{*}(\mathbf{S}) \tosim (\gpfinialg*{S})^{\fr}.\]
À $\theta$ est donc associé un $t \in (\gpfinialg*{S})^{\fr}$. La paire  $(\gpfinialg*{S},t)$ est bien définie à $\gpfini*{G}$-conjugaison près, et on obtient de la sorte une application
\[ \pairesSt \to \pairesSt*/{\sim_{\gpfini*{G}}}.\]

\begin{Pro}
\label{proInjSim1}
L'application $\pairesSt \to \pairesSt*/{\sim_{\gpfini*{G}}}$ passe au quotient et induit une injection $\pairesSt/{\sim_1} \hookrightarrow \pairesSt*/{\sim_{\gpfini*{G}}}$.

Si $\G$ est de plus quasi-déployé, cette injection est alors une bijection.
\end{Pro}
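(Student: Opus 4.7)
The plan is to verify three things: (a) the map $\pairesSt \to \pairesSt*/{\sim_{\gpfini*{G}}}$ is constant on $\sim_1$-classes, so descends to $\pairesSt/{\sim_1}$; (b) the descended map is injective; (c) when $\gpalg{\G}$ is quasi-déployé, it is also surjective. The overarching tool is that both sides are classified by the same combinatorial data: $\fr$-conjugacy classes in the absolute Weyl group $W$ of $\gpalg{\G}$. More precisely, $\G^{nr}$-conjugacy classes of unramified maximal $\kk$-tori of $\gpalg{\G}$ (respecting the $\kk$-structure) and $\gpfini*{G}$-conjugacy classes of maximal $\res$-tori of $\gpfinialg*{\G}$ are both in bijection with the set of $\fr$-conjugacy classes in $W$, the two $\fr$-actions being matched under the duality of root data.

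For (a), suppose $(\mathbf{S}_{1},\theta_{1}) \sim_1 (\mathbf{S}_{2},\theta_{2})$ via some $g \in \G^{nr}$. As observed just before the definition of $\sim_m$, the condition ${}^g \mathbf{S}_1^{\fr} = \mathbf{S}_2^{\fr}$ (with $m=1$) is equivalent to $\Ad(g) : \gpalg{S}_{1} \to \gpalg{S}_{2}$ being a $\kk$-isomorphism of tori. By the classification recalled above, this $\kk$-isomorphism corresponds under duality to some $g^* \in \gpfini*{G}$ conjugating $\gpfinialg*{S}_{1}$ onto $\gpfinialg*{S}_{2}$. The naturality of the bijections
\[X^{*}(\mathbf{S}_i)/(\fr_{\mathbf{S}_i}-1)X^{*}(\mathbf{S}_i) \tosim (\gpfinialg*{S}_i)^{\fr}\]
used to define $\theta_i \mapsto t_i$ then translates $g \cdot \theta_1 = \theta_2$ into $g^* \cdot t_1 = t_2$, so the images are $\gpfini*{G}$-conjugate. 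For (b), one runs this argument in reverse: a $g^* \in \gpfini*{G}$ realising $(\gpfinialg*{S}_{1}, t_1) \sim_{\gpfini*{G}} (\gpfinialg*{S}_{2}, t_2)$ produces, by the same classification, some $g \in \G^{nr}$ with $\Ad(g) : \gpalg{S}_{1} \to \gpalg{S}_{2}$ a $\kk$-isomorphism, and the same diagram reverses the character translation to give $g \cdot \theta_1 = \theta_2$.

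For (c), assume $\gpalg{\G}$ is quasi-déployé. Then the map from $\G$-conjugacy classes of unramified maximal $\kk$-tori of $\gpalg{\G}$ to $\fr$-conjugacy classes in $W$ is surjective (the usual $H^1$ obstruction to lifting a stable class of tori to a rational one vanishes in the quasi-déployé case). Hence every $\gpfini*{G}$-conjugacy class of maximal $\res$-tori of $\gpfinialg*{\G}$ is dual to some unramified $\gpalg{S}$, and every $t \in (\gpfinialg*{S})^{\fr}$ has a preimage $\theta$ through the character bijection, establishing bijectivity. The main technical obstacle I expect is the careful matching of the two $\fr$-actions on $W$ under duality, and the promotion of an abstract dual isomorphism of tori into an explicit conjugation by $g^* \in \gpfini*{G}$; these are essentially the statements collected in the appendix on duality of tori that already underlies the construction of the map $\pairesSt \to \pairesSt*/{\sim_{\gpfini*{G}}}$.
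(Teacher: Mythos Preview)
Your proposal is correct and follows essentially the same route as the paper: both sides are parametrized by $\fr$-conjugacy classes of pairs $(w,\theta_w)$ (equivalently $(w,t_w)$) in the Weyl group of a fixed reference torus, via DeBacker's classification of unramified maximal tori (lemma 4.3.1 of \cite{debacker}), and the duality bijection of the appendix matches these two parametrizations. The paper is simply more explicit about this intermediate step, writing out the coordinates $(w,\theta_w)\leftrightarrow(w,t_w)$ on the reference pair $(\gpalg{T},\gpfinialg*{T})$ rather than invoking naturality abstractly; this is precisely what resolves the ``technical obstacle'' you flag at the end (promoting the abstract dual isomorphism to a conjugation by some $g^*\in\gpfini*{G}$), so your instinct about where the work lies is right.
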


\begin{proof}
Pour traiter la classe de $\sim_1$-équivalence, nous allons utiliser les résultats de \cite{debacker}. Pour cela nous fixons $\gpalg{T}$ un tore non ramifié de $\gpalg{\G}$ et $\textsf{\textbf{T}}^{*}$ un $\res$-tore maximal de $\gpfinialg*{\G}$ en dualité avec $\gpalg{T}$. On note $\Wf$ le groupe de Weyl fini de $\gpalg{T}$.

Notre groupe $\gpalg{\G}$ étant déployé sur $\knr$, nous savons par le lemme 4.3.1 de \cite{debacker} qu'il y a une injection entre les classes de $\sim_{1}$-équivalence de tores maximaux non-ramifiés et les classes de $\fr$-conjugaison dans $\Wf$, c'est à dire $H^{1}(\fr,\Wf)$ (cette injection est bijective si on suppose de plus $\G$ quasi-déployé) . Cette application fonctionne de la manière suivante. Soient $\gpalg{S}$ et $\gpalg{T}$ deux tores non-ramifiés. Ceux-ci étant non-ramifiés, ils sont conjugués sous $\G^{nr}$. Ainsi, il existe $g \in \G^{nr}$ tel que $S^{nr}={}^{g}T^{nr}$. Or $\fr(S^{nr})=S^{nr}$ donc cela définit un élément $n := g^{-1}\fr(g) \in Z^{1}(\fr,N^{nr})$ où $N^{nr}:=N(\G^{nr},T^{nr})$. On obtient $w := nT^{nr} \in Z^{1}(\fr,\Wf)$ dont la classe $[w] \in H^{1}(\fr,\Wf)$ est indépendante du choix de $g$. Notons également que $(S^{nr})^{\fr}={}^{g}((T^{nr})^{\fr_{w}})$ où $\fr_{w}:=\Ad(n) \circ \fr$.

Soit maintenant $(\gpalg{S},\theta) \in \pairesSt$. Comme ci-dessus, choisissons $g \in G^{nr}$ tel que $S^{nr}={}^{g}T^{nr}$ et posons $w$ l'image de $g^{-1}\fr(g)$ dans $\Wf$ et $\theta_w = g^{-1}\theta \in X^{*}(\textbf{T})/((w\vartheta) \circ \psi -1)X^{*}(\textbf{T})$. Alors la même preuve que le lemme 4.3.1 de \cite{debacker} en rajoutant les caractères $\theta$ montre que la classe de $\sim_{1}$-équivalence de $(\gpalg{S},\theta)$ est caractérisée par la classe de $\fr$-conjugaison de $(w,\theta_w)$.

La bijection  $X^{*}(\textbf{T})/((w\vartheta)\circ \psi -1)X^{*}(\textbf{T}) \simeq X_{*}(\gpfinialg*{T})/(\fr_{w}-1)X_{*}(\gpfinialg*{T}) \simeq (\gpfinialg*{T})^{\fr_{w}}$ (rappelée dans l'annexe \ref{secIsoTores}) envoie $\theta_w$ sur $t_w\in (\textsf{\textbf{T}}^{*})^{\fr_{w}}$. La classe de $\fr$-conjugaison de $(w,t_w)$ est elle même en correspondance avec la classe de conjugaison d'une paire $(\gpfinialg*{S},t)$ (cette correspondance est aussi rappelée dans l'annexe \ref{secIsoTores}). Notons que $\gpfinialg*{S}$ est en dualité avec $\gpalg{S}$, et $t$ correspond à $\theta$ via la bijection $X^{*}(\mathbf{S})/(\fr_{\mathbf{S}}-1)X^{*}(\mathbf{S}) \tosim (\gpfinialg*{S})^{\fr}$, donc cette classe de conjugaison est bien l'image de $(\gpalg{S},\theta)$ par l'application $\pairesSt \to \pairesSt*/{\sim_{\gpfini*{G}}}$, d'où le résultat.
\end{proof}

Nous avons une application bien définie $\pairesSt*/{\sim_{\gpfini*{G}}} \to \ss*{\gpfini*{G}}$, $(\gpfinialg*{S},t) \mapsto t$. Celle-ci induit donc deux applications $\pairesSt \to \ss*{\gpfini*{G}}$ et $\pairesSt \to (\ss*{\gpfinialg*{G}})^{\fr}$ (en composant avec $\ss*{\gpfini*{G}} \to (\ss*{\gpfinialg*{G}})^{\fr}$).

\begin{Pro}
\label{prosimconj}
Les applications précédentes passent au quotient pour les relations d'équivalence $\sim_r$ et $\sim_\infty$ et donnent des injections rendant le diagramme suivant commutatif
\[ \xymatrix{
	\pairesSt/{\sim_1} \ar@{^{(}->}[d] \ar@{->}[r] &  \pairesSt/{\sim_r} \ar@{^{(}->}[d] \ar@{->}[r] & \pairesSt/{\sim_\infty} \ar@{^{(}->}[d]\\
	\pairesSt*/{\sim_{\gpfini*{G}}} \ar@{->}[r] & \ss*{\gpfini*{G}} \ar@{->}[r] & (\ss*{\gpfinialg*{G}})^{\fr} }\] 
Si $\G$ est de plus quasi-déployé, ces injections sont des bijections.
\end{Pro}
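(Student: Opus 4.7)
Le plan est de réduire l'énoncé à la Proposition \ref{proInjSim1} en analysant les fibres des flèches horizontales cibles $\pairesSt*/{\sim_{\gpfini*{\G}}}\to\ss*{\gpfini*{\G}}\to(\ss*{\gpfinialg*{\G}})^{\fr}$ en regard des raffinements $\sim_r$, $\sim_\infty$ de $\sim_1$ sur $\pairesSt$. La commutativité des deux carrés est évidente : toutes les flèches verticales envoient $(\gpalg{S},\theta)$ sur la classe de conjugaison du $t\in(\gpfinialg*{S})^{\fr}$ associé par la dualité de l'annexe \ref{secIsoTores}, et les flèches horizontales cibles sont les oublis naturels. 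Pour la factorisation, je pars de $(\gpalg{S}_1,\theta_1)\sim_m(\gpalg{S}_2,\theta_2)$ témoignée par $g\in G^{nr}$ ; la conjugaison par $g$ transporte $(\gpalg{S}_1^{\fr^m},\theta_1\langle m\rangle)$ sur $(\gpalg{S}_2^{\fr^m},\theta_2\langle m\rangle)$, et par passage au dual on obtient un élément $x\in\gpfinialg*{\G}$ qui conjugue $(\gpfinialg*{S}_1,t_1)$ sur $(\gpfinialg*{S}_2,t_2)$, établissant ainsi la factorisation à travers $\sim_\infty$. Pour $\sim_r$, la condition $g^{-1}\fr(g)\in\NSte$ assure que $x$ peut être ajusté pour commuter avec $\fr$ (l'appartenance à $\NSte$ se traduit côté dual par $x^{-1}\fr(x)$ dans le normalisateur de $(\gpfinialg*{S}_1,t_1)$ dans $C_{\gpfinialg*{\G}}(t_1)^{\circ}$), de sorte que $t_1,t_2$ soient $\gpfini*{\G}$-conjugués.

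Pour l'injectivité de $\pairesSt/{\sim_r}\hookrightarrow\ss*{\gpfini*{\G}}$, soient $(\gpalg{S}_1,\theta_1),(\gpalg{S}_2,\theta_2)$ d'image commune $[t]\in\ss*{\gpfini*{\G}}$. Fixons des paires duales $(\gpfinialg*{S}_i,t_i)$ ; en remplaçant $(\gpalg{S}_2,\theta_2)$ par un $\G$-conjugué (ce qui préserve la $\sim_r$-classe) puis $(\gpfinialg*{S}_2,t_2)$ par un $\gpfini*{\G}$-conjugué, on se ramène à $t_1=t_2=t$. Les deux tores $\gpfinialg*{S}_1,\gpfinialg*{S}_2$ sont alors des tores maximaux $\fr$-stables de $C_{\gpfinialg*{\G}}(t)^{\circ}$, groupe réductif connexe sur $\res$, donc conjugués par un $y\in C_{\gpfinialg*{\G}}(t)^{\circ}$ tel que $y^{-1}\fr(y)$ soit dans le normalisateur de $\gpfinialg*{S}_1$ dans $C_{\gpfinialg*{\G}}(t)^{\circ}$. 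Le Lemme \ref{lemIdentifWeyl} identifie le groupe de Weyl correspondant à $\WStf$ ; par la définition de $\NSte$ comme préimage de $\WStf$, la correspondance des paires de la Proposition \ref{proInjSim1} permet de relever $y$ en $g\in G^{nr}$ réalisant une $\sim_m$-équivalence entre $(\gpalg{S}_1,\theta_1)$ et $(\gpalg{S}_2,\theta_2)$ avec $g^{-1}\fr(g)\in\NSte$. L'injectivité de $\pairesSt/{\sim_\infty}\hookrightarrow(\ss*{\gpfinialg*{\G}})^{\fr}$ s'obtient par le même raisonnement en abandonnant la compatibilité $\fr$. L'obstacle principal réside précisément dans ce dictionnaire entre $\NSte\subseteq N^{nr}(\gpalg{S})$ et le normalisateur de $(\gpfinialg*{S},t)$ dans $C_{\gpfinialg*{\G}}(t)^{\circ}$, qu'il faut extraire conjointement du Lemme \ref{lemIdentifWeyl} et de la preuve de la Proposition \ref{proInjSim1}.

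Dans le cas quasi-déployé, la flèche verticale de gauche est déjà bijective par la Proposition \ref{proInjSim1} ; d'autre part la composée $\pairesSt*/{\sim_{\gpfini*{\G}}}\to\ss*{\gpfini*{\G}}\to(\ss*{\gpfinialg*{\G}})^{\fr}$ est alors surjective puisque tout tore maximal $\fr$-stable de $\gpfinialg*{\G}$ provient d'un tore maximal non-ramifié de $\gpalg{\G}$, et toute classe semi-simple d'ordre inversible dans $\ld$ rencontre un tel tore. Une chasse au diagramme fournit enfin la surjectivité des deux flèches verticales restantes.
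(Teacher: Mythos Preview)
Your proposal is correct and takes essentially the same approach as the paper: both reduce to Proposition \ref{proInjSim1} and use Lemma \ref{lemIdentifWeyl} to translate the condition $g^{-1}\fr(g)\in\NSte$ into rational conjugacy on the dual side. The paper's presentation is slightly more streamlined---it observes that $\sim_m$ is literally $\sim_1$ with $\fr$ replaced by $\fr^m$ (so Proposition \ref{proInjSim1} over $\kk_m$ yields $\pairesSt/{\sim_\infty}\hookrightarrow(\ss*{\gpfinialg*{\G}})^{\fr}$ directly) and packages the $\sim_r$ case as a single if-and-only-if rather than splitting into factorization and injectivity---but the mathematical content is the same as yours.
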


\begin{proof}
Soit $m\in \mathbb{N}^{*}$. Passer de $\sim_{1}$ à $\sim_{m}$ revient à changer $\fr$ en $\fr^{m}$. De plus l'application $\theta \mapsto \theta \langle m \rangle$ correspond au niveau des groupes finis à l'application $t\in (\textsf{\textbf{T}}^{*})^\fr \mapsto t\in (\textsf{\textbf{T}}^{*})^{\fr^{m}}$ (voir par example \cite{DeligneLusztig} section 5.3). On en déduit (comme dans \ref{proInjSim1}) que l'on a $\pairesSt/{\sim_m} \hookrightarrow \pairesSt*/{\sim_{(\gpfinialg*{G})^{\fr^m}}}$. En particulier on a que $\pairesSt/{\sim_\infty} \hookrightarrow (\ss*{\gpfinialg*{G}})^{\fr}$ .

Soient $(\mathbf{S}, \theta),(\mathbf{S}', \theta') \in \pairesSt$. Notons $(\gpfinialg*{S},t)$ et $((\gpfinialg{S}')^*,t')$ des représentants des images respectives de $(\mathbf{S}, \theta)$ et $(\mathbf{S}', \theta')$ par $\pairesSt \to \pairesSt*/{\sim_{\gpfini*{G}}}$. Alors $(\mathbf{S}, \theta)$ et $(\mathbf{S}', \theta')$ sont $\sim_{r}$-équivalentes si et seulement si elles sont $\sim_{\infty}$-équivalentes et que le $g$ reliant les deux paires vérifie $g^{-1}\fr(g) \in \NSte$. Par ce qui précède, être $\sim_{\infty}$-équivalent est équivalent à ce que $t$ et $t'$ soient géométriquement conjugués. Notons $w$ la réduction de $g^{-1}\fr(g)$ dans $\WStf$. Comme dans le lemme \ref{lemIdentifWeyl}, nous savons que $\WStf$ correspond au groupe de Weyl de $C_{\gpfinialg*{\G}}(t)^{\circ}$. Ainsi $(\mathbf{S}, \theta)$ et $(\mathbf{S}', \theta')$ sont $\sim_{r}$-équivalentes si et seulement si $t'$ est conjugué à $t$ par un élément $g^{*} \in \gpfinialg*{\G}$ vérifiant $w=(g^{*})^{-1}\fr(g^{*}) \in \WStf$ (c'est à dire tel que $g^{*} \in (\gpfinialg*{\G})^{\fr} C_{\gpfinialg*{\G}}(t)^{\circ}$) (une fois le tore $\gpalg{S}$ fixé la dualité fonction comme celle sur les groupes finis rappelée après la proposition \ref{proBijPairesSt}), si et seulement si $t$ et $t'$ sont rationnellement conjugués.
\end{proof}

\subsection{Classes de conjugaison rationnelles}

\label{secClassesRatioX}

La proposition \ref{prosimconj} montre que les classes de $\sim_r$-équivalence correspondent à des classes de conjugaison rationnelles. Dans \cite{lanard}, pour relier les classes de conjugaison géométriques semi-simples $\fr$-stables aux paramètres de l'inertie, nous avons interprété $(\ss{\gpfinialg*{\G}})^{\fr}$ comme $((X\otimes_{\mathbb{Z}} \resalg^{\times})/\Wf)^{\fr}$. Nous souhaitons faire de même ici, mais pour les classes de conjugaison rationnelles.  Nous introduisons donc un ensemble $\widetilde{X}_{\ld}$ muni d'une action de $\Wf$ de sorte que l'on ait le diagramme commutatif
\[ \xymatrix{
	\widetilde{X}_{\ld}/\Wf \ar@{->}[r]^-{\sim} \ar@{->}[d] & \ss*{\gpfini*{\G}} \ar@{->}[d]\\
	((X\otimes_{\mathbb{Z}} \resalg^{\times})_{\ld}/\Wf)^{\fr} \ar@{->}[r]^-{\sim} & (\ss*{\gpfinialg*{\G}})^{\fr} }\]
($(X\otimes_{\mathbb{Z}} \resalg^{\times})_{\ld}$ désigne les éléments de $(X\otimes_{\mathbb{Z}} \resalg^{\times})$ d'ordre inversible dans $\ld$).

\sautintro

Soit $s \in \gpfinialg*{T} \simeq X \otimes_{\mathbb{Z}} \resalg^{\times}$, on note $\Ws*{s}$ le sous-groupe de $\Wf$ formé par les éléments $w$ tels que $w\cdot s=s$, qui correspond au groupe de Weyl de $C_{\gpfinialg*{\G}}(s)$ par rapport à $\gpfinialg*{T}$, et $\Ws{s}$ le sous-groupe de $\Wf$ engendré par les $s_{\alpha}$ tels que $\alpha(s)=1$, qui correspond au groupe de Weyl de $C_{\gpfinialg*{\G}}(s)^{\circ}$ par rapport à $\gpfinialg*{T}$. Posons
\[\widetilde{X}:=\{ (s,\bar{w})\in (X \otimes_{\mathbb{Z}} \resalg^{\times}) \times (\Ws{s}\backslash \Wf), s = \bar{w}\cdot \fr(s) \}\]
et $\widetilde{X}_{\ld}$ le sous-ensemble de $\widetilde{X}$ formé par les $(s,\bar{w})$ avec $s$ d'ordre inversible dans $\ld$. Le groupe de Weyl $\Wf$ agit sur $\widetilde{X}_{\ld}$ par
\[v \cdot (s,\bar{w})=(v \cdot s, \overline{vw\fr(v)^{-1}}),\ v \in \Wf.\]
Vérifions que $\overline{vw\fr(v)^{-1}}$ a bien un sens. Si $\bar{w}_1=\bar{w}_2$ dans $\Ws{s}\backslash \Wf$, c'est à dire $w_1=u w_2 $ avec $u \in \Ws{s}$, alors $vw_1\fr(v)^{-1}=(vuv^{-1})(vw_2\fr(v)^{-1})$ et comme $vuv^{-1} \in \Ws{v.s}$ on a bien le résultat voulu.

\bigskip

On va maintenant relier $\widetilde{X}_{\ld}/\Wf$ aux classes de conjugaison rationnelles semi-simples dans $\gpfini*{\G}$ d'ordre inversible dans $\ld$.

Soient $(s,\bar{w}) \in \widetilde{X}_{\ld}$ et $n \in \gpfinialg*{\G}$ un relèvement de $\bar{w}$. Comme $s=\bar{w}\fr(s)$, on a $s=n\fr(s)n^{-1}$. Écrivons $n$ sous la forme $n=g^{-1}\fr(g)$ (grâce à la surjectivité de l'application de Lang), alors $t:=gsg^{-1}$ vérifie $t=\fr(t)$. On associe à $(s,\bar{w})$ la classe de conjugaison rationnelle de $t$.

\begin{Lem}
Ce procédé ne dépend pas des choix effectués et  définit une application $\widetilde{X}_{\ld} \rightarrow \ss*{\gpfini*{\G}}$.
\end{Lem}

\begin{proof}
Prenons un autre relèvement $n'=(g')^{-1}\fr(g')$ de $\bar{w}$. Alors $n'n^{-1} \in C_{\gpfinialg*{\G}}(s)^{\circ}$. Le groupe réductif connexe $C_{\gpfinialg*{\G}}(s)^{\circ}$ est $\fr_{n}:=\Ad(n) \circ \fr$ stable, donc il existe $h \in C_{\gpfinialg*{\G}}(s)^{\circ}$ tel que $n'n^{-1}=h^{-1}\fr_{n}(h)=h^{-1}n\fr(h)n^{-1}$. Donc $n'=h^{-1}n\fr(h)=(gh)^{-1}\fr(gh)$. Ainsi $gh(g')^{-1} \in (\gpfinialg*{\G})^{\fr}$ et $t' := g's(g')^{-1} = (gh(g')^{-1})^{-1}t(gh(g')^{-1})$ est rationnellement conjugué à $t$.
\end{proof}

\begin{Pro}
\label{proXtildeG}
Cette application passe au quotient pour fournir une bijection naturelle $\widetilde{X}_{\ld}/\Wf \overset{\sim}{\longrightarrow} \ss*{\gpfini*{\G}}$.
\end{Pro}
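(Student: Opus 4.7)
Ma preuve se décomposerait en trois étapes correspondant à : la $\Wf$-équivariance (pour factoriser l'application par le quotient), la surjectivité, et enfin l'injectivité qui constitue la difficulté principale.

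\textbf{$\Wf$-équivariance et surjectivité.} Pour $v \in \Wf$ et un relèvement $\tilde{v} \in N(\gpfinialg*{T})$, je remarquerais que si $n = g^{-1}\fr(g)$ relève $\bar{w}$, alors
\[\tilde{v}\, n\, \fr(\tilde{v})^{-1} = (g\tilde{v}^{-1})^{-1}\fr(g\tilde{v}^{-1})\]
relève $\overline{vw\fr(v)^{-1}}$. En prenant $g' := g\tilde{v}^{-1}$ dans la construction pour $v\cdot(s,\bar{w})$, l'élément associé serait $g'(\tilde{v}s\tilde{v}^{-1})(g')^{-1} = gsg^{-1} = t$, ce qui montrerait que l'application se factorise par $\widetilde{X}_{\ld}/\Wf$. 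Pour la surjectivité, je partirais d'un représentant $t$ d'une classe de $\ss*{\gpfini*{\G}}$ : par Steinberg, $C_{\gpfinialg*{\G}}(t)^{\circ}$ contient un tore maximal $\fr$-stable $\gpfinialg*{S}$, qui est automatiquement un tore maximal de $\gpfinialg*{\G}$ contenant $t$. Écrivant $\gpfinialg*{S} = g\gpfinialg*{T}g^{-1}$ et posant $s := g^{-1}tg$, $n := g^{-1}\fr(g)$, la $\fr$-stabilité de $\gpfinialg*{S}$ impliquerait $n \in N(\gpfinialg*{T})$ et la relation $\fr(t) = t$ donnerait $\fr(s) = n^{-1}sn$, c'est-à-dire $w\fr(s) = s$ avec $w$ l'image de $n$ dans $\Wf$. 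La paire $(s,\bar{w}) \in \widetilde{X}_{\ld}$ s'enverrait bien sur la classe de $t$.

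\textbf{Injectivité.} C'est le point technique principal. Supposant $(s_{1},\bar{w}_{1})$ et $(s_{2},\bar{w}_{2})$ d'image commune, avec $t_{i} = g_{i}s_{i}g_{i}^{-1}$ où $n_{i} := g_{i}^{-1}\fr(g_{i})$ relève $\bar{w}_{i}$, et $h \in \gpfini*{\G}$ tel que $t_{2} = h t_{1}h^{-1}$, je poserais $x := g_{2}^{-1}hg_{1}$, ce qui donnerait immédiatement $xs_{1}x^{-1} = s_{2}$ et, grâce à $\fr(h) = h$, la relation clé $n_{2} = xn_{1}\fr(x)^{-1}$. Les tores maximaux $\gpfinialg*{T}$ et $x\gpfinialg*{T}x^{-1}$ contiennent tous deux $s_{2}$, donc sont conjugués par un $c \in C_{\gpfinialg*{\G}}(s_{2})^{\circ}$ ; ainsi $cx \in N(\gpfinialg*{T})$ aurait pour image $v \in \Wf$ vérifiant $v\cdot s_{1} = s_{2}$. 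Il resterait à identifier la classe de $vw_{1}\fr(v)^{-1}$ modulo $\Ws{s_{2}}$ : le calcul donne $cxn_{1}\fr(cx)^{-1} = cn_{2}\fr(c)^{-1} = u\cdot n_{2}$, où $u := c\,\fr_{n_{2}}(c)^{-1}$ avec $\fr_{n_{2}} := \Ad(n_{2})\circ \fr$.

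\textbf{Argument final.} La subtilité est qu'il faut que $\bar{u} \in \Ws{s_{2}}$ (et non simplement dans $\Ws*{s_{2}}$) pour conclure. Je l'obtiendrais en observant que $s_{2}$ est $\fr_{n_{2}}$-fixe (conséquence directe de $w_{2}\fr(s_{2}) = s_{2}$), ce qui rend $C_{\gpfinialg*{\G}}(s_{2})^{\circ}$ stable par $\fr_{n_{2}}$ ; par conséquent $u = c\,\fr_{n_{2}}(c)^{-1}$ appartient à $C_{\gpfinialg*{\G}}(s_{2})^{\circ}$, et aussi à $N(\gpfinialg*{T})$ (car $u\cdot n_{2} = cn_{2}\fr(c)^{-1}$ et $n_{2}$ y sont), donc dans $N_{C_{\gpfinialg*{\G}}(s_{2})^{\circ}}(\gpfinialg*{T})$ dont l'image dans $\Wf$ est précisément $\Ws{s_{2}}$. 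On en déduirait $\overline{vw_{1}\fr(v)^{-1}} = \overline{\bar{u}w_{2}} = \bar{w}_{2}$ dans $\Ws{s_{2}}\backslash\Wf$, soit $v\cdot(s_{1},\bar{w}_{1}) = (s_{2},\bar{w}_{2})$, ce qui achèverait la preuve.
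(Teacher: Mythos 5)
Votre preuve est correcte et suit essentiellement la même démarche que celle de l'article : le passage au quotient est le même calcul, votre argument de surjectivité est exactement la construction de l'application réciproque du texte (existence par Lang--Steinberg d'un tore maximal $\fr$-stable de $C_{\gpfinialg*{\G}}(t)^{\circ}$ contenant $t$), et votre preuve d'injectivité explicite précisément ce que l'article expédie par la parenthèse « elle est indépendante des choix effectués ». Le point clé que vous dégagez — que $u=c\,\fr_{n_2}(c)^{-1}$ tombe dans $N_{C_{\gpfinialg*{\G}}(s_2)^{\circ}}(\gpfinialg*{T})$, donc dans $\Ws{s_2}$ et non seulement dans $\Ws*{s_2}$ — est exactement la vérification nécessaire, et elle est juste.
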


\begin{proof}
Montrons d'abord qu'elle passe au quotient. Soient $(s,\bar{w}),(s',\bar{w'}) \in \widetilde{X}_{\ld}$ et $v \in \Wf$ tels que $(s,\bar{w})=v \cdot (s',\bar{w'})$. Soient $n=g^{-1}\fr(g)$ un relèvement de $\bar{w}$ et $m$ un relèvement de $v$. Alors $n':=m^{-1}n\fr(m)=(gm)^{-1}\fr(gm)$ est un relèvement de $\bar{w}'$. Notons $t$ la classe de conjugaison semi-simple rationnelle associée à $(s,\bar{w})$ et $t'$ celle associée à $(s',\bar{w}')$. Alors $t'$ est la classe de conjugaison rationnelle de $gms'm^{-1}g^{-1}=gsg^{-1}$ donc $t'=t$.

Il reste à vérifier la bijectivité. Construisons la bijection réciproque. Soit $t\in\ss*{\gpfini*{\G}}$, alors il existe $\gpfinialg*{S}$ un tore maximal $\fr$-stable de $\gpfinialg*{G}$ tel que $t \in (\gpfinialg*{S})^{\fr}$. Prenons $g \in \gpfinialg*{G}$ tel que ${}^{g}\gpfinialg*{S} = \gpfinialg*{T}$. Alors $g^{-1}\fr(g)$ normalise $\gpfinialg*{T}$ et on peut former le couple $(s,\bar{w})=(gtg^{-1},\overline{g^{-1}\fr(g)})$. Ceci nous définit une application $\ss*{\gpfini*{\G}} \rightarrow \widetilde{X}_{\ld}/\Wf$ (elle est indépendante des choix effectués) qui est la réciproque de $\widetilde{X}_{\ld}/\Wf \rightarrow \ss*{\gpfini*{\G}}$.
\end{proof}

Nous avons une application naturelle $\widetilde{X}_{\ld}/\Wf \to ((X\otimes_{\mathbb{Z}} \resalg^{\times})_{\ld}/\Wf)^{\fr}$, $(s,\bar{w}) \mapsto s$ et il est clair au vu de la définition de l'application $\widetilde{X}_{\ld} \rightarrow \ss*{\gpfini*{\G}}$ que l'on a
\[ \xymatrix{
	\widetilde{X}_{\ld}/\Wf \ar@{->}[r]^-{\sim} \ar@{->}[d] & \ss*{\gpfini*{\G}} \ar@{->}[d]\\
	((X\otimes_{\mathbb{Z}} \resalg^{\times})_{\ld}/\Wf)^{\fr} \ar@{->}[r]^-{\sim} & (\ss*{\gpfinialg*{\G}})^{\fr} }\]

\subsection{Réinterprétation de la relation d'équivalence \texorpdfstring{$\sim_r$}{~r}}
\label{secSimpler}

Dans cet article, nous obtenons des décompositions de $\rep[\ld][0]{G}$ à partir de relations d'équivalence ($\sim_e$, $\sim_r$ et $\sim_\infty$) sur $\pairesSt$. Dans \cite{lanard}, la décomposition de $\rep[\ld][0]{G}$, indexée par des paramètres inertiels, ne passe pas par les paires $(\gpalg{S},\theta)$. Elle est construite à partir d'une application $\Jss \to (\ss{\gpfinialg*{G}})^{\fr}$ (on rappelle que $\Jss$ désigne  l'ensemble des couples $(\sigma,s)$ où $\sigma \in \bt$ et $s \in \ss*{\quotred*{\G}{\sigma}}$, voir définition \ref{defJss}). La bijection $\pairesSt/{\sim_e} \tosim \Smin \tosim \Jss/{\simJss}$ (théorème \ref{theBijSime}) permet de définir sur $\Jss$ deux relations d'équivalence $\sim_r$ et $\sim_\infty$ qui correspondent aux relations éponymes sur $\pairesSt$. Nous montrons dans cette partie que l'application $\Jss \to (\ss{\gpfinialg*{G}})^{\fr}$ de \cite{lanard} passe au quotient pour donner une injection $\Jss/{\sim_\infty} \hookrightarrow (\ss{\gpfinialg*{G}})^{\fr}$ (on aura donc une compatibilité entre la décomposition dans \cite{lanard} et celle obtenue ici grâce à $\sim_\infty$). Nous souhaitons également trouver une application plus simple pour les classes de $\sim_r$-équivalence. On construira donc $\Jss \to \ss{\gpfini*{G}}$ qui passe au quotient pour $\sim_r$ et donne le diagramme commutatif
\[  \xymatrix{
    \Jss/{\sim_r} \ar@{^{(}->}[rr] \ar@{<->}[rd]^-{\sim} \ar@{->}[dd] &&  \ss*{\gpfini*{G}} \ar@{->}[dd] \\
    &  \pairesSt/{\sim_r} \ar@{^{(}->}[ru] \ar@{->}[dd] \\
     \Jss/{\sim_\infty}  \ar@{^{(}->}[rr]|\hole \ar@{<->}[rd]^-{\sim} && (\ss*{\gpfinialg*{G}})^{\fr} \\
    &  \pairesSt/{\sim_\infty} \ar@{^{(}->}[ru] }\]

\sautintro

Soient $\sigma \in \bts$ et $\gpalg{T}$ un tore maximal  maximalement déployé tel que $\sigma \in \app{T}{\kk}$. Rappelons que l'on a une bijection  canonique  $\widetilde{X}_{\ld}/\Wf \simeq \ss*{\gpfini*{\G}}$ (proposition \ref{proXtildeG}). On a également $\widetilde{X}_{\ld}/\Wx{\sigma} \simeq \ss*{\quotred*{\G}{\sigma}}$. L'application $\Wx{\sigma} \hookrightarrow \Wf$ induit une application $\Wx{\sigma}/\Ws{\sigma,s} \rightarrow \Wf/\Ws{s}$ et donc $\widetilde{X}_{\ld}/\Wx{\sigma} \rightarrow \widetilde{X}_{\ld}/\Wf$. On obtient de la sorte notre application $\ss*{\quotred*{\G}{\sigma}} \rightarrow \ss*{\gpfini*{\G}}$.

\begin{Pro}
\label{proCquotr}
L'application précédente est indépendante du choix du tore $\gpalg{T}$ et définit une application $\Jss \to \ss*{\gpfini*{\G}}$. Cette dernière passe au quotient pour $\sim_r$ et rend le diagramme suivant commutatif
     \[  \xymatrix{
    \Jss/{\sim_r} \ar@{->}[rd] \ar@{<->}[rr]^-{\sim} && \pairesSt/{\sim_r}  \ar@{->}[ld] \\
    &   \ss*{\gpfini*{G}} &}\]
\end{Pro}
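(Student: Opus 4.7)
The plan is to establish the three assertions — independence from $\gpalg{T}$, commutativity of the triangle, and passage to the quotient for $\sim_{r}$ — by unwinding the explicit descriptions of $\ss*{\quotred*{\G}{\sigma}}$ and $\ss*{\gpfini*{\G}}$ through $\widetilde{X}_{\ld}$ from Section \ref{secClassesRatioX}, and then invoking the injection $\pairesSt/{\sim_r} \hookrightarrow \ss*{\gpfini*{\G}}$ of Proposition \ref{prosimconj} together with the bijection $\Jss/{\simJss} \simeq \pairesSt/{\sim_e}$ of Theorem \ref{theBijSime}.

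For independence from $\gpalg{T}$, one observes that any two maximally split $\kk$-tores of $\gpalg{\G}$ whose apartments contain $\sigma$ are conjugate by an element of $\para{\G}{\sigma}$. Such a conjugation identifies the character lattices and carries $\Wx{\sigma}$ and $\Wf$ onto each other compatibly, so the map $\widetilde{X}_{\ld}/\Wx{\sigma} \to \widetilde{X}_{\ld}/\Wf$ becomes identified with itself under the natural isomorphisms of Proposition \ref{proXtildeG}. Hence the composite $\ss*{\quotred*{\G}{\sigma}} \xleftarrow{\sim} \widetilde{X}_{\ld}/\Wx{\sigma} \to \widetilde{X}_{\ld}/\Wf \xrightarrow{\sim} \ss*{\gpfini*{\G}}$ is intrinsic and assembles into a well-defined application $\Jss \to \ss*{\gpfini*{\G}}$.

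For the commutativity of the triangle, fix $(\gpalg{S},\theta) \in \pairesSt$ and choose a facette $\sigma \in \mathcal{A}(\gpalg{S},\knr)^{\fr}$. By the constructions of Section \ref{secintroStheta} and Proposition \ref{proDecompoAppPS}, the class of $(\gpalg{S},\theta)$ in $\pairesSt/{\sim_e} \simeq \Jss/{\simJss}$ is represented by $(\sigma,s)$, where $s \in \ss*{\quotred*{\G}{\sigma}}$ is the rational class attached to $(\gpfinialg{S},\theta)$ via Proposition \ref{proBijPairesSt}. Taking for $\gpalg{T}$ a maximally split $\kk$-tore producing a lift of $\gpfinialg{S}$, one identifies $s$ with an element of $\widetilde{X}_{\ld}/\Wx{\sigma}$ whose canonical image in $\widetilde{X}_{\ld}/\Wf$ is, by the construction in Proposition \ref{proXtildeG}, exactly the rational class in $\gpfini*{\G}$ of the element $t \in (\gpfinialg*{S})^{\fr}$ corresponding to $\theta$ via the bijection $X^{*}(\mathbf{S})/(\fr_{\mathbf{S}}-1)X^{*}(\mathbf{S}) \simeq (\gpfinialg*{S})^{\fr}$. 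This is precisely the image of $(\gpalg{S},\theta)$ under $\pairesSt \to \pairesSt*/{\sim_{\gpfini*{G}}} \to \ss*{\gpfini*{\G}}$. The passage to the quotient for $\sim_{r}$ is then automatic: through $\Jss/{\simJss} \simeq \pairesSt/{\sim_e}$ and its coarsening $\Jss/{\sim_r} \simeq \pairesSt/{\sim_r}$, the injection of Proposition \ref{prosimconj} combined with the commuting triangle forces $\Jss \to \ss*{\gpfini*{\G}}$ to descend to $\Jss/{\sim_r}$ and yields the stated commutative triangle.

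The main obstacle lies in Step 2: aligning the two routes from $(\gpalg{S},\theta)$ to $\ss*{\gpfini*{\G}}$ — the global route through the duality of tores used to define $\pairesSt \to \pairesSt*/{\sim_{\gpfini*{G}}}$, and the local route through the triplet $(\sigma,\gpfinialg{S},\theta)$ together with the Lang-style construction of $\widetilde{X}_{\ld} \to \ss*{\gpfini*{\G}}$. Both rest on the fixed compatible systems of roots of unity and the duality bijections recalled in the appendix, but verifying compatibility requires carefully tracking how the Weyl cocycle $g^{-1}\fr(g)$ used in Proposition \ref{proXtildeG} interacts with the reduction to the Levi $\gpfinialg*{S} \subset \quotredalg*{\G}{\sigma} \subset \gpfinialg*{\G}$. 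Once this identification is in place, the remaining steps are essentially formal.
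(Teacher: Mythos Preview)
Your proposal correctly identifies the overall architecture and, crucially, correctly isolates the real content of the proposition as the compatibility of the two routes to $\ss*{\gpfini*{\G}}$. However, you do not actually prove this compatibility: you flag it as ``the main obstacle'' and then assume it. That is precisely where the paper does its work. The paper's proof reformulates both routes in terms of $\fr$-conjugacy classes of Weyl-group pairs, introducing $\pairesStw{\Wx{\sigma}}/\fr$ and $\pairesStw{\Wf}/\fr$ (pairs $(w,\theta)$ with $w$ in the relevant Weyl group) so that the map $\ss*{\quotred*{\G}{\sigma}} \to \ss*{\gpfini*{\G}}$ becomes the map induced by $\Wx{\sigma} \hookrightarrow \Wf$ on these sets, while the injection $\pairesSt/{\sim_1} \hookrightarrow \pairesSt*/{\sim_{\gpfini*{\G}}}$ of Proposition~\ref{proInjSim1} becomes an injection into $\pairesStw{\Wf}/\fr$. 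The identification you need is then exactly Corollary~4.3.2 of \cite{debacker}, which says that the global Weyl-group parameterisation of unramified tori can be computed locally via $\Wx{\sigma}$; adding the characters $\theta$ is straightforward. Without this external input (or a substitute for it), your Step~2 is not a proof.

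Two further points. First, the sentence ``taking for $\gpalg{T}$ a maximally split $\kk$-tore producing a lift of $\gpfinialg{S}$'' does not make sense in general: $\gpalg{T}$ is required to be maximally $\kk$-split, whereas the torus $\gpalg{S}$ appearing in $(\gpalg{S},\theta)$ is an arbitrary unramified maximal torus, and the induced torus on $\quotredalg{\G}{\sigma}$ need not be $\gpfinialg{S}$. The whole difficulty is that $\gpfinialg{S}$ differs from the torus induced by $\gpalg{T}$ by an element of $\Wx{\sigma}$, and the point is to track this cocycle under $\Wx{\sigma} \hookrightarrow \Wf$. Second, the paper reverses your order for independence from $\gpalg{T}$: it first proves commutativity of the triangle and then observes that, since the right-hand arrow $\pairesSt/{\sim_r} \to \ss*{\gpfini*{\G}}$ is defined without reference to $\gpalg{T}$, the left-hand arrow must also be independent of that choice. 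Your direct argument via conjugacy under $\para{\G}{\sigma}$ is reasonable but would still need to be carried out.
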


\begin{proof}
Soit $t \in \ss*{\quotred*{\G}{\sigma}}$ et notons $s$ son image par $\ss*{\quotred*{\G}{\sigma}} \rightarrow \ss*{\gpfini*{\G}}$. Prenons $\gpfinialg*{S}$ un $\res$-tore maximal de $\ss*{\quotred*{\G}{\sigma}}$ tel que $t \in (\gpfinialg*{S})^{\fr}$. Considérons $(\gpfinialg{S},\theta)$ en dualité avec $(\gpfinialg*{S},t)$ et que l'on relève en $(\gpalg{S},\theta)$ où $\gpalg{S}$ est un tore maximal de $\gpalg{G}$. Il est alors clair que la classe de $\sim_r$-équivalence de $(\gpalg{S},\theta)$ correspond à la classe de $\sim_r$-équivalence de $(\sigma,t)$ via la bijection $\Jss/{\sim_r} \tosim \pairesSt/{\sim_r}$. Nous souhaitons donc montrer que $s$ est l'image de $(\gpalg{S},\theta)$ par l'application $\pairesSt/{\sim_r} \to \ss*{\gpfini*{G}}$.

L'application $\ss*{\quotred*{\G}{\sigma}} \rightarrow \ss*{\gpfini*{\G}}$ étant définie à partir des groupes de Weyl, nous allons réinterpréter les résultats grâce à ces derniers. Notons $\gpfinialg{T}$ le tore induit par $\gpalg{T}$ sur $\quotredalg{\G}{\sigma}$ et  $\gpfinialg*{T}$ en dualité avec  $\gpfinialg{T}$. Notons $\pairesStw{\Wf}/{\fr}$ l'ensemble des classes de $\fr$-conjugaison de paires $(w,\theta)$ où $w \in \Wf$, $\theta \in X/((w\vartheta \circ \psi -1)X$ et $\theta$ est d'ordre inversible dans $\ld$. Notons de même $\pairesStw{\Wx{\sigma}}/{\fr}$ le même ensemble mais avec $w \in \Wx{\sigma}$. L'injection $\Wx{\sigma} \hookrightarrow \Wf$ permet de définir une application $\pairesStw{\Wx{\sigma}}/{\fr} \rightarrow \pairesStw{\Wf}/{\fr}$. Soit $(w_\sigma,\theta_\sigma) \in \pairesStw{\Wx{\sigma}}/{\fr}$ correspondant à la classe de $\quotred*{\G}{\sigma}$-conjugaison de la paire $(\gpfinialg*{S},t)$ (voir annexe \ref{secIsoTores}) et nommons $(w_0,\theta_0) \in  \pairesStw{\Wf}/{\fr}$ l'image de $(w_\sigma,\theta_\sigma)$ par l'application $\pairesStw{\Wx{\sigma}}/{\fr} \rightarrow \pairesStw{\Wf}/{\fr}$. La classe de $\fr$-conjugaison de la paire $(w_0,\theta_0)$ correspond à la classe de $\gpfini*{G}$-conjugaison d'une paire $(\gpfinialg*{S}_0,t_0)$ et il est clair que $s$ est la classe de conjugaison de $t_0$.

En termes de groupes de Weyl, la proposition \ref{proInjSim1} peut se réinterpréter en une injection $\pairesSt/{\sim_1} \hookrightarrow \pairesStw{\Wf}/{\fr}$. Grâce à la proposition \ref{prosimconj}, il nous suffit pour conclure de montrer que la classe de $(w_0,\theta_0)$ est l'image de $(\gpfinialg{S},\theta)$ par $\pairesSt/{\sim_1} \hookrightarrow \pairesStw{\Wf}/{\fr}$. Le corollaire 4.3.2 de \cite{debacker} montre ce résultat mais sans les caractères $\theta$. La même preuve s'adapte aisément en rajoutant les caractères $\theta$, ce qui nous démontre la proposition.

Notons que comme l'application $\pairesSt/{\sim_r} \to \ss*{\gpfini*{G}}$ ne dépend pas du choix du tore $\gpalg{T}$, il en est de même pour $\ss*{\quotred*{\G}{\sigma}} \rightarrow \ss*{\gpfini*{\G}}$.
\end{proof}

\begin{Pro}
\label{procompatibilitelanard}
L'application $\Jss \to (\ss*{\gpfinialg*{\G}})^{\fr}$ (de \cite{lanard}) passe au quotient pour $\sim_\infty$ et rend le diagramme suivant commutatif
     \[  \xymatrix{
    \Jss/{\sim_\infty} \ar@{->}[rd] \ar@{<->}[rr]^-{\sim} && \pairesSt/{\sim_\infty}  \ar@{->}[ld] \\
    &  (\ss*{\gpfinialg*{\G}})^{\fr} &}\]
\end{Pro}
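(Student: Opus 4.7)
Le plan est de se ramener à la proposition \ref{proCquotr} déjà démontrée. Plus précisément, je construirais l'application voulue $\Jss \to (\ss*{\gpfinialg*{\G}})^{\fr}$ en composant l'application $\Jss \to \ss*{\gpfini*{\G}}$ de la proposition \ref{proCquotr} avec la projection naturelle $\ss*{\gpfini*{\G}} \to (\ss*{\gpfinialg*{\G}})^{\fr}$ qui associe à une classe de conjugaison rationnelle sa classe de conjugaison géométrique, puis j'identifierais cette composition avec l'application provenant de \cite{lanard}.

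La première étape, qui sera la plus délicate, consiste à vérifier que la composition proposée coïncide avec l'application de \cite{lanard}. Cette dernière est construite localement à partir de $\ss*{\quotred*{\G}{\sigma}} \to (\ss*{\gpfinialg*{\G}})^{\fr}$ via une description par les groupes de Weyl analogue à celle utilisée dans la section \ref{secClassesRatioX}. La clef consiste à dérouler soigneusement les identifications et à utiliser la commutativité du diagramme
\[  \xymatrix{
	\widetilde{X}_{\ld}/\Wf \ar@{->}[r]^-{\sim} \ar@{->}[d] & \ss*{\gpfini*{\G}} \ar@{->}[d]\\
	((X\otimes_{\mathbb{Z}} \resalg^{\times})_{\ld}/\Wf)^{\fr} \ar@{->}[r]^-{\sim} & (\ss*{\gpfinialg*{\G}})^{\fr} }\]
établie à la fin de la section \ref{secClassesRatioX}. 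Cela permettra de constater que les deux constructions se factorisent toutes les deux par $\widetilde{X}_{\ld}/\Wx{\sigma} \to \widetilde{X}_{\ld}/\Wf$, au moyen respectivement de $\ss*{\gpfini*{\G}}$ et de $((X\otimes \resalg^{\times})_{\ld}/\Wf)^{\fr}$, et aboutissent donc au même élément de $(\ss*{\gpfinialg*{\G}})^{\fr}$.

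Deuxième étape, montrer le passage au quotient par $\sim_\infty$. On sait déjà par la proposition \ref{proCquotr} que l'application se factorise par $\sim_r$, qui implique $\sim_\infty$. Via la bijection $\Jss/{\simJss} \tosim \pairesSt/{\sim_e}$ du théorème \ref{theBijSime} et la définition de $\sim_\infty$ transportée sur $\Jss$, il suffit de vérifier que l'application composée $\pairesSt \to \ss*{\gpfini*{\G}} \to (\ss*{\gpfinialg*{\G}})^{\fr}$ se factorise par $\sim_\infty$, ce qui est précisément le contenu de la proposition \ref{prosimconj} affirmant que $\pairesSt/{\sim_\infty} \hookrightarrow (\ss*{\gpfinialg*{\G}})^{\fr}$.

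Enfin, la commutativité du triangle recherché découlera directement de la commutativité analogue établie pour $\sim_r$ dans la proposition \ref{proCquotr}, en la composant avec la projection $\ss*{\gpfini*{\G}} \to (\ss*{\gpfinialg*{\G}})^{\fr}$ et en invoquant la compatibilité de la colonne centrale et de la colonne droite du diagramme de la proposition \ref{prosimconj}. L'obstacle principal restera donc la vérification que notre composition reproduit effectivement l'application de \cite{lanard}; une fois ce rapprochement effectué, les assertions de factorisation et de commutativité seront des corollaires immédiats des résultats déjà établis dans l'article.
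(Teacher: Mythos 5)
Votre démonstration est correcte et suit essentiellement la même route que celle de l'article : on factorise l'application de \cite{lanard} en $\Jss \to \ss*{\gpfini*{\G}} \to (\ss*{\gpfinialg*{\G}})^{\fr}$, puis on invoque la proposition \ref{proCquotr} et le diagramme de la proposition \ref{prosimconj} reliant $\sim_r$ et $\sim_\infty$. Le point que vous identifiez comme délicat (la coïncidence de cette composition avec l'application de \cite{lanard}) est précisément le premier diagramme commutatif que l'article utilise, et votre esquisse via les descriptions par groupes de Weyl de la section \ref{secClassesRatioX} est la bonne façon de le vérifier.
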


\begin{proof}
Le résultat se déduit de la proposition \ref{proCquotr} et des diagrammes commutatifs
\[  \xymatrix{
    \Jss \ar@{->}[rd] \ar@{->}[r] & \ss*{\gpfini*{G}}  \ar@{->}[d] \\
    &  (\ss*{\gpfinialg*{\G}})^{\fr}}\]
et (proposition \ref{prosimconj})
\[  \xymatrix{
    \Jss \ar@{->}[rd] \ar@{->}[r] &  \Jss/{\sim_r}  \ar@{->}[d] \ar@{<->}[r]^-{\sim} &  \pairesSt/{\sim_r} \ar@{->}[r]  \ar@{->}[d]&  \ss*{\gpfini*{G}}  \ar@{->}[d]\\
    &  \Jss/{\sim_\infty}  \ar@{<->}[r]^-{\sim} &  \pairesSt/{\sim_\infty} \ar@{->}[r] & (\ss*{\gpfinialg*{\G}})^{\fr}}\]
\end{proof}
Il est aisé de voir que l'on a bien le diagramme commutatif annoncé dans l'introduction.

\subsection{Interprétation en termes duaux}
Les classes d'équivalence pour $\sim_{\infty}$ correspondent aux classes de conjugaison semi-simples géométriques $\fr$-stables dans $\gpfinialg*{\G}$ par la proposition \ref{prosimconj}. Nous avons interprété ces dernières par des paramètres de Langlands inertielles dans \cite{lanard}. On obtient donc une décomposition de $\rep[\ld][0]{\G}$ indéxée par  $\Lpm{\Ild}$, l'ensemble des paramètres inertielles,  qui est la même que celle dans \cite{lanard} par la section \ref{secSimpler}. Nous souhaitons obtenir de même, une interprétation en des termes duaux pour la relation d'équivalence $\sim_r$. Nous introduisons pour cela un ensemble $\Lpbm{\Ild}$ rendant commutatif le diagramme suivant
\[ \xymatrix{
\Lpbm{\Ild} \ar@{->}[r]^-{\sim} \ar@{->}[d] & \ss*{\gpfini*{\G}} \ar@{->}[d]\\
\Lpm{\Ild} \ar@{->}[r]^-{\sim} & (\ss*{\gpfinialg*{\G}})^{\fr} }\]
Ainsi, grâce à la proposition \ref{prosimconj}, les classes de $\sim_r$-équivalence fourniront une décomposition de $\rep[\ld][0]{\G}$ indexée par $\Lpbm{\Ild}$.

\sautintro

Commençons par quelques rappels succincts sur les paramètres inertiels et la décomposition de \cite{lanard}.

Soit $\mathbf{T}$ un $\kk$-tore maximal $\kk$-déployé maximalement déployé. On notera $\mathcal{G}_{\kk}=\Gal(\kalg/\kk)$ le groupe de Galois absolu de $\kk$, et on choisit $\text{Frob}$, un Frobenius inverse dans $\mathcal{G}_{\kk}$, qui induit un automorphisme d'ordre fini  sur $X_{*}(\mathbf{T})$ que l'on nomme $\vartheta$. La dualité entre $X_{*}(\mathbf{T})$ et $X^{*}(\mathbf{T})$ permet d'associer de façon naturelle à $\vartheta$ un automorphisme $\widehat{\vartheta} \in \Aut(\gpalg*{T}[\Ql])$ (car $\gpalg*{T}[\Ql]:=X^{*}(\mathbf{T})\otimes \Ql^{\times}$). Puis, via le choix d'un épinglage $(\gpalg*{\G},\gpalg*{B},\gpalg*{T},\{x_{\alpha}\}_{\alpha \in \Delta})$, on étend ce dernier en un automorphisme $\widehat{\vartheta} \in \Aut(\gpalg*{\G})$. On définit alors $\Ldual{\G}$ par $\Ldual{\G}:=\gpalg*{\G} \rtimes \langle\widehat{\vartheta}\rangle$.

Soient $\weil$ le groupe de Weil et $\weil'=\weil\ltimes \Ql$ le groupe de Weil-Deligne. On note $\Lparametre{\G}[\Ql]$ l'ensemble des morphismes admissibles $\varphi : W'_{k} \rightarrow \Ldual{\G}[\Ql]$ modulo les automorphismes intérieurs par des éléments de $\gpalg*{\G}[\Ql]$. Pour simplifier les notations, on notera généralement $\Ldual{\G}$ à la place de $\Ldual{\G}[\Ql]$ lorsqu'il n'y a pas d'ambiguïté, par exemple $\Lparametre{\G}[\Ql]$ devient $\Lparametre{\G}$. Soit $I$ un sous-groupe de $\weil$. On note alors $\Lp{I}$ l'ensemble des classes de $\gpalg*{\G}$-conjugaison des morphismes continus $I \rightarrow \Ldual{\G}[\Ql]$ (où $\Ql$ est muni de la topologie discrète) qui admettent une extension à un $L$-morphisme de $\Lparametre{\G}$. Si $I$ contient $\inersauv$, l'inertie sauvage, on dit qu'un paramètre $\phi \in \Lp{I}$ est modéré s'il est trivial sur $\inersauv$, et on note $\Lpm{I}$ pour l'ensemble des paramètres de $I$ modérés.

Notons $\iner$ le groupe d'inertie et $\inerl:=\ker\{\iner\rightarrow\mathbb{Z}_{l}(1)\}$ son sous-groupe fermé maximal de pro-ordre premier à $\lprime$. On unifiera les notations en posant $\Ild$ qui vaut $\iner$ si $\ld=\Ql$ et $\inerl$ si $\ld = \Zl$.

\begin{Pro}[\cite{lanard} proposition 3.1.4]
	\label{proIdentifPhiInert}
    On a une identification
    \[ \Lpm{\Ild} \longleftrightarrow ((X^{*}(\mathbf{T})\otimes_{\mathbb{Z}} \resalg^{\times})_{\ld}/\Wf)^{\fr} \longleftrightarrow (\ss*{\gpfinialg*{G}})^{\fr}\]
    ($(X^{*}(\mathbf{T})\otimes_{\mathbb{Z}} \resalg^{\times})_{\ld}$ désigne les éléments de $(X^{*}(\mathbf{T})\otimes_{\mathbb{Z}} \resalg^{\times})$ d'ordre inversible dans $\ld$).
\end{Pro}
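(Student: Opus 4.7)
The plan is to establish the two bijections in the statement separately and then compose them. Both identifications reduce to the observation that a tame parameter $\phi\in\Lpm{\Ild}$, being trivial on $\inersauv$, factors through the quotient $\Ild/\inersauv$, which is pro-cyclic: it is the pro-prime-to-$p$ part of the tame inertia when $\ld=\Ql$, and the pro-prime-to-$\lprime$ part of it when $\ld=\Zl$. Hence $\phi$ is determined by the image $\phi(\tau)$ of a topological generator $\tau$, and this image is a semi-simple element of $\gpalg*{\G}$ of finite order invertible in $\ld$.

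For the first bijection $\Lpm{\Ild}\longleftrightarrow ((X^{*}(\mathbf{T})\otimes_{\mathbb{Z}}\resalg^{\times})_{\ld}/\Wf)^{\fr}$, I will conjugate $\phi(\tau)$ into the fixed maximal torus $\gpalg*{T}$, which is possible since every semi-simple element lies in some maximal torus, and all maximal tori of $\gpalg*{\G}$ are conjugate. The ambiguity in the conjugator is precisely $\Wf=W(\gpalg*{T},\gpalg*{\G})$, which gives a well-defined class in $\gpalg*{T}/\Wf$. Using the canonical isomorphism $\gpalg*{T}[\Ql]\simeq X^{*}(\mathbf{T})\otimes \Ql^{\times}$ together with the compatible system $(\mathbb{Q}/\mathbb{Z})_{p'}\hookrightarrow \Ql^{\times}$ and $(\mathbb{Q}/\mathbb{Z})_{p'}\tosim\resalg^{\times}$ fixed in the notations, the subset of elements of order invertible in $\ld$ becomes $(X^{*}(\mathbf{T})\otimes\resalg^{\times})_{\ld}$. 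The relevance condition (existence of an extension to an admissible $L$-morphism $\weil'\to\Ldual{\G}$) will then have to be translated into the $\fr$-stability: since $\mathrm{Frob}$ acts on the tame inertia by $\tau\mapsto\tau^{q}$, any extension $\varphi$ satisfies $\varphi(\mathrm{Frob})\phi(\tau)\varphi(\mathrm{Frob})^{-1}=\phi(\tau)^{q}$; writing $\varphi(\mathrm{Frob})=g\cdot\widehat{\vartheta}$ with $g\in\gpalg*{\G}$, and using that $\fr_{\gpalg{T}}=\vartheta_{\gpalg{T}}\circ\psi$ with $\psi$ the multiplication by $q$, this becomes exactly the statement that the $\Wf$-orbit of $\phi(\tau)$ is $\fr$-fixed.

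The second bijection $((X^{*}(\mathbf{T})\otimes\resalg^{\times})_{\ld}/\Wf)^{\fr}\longleftrightarrow(\ss*{\gpfinialg*{G}})^{\fr}$ is the $\fr$-fixed part of the classical identification: the torus of $\gpfinialg*{\G}$ over $\resalg$ is $\gpfinialg*{T}=X_{*}(\gpfinialg*{T})\otimes\resalg^{\times}=X^{*}(\mathbf{T})\otimes\resalg^{\times}$ (by duality), and Weyl-orbits of semi-simple elements of a maximal torus parametrize semi-simple conjugacy classes. Restricting to elements of order invertible in $\ld$ gives $\ss*{\gpfinialg*{\G}}$. The key point is that $\widehat{\vartheta}$ was constructed to be dual to $\vartheta$, so the action of $\fr$ on $X^{*}(\mathbf{T})\otimes\resalg^{\times}$ (via $\vartheta_{\gpalg{T}}\circ\psi$) coincides with the geometric Frobenius action on $\gpfinialg*{T}$, and the two $\fr$-stability conditions are literally the same.

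The main obstacle will be the surjectivity in the first bijection, i.e.\ producing an admissible extension $\varphi:\weil'\to\Ldual{\G}$ from a $\fr$-stable orbit. Given a representative $s\in\gpalg*{T}$ with $w\cdot\widehat{\vartheta}(s)^{q}=s$ for some $w\in\Wf$, I have to construct a specific $\varphi(\mathrm{Frob})\in\Ldual{\G}$ normalising the image of $\phi$, and then extend trivially on the Weil-Deligne nilpotent part. This will require Lang's theorem applied inside the (possibly disconnected) centraliser $C_{\gpalg*{\G}}(s)$ endowed with the twisted Frobenius $\Ad(n)\circ\widehat{\vartheta}\circ\psi$ for $n\in N_{\gpalg*{\G}}(\gpalg*{T})$ lifting $w$; the vanishing of $H^{1}$ of this endomorphism on the connected centraliser provides the desired element $g$, and the residual ambiguity in the choice of $w$ encodes precisely the passage to the $\Wf$-quotient.
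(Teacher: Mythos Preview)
The paper does not give its own proof of this proposition: it is quoted verbatim from \cite{lanard}, proposition~3.1.4, and used as a black box. So there is no ``paper's proof'' to compare against here beyond the surrounding discussion (in particular the paragraph preceding Lemma~\ref{lemRelevementN}), which sketches exactly the mechanism you describe: $\Ild/\inersauv$ is pro-cyclic, the fixed compatible system of roots of unity singles out a generator, and $\phi$ is thereby encoded by a semisimple element $s\in\gpalg*{T}$ of order invertible in $\ld$, identified with an element of $X\otimes_{\mathbb{Z}}\resalg^{\times}$. Your outline for both bijections is correct and agrees with this.

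One point deserves correction. In your surjectivity argument you invoke Lang's theorem inside $C_{\gpalg*{\G}}(s)$. This is misplaced: we are working in the dual group over $\Ql$, not over a finite field, so Lang's theorem does not apply and is not needed. The extension is constructed directly: if $s=w\cdot\widehat{\vartheta}(s^{q})$ for some $w\in\Wf$, pick any lift $n\in N_{\gpalg*{\G}}(\gpalg*{T})$ of $w$ and set $\varphi(\mathrm{Frob})=n\widehat{\vartheta}$. The relation $\mathrm{Frob}^{-1}x\,\mathrm{Frob}=x^{q}$ in $\iner/\inersauv$ together with $s=\Ad(n)\circ\widehat{\vartheta}(s^{q})$ ensures this extends $\phi$ to a homomorphism $\weil\to\Ldual{\G}$; extending trivially on the $\sl{2}$ (or nilpotent) part gives an admissible $\varphi\in\Lparametre{\G}$. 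This is precisely the construction used later in the paper in the proof of Lemma~\ref{lemPhiXtilde}.
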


Combinée à la proposition \ref{prosimconj}, nous obtenons une injection 
\[\pairesSt/{\sim_\infty} \hookrightarrow \Lpm{\Ild},\]
qui fournit (voir remarque \ref{remdecompo}) une décomposition
\[ \rep[\ld][0]{\G} = \prod_{\phi \in \Lpm{\Ild}} \rep[\ld][\phi]{\G}.\]
De plus, cette dernière est la même que celle de \cite{lanard} théorème 3.4.5 d'après la proposition \ref{procompatibilitelanard}.

\medskip

Revenons maintenant à la relation d'équivalence $\sim_r$. Pour cela, reprenons les notations de \cite{datFunctoriality}. Étant donné une suite exacte $H \hookrightarrow \widetilde{H} \twoheadrightarrow W$ de groupes topologiques, on note $\Sigma(W,\widetilde{H})$ l'ensemble des sections continues $W \rightarrow \widetilde{H}$ qui scindent la suite exacte, et $\overline{\Sigma}(W,\widetilde{H})$ l'ensemble des classes de $H$-conjugaison dans $\Sigma(W,\widetilde{H})$.

\medskip

Soit $\phi \in \Lp{\Ild}$. On note $C_{\gpalg*{\G}}(\phi)$ le centralisateur de $\phi(\Ild)$ dans $\gpalg*{\G}$ qui est un groupe réductif, possiblement non connexe. Par définition $\phi$ peut s'étendre en un paramètre $\varphi : \weil \longrightarrow \Ldual{\G}$. Alors d'après la section 2.1.1 de \cite{datFunctoriality} le sous-groupe $\widetilde{C}_{\gpalg*{\G}}(\phi):=C_{\gpalg*{\G}}(\phi) \varphi(\weil)$ de $\Ldual{\G}$ est indépendant du choix de $\varphi$. Notons maintenant $\pi_{0}(\phi):=\pi_{0}(C_{\gpalg*{\G}}(\phi))$ et $\widetilde{\pi}_{0}(\phi):=\widetilde{C}_{\gpalg*{\G}}(\phi)/C_{\gpalg*{\G}}(\phi)^{\circ}$ qui donne une suite exacte scindée (\cite{datFunctoriality} section 2.1.4)
\[ 0 \longrightarrow \pi_{0}(\phi) \longrightarrow \widetilde{\pi}_{0}(\phi) \longrightarrow  \langle \widehat{\vartheta} \rangle \longrightarrow 0.\]

\begin{Def}
Notons $\Lpb{\Ild}$ l'ensemble des couples $(\phi,\sigma)$ à $\gpalg*{\G}$-conjugaison près, où $\phi : \Ild \to \Ldual{\G}$ est un paramètre inertiel et $\sigma \in \Sigma( \langle \widehat{\vartheta} \rangle ,\widetilde{\pi}_{0}(\phi))$. Appelons également  $\Lpbm{\Ild}$ le sous-ensemble de $\Lpb{\Ild}$ des $(\phi,\sigma)$ avec $\phi$ modéré.
\end{Def}

Nous souhaitons établir une bijection canonique entre $\Lpbm{\Ild}$ et $\ss*{\gpfini*{\G}}$ (puisque $\ss*{\gpfini*{\G}}$ correspond à l'ensemble des classes de $\sim_r$-équivalence par la proposition \ref{prosimconj}).

\bigskip

La section \ref{secClassesRatioX} construit une bijection canonique entre $\ss*{\gpfini*{\G}}$ et $\widetilde{X}_{\ld}/\Wf$. Il nous suffit donc de construite une bijection $\Lpbm{\Ild} \overset{\sim}{\rightarrow} \widetilde{X}_{\ld}/\Wf$.

Prenons $(\phi,\sigma) \in \Lpbm{\Ild}$. Quitte à conjuguer $(\phi,\sigma)$, on peut supposer que $\phi(\Ild) \subseteq \gpalg*{T}$. Le groupe $\Ild/\inersauv$ étant procyclique, $\phi$ est déterminé par l'image d'un générateur. Le système compatible de racines de l'unité, que l'on a fixé au début, nous fournit un tel générateur, ainsi $\phi$ est déterminé par un élément semi-simple $s \in \gpalg*{T}$ d'ordre inversible dans $\ld$. Comme dans \cite{lanard} section 3.1, on identifie $s$ à un élément $s \in X \otimes_{\mathbb{Z}} \resalg^{\times}$ (toujours grâce au même système compatible de racines de l'unité).

\begin{Lem}
\label{lemRelevementN}
Soit $f\widehat{\vartheta}^{n} \in \widetilde{C}_{\gpalg*{\G}}(\phi)$. Alors il existe $c \in C_{\gpalg*{\G}}(\phi)^{\circ}$ tel que $cf \in N$, où $N:=N(\gpalg*{T},\gpalg*{\G})$ est le normalisateur de $\gpalg*{T}$ dans $\gpalg*{\G}$.
\end{Lem}

\begin{proof}
On a $f\widehat{\vartheta}^{n} \in N(C_{\gpalg*{\G}}(\phi)^{\circ},\Ldual{\G})$ (le normalisateur dans $\Ldual{\G}$ de $C_{\gpalg*{\G}}(\phi)^{\circ}$) donc $(f\widehat{\vartheta}^{n}) \gpalg*{T}(f\widehat{\vartheta}^{n})^{-1}$ est un tore de $C_{\gpalg*{\G}}(\phi)^{\circ}$. Ainsi, il existe $c \in C_{\gpalg*{\G}}(\phi)^{\circ}$ tel que $(f\widehat{\vartheta}^{n}) \gpalg*{T}(f\widehat{\vartheta}^{n})^{-1}=c^{-1}\gpalg*{T}c$. Donc $(cf\widehat{\vartheta}^{n}) \gpalg*{T}(cf\widehat{\vartheta}^{n})^{-1}=\gpalg*{T}$, et comme $\gpalg*{T}$ est $\widehat{\vartheta}$-stable on obtient le résultat.
\end{proof}

La section $\sigma$ est déterminée par l'image de $\widehat{\vartheta}$. Le lemme \ref{lemRelevementN} nous permet de prendre un relèvement de $\sigma(\widehat{\vartheta})$ de la forme $f\widehat{\vartheta}$ avec $f \in N$. On pose alors $\bar{w}$ l'image de $f$ par l'application $N \rightarrow \Wf \rightarrow \Ws{s}\backslash \Wf$. Cette application ne dépend pas du choix du relèvement choisi. En effet si $f'\widehat{\vartheta}$ est un autre relèvement avec $f' \in N$, alors il existe $c \in C_{\gpalg*{\G}}(\phi)^{\circ}$ tel que $f'=cf$. Ainsi $c \in C_{\gpalg*{\G}}(\phi)^{\circ} \cap N$ donc $f$ et $f'$ ont même image dans $\Ws{s}\backslash \Wf$.

On vient donc d'associer à $(\phi,\sigma)$ avec $\phi(\Ild) \subseteq \gpalg*{T}$ une paire $(s,\bar{w})$. Pour arriver dans $\widetilde{X}_{\ld}$ il reste à vérifier que $s=\bar{w}\fr(s)$. Le Frobenius $\fr$ sur $\textsf{\textbf{T}}^{*}$ correspond à $\widehat{\vartheta} \circ \psi$ sur $\gpalg*{T}$. Il faut donc vérifier que $s=\Ad(f) \circ \widehat{\vartheta}(s^q)$. Prenons $\varphi \in \Lp{\weil}$ un relèvement de $\phi$. Notons $\varphi(Frob)=u\widehat{\vartheta}$. Comme $\widetilde{C}_{\gpalg*{\G}}(\phi)=C_{\gpalg*{\G}}(\phi)\varphi(\weil)$, on a que $u$ et $f$ diffèrent d'un élément de $C_{\gpalg*{\G}}(\phi)$ et donc $\Ad(f^{-1})(s)=\Ad(u^{-1})(s)$. Or pour un élément $x \in \iner/\inersauv$, $\text{Frob}^{-1} x \text{Frob} = x^q$, donc $\varphi(x)=\Ad(u) \circ \widehat{\vartheta} (\varphi(x)^q)$ et on a le résultat souhaité, en prenant pour $x$ le progénérateur de $\Ild/\inersauv$ choisit plus haut.

Montrons que le procédé précédent définit une application $\Lpbm{\Ild} \rightarrow \widetilde{X}_{\ld}/\Wf$.

\begin{Lem}
Soient $(\phi,\sigma)$ et $(\phi',\sigma')$ conjugués sous $\gpalg*{\G}$ avec $\phi(\Ild) \subseteq \gpalg*{T}$ et $\phi'(\Ild) \subseteq \gpalg*{T}$. Nommons $(s,\bar{w})$ et $(s',\bar{w}')$ les éléments de $\widetilde{X}_{\ld}$ leur étant associés respectivement. Alors il existe $v \in \Wf$ tel que $(s,\bar{w})=v \cdot (s',\bar{w}')$.
\end{Lem}

\begin{proof}
Soit $g \in \gpalg*{\G}$ tel que $s'=gsg^{-1}$ (ici on voit $s$,$s'$ comme des éléments de $\gpalg*{T}$) et $\sigma'=g\sigma g^{-1}$. Des éléments d'un tore qui sont conjugués le sont sous le groupe de Weyl, donc il existe $v \in \Wf$ tel que $s'=v \cdot s$. Appelons $n$ un relèvement de $v$ dans $N$. Comme $s'=nsn^{-1}=gsg^{-1}$, $n^{-1}g \in C_{\gpalg*{\G}}(s)$. Notons $f\widehat{\vartheta}$ (resp. $f'\widehat{\vartheta}$) un relèvement de $\sigma(\widehat{\vartheta})$ (resp. $\sigma'(\widehat{\vartheta})$) avec $f \in N$ (resp. $f' \in N$).

On a donc $f'=cgf\widehat{\vartheta}(g)^{-1}$ avec $c \in C_{\gpalg*{\G}}(\phi')^{\circ}$. Soit $c':=gn^{-1} \in C_{\gpalg*{\G}}(\phi')$ de telle sorte que $f'=cc'nf\widehat{\vartheta}(n)^{-1}\widehat{\vartheta}(c')^{-1}$. Par le lemme \ref{lemRelevementN}, il existe $h \in C_{\gpalg*{\G}}(\phi')^{\circ}$ et $m\in N$ tel que $c'=hm$. On obtient donc 
\[f'=ch[mnf\widehat{\vartheta}(n)^{-1}\widehat{\vartheta}(m)^{-1}]\widehat{\vartheta}(h)^{-1}\]
que l'on peut réécrire en 
\[hf'\widehat{\vartheta} = ch[mnf\widehat{\vartheta}(n)^{-1}\widehat{\vartheta}(m)^{-1}] \widehat{\vartheta}.\]
Nous obtenons de la sorte que $[mnf\widehat{\vartheta}(n)^{-1}\widehat{\vartheta}(m)^{-1}] \widehat{\vartheta}$ est un autre relèvement de $\sigma'(\widehat{\vartheta})$ avec $[mnf\widehat{\vartheta}(n)^{-1}\widehat{\vartheta}(m)^{-1}] \in N$. Comme $\bar{w}'$ ne dépend pas du relèvement choisi, en notant $v'$ la réduction de $mn$ dans $\Wf$, on obtient que $\bar{w}'=\overline{v'w\fr(v')^{-1}}$. Remarquons de plus que $m \in C_{\gpalg*{\G}}(\phi')$ pour obtenir $s' = v' \cdot s$. On a bien $(s',\bar{w}')=v' \cdot (s,\bar{w})$.
\end{proof}

\begin{Lem}
\label{lemPhiXtilde}
Le procédé précédent donne une bijection $\Lpbm{\Ild} \overset{\sim}{\longrightarrow} \widetilde{X}_{\ld}/\Wf$.
\end{Lem}

\begin{proof}
On a construit une application $\Lpbm{\Ild} \longrightarrow \widetilde{X}_{\ld}/\Wf$. Donnons l'application réciproque. Soit $(s,\bar{w}) \in \widetilde{X}_{\ld}$. Nous savons associer à $s$ un $\phi \in \Lpm{\Ild}$ (proposition \ref{proIdentifPhiInert}). Soit $f$ un relèvement de $\bar{w}$ dans $N$. Montrons que $f\widehat{\vartheta} \in \widetilde{C}_{\gpalg*{\G}}(\phi)$. Soit $\varphi \in \Lp{\weil}$ un relèvement de $\phi$ et notons $\varphi(\text{Frob})=u\widehat{\vartheta}$. Alors (comme dans la preuve du lemme \ref{lemRelevementN}) $s=\Ad(u) \circ \widehat{\vartheta}(s^q)$. Or $s = \bar{w}\fr(s)$, donc $s=\Ad(f) \circ \widehat{\vartheta}(s^q)$. On en déduit que $fu^{-1} \in C_{\gpalg*{\G}}(\phi)$, puis que $f\widehat{\vartheta} = (fu^{-1})(u\widehat{\vartheta})\in C_{\gpalg*{\G}}(\phi) \varphi(\weil)=\widetilde{C}_{\gpalg*{\G}}(\phi)$. On définit alors la section $\sigma$ en envoyant $\widehat{\vartheta}$ sur l'image de $f\widehat{\vartheta}$ dans $\widetilde{\pi}_{0}(\phi)$. Comme précédemment cette application passe au quotient et fournit l'application recherchée.
\end{proof}

En combinant le lemme \ref{lemPhiXtilde} et la proposition \ref{proXtildeG} on obtient:

\begin{Pro}
\label{proBijPhiSigmaRatio}
On a une bijection naturelle
\[\Lpbm{\Ild} \overset{\sim}{\longrightarrow} \ss*{\gpfini*{\G}}\]
\end{Pro}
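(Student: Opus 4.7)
Le plan est très court : il s'agit essentiellement de composer les deux bijections canoniques déjà établies. Le lemme \ref{lemPhiXtilde} fournit une bijection naturelle $\Lpbm{\Ild} \tosim \widetilde{X}_{\ld}/\Wf$, construite en associant à un couple $(\phi,\sigma)$ (avec un représentant tel que $\phi(\Ild) \subseteq \gpalg*{T}$) la paire $(s,\bar{w}) \in \widetilde{X}_{\ld}$, où $s \in \gpalg*{T} \simeq X\otimes_{\mathbb{Z}} \resalg^{\times}$ est l'élément semi-simple déterminant $\phi$ via le système compatible de racines de l'unité, et $\bar{w}$ est l'image dans $\Ws{s}\backslash \Wf$ d'un relèvement dans $N$ de $\sigma(\widehat{\vartheta})$ (choix possible grâce au lemme \ref{lemRelevementN}). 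Indépendamment, la proposition \ref{proXtildeG} fournit la bijection naturelle $\widetilde{X}_{\ld}/\Wf \tosim \ss*{\gpfini*{\G}}$ construite à partir de la surjectivité de l'application de Lang, en envoyant $(s,\bar{w})$ sur la classe rationnelle de $gsg^{-1}$ pour un $g$ vérifiant $g^{-1}\fr(g) = n$ un relèvement de $\bar{w}$.

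Je procéderais donc simplement en composant les deux flèches pour obtenir $\Lpbm{\Ild} \tosim \ss*{\gpfini*{\G}}$. La naturalité résulte du fait que les deux bijections intermédiaires sont canoniques une fois fixés le tore de référence $\gpalg*{T}$ (resp. son dual $\gpfinialg{T}$) et le système compatible de racines de l'unité, et que l'indépendance vis-à-vis de ces choix a déjà été vérifiée dans les deux énoncés précédents.

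Il n'y a à proprement parler aucune obstruction technique nouvelle : tout le travail non trivial a été fait dans la construction de $\widetilde{X}_{\ld}$ et dans l'interprétation de $\Lpbm{\Ild}$ en termes de couples $(s,\bar{w})$. La seule vérification que l'on souhaiterait éventuellement expliciter est que la composée est bien compatible avec le diagramme annoncé en début de section (à savoir que la projection $\Lpbm{\Ild} \twoheadrightarrow \Lpm{\Ild}$ correspond à la projection $\ss*{\gpfini*{\G}} \to (\ss*{\gpfinialg*{\G}})^{\fr}$), mais cela est immédiat puisque $(s,\bar{w}) \mapsto s$ du côté de $\widetilde{X}_{\ld}$ correspond d'un côté à l'oubli de la section $\sigma$ et de l'autre au passage d'une classe de conjugaison rationnelle à sa classe de conjugaison géométrique.
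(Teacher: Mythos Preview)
Your proposal is correct and matches the paper's proof exactly: the paper simply states that the bijection is obtained by combining lemma \ref{lemPhiXtilde} and proposition \ref{proXtildeG}. Your additional remark on compatibility with the projections to $\Lpm{\Ild}$ and $(\ss*{\gpfinialg*{\G}})^{\fr}$ is precisely the content of the proposition immediately following in the paper.
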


Remarquons que cette bijection est compatible avec la proposition \ref{proIdentifPhiInert} dans le sens suivant :

\begin{Pro}
Le diagramme suivant est commutatif
\[ \xymatrix{
\Lpbm{\Ild} \ar@{->}[r]^{\sim} \ar@{->}[d] & \ss*{\gpfini*{\G}} \ar@{->}[d]\\
\Lpm{\Ild} \ar@{->}[r]^{\sim} & (\ss*{\gpfinialg*{\G}})^{\fr} }\]
où la première flèche verticale est la projection sur la première coordonnée.
\end{Pro}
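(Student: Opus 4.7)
La stratégie consiste à dérouler les définitions des deux flèches horizontales et constater que les deux compositions aboutissent à la même classe de conjugaison géométrique, à savoir celle de l'élément semi-simple $s\in\gpalg*{T}$ attaché à $\phi$ par le système compatible de racines de l'unité fixé au début de l'article.

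Pour la composition par le haut suivie de la flèche verticale de droite, je partirais de $(\phi,\sigma) \in \Lpbm{\Ild}$ que l'on conjugue pour se ramener à $\phi(\Ild) \subseteq \gpalg*{T}$, ce qui fournit $s \in \gpalg*{T}$. Le lemme \ref{lemRelevementN} permet de choisir un relèvement $f\widehat{\vartheta}$ de $\sigma(\widehat{\vartheta})$ avec $f\in N$, d'où $(s,\bar{w}) \in \widetilde{X}_{\ld}$ comme dans la preuve du lemme \ref{lemPhiXtilde}. La bijection $\widetilde{X}_{\ld}/\Wf \tosim \ss*{\gpfini*{\G}}$ de la proposition \ref{proXtildeG} envoie $(s,\bar{w})$ sur la classe rationnelle de $t = gsg^{-1}$, où $g^{-1}\fr(g)$ relève $\bar{w}$. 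Comme $t$ et $s$ sont $\gpalg*{\G}$-conjugués, leur image commune par la flèche verticale de droite $\ss*{\gpfini*{\G}} \to (\ss*{\gpfinialg*{\G}})^{\fr}$ est la classe géométrique de $s$.

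Pour l'autre composition, la projection $\Lpbm{\Ild} \to \Lpm{\Ild}$ envoie $(\phi,\sigma)$ sur $\phi$, que la proposition \ref{proIdentifPhiInert} identifie directement à la classe géométrique $\fr$-stable de $s$. Les deux chemins produisent donc la même classe dans $(\ss*{\gpfinialg*{\G}})^{\fr}$, d'où la commutativité.

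L'unique point demandant un peu d'attention, et le seul obstacle potentiel, est de vérifier que l'identification $\phi \leftrightarrow s$ utilisée implicitement dans la construction de l'application $\Lpbm{\Ild} \to \widetilde{X}_{\ld}$ (lemme \ref{lemPhiXtilde}) coïncide bien avec celle de la proposition \ref{proIdentifPhiInert}. Cela découle du fait que toutes deux reposent sur le même système compatible de racines de l'unité et sur le même choix de progénérateur de $\Ild/\inersauv$, de sorte que la vérification est essentiellement tautologique.
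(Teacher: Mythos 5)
Votre argument est correct et suit exactement la démarche du texte : la bijection $\Lpbm{\Ild} \tosim \ss*{\gpfini*{\G}}$ se factorise par $\widetilde{X}_{\ld}/\Wf$, dont la projection sur la première coordonnée $s$ redonne l'identification $\Lpm{\Ild} \leftrightarrow (\ss*{\gpfinialg*{\G}})^{\fr}$ de la proposition \ref{proIdentifPhiInert}, les deux reposant sur le même système compatible de racines de l'unité. Le papier omet d'ailleurs la preuve, la tenant pour immédiate au vu du diagramme déjà établi en \ref{secClassesRatioX}.
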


On déduit des propositions \ref{prosimconj} et \ref{proBijPhiSigmaRatio} la proposition suivante

\begin{Pro}
\label{proinjsimr}
Nous avons une injection $\pairesSt/{\sim_r} \hookrightarrow \Lpbm{\Ild}$.
\end{Pro}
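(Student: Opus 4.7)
The plan is to obtain the injection simply by composing two already established maps. Proposition \ref{prosimconj} furnishes an injection $\pairesSt/{\sim_r} \hookrightarrow \ss*{\gpfini*{\G}}$, sending the class of $(\gpalg{S},\theta)$ to the rational semi-simple conjugacy class of the element $t\in (\gpfinialg*{S})^{\fr}$ corresponding to $\theta$ under the duality bijection $X^{*}(\textbf{S})/(\fr_{\mathbf{S}}-1)X^{*}(\textbf{S}) \tosim (\gpfinialg*{S})^{\fr}$. On the other hand, Proposition \ref{proBijPhiSigmaRatio} provides a canonical bijection $\Lpbm{\Ild} \overset{\sim}{\longrightarrow} \ss*{\gpfini*{\G}}$. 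Composing the former with the inverse of the latter yields the desired injection $\pairesSt/{\sim_r} \hookrightarrow \Lpbm{\Ild}$.

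The only thing worth verifying beyond this formal composition is that the resulting map agrees with the description given in the introduction, namely that a representative $(\gpalg{S},\theta)$ is sent to $(\phi,\sigma)$ where $\phi=(\iota \circ \varphi)_{|\Ild}$ and $\sigma(\widehat{\vartheta})=(\iota\circ\varphi)(\text{Frob})$, for $\varphi:\weil\to\Ldual{\gpalg{S}}$ the Langlands parameter of a lift $\tilde{\theta}$ of $\theta$ and $\iota:\Ldual{\gpalg{S}}\hookrightarrow\Ldual{\G}$ an admissible embedding. This follows by chasing definitions through the proof of Lemma \ref{lemPhiXtilde}: restricting $\iota\circ\varphi$ to $\Ild/\inersauv$ gives a semi-simple element $s\in\gpalg*{T}$ (after conjugation) that matches the element associated to $\theta$ via the compatible system of roots of unity; evaluating at $\text{Frob}$ produces an element of the form $f\widehat{\vartheta}\in\widetilde{C}_{\gpalg*{\G}}(\phi)$ with $f\in N$ whose class in $\Ws{s}\backslash \Wf$ is exactly the $\bar w$ attached to $(\gpalg{S},\theta)$ in Section \ref{secpairestconjss}.

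There is no real obstacle here: the injectivity, which is the only nontrivial content, is already enclosed in Proposition \ref{prosimconj} and was the main technical achievement of Section \ref{secpairestconjss}. Assembling the statement is therefore a one-line argument, and the proposition serves essentially as a reformulation of the previously established injection in Langlands-dual terms via the identification $\Lpbm{\Ild}\simeq \ss*{\gpfini*{\G}}$.
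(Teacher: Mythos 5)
Your argument is exactly the paper's: the proposition is deduced by composing the injection $\pairesSt/{\sim_r} \hookrightarrow \ss*{\gpfini*{\G}}$ of Proposition \ref{prosimconj} with the bijection $\Lpbm{\Ild} \tosim \ss*{\gpfini*{\G}}$ of Proposition \ref{proBijPhiSigmaRatio}. The additional compatibility check with the explicit description via the Langlands correspondence for tori is not needed for the injection itself (the paper treats it separately in the proposition that follows), but it is correctly sketched.
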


Nous pouvons écrire directement l'application $\pairesSt \to \Lpbm{\Ild}$ (construite pas les propositions \ref{prosimconj} et \ref{proBijPhiSigmaRatio}) grâce à la correspondance de Langlands locale pour les tores. En effet prenons $(\gpalg{S},\theta) \in \pairesSt$ et relevons $\theta$ en un caractère $\tilde{\theta}$ de $\gpalg{S}(\kk)$. Soit $\varphi : \weil \to \Ldual{S}$ le paramètre de Langlands associé à $\tilde{\theta}$ via la correspondance de Langlands locale pour les tores. Choisissons $\iota$ un plongement dual (non-canonique, voir preuve ci-dessous) $\iota : \Ldual{S} \hookrightarrow \Ldual{G}$ qui nous permet d'obtenir un paramètre de Langlands pour $\gpalg{G}$ : $\iota \circ \varphi : \weil \to \Ldual{G}$. On pose alors $\phi:=(\iota \circ \varphi)_{|\Ild}$ et $\sigma(\widehat{\vartheta}):=(\iota \circ \varphi)(\text{Frob})$.

\begin{Pro}
Le procédé précédent définit une application $\pairesSt \to  \Lpbm{\Ild}$ qui est la même que celle construite par les propositions \ref{prosimconj} et \ref{proBijPhiSigmaRatio}.
\end{Pro}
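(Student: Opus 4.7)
The plan is to first check that the recipe is well-defined modulo $\gpalg*{\G}$-conjugation, then to unwind both constructions on a fixed reference torus and match them through the bijection $\Lpbm{\Ild}\tosim\widetilde X_\ld/\Wf$ of Lemma \ref{lemPhiXtilde}.

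First I would verify well-definedness. Two lifts $\tilde\theta,\tilde\theta'$ of $\theta\in X^{*}(\gpalg{S})/(\fr_{\gpalg{S}}-1)X^{*}(\gpalg{S})$ to $\gpalg{S}(\kk)$ differ by an unramified character (up to pro-$p$, i.e.\ wildly ramified, contribution, which is irrelevant after restriction to $\Ild$). Under the local Langlands correspondence for tori, such a twist changes $\varphi$ only by an unramified cocycle valued in $\widehat{\gpalg{S}}^{\circ}$, leaving $\varphi_{|\Ild}$ fixed and modifying $\varphi(\mathrm{Frob})$ by an element of $\widehat{\gpalg{S}}^{\circ}\subseteq C_{\gpalg*{\G}}(\phi)^{\circ}$, hence not changing the class of $\sigma$ in $\widetilde\pi_{0}(\phi)$. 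Two choices of dual embedding $\iota,\iota'$ differ by $\gpalg*{\G}$-conjugation by definition, so the image in $\Lpbm{\Ild}$ is unchanged.

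Next I would fix, as in Section \ref{secClassesRatioX}, a maximally split unramified $\kk$-torus $\gpalg{T}$ and pick $g\in G^{nr}$ with $S^{nr}={}^{g}T^{nr}$; let $n:=g^{-1}\fr(g)$ and $\bar w\in\Ws{s}\backslash\Wf$ its reduction. By construction one may choose the dual embedding $\iota:\Ldual{S}\hookrightarrow\Ldual{\G}$ to be the one built from $\gpalg{T}$ and $g$: explicitly, $\iota$ identifies $\widehat{\gpalg{S}}$ with $\widehat{\gpalg{T}}$ twisted by the $1$-cocycle of $\weil$ in $N(\widehat{\gpalg{T}},\gpalg*{\G})$ representing $\bar w$, so that $(\iota\circ\varphi)(\mathrm{Frob})$ lies in $N(\widehat{\gpalg{T}},\gpalg*{\G})\rtimes\langle\widehat\vartheta\rangle$ and takes the form $f\widehat\vartheta$ with $f$ projecting to $\bar w$ in $\Wf$. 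This is exactly the recipe that, in Lemma \ref{lemPhiXtilde}, associates to $(s,\bar w)\in\widetilde X_\ld$ a pair $(\phi,\sigma)$ with $\sigma(\widehat\vartheta)$ the class of $f\widehat\vartheta$.

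Then I would invoke the key compatibility: with the compatible system of roots of unity fixed in the \emph{Notations}, the LLC for the unramified torus $\gpalg{S}$ realises $\theta\in X^{*}(\gpalg{S})/(\fr_{\gpalg{S}}-1)X^{*}(\gpalg{S})$ as the Weil parameter whose restriction to $\Ild$, viewed as a homomorphism into $\widehat{\gpalg{S}}=\gpfinialg*{S}(\resalg)$, is precisely the element $t\in(\gpfinialg*{S})^{\fr}$ produced in Appendix \ref{secIsoTores}; this is the very construction of Proposition \ref{proInjSim1}. Transporting through $\iota$, $\phi=(\iota\circ\varphi)_{|\Ild}$ sends the fixed pro-generator of $\Ild/\inersauv$ to $gtg^{-1}\in\widehat{\gpalg{T}}$, i.e.\ to the element $s$ attached to $(\gpalg{S},\theta)$ in Section \ref{secClassesRatioX}. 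Combined with the previous paragraph, the pair $(\phi,\sigma)$ corresponds under Lemma \ref{lemPhiXtilde} to $(s,\bar w)\in\widetilde X_\ld/\Wf$, which is by definition the image of $(\gpalg{S},\theta)$ through the composition of Propositions \ref{prosimconj} and \ref{proBijPhiSigmaRatio}.

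The main obstacle is bookkeeping rather than any substantial calculation: one must ensure that the pinning of $\Ldual{\G}$, the compatible roots of unity, the choice of $g$ representing the rational class of $\gpalg{S}$, and the explicit form of $\iota$ as a twist of $\Ldual{\gpalg{T}}$ by a $1$-cocycle representing $\bar w$, are all matched precisely with the construction of $\bar w$ in Section \ref{secClassesRatioX} and with the identification $X^{*}(\gpalg{S})/(\fr_{\gpalg{S}}-1)X^{*}(\gpalg{S})\simeq(\gpfinialg*{S})^{\fr}$ of Appendix \ref{secIsoTores}. Once these identifications are aligned, the equality of the two maps is immediate.
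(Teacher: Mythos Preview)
Your proof plan is correct and follows essentially the same approach as the paper: first check well-definedness (independence of the lift $\tilde\theta$ and of the choice of $\iota$), then match the two constructions by unwinding the explicit tame local Langlands correspondence for tori on a reference torus. The paper's own proof is in fact terser on the second step---it simply invokes section~4.3 of \cite{DebackerReeder}, which reconstructs the tame LLC for tori in exactly the language you use---so your proposal is a faithful expansion of that reference; one small notational slip to fix is the expression ``$gtg^{-1}\in\widehat{\gpalg{T}}$'', which conflates the group-side $g\in G^{nr}$ with a dual-side conjugation, though the intended transport of $t$ to $s$ via $X^{*}(\gpalg{S})\simeq X^{*}(\gpalg{T})$ is clear.
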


\begin{proof}
Rappelons comment est construit le plongement $\iota$. À $\gpalg{S}\subseteq \gpalg{G}$ est associé une classe de $\gpalg*{G}$-conjugaison $\widehat{\vartheta}$-équivariante de plongements $\hat{\iota} : \gpalg*{S} \hookrightarrow \gpalg*{G}$. Ainsi il existe $g \in \gpalg*{G}$ tel que $\Ad(g)\circ \widehat{\vartheta} \hat{\iota} = \hat{\iota}$ et on peut définir $\iota : \Ldual{S} \hookrightarrow \Ldual{G}$ par $\gpalg*{S} \overset{\hat{\iota}}{\hookrightarrow} \gpalg*{G}$ et $\iota(\widehat{\vartheta}_{\gpalg*{S}})=g\widehat{\vartheta}_{\gpalg*{G}}$. Montrons alors que l'application $\pairesSt \to  \Lpbm{\Ild}$ est indépendante des choix effectués. Le choix de $g$ est non-canonique mais deux $g$ diffèrent par un élément de $\gpalg*{S}$ donc donnent la même section $\sigma$ (puisque $\gpalg*{S} \subseteq C_{\gpalg*{\G}}(\phi)^{\circ}$). Enfin le procédé est indépendant du choix de $\tilde{\theta}$ puisque deux tels caractères diffèrent par un caractère non-ramifié. Et on obtient de la sorte une application bien définie $\pairesSt \to  \Lpbm{\Ild}$.

Il reste à vérifier que l'application est bien la même que celle des propositions \ref{prosimconj} et \ref{proBijPhiSigmaRatio}. Ceci est aisé à faire en utilisant la section 4.3 de \cite{DebackerReeder} qui reconstruit la correspondance de Langlands locale pour les tores dans le cas modéré avec une méthode très proche de celle de cet article.
\end{proof}

La proposition \ref{proinjsimr} combinée avec la remarque \ref{remdecompo}, démontre le théorème suivant.

\begin{The}
	\label{theDecompoPhiSigma}

	Soit $\phi : \Ild \to \Ldual{\G}$ un paramètre inertiel. La relation d'équivalence $\sim_r$ permet de décomposer $\rep[\ld][\phi]{\G}$ de la façon suivante :
	
	\[ \rep[\ld][\phi]{\G} = \prod_{ \sigma \in \overline{\Sigma}( \langle \widehat{\vartheta} \rangle ,\widetilde{\pi}_{0}(\phi))} \rep[\ld][(\phi,\sigma)]{\G}\]
\end{The}

Notons que, si $\G$ est quasi-déployé, la proposition \ref{prosimconj} donne une bijection, et donc toutes les catégories $\rep[\ld][(\phi,\sigma)]{\G}$ sont non nulles. Lorsque $\G$ n'est pas quasi-déployé, nous n'avons qu'une injection et donc, les éléments qui ne sont pas dans l'image de cette injection, donnent des catégories nulles. Nous donnerons, dans la section \ref{secRelevance}, une condition sur $(\phi,\sigma)$ pour déterminer si $\rep[\ld][(\phi,\sigma)]{\G}$ est nulle ou non.

\subsection{Lien entre \texorpdfstring{$\Zl$}{Zl} et \texorpdfstring{$\Ql$}{Ql}}
\label{seclienzlqlr}

Nous souhaitons expliciter dans cette section le lien qui existe entre les catégories construites précédemment sur $\Ql$ et sur $\Zl$.

\sautintro

Soit $(\phi,\sigma) \in \Lpbm{\iner}$. Définissons alors $\phi'=\phi_{|\inerl} \in \Lpm{\inerl}$. Alors $C_{\gpalg*{G}}(\phi) \subseteq C_{\gpalg*{G}}(\phi')$ et donc $\widetilde{C}_{\gpalg*{G}}(\phi) \subseteq \widetilde{C}_{\gpalg*{G}}(\phi')$. De plus $C_{\gpalg*{G}}(\phi)^{\circ} \subseteq C_{\gpalg*{G}}(\phi')^{\circ}$, ainsi nous avons un morphisme $\Sigma( \langle \widehat{\vartheta} \rangle ,\widetilde{\pi}_{0}(\phi)) \to \Sigma( \langle \widehat{\vartheta} \rangle ,\widetilde{\pi}_{0}(\phi'))$. Notons $\sigma'$ l'image de $\sigma$ par ce morphisme. L'application qui à $(\phi,\sigma)$ associe $(\phi',\sigma')$ est alors une application naturelle $\Lpbm{\iner} \to \Lpbm{\inerl}$.

Il est aisé de voir que l'on obtient un diagramme commutatif

\[ \xymatrix{
\Lpbm{\iner} \ar@{->}[r]^-{\sim} \ar@{->}[d] & \ss{\gpfini*{\G}} \ar@{->}[d]\\
\Lpbm{\inerl} \ar@{->}[r]^-{\sim} & \gpfini*{\G}_{ss,\Zl} },\]
où l'application $\ss{\gpfini*{\G}} \to \gpfini*{\G}_{ss,\Zl}$ revient à prendre la partie $\lprime$-régulière d'une classe de conjugaison semi-simple.

Par construction des catégories $\rep[\ld][(\phi,\sigma)]{G}$, nous avons donc la proposition suivante.

\begin{Pro}
Nous avons $\rep[\Zl][(\phi',\sigma')]{G} \cap \rep[\Ql]{G} = \prod_{(\phi,\sigma)} \rep[\Ql][(\phi,\sigma)]{G}$, où le produit est pris sur les $(\phi,\sigma) \in \Lpbm{\iner}$ s'envoyant sur $(\phi',\sigma')$ par $\Lpbm{\iner} \to \Lpbm{\inerl}$.
\end{Pro}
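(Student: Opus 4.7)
The plan is to deduce the statement from the compatibility between the $\Zl$-valued and $\Ql$-valued idempotents $e_{s,\ld}^{\quotred{\G}{\sigma}}$ that cut out the categories, combined with the commutative diagram
\[ \xymatrix{
\Lpbm{\iner} \ar@{->}[r]^-{\sim} \ar@{->}[d] & \ss{\gpfini*{\G}} \ar@{->}[d]\\
\Lpbm{\inerl} \ar@{->}[r]^-{\sim} & \gpfini*{\G}_{ss,\Zl} }\]
stated just before the proposition. Recall from section \ref{secSystcoh} that $\rep[\ld][(\phi,\sigma)]{G}$ is cut out by a system of idempotents coming from a $0$-coherent system $\systConj{S}^{\ld}_{(\phi,\sigma)}$ of $\ld$-regular semisimple conjugacy classes in the various $\quotred*{\G}{x}$, and that by Bonnafé--Rouquier one has the key identity $e_{s',\Zl}^{\quotred{\G}{x}}=\sum_{s\sim_{\lprime} s'} e_{s,\Ql}^{\quotred{\G}{x}}$ whenever $s'$ is $\lprime$-regular.

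First I would verify the following local compatibility: for every $x\in \bt$, the image of $\systConjx{S^{\Zl}_{(\phi',\sigma')}}{x}$ inside $\ss{\quotred*{\G}{x}}$ agrees with the union $\bigcup_{(\phi,\sigma)\mapsto(\phi',\sigma')}\systConjx{S^{\Ql}_{(\phi,\sigma)}}{x}$ seen under the $\lprime$-regular-part map $\ss{\quotred*{\G}{x}}\to \gpfini*{\G}{x}_{ss,\Zl}$. This reduces, via the bijections $\pairesSt/{\sim_r}\hookrightarrow \Lpbm{\Ild}\tosim \ss{\gpfini*{\G}}$ and the description of $\pairesSt \to \ensSystConjc$ from section \ref{secintroStheta} using the triplets $(\sigma,\gpfinialg{S},\theta)$, to the naturality of the local map $\ss{\quotred*{\G}{\sigma}}\to \ss{\gpfini*{\G}}$ (proposition \ref{proCquotr}) with respect to $\lprime$-regular-part. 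Concretely, two classes $s_1,s_2\in \ss{\quotred*{\G}{\sigma}}$ map to the same $\lprime$-regular class in $\gpfini*{\G}_{ss,\Zl}$ iff the associated pairs $(\phi_1,\sigma_1),(\phi_2,\sigma_2) \in \Lpbm{\iner}$ map to the same element of $\Lpbm{\inerl}$.

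Next, knowing this, the idempotent identity above yields the corresponding identity of global idempotents on the building: for every $x \in \bts$,
\[ e_{x}^{\systConj{S}^{\Zl}_{(\phi',\sigma')},\Zl}\;=\;\sum_{(\phi,\sigma)\mapsto (\phi',\sigma')} e_{x}^{\systConj{S}^{\Ql}_{(\phi,\sigma)},\Ql}, \]
the sum being a sum of pairwise orthogonal idempotents (since the $\systConj{S}^{\Ql}_{(\phi,\sigma)}$ are pairwise disjoint). Thus, for $V\in\rep[\Ql]{G}$ we have $V \in \rep[\Zl][(\phi',\sigma')]{G}$ iff $V=\sum_x e_x^{\systConj{S}^{\Zl}_{(\phi',\sigma')},\Zl}V$, which by the identity above is equivalent to $V=\bigoplus_{(\phi,\sigma)\mapsto(\phi',\sigma')}\sum_x e_x^{\systConj{S}^{\Ql}_{(\phi,\sigma)},\Ql}V$, i.e.\ to $V$ lying in $\prod_{(\phi,\sigma)\mapsto (\phi',\sigma')}\rep[\Ql][(\phi,\sigma)]{G}$.

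The main (and really only non-formal) step is the local compatibility in the first paragraph; everything else is a direct consequence of the Bonnafé--Rouquier identity and the orthogonality of the summands. I do not anticipate serious technical obstacles because the bijection $\Lpbm{\Ild}\tosim \ss{\gpfini*{\G}}$ was constructed in a manner that is manifestly functorial in the inertia subgroup $\Ild$, so passing from $\iner$ to $\inerl$ matches the passage to $\lprime$-regular part on the semisimple side.
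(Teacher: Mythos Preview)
Your argument is correct and is precisely what the paper means by ``par construction des cat\'egories $\rep[\ld][(\phi,\sigma)]{G}$'': the paper offers no further proof, so you have simply unpacked the implicit reasoning. The local compatibility you isolate (that the map $\ss{\quotred*{\G}{\sigma}} \to \ss*{\gpfini*{\G}}$ of proposition~\ref{proCquotr} commutes with taking $\lprime$-regular parts, hence the $\Zl$-system at each $\sigma$ is the union of the $\Ql$-systems over the fibre) together with the Bonnaf\'e--Rouquier identity $e_{s',\Zl}^{\quotred{\G}{x}}=\sum_{s\sim_{\lprime} s'} e_{s,\Ql}^{\quotred{\G}{x}}$ is exactly the content behind the commutative square preceding the proposition.
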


\subsection{Compatibilité à l'induction et à la restriction parabolique}

\label{secCompaInductionParaboliquePhiSigma}

Le but de cette section est d'étudier les propriétés de $\rep[\ld][(\phi,\sigma)]{\G}$ vis à vis des foncteurs d'induction et de restriction parabolique.

\sautintro

Soit $\gpalg{P}$ un $\kk$-sous-groupe parabolique de $\gpalg{\G}$ de quotient de Levi $\gpalg{M}$ défini sur $\kk$. Considérons $\gpalg*{M}$ un dual de $\gpalg{M}$ sur $\Ql$ muni d'un plongement $\iota:\Ldual{M} \hookrightarrow \Ldual{\G}$ (voir \cite{borel} section 3.4), qui induit une application $\Lparamm{\Ild}{M} \rightarrow \Lpm{\Ild}$. Soit $\phi_{M} \in \Lparamm{\Ild}{M}$ et notons $\phi:=\iota \circ \phi_{M} \in \Lpm{\Ild}$. Comme $\iota(\widetilde{C}_{\gpalg*{M}}(\phi_{M})) \subseteq \widetilde{C}_{\gpalg*{\G}}(\phi)$  et $\iota(C_{\gpalg*{M}}(\phi_{M}))^{\circ} \subseteq C_{\gpalg*{\G}}(\phi)^{\circ}$, le plongement $\iota$ induit un morphisme $\iota : \widetilde{\pi}_{0}(\phi_{M}) \rightarrow\widetilde{\pi}_{0}(\phi)$. En particulier, si $\sigma_{M} \in \Sigma(\langle \widehat{\vartheta} \rangle,\widetilde{\pi}_{0}(\phi_{M}))$ alors  $\sigma:=\iota \circ \sigma_{M} \in  \Sigma(\langle \widehat{\vartheta} \rangle,\widetilde{\pi}_{0}(\phi))$. On obtient de la sorte une application 
\[ \iota_{M}^{\G} : \LparamBm{\Ild}{M} \longrightarrow \Lpbm{\Ild}\]
qui à $(\phi_M,\sigma_M)$ associe $(\phi,\sigma)$.

\begin{Lem}
	\label{lemPhiSigmaM}
	Soit $t \in \textsf{M}^{*}$ un représentant de la classe de conjugaison rationnelle semi-simple associée à $(\phi_{M},\sigma_{M})$, par \ref{proBijPhiSigmaRatio}. Alors la classe de conjugaison associée à $(\phi,\sigma)$ est celle de $t$ vu comme un élément de $\gpfini*{\G}$.
\end{Lem}

\begin{proof}
	Commençons par conjuguer $(\phi_M,\sigma_M)$ pour que $\phi_M$ soit à image dans $\gpalg*{T}$. Prenons $f\widehat{\vartheta}$ un relèvement de $\sigma_M(\widehat{\vartheta})$ avec $f \in N(\gpalg*{T},\gpalg*{M})$. Notons $s\in \textsf{\textbf{T}}^{*}$ l'élément correspondant à $\phi_M$ et  $\bar{w}\in \WfM{M}/\WsM{s}{M}$ ($\WfM{M}$ étant le groupe de Weyl de $\gpalg*{M}$ relativement à $\widehat{\textbf{T}}$) la réduction de $f$, de sorte que $(s,\bar{w})$ soit un représentant de l'image de $(\phi_{M},\sigma_{M})$ par la bijection $\LparamBm{\Ild}{M} \overset{\sim}{\rightarrow} \widetilde{X}_{\ld}/\WfM{M}$. Alors $\iota(f)\widehat{\vartheta}$ est un relèvement de $\sigma$ et $\iota(f) \in N(\gpalg*{T},\gpalg*{\G})$. Donc $(s,\bar{w})$, où l'on voit $\bar{w}$ comme un élément de $\Wf/\Ws{s}$, est le couple correspondant à $(\phi,\sigma)$, d'où le résultat.
\end{proof}

\begin{The}
	\label{theInductionParaboliquephisigma}
	Soit $\gpalg{P}$ un sous-groupe parabolique de $\gpalg{\G}$ ayant pour facteur de Levi $\gpalg{M}$.
	\begin{enumerate}
		\item Soit $(\phi,\sigma) \in \Lpbm{\Ild}$. Alors
		\[ \rp[ \rep[\ld][(\phi,\sigma)]{\G} ] \subseteq \prod_{\substack{(\phi_{M},\sigma_M) \\ \iota_{M}^{\G}(\phi_M,\sigma_M)=(\phi,\sigma)} } \rep[\ld][(\phi_{M},\sigma_M)]{M}\]
		où $\rp$ désigne la restriction parabolique.
		
		\item Soit $(\phi_M,\sigma_M) \in \LparamBm{\Ild}{M}$ et notons $(\phi,\sigma):=\iota_{M}^{\G}(\phi_M,\sigma_M)$. Alors
		\[ \ip[\rep[\ld][(\phi_{M},\sigma_M)]{M} ] \subseteq \rep[\ld][(\phi,\sigma)]{\G}\]
		où $\ip$ désigne l'induction parabolique.
	\end{enumerate}
	
\end{The}

\begin{proof}
	Cela découle du lemme \ref{lemPhiSigmaM} et des propositions \ref{proInducParaSyst}, \ref{proResParaSyst}.
\end{proof}

\begin{The}
	\label{theEquivalenceparaphisigma}
	Si $C_{\gpalg*{\G}}(\phi) \subseteq \iota(\gpalg*{M})$ le foncteur d'induction parabolique $\ip$ réalise une équivalence de catégories entre $\rep[\ld][(\phi_{M},\sigma_M)]{M}$ et $\rep[\ld][(\phi,\sigma)]{\G}$.
\end{The}

\begin{proof}
	Cela découle du théorème \ref{theInductionParaboliquephisigma} et du théorème 4.4.6 dans \cite{lanard}.
\end{proof}

\subsection{Équivalence inertielle sur les paramètres de Weil}
\label{secEqinert}

Nous avons obtenu au théorème \ref{theDecompoPhiSigma} une décomposition de $\rep[\ld][\phi]{G}$ en rajoutant à $\phi$ un paramètre $\sigma$. Ici, nous allons paramétrer d'une autre manière cette décomposition. À la place de rajouter un paramètre $\sigma$ à $\phi$, on va considérer les relèvements $\lambda \in \Lp{\weil}$  de $\phi$ et les regrouper grâce à une relation d'équivalence. Pour $\ld=\Ql$, on va retrouver la relation d'équivalence inertielle introduite par Haines dans \cite{haines}. Nous introduirons également une relation de "$\lprime$-équivalence inertielle", qui imite celle de Haines, pour obtenir le cas $\ld=\Zl$. Cette nouvelle interprétation de la décomposition de $\rep[\ld][0]{\G}$, nous montrera que dans le cas où $\G$ est un groupe classique non-ramifié ($p\neq 2$) et $\ld=\Ql$ (cas où l'on a la correspondance de Langlands locale), alors la décomposition du théorème \ref{theDecompoPhiSigma} est la décomposition de $\rep[\ld][0]{\G}$ en "blocs stables".

\sautintro

Commençons par remarquer la chose suivante : nous avons identifié les paramètres de l'inertie modérés $\Lpm{\iner}[\Ql]$ aux classes de conjugaison géométriques $\fr$-stables dans $\gpfinialg*{\G}$ et $\Lpm{\inerl}[\Ql]$ à celles d'ordre premier à $\lprime$. Maintenant, si l'on considère des paramètres inertiels modérés mais à valeur dans le dual sur $\Fl$, $\phi : \iner \rightarrow \Ldual{\G}[\Fl]$, la même démonstration montre que l'on peut les identifier aux classes de conjugaison géométriques $\fr$-stables dans $\gpfinialg*{\G}$ d'ordre premier à $\lprime$. On a une identification naturelle $\Lpm{\iner}[\Fl] \simeq \Lpm{\inerl}[\Ql]$.

De même, notons $\Lpbm{\iner}[\Fl]$ l'ensemble des couples $(\phi,\sigma)$ à $\gpalg*{\G}$-conjugaison près, où $\phi \in \Lpm{\iner}[\Fl]$ et $\sigma \in \Sigma( \langle \widehat{\vartheta} \rangle ,\widetilde{\pi}_{0}(\phi))$. On a également une bijection $\Lpbm{\iner}[\Fl] \simeq \Lpbm{\inerl}[\Ql]$.

\bigskip

Fixons des données $\gpalg*{T}_{0}\subseteq \gpalg*{B}_{0} \subseteq \gpalg*{\G}$, composées d'un tore maximal et d'un Borel, stables sous l'action du groupe de Galois, qui nous permettent de définir la notion de paraboliques standards et de Levis standards de $\Ldual{\G}$, comme dans \cite{borel} paragraphes 3.3 et 3.4.  Pour $\mathcal{M}$ un Levi standard de $\Ldual{\G}$, on note $\{\mathcal{M}\}$ l'ensemble des sous-groupes de Levi standards qui sont $\gpalg*{\G}$-conjugués à $\mathcal{M}$. Remarquons que si $\mathcal{M}$ un Levi de $\Ldual{\G}$, alors $\mathcal{M}^{\circ}$ est un Levi de $\gpalg*{\G}$.

Soit $\lambda : \weil \rightarrow \Ldual{\G}$ un morphisme admissible. Alors $\lambda$ donne lieu à une unique classe de Levi standards $\{\mathcal{M}_{\lambda}\}$ telle qu'il existe $\lambda^{+}$ dans $(\lambda)_{\gpalg*{\G}}$, la classe de $\gpalg*{\G}$-conjugaison de $\lambda$, dont l'image est contenue minimalement dans $\mathcal{M}_{\lambda}$, pour un $\mathcal{M}_{\lambda}$ dans cette classe. 

Haines définit dans \cite{haines} définition 5.3.3, une notion d'équivalence inertielle pour des paramètres $\lambda : \weil \to \Ldual{G}[\Ql]$. Nous rappelons cette dernière et l'étendons en une notion de $\lprime$-équivalence inertielle pour des paramètres $\lambda : \weil \to \Ldual{G}[\Fl]$.

\begin{Def}
\label{defEqInert}
Soient $\lambda_{1},\lambda_{2} : \weil \rightarrow \Ldual{\G}[\Ql]$ (resp. $\lambda_{1},\lambda_{2} : \weil \rightarrow \Ldual{\G}[\Fl]$) deux paramètres admissibles. On dit que $\lambda_{1}$ et $\lambda_{2}$ sont \textit{inertiellement équivalents} (resp. $\lprime$-\textit{inertiellement équivalents}) si
\begin{enumerate}
\item $\{\mathcal{M}_{\lambda_{1}}\}=\{\mathcal{M}_{\lambda_{2}}\}$
\item il existe $\mathcal{M} \in \{\mathcal{M}_{\lambda_{1}}\}$, $\lambda_{1}^{+} \in (\lambda_{1})_{\gpalg*{\G}}$ et $\lambda_{2}^{+} \in (\lambda_{2})_{\gpalg*{\G}}$ dont les images sont minimalement contenues dans $\mathcal{M}$, et $z \in H^{1}(\langle \widehat{\vartheta} \rangle, (Z(\mathcal{M}^{\circ})^{\iner})^{\circ})$ vérifiant
\[ (z\lambda_{1}^{+})_{\mathcal{M}^{\circ}}=(\lambda_{2}^{+})_{\mathcal{M}^{\circ}}\]
\end{enumerate}
\end{Def}

On définit $\mathcal{B}^{st}$ (resp. $\mathcal{B}^{st}_{(\lprime)}$) comme l'ensemble des paramètres $\lambda : \weil \rightarrow \Ldual{\G}[\Ql]$ (resp. $\lambda : \weil \rightarrow \Ldual{\G}[\Fl]$) à équivalence inertielle près (resp. $\lprime$-équivalence inertielle près). À l'usuelle, on unifie les notations en posant $\mathcal{B}^{st}_{\ld}$ qui vaut $\mathcal{B}^{st}$ si $\ld = \Ql$  et $\mathcal{B}^{st}_{(\lprime)}$ si $\ld = \Zl$. On défini également $\mathcal{B}_{m,\ld}^{st}$, le sous-ensemble de $\mathcal{B}_{\ld}^{st}$ des éléments modérés.

\medskip

Le but de cette section est de construire une bijection naturelle
\[\mathcal{B}^{st}_{m,\ld} \overset{\sim}{\rightarrow} \Lpbm{\iner}[\ldb],\]
où $\ldb$ vaut $\Ql$ si $\ld=\Ql$ et $\Fl$ si $\ld=\Zl$.

Commençons par définir une application $\Lpm{\weil}[\ldb] \rightarrow \Lpbm{\iner}[\ldb]$. Soit $\varphi : \weil \to \Ldual{G}[\ldb]$. Posons $\phi:=\varphi_{|\iner} : \iner \to \Ldual{G}[\ldb]$ un paramètre modéré. Il reste à construire une section $\sigma \in \Sigma( \langle \widehat{\vartheta} \rangle ,\widetilde{\pi}_{0}(\phi))$. L'image de $\varphi$ est contenue dans $\widetilde{C}_{\gpalg*{\G}}(\phi)$, donc $\varphi : \weil \rightarrow \widetilde{C}_{\gpalg*{\G}}(\phi)$. En composant avec la projection $\widetilde{C}_{\gpalg*{\G}}(\phi) \twoheadrightarrow \widetilde{\pi}_{0}(\phi)$ on obtient une application $\weil \rightarrow \widetilde{\pi}_{0}(\phi)$. Puisque $\phi$ est modéré, on $\varphi(\iner)=\phi(\iner) \subseteq C_{\gpalg*{\G}}(\phi)^{\circ}$ et cette application passe au quotient et nous fournit $\weil/\iner \rightarrow \widetilde{\pi}_{0}(\phi)$, ou encore une section $\sigma : \langle \widehat{\vartheta} \rangle \rightarrow \widetilde{\pi}_{0}(\phi)$. Dit autrement, la section $\sigma$ est caractérisée par $\sigma(\widehat{\vartheta})=\varphi(\text{Frob}) \in \widetilde{\pi}_{0}(\phi)$. On obtient de la sorte une application $\Lpm{\weil}[\ldb] \rightarrow \Lpbm{\iner}[\ldb]$.

\begin{Lem}
\label{lemSurjParamWeil}
L'application $\Lpm{\weil}[\ldb] \rightarrow \Lpbm{\iner}[\ldb]$ est surjective.
\end{Lem}

\begin{proof}
Soit $(\phi,\sigma) \in \Lpbm{\iner}[\ldb]$. Choisissons $\varphi \in \Lp{\weil}[\ldb]$ un relèvement de $\phi$ arbitraire. Notons $\varphi(\text{Frob})=:f\widehat{\vartheta}$ et considérons $uf\widehat{\vartheta}$, un relèvement de $\sigma(\widehat{\vartheta})$ dans $\widetilde{C}_{\gpalg*{\G}}(\phi)$, avec $u \in C_{\gpalg*{\G}}(\phi)$. Pour $x\in \iner$ comme $\varphi$ est un morphisme on a $\Ad(f) \circ \widehat{\vartheta}  (\phi(x)^{q})=\phi(x)$. Ainsi $\Ad(uf) \circ \widehat{\vartheta}  (\phi(x)^{q})=\phi(x)$ et l'application $\varphi' : \weil=\iner \rtimes \langle \text{Frob} \rangle \rightarrow \Ldual{\G}$ définie par $\varphi'(x)=\phi(x)$ pour $x \in \iner$ et $\varphi'(\text{Frob})=uf\widehat{\vartheta}$ est également un morphisme. Ainsi $\varphi' \in \Lp{\weil}[\ldb]$ fournit un antécédent de $(\phi,\sigma)$.
\end{proof}

\begin{Pro}
\label{proBijStablePhiTilde}
L'application $\Lpm{\weil}[\ldb] \rightarrow \Lpbm{\iner}[\ldb]$ passe au quotient et fournit une bijection $\mathcal{B}^{st}_{m,\ld} \overset{\sim}{\rightarrow} \Lpbm{\iner}[\ldb]$.
\end{Pro}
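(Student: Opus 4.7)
The plan is to combine Lemma \ref{lemSurjParamWeil} (surjectivity) with two further arguments: that the map factors through $(\lprime\text{-})$inertial equivalence, and that the induced map is injective. The same reasoning applies uniformly to $\ld = \Ql$ and $\ld = \Zl$.

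For well-definedness, take $\lambda_1, \lambda_2 \in \Lpm{\weil}[\ldb]$ inertially (resp. $\lprime$-inertially) equivalent. Definition \ref{defEqInert} provides, after replacing each by a $\gpalg*{\G}$-conjugate, a common Levi $\mathcal{M}$ of $\Ldual{\G}$ containing both images minimally, an unramified cocycle $z \in H^{1}(\langle \widehat{\vartheta} \rangle, (Z(\mathcal{M}^{\circ})^{\iner})^{\circ})$, and $m \in \mathcal{M}^{\circ}$ with $m(z\lambda_1)m^{-1} = \lambda_2$. Triviality of $z$ on $\iner$ yields $m \phi_1 m^{-1} = \phi_2$, so $\phi_1 = \phi_2$ in $\Lpm{\iner}[\ldb]$; after conjugating $\lambda_1$ by $m$ I may assume $\phi_1 = \phi_2 = \phi$ and $\lambda_2(\text{Frob}) = z(\widehat{\vartheta}) \lambda_1(\text{Frob})$. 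Since $\phi(\iner) \subseteq \mathcal{M}^{\circ}$, one has $(Z(\mathcal{M}^{\circ})^{\iner})^{\circ} \subseteq C_{\gpalg*{\G}}(\phi)^{\circ}$, so $\lambda_1(\text{Frob})$ and $\lambda_2(\text{Frob})$ project to the same element of $\widetilde{\pi}_{0}(\phi) = \widetilde{C}_{\gpalg*{\G}}(\phi)/C_{\gpalg*{\G}}(\phi)^{\circ}$, whence $\sigma_1 = \sigma_2$.

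For injectivity, I start from $\lambda_1, \lambda_2$ giving the same $(\phi, \sigma) \in \Lpbm{\iner}[\ldb]$. After $\gpalg*{\G}$-conjugation I may assume $\phi_1 = \phi_2 = \phi$ and $\lambda_2(\text{Frob}) = c \, \lambda_1(\text{Frob})$ with $c \in C_{\gpalg*{\G}}(\phi)^{\circ}$. To ensure $\{\mathcal{M}_{\lambda_1}\} = \{\mathcal{M}_{\lambda_2}\}$ I verify that this common minimal Levi class depends only on $(\phi,\sigma)$, by exhibiting a representative with $\mathcal{M}^{\circ} = C_{\gpalg*{\G}}(S)$, where $S$ is a maximal torus of $Z(C_{\gpalg*{\G}}(\phi)^{\circ})^{\sigma(\widehat{\vartheta})}$. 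This $\mathcal{M}^{\circ}$ is $\Ad(\lambda_i(\text{Frob}))$-stable and contains $C_{\gpalg*{\G}}(\phi)^{\circ}$, so in particular $c \in \mathcal{M}^{\circ}$; constructing the required cocycle then amounts to finding $m \in \mathcal{M}^{\circ}$ such that $m c \, \Ad(\lambda_1(\text{Frob}))(m^{-1}) \in (Z(\mathcal{M}^{\circ})^{\iner})^{\circ}$.

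The main obstacle is this last step: showing that the $\Ad(\lambda_1(\text{Frob}))$-twisted conjugation orbit of $c$ in $\mathcal{M}^{\circ}$ meets $(Z(\mathcal{M}^{\circ})^{\iner})^{\circ}$. I would reduce this to a Lang-type surjectivity statement on the derived subgroup $[\mathcal{M}^{\circ}, \mathcal{M}^{\circ}]$ equipped with the semisimple automorphism $\Ad(\lambda_1(\text{Frob}))$, absorbing the ``toral part'' of $c$ into $S$ and writing its ``semisimple-commutator part'' as a coboundary. This is essentially the unramified content of Haines' parametrization of stable inertial classes in \cite[Section 5]{haines}, which I would invoke directly for $\ld = \Ql$ and transport to $\ldb = \Fl$ coefficients for $\ld = \Zl$, the argument being formal in the isomorphism class of the dual group scheme.
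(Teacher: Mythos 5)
Your passage to the quotient and your surjectivity argument are correct and coincide with the paper's: the inclusion $(Z(\mathcal{M}^{\circ})^{\iner})^{\circ} \subseteq C_{\gpalg*{\G}}(\phi)^{\circ}$ gives $\sigma_1=\sigma_2$, and surjectivity is Lemma \ref{lemSurjParamWeil}. The paper settles injectivity by transposing the proof of Proposition C.0.2 of \cite{lanard}, with $C_{\gpalg*{\G}}(\phi)$ replaced by $C_{\gpalg*{\G}}(\phi)^{\circ}$; you instead attempt a direct argument, and that is where the gap lies.

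Concretely, you take as representative of $\{\mathcal{M}_{\lambda}\}$ the Levi $C_{\Ldual{\G}}(S)$ with $S$ a maximal torus of $Z(C_{\gpalg*{\G}}(\phi)^{\circ})^{\sigma(\widehat{\vartheta})}$, and you use crucially that this $\mathcal{M}^{\circ}$ contains $C_{\gpalg*{\G}}(\phi)^{\circ}$, hence $c$. But that Levi is the $\mathcal{M}_{\varphi}$ of Lemma \ref{lemFactoVarphi}, which governs the Weil--Deligne extensions of $\varphi$; the class $\{\mathcal{M}_{\lambda}\}$ entering Definition \ref{defEqInert} is the minimal Levi containing the image of the \emph{Weil} parameter $\lambda$ itself, i.e. $C_{\Ldual{\G}}(S')$ for $S'$ a maximal torus of the full centralizer $C_{\gpalg*{\G}}(\lambda(\weil))$, which is in general strictly smaller and does not contain $C_{\gpalg*{\G}}(\phi)^{\circ}$. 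For $\G=\pgl{2}[\kk]$ and $\phi$ trivial your recipe gives $\mathcal{M}=\Ldual{\G}$, whereas every unramified Weil parameter has $\mathcal{M}_{\lambda}=\Ldual{T}$; with your $\mathcal{M}$ the target $(Z(\mathcal{M}^{\circ})^{\iner})^{\circ}$ is trivial, and no twisted conjugation inside $\sl{2}$ can move a regular semisimple $c$ into it, even though the corresponding $\lambda_2$ \emph{is} inertially equivalent to the trivial $\lambda_1$ (via $\Ldual{T}$). So the Lang-type step is posed in the wrong group and, as posed, is false. The correct route is to first conjugate $\lambda_2$ by an element of $\gpalg*{\G}$ (not of $\mathcal{M}^{\circ}$, and not one stabilizing $\phi$ pointwise) so that its image is minimally contained in the same standard Levi as that of $\lambda_1$, and then to use that minimality to force the discrepancy of the Frobenius values into $(Z(\mathcal{M}^{\circ})^{\iner})^{\circ}$; both the constancy of $\{\mathcal{M}_{\lambda}\}$ on the fibres of $\lambda\mapsto(\phi,\sigma)$ and this last step require proof, and they are exactly what the cited Proposition C.0.2 of \cite{lanard} supplies after replacing $C_{\gpalg*{\G}}(\phi)$ by $C_{\gpalg*{\G}}(\phi)^{\circ}$.
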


\begin{proof}
Commençons par montrer que l'application passe au quotient. Soient $\varphi_1,\varphi_2 \in \Lpm{\weil}[\ldb]$ qui sont inertiellement équivalents. Quitte à les conjuguer, on peut alors supposer qu'il existe un Levi standard $\mathcal{M}$ contenant minimalement l'image de $\varphi_1$ et $\varphi_2$ et un $z \in H^{1}(\langle \widehat{\vartheta} \rangle, Z(\mathcal{M}^{\circ})^{\circ})$ tel que $\varphi_1=z\varphi_2$. Notons $(\phi_1,\sigma_1)$ (resp. $(\phi_2,\sigma_2)$) l'image de $\varphi_1$ (resp. $\varphi_2$) par $\Lpm{\weil}[\ldb] \rightarrow \Lpbm{\iner}[\ldb]$. Les paramètres $\varphi_1$ et $\varphi_2$ ont même restriction à $\iner$ donc $\phi_1=\phi_2$. Notons $\phi:=\phi_1=\phi_2$. Pour $i \in \{1,2\}$, $\sigma_i$ est déterminé par l'image de $\varphi_i(\text{Frob}) \in \widetilde{\pi}_{0}(\phi)$. Mais comme $Z(\mathcal{M}^{\circ})^{\circ} \subseteq C_{\gpalg*{\G}}(\phi)^{\circ}$, on a $\varphi_1(\text{Frob}) = \varphi_2(\text{Frob})$ dans $\widetilde{\pi}_{0}(\phi)$ et $\sigma_1=\sigma_2$ d'où le résultat.

On obtient donc une application $\mathcal{B}^{st}_{m,\ld} \rightarrow \Lpbm{\iner}[\ldb]$. Celle-ci est surjective d'après le lemme \ref{lemSurjParamWeil}. La preuve de l'injectivité est analogue à la preuve de la proposition C.0.2 de \cite{lanard} en remplaçant $\cent{\phi}{\gpalg*{\G}}$ par $\cent*{\phi}{\gpalg*{\G}}$.
\end{proof}

Cette bijection nous permet de réinterpréter la décomposition du théorème \ref{theDecompoPhiSigma} :

\begin{Pro}
On a la décomposition suivante
\[ \rep[\ld][0]{\G}=\prod_{ [\varphi] \in \mathcal{B}_{m,\ld}^{st}} \rep[\ld][[\varphi]]{\G} \]
\end{Pro}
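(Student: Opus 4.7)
The plan is to combine two ingredients already at our disposal: the decomposition from Theorem \ref{theDecompoPhiSigma} (together with the decomposition along $\Lpm{\Ild}$ recalled in Proposition \ref{proIdentifPhiInert}) giving
\[ \rep[\ld][0]{\G} = \prod_{(\phi,\sigma) \in \Lpbm{\Ild}} \rep[\ld][(\phi,\sigma)]{\G},\]
and the natural bijection $\mathcal{B}^{st}_{m,\ld} \overset{\sim}{\to} \Lpbm{\iner}[\ldb]$ furnished by Proposition \ref{proBijStablePhiTilde}. The proposition is essentially a relabelling of the indexing set of the decomposition.

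First I would spell out the definition of $\rep[\ld][[\varphi]]{\G}$. Given $[\varphi] \in \mathcal{B}^{st}_{m,\ld}$, one associates via $\varphi \mapsto (\varphi_{|\Ild},\sigma)$ (where $\sigma(\widehat{\vartheta})$ is the image of $\varphi(\mathrm{Frob})$ in $\widetilde{\pi}_{0}(\varphi_{|\Ild})$) a well-defined element of $\Lpbm{\iner}[\ldb]$, by Proposition \ref{proBijStablePhiTilde}. In the case $\ld = \Zl$, one uses the identification $\Lpbm{\iner}[\Fl] \simeq \Lpbm{\inerl}[\Ql] = \Lpbm{\Ild}$ noted at the start of Section \ref{secEqinert}; in the case $\ld = \Ql$ there is nothing to do. In either case one gets $(\phi,\sigma) \in \Lpbm{\Ild}$, and one sets $\rep[\ld][[\varphi]]{\G} := \rep[\ld][(\phi,\sigma)]{\G}$.

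Once the definition is fixed, the decomposition is immediate: since the map $[\varphi] \mapsto (\phi,\sigma)$ is a bijection, reindexing the product
\[ \rep[\ld][0]{\G} = \prod_{(\phi,\sigma) \in \Lpbm{\Ild}} \rep[\ld][(\phi,\sigma)]{\G} \]
along this bijection yields exactly
\[ \rep[\ld][0]{\G} = \prod_{[\varphi] \in \mathcal{B}^{st}_{m,\ld}} \rep[\ld][[\varphi]]{\G},\]
as required. There is no real obstacle: all the content has been packaged into Theorem \ref{theDecompoPhiSigma} and Proposition \ref{proBijStablePhiTilde}. The only point worth double-checking is that the identification between $\ldb$-valued parameters and $\Ild$-parameters in the $\Zl$-case is consistent with the decomposition (which is a straightforward consequence of the compatibility diagram given at the start of Section \ref{secEqinert}, itself essentially the statement that reducing an $\ell$-adic parameter modulo $\ell$ corresponds to restricting it to $\inerl$).
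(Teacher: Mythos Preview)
Your proposal is correct and matches the paper's approach exactly: the proposition is stated as an immediate reinterpretation of Theorem~\ref{theDecompoPhiSigma} via the bijection of Proposition~\ref{proBijStablePhiTilde}, and the paper does not even spell out the details you have supplied. Your extra care about the $\ld=\Zl$ case (using the identification $\Lpbm{\iner}[\Fl]\simeq\Lpbm{\inerl}[\Ql]$ from the start of Section~\ref{secEqinert}) is precisely the compatibility the paper is relying on implicitly.
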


Notons que les facteurs $\rep[\ld][[\varphi]]{\G}$ sont construits sans supposer l'existence de la correspondance de Langlands. Dans le cas d'un groupe classique, où la correspondance de Langlands locale est connue, alors ces facteurs sont bien compatibles avec cette dernière (ceci est démontré dans la section \ref{secCompatibiliteLanglandsStable}). On obtient alors :

\begin{The}
\label{theblocsstables}
Soit $\G$ un groupe classique non-ramifié, $\ld=\Ql$ et $p \neq 2$. Alors la décomposition
\[\rep[\Ql][0]{\G}=\prod_{ [\varphi] \in \mathcal{B}^{st}_{m}} \rep[\Ql][[\varphi]]{\G} =\prod_{(\phi,\sigma) \in \Lpbm{\iner}} \rep[\Ql][(\phi,\sigma)]{\G}\]
est la décomposition de $\rep[\ld][0]{\G}$ en "blocs stables". C'est à dire que ces facteurs correspondent à des idempotents primitifs du centre de Bernstein stable.
\end{The}

\subsection{Condition de relevance}
\label{secRelevance}

Nous avons vu que lorsque $\G$ est quasi-déployé, les sous-catégories $\rep[\ld][[\varphi]]{\G} = \rep[\ld][(\phi,\sigma)]{\G}$ sont non vides. Ce n'est plus le cas lorsque l'on retire cette hypothèse. Dans cette partie, nous souhaitons montrer que $\rep[\ld][[\varphi]]{\G}$ est non vide si et seulement si $[\varphi]$ est relevant (dans un sens précisé en dessous).

\begin{Def}
On dit qu'un paramétre de Langlands $\varphi' \in \Lparametre{G}[\ldb]$ est relevant si, lorsque l'image de $\varphi'$ est contenue dans un Levi de $\Ldual{G}[\ldb]$ alors ce dernier est relevant (au sens de \cite{borel} 3.4).

On dit que $[\varphi]$ est relevant s'il existe  $\varphi' \in \Lparametre{G}[\ldb]$ relevant tel que $\varphi'_{|\weil} \in [\varphi]$.
\end{Def}

Les preuves nécéssaires pour démontrer les résultats souhaités de cette partie sont similaires à celles de la partie 4.3 de \cite{lanard}. Nous nous appuierons donc sur ces dernières, et on ne se concentrera que sur les légères modifications.

\begin{Lem}
\label{lemFactoVarphi}
Soit $\varphi \in \Lp{\weil}[\ldb]$. Définissons $\mathcal{M}_{\varphi}:=C_{{}^{L}\mathbf{G}}(Z(C_{\widehat{\mathbf{G}}}(\phi)^{\circ})^{\varphi(W_k),\circ})$. Alors toute extension $\varphi' \in \Lparametre{G}[\ldb]$ de $\varphi$ à $\weil'$ se factorise par $\mathcal{M}_{\varphi}$. De plus il existe un $\varphi' \in \Lparametre{G}[\ldb]$ ne se factorisant par aucun sous Levi propre de $\mathcal{M}_{\varphi}$ et tel que $\varphi'_{|\weil} \in [\varphi]$. 
\end{Lem}

\begin{proof}
La première partie est montrée dans le lemme 4.3.2 de \cite{lanard}. Il ne reste qu'à modifier légèrement la preuve de la deuxième partie de ce même lemme pour obtenir le résultat souhaité. Le lemme  4.3.2 de \cite{lanard} construit à partir de $\varphi$ un paramètre $\varphi'$. Ce dernier ne se factorise par aucun sous Levi propre de $\mathcal{M}_{\varphi}$ et vérifie que $\varphi'_{|\iner} \sim \phi:= \varphi_{|\iner}$. Pour vérifier que $\varphi'_{|\weil} \in [\varphi]$, il reste à montrer, d'après la section \ref{secEqinert}, que $\varphi'(\text{Frob})$ et $\varphi(\text{Frob})$ différent par un élément de $\cent*{\phi}{\gpalg*{\G}}$. Or par définition de $\varphi'$, $\varphi'(\text{Frob})$ et $\varphi(\text{Frob})$ différent par un élément dans l'image du morphisme principal de $\sl{2}$ , $\sl{2} \to \cent*{\varphi}{\gpalg*{G}}$, d'où le résultat.
\end{proof}

\begin{The}
La sous-catégorie $\rep[\ld][[\varphi]]{\G}$ est non vide si et seulement si $[\varphi]$ est relevant.
\end{The}

\begin{proof}
Soit $t \in \ss*{\gpfini*{G}}$ une classe de conjugaison semi-simple rationnelle correspondant à $[\varphi]$. Alors $\rep[\ld][[\varphi]]{\G}$ est non vide si et seulement si $t$ est dans l'image de l'application $\pairesSt/{\sim_r} \to  \ss*{\gpfini*{G}}$ de la proposition \ref{prosimconj} si et seulement s'il existe $\gpalg{S}$ un tore non-ramifié de $\G$ en dualité avec $\gpfinialg*{S}$ tel que $t \in \gpfinialg*{S}$. Le reste de la preuve est alors quasiment identique à la preuve de la proposition 4.3.5 dans \cite{lanard}, en utilisant le nouveau lemme \ref{lemFactoVarphi}.
\end{proof}

\subsection{Classes de conjugaison rationnelles pour les groupes classiques}

\label{secConjClassique}

Le but de cette section est de préparer quelques résultats qui seront nécessaires pour la section \ref{secCompatibiliteLanglandsStable}, qui vise à montrer la compatibilité de la décomposition associée à $\sim_r$ avec la correspondance de Langlands locale. Par le théorème 4.5.6 dans \cite{lanard}, nous avons déjà la compatibilité pour la relation d'équivalence $\sim_\infty$. La relation d'équivalence $\sim_\infty$ correspond à des classes de conjugaison géométriques dans $\gpfinialg*{\G}$, alors que $\sim_r$ correspond à des classes de conjugaison rationnelles. Cette partie consiste donc à étudier les classes de conjugaison rationnelles dans une classe de conjugaison géométrique pour un groupe classique non-ramifié. Lorsque $\G$ est à centre connexe, il n'y a aucune différence, on s'intéressera donc aux groupes $\sp{2n}$, $\so{2n}$ et $\so*{2n}$ (groupe spécial orthogonal quasi-déployé, non-déployé, associé à une extension quadratique non-ramifiée) de duaux respectifs (sur le corps fini) $\so{2n+1}$, $\so{2n}$ et $\so*{2n}$. On ne s'intéressera également qu'au cas $\ld=\Ql$.

\sautintro

Prenons $s \in (\ss{\gpfinialg*{\G}})^{\fr}$ une classe de conjugaison géométrique semi-simple $\fr$-stable. Celle-ci contient au plus deux classes de conjugaison rationnelles. De plus, elle en contient exactement deux si et seulement si le centralisateur de $s$ n'est pas connexe, ce qui se produit si et seulement si $1$ et $-1$ sont tous les deux valeurs propres de $s$.

Nous souhaitons identifier les classes de conjugaison rationnelles dans une classe de conjugaison géométrique. Pour cela, nous allons définir une application $\widetilde{X}/\Wf \to \mathbb{Z}/2\mathbb{Z}$. Dans le cas où l'on s'est placé, nous avons $X \simeq \mathbb{Z}^{n}$ et le groupe de Weyl $\Wf$ s'identifie à $(\mathbb{Z}/2\mathbb{Z})^{n} \rtimes S_{n}$, dans le cas où $\gpfinialg*{\G} = \so{2n+1}$ et au noyau de l'application $(\mathbb{Z}/2\mathbb{Z})^{n} \rtimes S_{n} \rightarrow \mathbb{Z}/2\mathbb{Z}$, $((\epsilon_1,\cdots,\epsilon_n),\sigma) \mapsto \sum_{i=1}^{n} \epsilon_i$, lorsque $\gpfinialg*{\G} = \so{2n}$ ou $\so*{2n}$. Nous savons que le Frobenius agit sur $X$ par $\widehat{\vartheta} \circ \psi$ où $\psi$ est l'élévation à la puissance $q$. Ici $\widehat{\vartheta}$ est trivial pour $\so{2n+1}$ et $\so{2n}$, et est un élément d'ordre deux pour $\so*{2n}$. Pour unifier ces notations on notera $v\fr$ le Frobenius, où $v \in \Wf$ est trivial pour $\so{2n+1}$ et $\so{2n}$, et $v=((0,\cdots,0,1),id)$ pour $\so*{2n}$.

Soit $(s,w)$ où $s\in X \otimes \resalg^{\times}$ et $w \in \Wf$. On définit $f(s,w)=\sum_{i \in I_{s}} \epsilon_i \in \ZnZ{2}$ où $s=(s_1,\cdots,s_n)$, $w=((\epsilon_1,\cdots,\epsilon_n),\sigma)$ et $I_{s}=\{i\in\{1,\cdots,n\}, s_i=-1 \}$. Pour simplifier les écritures, on abrégera la notation $w=((\epsilon_1,\cdots,\epsilon_n),\sigma)$ en $w=(\epsilon_i,\sigma)$.

Rappelons que l'on note $\Ws{s}$ le groupe de Weyl de $C_{\gpfinialg*{\G}}(s)^{\circ}$ comme dans la section \ref{secClassesRatioX}.

\begin{Lem}
	\label{lemf1}
	Soit $w' \in \Ws{s}$, alors $f(s,w)=f(s,w'w)$.
\end{Lem}

\begin{proof}
Écrivons $w=(\epsilon_i,\sigma)$ et $w'=(\lambda_i,\tau)$. Notons $a_{+}$ (resp. $a_{-}$) le nombre de $1$ (resp. $-1$) dans $s=(s_1,\cdots,s_n)$. Alors $C_{\gpfinialg*{\G}}(s)^{\circ} \simeq \so{2a_{+}+1} \times \so{2a_{-}} \times \prod_{i=1}^{r} \gl{n_i}$ si $\gpfinialg*{\G}=\so{2n+1}$ et $C_{\gpfinialg*{\G}}(s)^{\circ} \simeq \so{2a_{+}} \times \so{2a_{-}} \times \prod_{i=1}^{r} \gl{n_i}$ si $\gpfinialg*{\G}=\so{2n}$ ou $\so*{2n}$. En particulier, on a dans tous les cas $\sum_{i \in I_{s}} \lambda_i = 0$.

Maintenant, $w'w=(\lambda_i,\tau) \cdot (\epsilon_i,\sigma) = (\lambda_i + \epsilon_{\tau^{-1}(i)},\tau\sigma)$. Notons que $s=w'\cdot s$ de sorte que $\tau$ permute $I_{s}$. Ainsi, $\sum_{i\in I_{s}} \lambda_i + \epsilon_{\tau^{-1}(i)} = \sum_{i\in I_{s}} \lambda_i + \sum_{i\in I_{s}} \epsilon_{\tau^{-1}(i)} = 0 + \sum_{i\in I_{s}} \epsilon_{i}$, d'où le résultat.
\end{proof}

\begin{Lem}
	\label{lemf2}
Soit $u \in \Wf$, alors $f(s,w)=f(u\cdot s, uwu^{-1})$.
\end{Lem}

\begin{proof}
Notons $w=(\epsilon_i,\sigma)$ et $u=(\lambda_i,\tau)$. Nous avons alors que $I_{u\cdot s} = \tau(I_{s})$. Calculons $uwu^{-1}$. On a $uwu^{-1}=(\lambda_i,\tau) \cdot (\epsilon_i,\sigma) \cdot (\lambda_{\tau(i)},\tau^{-1}) = (\lambda_i + \epsilon_{\tau^{-1}(i)}+\lambda_{\tau(\sigma^{-1}(\tau^{-1}(i)))},\tau\sigma\tau^{-1})$. Donc $f(u\cdot s, uwu^{-1})=\sum_{i \in I_{u \cdot s}} \lambda_i + \epsilon_{\tau^{-1}(i)}+\lambda_{\tau(\sigma^{-1}(\tau^{-1}(i)))} = \sum_{i \in I_{u \cdot s}} \lambda_i + \sum_{i \in I_{s}} \epsilon_{i} + \sum_{i \in I_{u \cdot s}} \lambda_i = \sum_{i \in I_{s}} \epsilon_{i} = f(s,w)$.
\end{proof}

Soit $(s,\bar{w	}) \in \widetilde{X}$ alors avec notre notation ($v\fr$) pour le Frobenius nous avons que pour $u \in \Wf$, $u \cdot (s,\bar{w})=(u \cdot s, uwvu^{-1}v^{-1})$. Ainsi les lemmes \ref{lemf1} et \ref{lemf2} montrent que l'on peut définir une application $\widetilde{X}/\Wf \to \ZnZ{2}$, par $(s,\bar{w}) \mapsto f(s,wv)$.

Notons que si le centralisateur de $s$ n'est pas connexe, alors pour $w' \in \Ws*{s} \setminus \Ws{s}$, les même calculs que pour le lemme \ref{lemf1} montrent que $f(s,w'w)=1+f(s,w)$. Ainsi, en notant $(\widetilde{X}/\Wf)_{s}$ la fibre de $s$ par l'application $\widetilde{X}/\Wf \to ((X\otimes_{\mathbb{Z}} \resalg^{\times})/\Wf)^{\fr}$, l'application $(\widetilde{X}/\Wf)_{s} \overset{\sim}{\to} \ZnZ{2}$ est une bijection. On notera ainsi $s_{[0]}$ et $s_{[1]}$ les deux classes de conjugaison rationnelles  contenues dans la classe de conjugaison géométrique de $s$, correspondant respectivement aux images réciproques de $0$ et $1$.

\bigskip

Nous aurons besoin par la suite d'un représentant d'une des classes rationnelles tel que les composantes de $\bar{w}$ sur les $\pm1$ soient triviales. Soit $w \in \Wf$ tel que $s=wv\fr(s)$. Écrivons $w=(\epsilon_i,\sigma)$. Alors $\sigma$ préserve les ensembles $I_s=\{ i \in \{1, \cdots ,n\}, s_i=-1\}$ et $I_s'=\{ i \in \{1, \cdots ,n\}, s_i=1\}$, ainsi $\sigma$ s'écrit comme un produit de permutations à supports disjoints $\sigma=\sigma_1 \times \sigma_{-1} \times \tau$, où $\sigma_1$ est à support dans $I_s'$ et $\sigma_{-1}$ à support dans $I_s$.  De plus $\ZnZ{2}$ agit trivialement sur les $1$ et $-1$ donc si l'on définit $\nu_i$ qui vaut $0$ si $i \in I_s \cup I_s'$ et $\epsilon_i$ sinon et que l'on pose $w'=(\nu_i,\tau) \in \Wf$ on a toujours $s=w'v\fr(s)$. Le couple $(s,\bar{w'}) \in \widetilde{X}$ correspond alors à un élément $\sd \in \gpfini*{\G}$. Notons le fait suivant, comme pour tout $i \in I_s$, $\nu_i=0$ on a que $f(s,w'v)=1$ si et seulement si $v$ est non trivial et $\tau(n) \in I_s$ si et seulement si $v$ et non trivial et $n \in I_s$. En particulier, même si $\sd$ dépend du choix de $w$, sa classe de conjugaison est bien définie indépendamment des choix effectués.

\bigskip

Soit $x \in \bts$. Dans le cas des groupes classiques nous connaissons la forme des quotients réductifs des sous-groupes parahoriques maximaux. On obtient dans les cas qui nous intéressent ici (voir par exemple \cite{StevensLust} section 2)

\[\begin{array}{|c|c|c|}
\hline \G & \quotred{\G}{x} & \quotred*{\G}{x} \\
\hline \sp{2n} & \sp{2n_1} \times \sp{2n_2} & \so{2n_1+1} \times \so{2n_2+1}\\
\hline \so{2n} & \so{2n_1} \times \so{2n_2} &  \so{2n_1} \times \so{2n_2}\\
\hline \so*{2n} &  \so{2n_1} \times \so*{2n_2} &  \so{2n_1} \times \so*{2n_2}  \\
\hline
\end{array}\]

Dans chaque cas, le groupe $\quotred{\G}{x}$ se décompose en un produit de deux groupes classiques que l'on nommera  $\quotred{\G}{x} \simeq \quotred{\G}{x,1} \times \quotred{\G}{x,2}$.

Soit $t \in \ss{\quotred*{\G}{x}}$. La classe de conjugaison $t$ correspond ainsi à $(t_1,t_2)$ où $t_i \in \ss{\quotred*{\G}{x,i}}$.  Dans la section \ref{secSimpler} nous avons obtenu la classe de $\sim_r$-équivalence de $(x,t)$ grâce à une application $\ss{\quotred*{\G}{x}} \to (\gpfini*{\G})_{ss}$. Cette application est obtenue à partir de $\widetilde{X}/\Wx{x} \to \widetilde{X}/\Wf$ et est compatible avec l'application $\widetilde{X}/\Wf \to \ZnZ{2}$ de sorte que l'on ait 

\begin{Lem}
\label{lemtsd}
Si $t=(t_{1,[i]},t_{2,[j]})$ alors sa classe de $\sim_r$-équivalence est $s_{[i+j]}$, où $s$ correspond à la classe de $\sim_{\infty}$-équivalence de $t$.
\end{Lem}

Nous avons besoin d'un dernier résultat concernant les représentations cuspidales irréductibles dans une série de Deligne-Lusztig.

\begin{Lem}
\label{lemCuspidaleUnipotente}
Soit $\gpfinialg{\G}$ un groupe réductif connexe défini sur $\res$. Soit $s \in \gpfini*{\G}_{ss}$. Alors si $\dl{\gpfini{\G}}{s}$ contient une représentation cuspidale irréductible, $\dl{(C_{\gpfinialg*{\G}}(s)^{\circ})^{\fr}}{1}$ en contient également une.
\end{Lem}

\begin{proof}
Il suffit d'appliquer la proposition 1.10 (i) de \cite{CabanesEnguehardBlocks} avec $e=1$.
\end{proof}

Soit $s \in \gpfini*{\G}$. Notons $a_{-}$ la dimension de l'espace propre associé à la valeur propre $-1$. Supposons que  $\widetilde{\mathcal{E}}(\gpfini{\G},s)$ contienne une représentation cuspidale irréductible. Alors comme $\widetilde{\mathcal{E}}(\gpfini{\G},s) = \dl{\gpfini{\G}}{s_{[0]}} \sqcup \dl{\gpfini{\G}}{s_{[1]}}$, l'une des deux séries $\dl{\gpfini{\G}}{s_{[0]}}$, $\dl{\gpfini{\G}}{s_{[1]}}$ contient une représentation cuspidale irréductible.

Si $\gpfini{\G} \neq \so*{2n}$ ou $\gpfini{\G}=\so*{2n}$ et $n \notin I_s$, alors la classe de conjugaison de $\sd$ est $s_{[0]}$. De plus $(\cent*{\sd}{\gpfinialg*{\G}})^{\fr}=\so{a_{-}}[\res] \times \gpfini{G}'$ où $\gpfinialg{\G}'$ est un groupe réductif connexe défini sur $\res$, et si l'on prend $s'_{t}$ un représentant de $s_{[1]}$ alors $(\cent*{s'_t}{\gpfinialg*{\G}})^{\fr}=\so*{a_{-}}[\res] \times \gpfini{G}''$. Le lemme \ref{lemCuspidaleUnipotente} nous dit alors que si $\dl{\gpfini{\G}}{s_{[0]}}$ (resp. $\dl{\gpfini{\G}}{s_{[1]}}$) contient une représentation cuspidale irréductible alors $\mathcal{E}(\so{a_{-}},1)$ (resp. $\mathcal{E}(\so*{a_{-}},1)$) en contient également une. Or nous savons que $\so{a_{-}}$ (resp. $\so*{a_{-}}$) contient une cuspidale irréductible unipotente si et seulement s'il existe un entier $m_{-}$ pair (resp. impair) tel que $a_{-}=2m_{-}^2$ (\cite{lusztig}, appendice "Tables of Unipotent Representations").

Maintenant si $\gpfini{\G}=\so*{2n}$ et $n \in I_s$, alors la classe de conjugaison de $\sd$ est $s_{[1]}$. Cette fois $(\cent*{\sd}{\gpfinialg*{\G}})^{\fr}=\so*{a_{-}}[\res] \times \gpfini{G}'$ et $(\cent*{s'_t}{\gpfinialg*{\G}})^{\fr}=\so{a_{-}}[\res] \times \gpfini{G}''$. Et de même, l'une des deux séries $\mathcal{E}(\so{a_{-}},1)$, $\mathcal{E}(\so*{a_{-}},1)$ contient une représentation cuspidale irréductible. Par conséquent on a encore que $a_{-}=2m_{-}^2$.

En regroupant les deux discussions précédentes, on vient de démontrer (où l'on note $[m_{-}]$ la classe d'équivalence de $m_{-}$ dans $\ZnZ{2}$)

\begin{Lem}
\label{lemCuspidalemPair}
Si $\widetilde{\mathcal{E}}(\gpfini{\G},s)$ contienne une représentation cuspidale irréductible alors cette dernière est dans $\dl{\gpfini{\G}}{s_{[m_{-}]}}$.
\end{Lem}

\subsection{Compatibilité à la correspondance de Langlands locale}

\label{secCompatibiliteLanglandsStable}

Nous supposons dans cette partie que $\G$ est un groupe classique non-ramifié et $\ld = \Ql$, de sorte que l'on ait la correspondance de Langlands locale. Nous souhaitons vérifier la compatibilité à la correspondance de Langlands de la décomposition $\rep[\Ql][0]{\G}=\prod_{[\varphi] \in \mathcal{B}^{st}_{\Ql}} \rep[\Ql][[\varphi]]{\G}$. De façon plus précise, nous souhaitons montrer que si $\pi \in \rep[\Ql][[\varphi]]{\G}$ est une représentation irréductible alors $\varphi_{\pi|\weil} \in [\varphi]$, où $\varphi_{\pi}$ est le paramètre de Langlands associé à $\pi$.

Par le théorème 4.5.6 dans \cite{lanard},  nous avons le résultat pour les paramètres inertiels. Ainsi, le cas où $C_{\gpalg*{\G}}(\phi)$ est connexe est déjà traité. Par conséquent, il ne reste qu'à traiter le cas où $\G$ est à centre non connexe. On supposera donc dans cette partie que $\G$ est l'un des groupes $\sp{2n}[\kk]$, $\so{2n}[\kk]$ ou $\so*{2n}[\kk]$. Dans le but d'utiliser les résultats de \cite{StevensLust} pour calculer les paramètres de Langlands, on supposera également que $p \neq 2$.

\sautintro

Nos groupes classiques viennent avec un plongement $\iota : \Ldual{G} \hookrightarrow{}^{L}\gl{N}$. Pour $\sp{2n}$ et $\so{2n}$ de duaux respectifs $\so{2n+1}$ et $\so{2n}$ on plonge le $\so{N}$ correspondant dans $\gl{N}$. Analysons plus précisément $\G=\so*{2n}$. Son dual est $\Ldual{\G}=\so{2n} \rtimes \langle \widehat{\vartheta} \rangle$, où $ \widehat{\vartheta}$ agit sur $\so{2n}$ par conjugaison par la matrice $\widehat{w}$ ci-dessous

\[\widehat{w} = \begin{pmatrix}
   1 &  &&&&&\\
   & \ddots &&&&& \\
   && 1&&&& \\
   &&&
      \begin{bmatrix}
       0&1\\
   	   1&0
   	   \end{bmatrix}
   &&&\\
   &&&&1&&\\
   &&&&&\ddots&\\
   &&&&&&1
\end{pmatrix}.\] 
On peut alors définir $\iota$ par $\iota(M,1)=(M,1)$ et $\iota(1,\widehat{\vartheta})=(\widehat{w},\widehat{\vartheta})$. Pour résumer, on a

\[\begin{array}{|c|c|c|}
\hline \G & \Ldual{G} & {}^{L}\gl{N} \\
\hline \sp{2n} & \so{2n+1} \times \langle \widehat{\vartheta} \rangle & \gl{2n+1} \times \langle \widehat{\vartheta} \rangle\\
\hline \so{2n} & \so{2n} \times \langle \widehat{\vartheta} \rangle & \gl{2n} \times \langle \widehat{\vartheta} \rangle \\
\hline \so*{2n} & \so{2n} \rtimes \langle \widehat{\vartheta} \rangle & \gl{2n} \times \langle \widehat{\vartheta} \rangle \\
\hline
\end{array}\]

Soit $\pi$ une représentation cuspidale irréductible. Notons $\mathcal{A}(\kk)$ l'ensemble des classes d'équivalence de représentations irréductibles cuspidales auto-duales d'un certain $\gl{m}[\kk]$. Alors pour $\rho \in \mathcal{A}(\kk)$, il existe au plus un nombre réel positif $s_{\pi}(\rho)$ tel que l'induite parabolique $i(\rho |det(\cdot)|_{\kk}^{s_{\pi}(\rho)} \otimes \pi)$ soit réductible. On définit alors l'ensemble de Jordan par $Jord(\pi)=\{(\rho,m)\in \mathcal{A}(\kk) \times \mathbb{N}^{*}, 2s_{\pi}(\rho)-(m+1) \in 2\mathbb{N}\}$. Considérons ici la version suivante du groupe de Weil-Deligne : $\weil'\simeq \weil \times \sl{2}[\Ql]$. On peut alors retrouver $\varphi_{\pi}$ à partir de l'ensemble de Jordan grâce au théorème suivant

\begin{The}[\cite{moeglin}]
On a
\[\iota \circ \varphi_{\pi}=\bigoplus_{(\rho,m) \in Jord(\pi)} \varphi_{\rho} \otimes st_{m}\]
où $\varphi_{\rho}$ est la représentation irréductible de $\weil$ correspondant à $\rho$ via la correspondance de Langlands locale pour $\gl{n}$ et $st_{m}$ est la représentation irréductible $m$-dimensionnelle de $\sl{2}[\Ql]$.
\end{The}

Il existe $x \in \bts$ et $t \in \ss{\quotred*{\G}{x}}$ tels que $e_{t,\Ql}^{\quotred{\G}{x}} \pi^{\radpara{\G}{x}} \neq 0$. Nous avons vu à la section \ref{secConjClassique} que le groupe $\quotred{\G}{x}$ se décompose en un produit de deux groupes $\quotred{\G}{x} \simeq \quotred{\G}{x,1} \times \quotred{\G}{x,2}$. Ainsi $t$ correspond via cet isomorphisme à $(t_1,t_2)$ où $t_i \in \ss{\quotred*{\G}{x,i}}$. Nous avons également défini des entiers $a^{(1)}_{-}$ et $a^{(2)}_{-}$ correspondant respectivement à la dimension de l'espace propre associé à la valeur propre $-1$ pour $t_1$ et $t_2$. Comme $\dl{\quotred{\G}{x,i}}{t_i}$ contient une représentation irréductible cuspidale, il existe des entiers $m^{(1)}_{-}$ et $m^{(2)}_{-}$ tels que $a^{(i)}_{-}=2(m^{(i)}_{-})^2$.

Soit $s$ la classe de conjugaison géométrique associée à $\phi=\varphi_{\pi|\iner}$. Supposons que $\cent{s}{\gpfinialg*{\G}}$ n'est pas connexe, alors $s$ contient, d'après la section \ref{secConjClassique}, deux classes de conjugaison rationnelles $s_{[0]}$ et $s_{[1]}$. La bijection entre $\ss{\gpfini*{\G}}$ et  $\mathcal{B}_{\Ql}^{st}$ permet de faire correspondre à $s_{[0]}$ et $s_{[1]}$ deux classes d'équivalence de $\mathcal{B}_{\Ql}^{st}$ : $\varphi_{[0]}$ et $\varphi_{[1]}$.

\begin{Lem}
\label{lemPhiDiag}
On a $\varphi_{\pi|\weil} \in \varphi_{[m_{-}^{(1)}+m_{-}^{(2)}]}$.
\end{Lem}

\begin{proof}
Prenons $ [\varphi_t] \in \mathcal{B}_{\Ql}^{st}$ correspondant à $\sd$. Nous allons examiner quand est-ce que $\varphi_{\pi|\weil} \in [\varphi_t]$. Par construction, nous avons que si $\varphi \in \cd$ alors $\varphi_{|\iner} \sim \phi$. Or $\varphi_{\pi|\iner} =\phi$ donc il ne nous reste qu'à examiner l'image de $\varphi_\pi (\text{Frob})$ dans $\widetilde{\pi}_{0}(\phi)$. On a construit $\cd$ à partir d'une paire $(s,\bar{w})$, où $w=(\epsilon_i,\sigma) \in \Wf$ vérifie que $\epsilon_i=0$ si $s_i= \pm 1$. Notons $\mathds{1}$ le caractère trivial de $\gl{1}[\kk]$, $\omega_0$ le caractère non-ramifié d'ordre deux et $\omega_1$,$\omega_2$ les caractères ramifiés d'ordre deux ($\omega_2=\omega_1\omega_0$). Ainsi $\varphi_{\pi|\weil} \in \cd$ si et seulement s'il existe $\eta \in \cent*{\phi}{\gpalg*{\G}}$ tel que $\eta \varphi_{\pi}$ ait une composante triviale sur $\mathds{1},\omega_0,\omega_1,\omega_2$. Or $\cent*{\phi}{\gpalg*{\G}} = \so{a_{+}} \times \so{a_{-}} \times \prod_{i=1}^{k} \gl{n_i}$, donc $\varphi_{\pi|\weil} \in \cd$ si et seulement si la composante de $\varphi_{\pi|\weil}$ pour $\rho = \mathds{1},\omega_{0}$ est dans $\so{a_{+}}$ et celle pour $\rho = \omega_{1},\omega_{2}$ est dans $\so{a_{-}}$. En remarquant que $\varphi_{\pi|\weil}$ est à valeur dans $\so{n}$, on obtient que $\varphi_{\pi} \in \cd$ si et seulement si la composante de $\varphi_{\pi|\weil}$ pour $\rho = \omega_{1},\omega_{2}$ est dans $\so{a_{-}}$.

Nous savons que $\iota \circ \varphi_{\pi}=\bigoplus_{(\rho,m) \in Jord(\pi)} \varphi_{\rho} \otimes st_{m}$ donc 
\[\iota \circ \varphi_{\pi|\weil}=\bigoplus_{(\rho,m) \in Jord(\pi)} \varphi_{\rho} \otimes (\nu^{(1-m)/2}\oplus \cdots \oplus \nu^{(m-1)/2})\]
où $\nu$ est trivial sur $\iner$ et $\nu(\text{Frob})=q$ (lorsque l'on écrit $\varphi_{\pi}=\bigoplus_{(\rho,m) \in Jord(\pi)} \varphi_{\rho} \otimes st_{m}$, la version du groupe de Weil-Deligne considérée est $\weil'=\weil \times \sl{2}[\Ql]$ et donc la restriction à $\weil$ devient maintenant $\varphi_{\pi|\weil}(w)=\varphi_{\pi}(w,d_w)$ où $d_w=diag(|w|^{1/2},|w|^{-1/2})$). Maintenant 
\[\{\pm s_{\pi}(\omega_1),\pm s_{\pi}(\omega_2)\} = \{ \pm (m_{-}^{(1)}+m_{-}^{(2)}+1), \pm (m_{-}^{(1)}-m_{-}^{(2)})\}\]
d'après \cite{StevensLust} section 8, et donc $s_{\pi}(\omega_1)$ et $s_{\pi}(\omega_2)$ sont de même parité. En particulier on obtient que la composante de $\iota \circ \varphi_{\pi|\weil}$ pour $\rho = \omega_{1},\omega_{2}$ est dans $\so{a_{-}}$ si et seulement si $s_{\pi}(\omega_1)$ est pair si et seulement si $m_{-}^{(1)}+m_{-}^{(2)}$ est pair.

Si $\G \neq \so*{2n}$ ou $\G = \so*{2n}$ et $n \notin I_s$, alors $[\varphi_t]=\varphi_{[0]}$ et $\iota$ ne change pas la composante  de $\varphi_\pi$ en $\rho = \omega_{1},\omega_{2}$. Donc $\varphi_{\pi|\weil} \in \varphi_{[0]}$ si et seulement si $m_{-}^{(1)}+m_{-}^{(2)}$ est pair, d'où le résultat.

Si $\G = \so*{2n}$ et $n \in I_s$, alors $[\varphi_t]=\varphi_{[1]}$ et $\iota$ multiplie la composante  de $\varphi_\pi$ en $\rho = \omega_{1},\omega_{2}$ par $\widehat{w}$ (de déterminant $-1$). Donc $\varphi_{\pi|\weil} \in \varphi_{[1]}$ si et seulement si $m_{-}^{(1)}+m_{-}^{(2)}$ est impair, d'où le résultat.
\end{proof}

\begin{Pro}
\label{proCompatibiliteLanglands}
Soit $\pi \in \rep[\Ql][[\varphi]]{\G}$ une représentation cuspidale irréductible. Alors $\varphi_{\pi|\weil} \in [\varphi]$.
\end{Pro}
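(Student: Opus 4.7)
The strategy is to combine the local-global dictionary via pairs $(x,t)$ with the parity computations already established in Lemmas \ref{lemtsd}, \ref{lemCuspidalemPair} and \ref{lemPhiDiag}. If the geometric conjugacy class $s$ associated to $\phi = \varphi_{\pi|\iner}$ has connected centralizer, the conclusion follows immediately from \cite{lanard} Theorem 4.5.6, since then $s$ contains a unique rational class and $\sim_r$ collapses to $\sim_\infty$. So the plan is to handle the remaining case where $s_{[0]}$ and $s_{[1]}$ are distinct, and to show that the rational class attached to $\pi$ by the construction of $\rep[\Ql][[\varphi]]{\G}$ is exactly the one producing $\varphi_{\pi|\weil}$ via Lemma \ref{lemPhiDiag}.

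First I would use the description of objects in $\rep[\Ql][(\gpalg{S},\theta)]{G}$ recalled at the end of Section \ref{secPairesStheta}: since $\pi \in \rep[\Ql][[\varphi]]{\G}$ is irreducible, there exist $x \in \bts$ and $t \in \ss*{\quotred*{\G}{x}}$ such that $e_{t,\Ql}^{\quotred{\G}{x}} \pi^{\radpara{\G}{x}} \neq 0$, and the $\sim_r$-class of $(x,t)$ in $\Jss/{\sim_r}$ matches $[\varphi]$ via Propositions \ref{proCquotr} and \ref{proBijPhiSigmaRatio}. Because $\pi$ is a level-0 cuspidal, standard theory (Moy--Prasad plus reduction to the finite reductive quotient) forces $x$ to be a vertex such that $e_{t,\Ql}^{\quotred{\G}{x}} \pi^{\radpara{\G}{x}}$ is a cuspidal irreducible representation of $\quotred{\G}{x}$. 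Writing the decomposition $\quotred{\G}{x} \simeq \quotred{\G}{x,1} \times \quotred{\G}{x,2}$ recalled in Section \ref{secConjClassique}, we get $t = (t_1,t_2)$ with each $t_i \in \ss*{\quotred*{\G}{x,i}}$ such that $\dl{\quotred{\G}{x,i}}{t_i}$ contains an irreducible cuspidal.

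Second, I apply Lemma \ref{lemCuspidalemPair} separately to each factor $\quotred{\G}{x,i}$: for $i\in\{1,2\}$, there exists $m_-^{(i)} \in \mathbb{N}$ with $a_-^{(i)} = 2(m_-^{(i)})^2$, and $t_i$ lies in the rational class $s_{i,[m_-^{(i)}]}$ inside its geometric class. Therefore $t = (t_{1,[m_-^{(1)}]}, t_{2,[m_-^{(2)}]})$, and by Lemma \ref{lemtsd} the $\sim_r$-equivalence class of $(x,t)$ maps to the rational conjugacy class $s_{[m_-^{(1)} + m_-^{(2)}]} \in \ss*{\gpfini*{\G}}$. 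Through the bijection $\mathcal{B}^{st}_{m,\Ql} \tosim \ss*{\gpfini*{\G}}$, this identifies $[\varphi] = \varphi_{[m_-^{(1)} + m_-^{(2)}]}$.

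Third, Lemma \ref{lemPhiDiag} computes directly, using the description of $\iota \circ \varphi_\pi$ via $Jord(\pi)$ and Lust--Stevens's formula for $s_\pi(\omega_1)$ and $s_\pi(\omega_2)$ (\cite{StevensLust} section 8), that $\varphi_{\pi|\weil}$ lies in $\varphi_{[m_-^{(1)} + m_-^{(2)}]}$. Combining the two identifications yields $\varphi_{\pi|\weil} \in [\varphi]$ as required. The main technical step — and the only genuine obstacle — is the parity computation carried out in Lemma \ref{lemPhiDiag}, where one has to track how the embedding $\iota : \Ldual{\G} \hookrightarrow {}^L\gl{N}$ interacts with the determinant on the $(\omega_1,\omega_2)$-isotypic component, distinguishing carefully the case $\G = \so*{2n}$ with $n \in I_s$ from the others so as to correctly match $[\varphi_t] = \varphi_{[0]}$ or $\varphi_{[1]}$ with the parity of $m_-^{(1)} + m_-^{(2)}$; once this bookkeeping is in hand, the proposition is immediate.
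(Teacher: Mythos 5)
Votre preuve est correcte et suit essentiellement la même démarche que celle de l'article : réduction au cas du centralisateur non connexe via \cite{lanard} théorème 4.5.6, choix d'une paire $(x,t)$ avec $e_{t,\Ql}^{\quotred{\G}{x}} \pi^{\radpara{\G}{x}} \neq 0$ et $t=(t_1,t_2)$ cuspidal sur chaque facteur, puis application successive des lemmes \ref{lemCuspidalemPair}, \ref{lemtsd} et \ref{lemPhiDiag}. Les précisions supplémentaires que vous donnez (théorie de Moy--Prasad, suivi du plongement $\iota$) explicitent simplement ce que l'article laisse implicite.
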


\begin{proof}

On peut supposer le centralisateur non connexe, le cas connexe est traité dans \cite{lanard} théorème 4.5.6.

Soit $\pi \in \rep[\Ql][[\varphi]]{\G}$ une représentation cuspidale irréductible. Il existe $x \in \bts$ et $t \in \ss{\quotred*{\G}{x}}$ tels que $e_{t,\Ql}^{\quotred{\G}{x}} \pi^{\radpara{\G}{x}} \neq 0$. La classe $t$ s'écrit $(t_1,t_2)$. Comme $\dl{\quotred{\G}{x,i}}{t_i}$ contient une représentation irréductible cuspidale, le lemme \ref{lemCuspidalemPair} nous dit que $t_i=t_{[m^{(i)}_{-}]}$. Notons $s$ la classe de conjugaison rationnelle correspondant à $[\varphi]$, alors le lemme \ref{lemtsd} nous dit alors que $s=s_{[m_{-}^{(1)}+m_{-}^{(2)}]}$. On conclut alors par le lemme \ref{lemPhiDiag} qui nous dit que $\varphi_{\pi|\weil} \in \varphi_{[m_{-}^{(1)}+m_{-}^{(2)}]}$.
\end{proof}

\begin{The}
\label{thecompatibilitelanglandsphisigma}
Soit $\G$ un groupe classique non-ramifié, $\ld=\Ql$ et $p \neq 2$. Alors la décomposition $\rep[\Ql][0]{\G}=\prod_{[\varphi] \in \mathcal{B}^{st}_{\Ql}} \rep[\Ql][[\varphi]]{\G}$ est compatible à la correspondance de Langlands locale. C'est-à-dire que si $\pi \in \rep[\Ql][[\varphi]]{\G}$ est une représentation irréductible, alors $\varphi_{\pi|\weil} \in [\varphi]$, où $\varphi_{\pi}$ est le paramètre de Langlands associé à $\pi$.
\end{The}

\begin{proof}
Il existe un Levi $M$ et une représentation irréductible cuspidale $\tau$ telle que $\pi$ soit une sous-représentation de $\ip (\tau)$. La proposition \ref{proCompatibiliteLanglands} nous donne le résultat pour $\tau$. Mais comme la correspondance de Langlands locale pour les groupes classiques est compatible à l'induction parabolique (pour les paramètres de Weil) et qu'il en est de même pour la décomposition $\rep[\Ql][0]{\G}=\prod_{[\varphi] \in \mathcal{B}^{st}_{\Ql}} \rep[\Ql][[\varphi]]{\G}$ par le théorème \ref{theInductionParaboliquephisigma} , on obtient le résultat pour $\pi$.
\end{proof}

\section{La relation d'équivalence \texorpdfstring{$\sim_{e}$}{~e}}

\label{secsime}

Nous venons d'étudier les relations d'équivalence $\sim_\infty$ et $\sim_r$ qui nous ont fourni des décompositions de $\rep[\ld][0]{G}$. Passons maintenant à l'étude de la dernière relation : $\sim_e$. Nous savons que cette relation permet de construire les systèmes cohérents minimaux et donc d'obtenir la décomposition de $\rep[\ld][0]{G}$ la plus fine que l'on puisse obtenir avec notre méthode. Pour paramétrer les classes de $\sim_e$-équivalence, nous allons avoir besoin de rajouter aux paires $(\phi,\sigma)$ une certaine donnée cohomologique $\alpha$. On pourra résumer l'ensemble des correspondances de cet article par le diagramme suivant
\[ \xymatrix{
\sim_{e} \ar@{=>}[r] \ar@{<->}[d]& \sim_{r} \ar@{=>}[r] \ar@{<->}[d]& \sim_{\infty} \ar@{<->}[d]\\
(\phi,\sigma,\alpha) & (\phi,\sigma) & \phi}\]

On suppose toujours que $\gpalg{G}$ se déploie sur une extension non-ramifiée de $\kk$. Cependant, à partir de la section \ref{secSthetaQuasidep}, les résultats ne s'appliqueront qu'aux formes intérieures pures des groupes non-ramifiés. On supposera donc, à partir de ce moment, que $\gpalg{G}$ est non-ramifié et on prendra $\omega \in H^1(\kk,\gpalg{G})$, qui correspond à $G_{\omega}$, une forme intérieure pure de $\G$. L'isomorphisme de Kottwitz nous permet d'identifier $\omega$ à un élément $\omega \in \Irr[\pi_{0}(Z(\gpalg*{\G})^{\widehat{\vartheta}})]$. On associera par la suite à $(\phi,\sigma) \in \Lpbm{\Ild}$, une application
\[\hps : \Irr[\pi_{0}(Z(C_{\gpalg*{\G}}(\phi)^{\circ})^{\sigma(\widehat{\vartheta})})]/{\pi_{0}(\phi)^{\sigma(\widehat{\vartheta})}} \to \Irr[\pi_{0}(Z(\gpalg*{\G})^{\widehat{\vartheta}})].\]
La décomposition associée à $\sim_e$ s'écrira alors
\[\rep[\ld][(\phi,\sigma)]{G_{\omega}} = \prod_{\alpha \in \hps^{-1}(\omega)} \rep[\ld][(\phi,\sigma,\alpha)]{G_{\omega}} \]

\subsection{Paramétrisation des classes de \texorpdfstring{$\sim_{e}$}{e}-équivalence dans une classe de \texorpdfstring{$\sim_{r}$}{r}-équivalence}

\label{secParamsime}

Nous avons décrit dans la section \ref{secSimr} les classes de $\sim_r$-équivalence. Pour paramétrer les classes de $\sim_e$-équivalence nous cherchons donc à décrire $\clr/{\sim_e}$, où $\clr$ est la classe de $\sim_r$ équivalence d'une paire $(\gpalg{S},\theta) \in \pairesSt$. De façons plus précise, nous allons construire une bijection $\clr/{\sim_{e}} \overset{\sim}{\longrightarrow} \ker[\tilde{H}^{1}(\fr,\WSte/\WSta) \rightarrow H^{1}(\fr,\We/\Wa)]$ (on rappelle la définition de $H^1$ dans l'annexe \ref{secCohomologie}). 

\sautintro

 Prenons donc $(\mathbf{S},\theta) \in \pairesSt$ et notons $\clr$ sa classe de $\sim_r$-équivalence. Définissons $\tilde{H}^{1}(\fr,\NSte)$ l'image de $H^{1}(\fr,\NSte)$ dans $H^{1}(\fr,\NSte*)$. Remarquons que $\tilde{H}^{1}(\fr,\NSte)$ n'est autre que $Z^{1}(\fr,\NSte)/{\sim}$, où $n\sim n'$ s'il existe $g \in \NSte*$ tel que $n=gn'\fr(g)^{-1}$.

\begin{Pro}
\label{proCphiT}
On a une bijection
\[\clr/{\sim_{\G}} \overset{\sim}{\longrightarrow} \ker[\tilde{H}^{1}(\fr,\NSte) \rightarrow H^{1}(\fr,\G^{nr})].\]
\end{Pro}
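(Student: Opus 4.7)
The plan is to construct an explicit bijection by fixing the representative $(\mathbf{S},\theta)$ and parametrising elements of $\clr$ by cocycles. First, I would use the injectivity of $\Tr_{\fr^m/\fr}$ from Lemma \ref{lemTrace} to simplify the relation $\sim_r$: namely, $(\mathbf{S}_1,\theta_1) \sim_r (\mathbf{S}_2,\theta_2)$ if and only if there exists $g \in \G^{nr}$ with ${}^{g}\mathbf{S}_1 = \mathbf{S}_2$, $g\theta_1 = \theta_2$, and $g^{-1}\fr(g) \in \NStecomplet{\gpalg{S}_1}{\theta_1}$. Consequently every element of $\clr$ has the form $(g\mathbf{S}, g\theta)$ for some $g \in \G^{nr}$ with $n := g^{-1}\fr(g) \in \NSte$, and the assignment $(g\mathbf{S},g\theta) \mapsto n$ defines a map $\Psi : \clr \to Z^1(\fr, \NSte)$.

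Next comes the key calculation: determining when two choices $(g_1\mathbf{S}, g_1\theta)$ and $(g_2\mathbf{S}, g_2\theta)$ are $\G$-conjugate. I would first verify that the stabiliser of $(\mathbf{S},\theta)$ in $\G^{nr}$ is exactly $\NSte*$, once more using the injectivity of $\Tr_{\fr^m/\fr}$ to ensure that preserving $\theta\langle m \rangle$ for some $m$ is equivalent to preserving $\theta$ itself. Thus the two paires are $\G$-conjugate if and only if there exists $h = g_2\, n\, g_1^{-1}$ with $n \in \NSte*$ satisfying $\fr(h) = h$. Rewriting Frobenius-invariance yields $n_2 = n\, n_1\, \fr(n)^{-1}$, which is precisely the equivalence relation defining $\tilde{H}^{1}(\fr,\NSte)$ as recalled just before the statement. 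This shows that $\Psi$ descends to an injection $\clr/{\sim_{\G}} \hookrightarrow \tilde{H}^{1}(\fr, \NSte)$.

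Finally, I would identify the image. By construction, a cocycle $n \in Z^1(\fr, \NSte)$ is realised by some paire in $\clr$ exactly when there exists $g \in \G^{nr}$ with $g^{-1}\fr(g) = n$, that is, when the class of $n$ in $H^1(\fr, \G^{nr})$ is trivial. Hence the image of $\Psi$ is precisely $\ker[\tilde{H}^{1}(\fr, \NSte) \to H^{1}(\fr, \G^{nr})]$, yielding the desired bijection. The main subtlety will be bookkeeping: keeping straight the distinction between $\NSte$ and $\NSte*$, making sure the cocycle computations genuinely take place in $\NSte$ while the coboundary equivalence is allowed to happen in $\NSte*$ (so that one gets $\tilde{H}^1$ rather than $H^1$), and confirming that every $g \in G^{nr}$ producing a paire in $\clr$ can be chosen so that $g^{-1}\fr(g) \in \NSte$ and not merely in $\NSte*$.
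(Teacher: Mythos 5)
Your argument is essentially the paper's proof: one sends a pair to the class of $g^{-1}\fr(g)$ in $\tilde{H}^{1}(\fr,\NSte)$, checks well-definedness because two admissible $g$'s differ by an element of $\NSte*$, shows cohomologous cocycles yield $\G$-conjugate pairs (your computation $n_2 = h\,n_1\,\fr(h)^{-1}$ with $h \in \NSte*$ is exactly the one in the paper), and identifies the image with the kernel of the map to $H^{1}(\fr,\G^{nr})$. The one step you gloss over is surjectivity: given a cocycle $n = g^{-1}\fr(g) \in \NSte$, producing a character $\theta'$ on ${}^{g}\mathbf{S}$ with $\theta'\langle m \rangle = g\theta\langle m \rangle$ requires the \emph{surjectivity} half of Lemma \ref{lemTrace} (this is precisely Lemma \ref{lemgStheta}, which the paper cites here), not only the injectivity you invoke throughout.
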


\begin{proof}
Soit $(\mathbf{S}',\theta') \in \clr$. Alors il existe $g \in \G^{nr}$ et $m \in \mathbb{N}^{*}$ tels que  ${}^{g} (\mathbf{S}^{\fr^{m}})=\mathbf{S}'^{\fr^{m}}$, $g \theta\langle m \rangle = \theta'\langle m \rangle$ et $g^{-1}\fr(g) \in \NSte$. L'élément $g^{-1}\fr(g)$ définit un élément de $\ker[\tilde{H}^{1}(\fr,\NSte) \rightarrow H^{1}(\fr,\G^{nr})]$. Montrons que cette construction est indépendante du choix de $g$. Soit $g'$ un autre élément de $\G^{nr}$ vérifiant les mêmes conditions que $g$. Alors $g^{-1}g' \in \NSte*$ et donc $[g^{-1}\fr(g)]=[g'^{-1}\fr(g')]$ dans $\tilde{H}^{1}(\fr,\NSte)$. La construction précédente ne dépend pas de la classe de $\G$-conjugaison choisie et on obtient une application
\[\clr/{\sim_{\G}}  \rightarrow \ker[\tilde{H}^{1}(\fr,\NSte) \rightarrow H^{1}(\fr,\G^{nr})].\]

Cette application est injective. Prenons $(\mathbf{S}_{1},\theta_{1})$ et $(\mathbf{S}_{2},\theta_{2})$ deux éléments de $\clr$. Soient $g_{1},g_{2} \in \G^{nr}$ associés à $(\mathbf{S}_{1},\theta_{1})$ et $(\mathbf{S}_{2},\theta_{2})$ et vérifiant que $[g_{1}^{-1}\fr(g_{1})]=[g_{2}^{-1}\fr(g_{2})]$ dans $\tilde{H}^{1}(\fr,\NSte)$. Il existe alors $h \in \NSte*$ tel que $g_{1}^{-1}\fr(g_{1})=hg_{2}^{-1}\fr(g_{2})\fr(h)^{-1}$. L'élément $g_{1}hg_{2}^{-1}$ est donc dans $\G$. Ainsi, quitte à conjuguer $(\mathbf{S}_{1},\theta_{1})$ par un élément de $\G$, on peut supposer que $g_{2}=g_{1}h$. Comme $h \in \NSte*$, $\mathbf{S}_{1}=\mathbf{S}_{2}$ et $\theta_{1}\langle m \rangle = \theta_{2} \langle m \rangle$. Mais comme l'application $\Tr_{\fr^m/\fr}$ est injective, $\theta_{1}=\theta_{2}$.

Elle est également surjective par le lemme \ref{lemgStheta}, d'où le résultat.
\end{proof}

\begin{Lem}
\label{lemH1NW}
L'application $N^{nr}\twoheadrightarrow \We$ induit un isomorphisme $H^{1}(\fr,N^{nr}) \overset{\sim}{\rightarrow} H^{1}(\fr,\We)$ (et de même en remplaçant respectivement $N^{nr}$ par $\NSte*$, $\NSte$ et $\We$ par $\WSte*$, $\WSte$).
\end{Lem}

\begin{proof}
La preuve est analogue à celle du lemme 2.3.4 de \cite{DebackerReeder}.
\end{proof}

Notons $\tilde{H}^{1}(\fr,\WSte)$ l'image de $H^{1}(\fr,\WSte)$ dans $H^{1}(\fr,\WSte*)$.

\begin{Cor}
\label{corCphiH1}
On a une bijection
\[ \clr/{\sim_{\G}} \overset{\sim}{\longrightarrow} \ker[\tilde{H}^{1}(\fr,\WSte) \rightarrow H^{1}(\fr,\We/\Wa)].\]
\end{Cor}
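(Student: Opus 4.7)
Plan: The corollary follows by combining Proposition \ref{proCphiT} with Lemma \ref{lemH1NW}. The latter, applied both to $N^{nr} \twoheadrightarrow \We$ and to its restriction $\NSte \twoheadrightarrow \WSte$ (and similarly for the starred versions), yields $\tilde{H}^{1}(\fr,\NSte) \overset{\sim}{\to} \tilde{H}^{1}(\fr,\WSte)$ by passing to images. Thus it suffices to show that, under this isomorphism, the kernel of $\tilde{H}^{1}(\fr,\NSte) \to H^1(\fr, G^{nr})$ is identified with the kernel of $\tilde{H}^{1}(\fr,\WSte) \to H^1(\fr, \We/\Wa)$, i.e.\ that the two vanishing conditions on a cocycle $n \in Z^{1}(\fr,\NSte)$ with image $w \in Z^{1}(\fr,\WSte)$ are equivalent.

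For the direct implication, suppose $n = g^{-1}\fr(g)$ with $g \in G^{nr}$. The Iwahori--Bruhat decomposition of $G^{nr}$ (equivalently the Iwahori--Weyl exact sequence) produces from $g$ a well-defined class $[\tilde{w}] \in \We/\Wa$ whose cocycle coincides with the image of $n$; since $g$ itself witnesses that this class is a coboundary, the image of $w$ in $H^1(\fr, \We/\Wa)$ is trivial. For the converse, assume $w \equiv u^{-1}\fr(u) \pmod{\Wa}$ for some $u \in \We$. Lifting $u$ to $N^{nr}$ and using the action of $\NSte*$, we may replace $n$ within its class in $\tilde{H}^{1}(\fr,\NSte)$ by a cocycle whose image in $\WSte$ lies in $\WSte \cap \Wa$. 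Since $\WSta \subseteq \WSte \cap \Wa$, we can then invoke Proposition~\ref{proCommuteNW} (already used in Lemma~\ref{lemChambreStableSime}) to lift the resulting element to $g^{-1}\fr(g) \in \NSta$ with $g \in G^{nr}$. Finally, by the argument of Lemma~\ref{lemPointfixew}, up to modifying $g$ we may arrange that $g^{-1}\fr(g)$ fixes a facette $\sigma$, and the triviality of $H^{1}(\fr, \paranr{G}{\sigma})$ (Lemma 2.3.1 of \cite{DebackerReeder}) then shows $[n]$ dies in $H^1(\fr, G^{nr})$.

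The main obstacle will be the bookkeeping between $H^1$ and its "tilde" variant: one must verify that the modifications performed on representative cocycles stay within the $\sim$-class in $\tilde{H}^{1}(\fr,\NSte)$ (resp.\ $\tilde{H}^{1}(\fr,\WSte)$)---that is, that the elements used to conjugate can be chosen in $\NSte*$ (resp.\ $\WSte*$) and not merely in $N^{nr}$ (resp.\ $\We$). Once this is confirmed, the commutative diagram assembling Proposition~\ref{proCphiT}, Lemma~\ref{lemH1NW}, and the two kernel identifications above yields the claimed bijection.
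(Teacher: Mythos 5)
Your skeleton is the paper's: combine Proposition \ref{proCphiT} with Lemma \ref{lemH1NW} to transport the kernel description from the $N$-side to the $W$-side. But the paper never needs a cocycle-level verification that "the two vanishing conditions are equivalent". Lemma \ref{lemH1NW} and Proposition \ref{proCommuteNW} assemble into a single commutative diagram
\[ \xymatrix{
H^{1}(\fr,\NSte) \ar@{->}[r] \ar@{->}[d]^{\sim}& H^{1}(\fr,\NSte*) \ar@{->}[r] \ar@{->}[d]^{\sim}& H^{1}(\fr,N^{nr}) \ar@{->}[r] \ar@{->}[d]^{\sim}& H^{1}(\fr,\G^{nr}) \ar@{->}[d]^{\sim}\\
H^{1}(\fr,\WSte) \ar@{->}[r] & H^{1}(\fr,\WSte*) \ar@{->}[r] & H^{1}(\fr,\We) \ar@{->}[r] & H^{1}(\fr,\We/\Wa)} \]
in which \emph{every} vertical arrow is a bijection. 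Passing to images in the first square identifies $\tilde{H}^{1}(\fr,\NSte)$ with $\tilde{H}^{1}(\fr,\WSte)$ (as you say), and then, because the rightmost vertical arrow $H^{1}(\fr,\G^{nr})\to H^{1}(\fr,\We/\Wa)$ is a bijection, a class dies in $H^{1}(\fr,\G^{nr})$ if and only if its image dies in $H^{1}(\fr,\We/\Wa)$. The corollary is then a formal consequence of Proposition \ref{proCphiT}. This is the bijectivity you underuse: your "direct implication" is just re-deriving the commutativity of the last square, and your converse becomes vacuous once bijectivity is invoked.

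Your hands-on converse, as written, has a genuine gap — the one you yourself flag as "the main obstacle", and it is not repairable along the lines you propose. The element $u\in\We$ witnessing triviality in $H^{1}(\fr,\We/\Wa)$ has no reason to lie in $\WSte*$, so twisted conjugation of $n$ by a lift of $u$ neither preserves the $\NSte*$-orbit that defines the class in $\tilde{H}^{1}(\fr,\NSte)$ nor even keeps the cocycle inside $\NSte$. Moreover $\WSte\cap\Wa$ is in general strictly larger than $\WSta$ (the latter is generated only by the reflections in the walls of $\mathcal{A}_{(\gpalg{S},\theta)}$), so landing in $\WSte\cap\Wa$ does not let you lift to $\NSta$; and the detour through Lemma \ref{lemPointfixew} and the vanishing of $H^{1}(\fr,\paranr{\G}{\sigma})$ is machinery for Lemma \ref{lemMemePiMemeSysteme}, not needed here. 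The fix is to delete the cocycle manipulation entirely and argue diagrammatically as above.
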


\begin{proof}
D'après la proposition \ref{proCommuteNW} on a le diagramme commutatif

\[ \xymatrix{
H^{1}(\fr,\NSte) \ar@{->}[r] \ar@{->}[d]^{\sim}& H^{1}(\fr,\NSte*) \ar@{->}[r] \ar@{->}[d]^{\sim}& H^{1}(\fr,N^{nr}) \ar@{->}[r] \ar@{->}[d]^{\sim}& H^{1}(\fr,\G) \ar@{->}[d]^{\sim}\\
H^{1}(\fr,\WSte) \ar@{->}[r] & H^{1}(\fr,\WSte*) \ar@{->}[r] & H^{1}(\fr,\We) \ar@{->}[r] & H^{1}(\fr,\We/\Wa)} \]
d'où le résultat par la proposition \ref{proCphiT}.
\end{proof}

De même l'image de $H^{1}(\fr,\WSte/\WSta)$ dans $H^{1}(\fr,\WSte*/\WSta)$ sera notée $\tilde{H}^{1}(\fr,\WSte/\WSta)$.

\begin{Pro}
\label{proSimeSimr}
On a une bijection
\[\clr/{\sim_{e}} \overset{\sim}{\longrightarrow} \ker[\tilde{H}^{1}(\fr,\WSte/\WSta) \rightarrow H^{1}(\fr,\We/\Wa)].\]
Et l'application, qui à un élément de $\clr$ associe sa classe de $\sim_{e}$-équivalence, est donnée par l'application naturelle $\tilde{H}^{1}(\fr,\WSte) \rightarrow \tilde{H}^{1}(\fr,\WSte/\WSta)$.
\end{Pro}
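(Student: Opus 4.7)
The plan is to leverage Corollary \ref{corCphiH1} to describe $\clr/\sim_e$ as a quotient of $\clr/\sim_\G$, and translate the coarser relation $\sim_e$ into the corresponding equivalence at the level of cocycles. Since $\sim_\G \Rightarrow \sim_e$, the natural surjection $\clr/\sim_\G \twoheadrightarrow \clr/\sim_e$ combined with \ref{corCphiH1} yields a surjection from $\ker[\tilde{H}^1(\fr,\WSte) \to H^1(\fr,\We/\Wa)]$ onto $\clr/\sim_e$, and the goal is to identify its fibres with those of the projection $\tilde{H}^1(\fr,\WSte) \to \tilde{H}^1(\fr,\WSte/\WSta)$.

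The main step is the cocycle translation. Fix a representative $(\gpalg{S},\theta) \in \clr$. Any $(\gpalg{S}_i,\theta_i) \in \clr$ is obtained, up to $\sim_\G$, from some $g_i \in G^{nr}$ with $c_i := g_i^{-1}\fr(g_i) \in \NSte$, as in the proof of \ref{proCphiT}. If $(\gpalg{S}_1,\theta_1) \sim_e (\gpalg{S}_2,\theta_2)$ via an element $h \in G^{nr}$ satisfying $h^{-1}\fr(h) \in \NStacomplet{\gpalg{S}_1}{\theta_1}$, then the composition $g_1^{-1} h^{-1} g_2$ lies in $G$ (it fixes everything rationally) and a direct computation, using that $\Ad(g_1)$ intertwines $\NSta$ with $\NStacomplet{\gpalg{S}_1}{\theta_1}$ (and preserves the normality of the former in $\NSte$), gives $c_2 = n \cdot c_1 \cdot \fr(n)^{-1} \cdot a$ for some $n \in \NSte*$ and some $a \in \NSta$. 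Conversely, any such relation produces an element $h$ with the required property. Thus the fibres of $\clr/\sim_\G \twoheadrightarrow \clr/\sim_e$ are exactly the fibres of $\tilde{H}^1(\fr,\NSte) \to \tilde{H}^1(\fr,\NSte/\NSta)$. Note that normality of $\NSta$ in $\NSte$ is immediate since the same reflection condition $\langle\alpha,\theta\langle d\rangle\rangle = 0$ used to define $\WSta$ cuts out an $\fr$-stable and $\WStf$-stable root subsystem.

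Next, I would transfer the statement from $N$ to $W$. The analogue of Lemma \ref{lemH1NW} applied to the short exact sequences $\NSta \hookrightarrow \NSte \twoheadrightarrow \NSte/\NSta$ and $\WSta \hookrightarrow \WSte \twoheadrightarrow \WSte/\WSta$, together with Proposition \ref{proCommuteNW}, yields a commutative diagram
\[
\xymatrix{
\tilde{H}^1(\fr,\NSte) \ar[r] \ar[d]^{\sim} & \tilde{H}^1(\fr,\NSte/\NSta) \ar[d]^{\sim} \\
\tilde{H}^1(\fr,\WSte) \ar[r] & \tilde{H}^1(\fr,\WSte/\WSta)
}
\]
so the fibre description becomes: $(\gpalg{S}_1,\theta_1) \sim_e (\gpalg{S}_2,\theta_2)$ iff the associated classes in $\tilde{H}^1(\fr,\WSte)$ have the same image in $\tilde{H}^1(\fr,\WSte/\WSta)$. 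This already proves that the map $\clr/\sim_e \to \tilde{H}^1(\fr,\WSte/\WSta)$ induced by the projection is well-defined and injective, and its image lies in $\ker[\tilde{H}^1(\fr,\WSte/\WSta) \to H^1(\fr,\We/\Wa)]$ by naturality, since the larger kernel $\ker[\tilde{H}^1(\fr,\WSte)\to H^1(\fr,\We/\Wa)]$ was already known to parametrise $\clr/\sim_\G$.

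The remaining point, and the main obstacle, is surjectivity onto this kernel. Given a class in $\ker[\tilde{H}^1(\fr,\WSte/\WSta) \to H^1(\fr,\We/\Wa)]$, one needs to lift it to a class in $\tilde{H}^1(\fr,\WSte)$ lying in the kernel of $\tilde{H}^1(\fr,\WSte) \to H^1(\fr,\We/\Wa)$. The idea is to use the semi-direct decomposition $\WSte = \WSta \rtimes \Omega_{(\gpalg{S},\theta)}$, parallel to $\We = \Wa \rtimes \Omega$, which allows lifting a cocycle modulo $\WSta$ canonically to a cocycle in $\WSte$ representing the same class in $\tilde{H}^1(\fr,\We)$; the vanishing condition in $H^1(\fr,\We/\Wa)$ then transports to the lift. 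Once this is established, the induced map $\clr/\sim_e \to \ker[\tilde{H}^1(\fr,\WSte/\WSta) \to H^1(\fr,\We/\Wa)]$ is a bijection, which is the content of the proposition; the description of the map as the natural composition $\tilde{H}^1(\fr,\WSte) \to \tilde{H}^1(\fr,\WSte/\WSta)$ is then precisely what the construction gives.
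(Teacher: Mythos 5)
Your overall route coincides with the paper's: both rest on Corollary \ref{corCphiH1}, on the observation that two elements of $\clr$ are $\sim_e$-equivalent exactly when their cocycles have the same image in $\tilde{H}^{1}(\fr,\WSte/\WSta)$, and on the surjectivity of $H^{1}(\fr,\WSte) \rightarrow H^{1}(\fr,\WSte/\WSta)$. (The paper avoids your two-element cocycle computation by using the twisting isomorphism $x \mapsto gx\fr(g)^{-1}$ to reduce to the case where one of the two pairs is the base pair $(\gpalg{S},\theta)$ itself, which is cleaner but not essentially different.)

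The one genuine gap is in the surjectivity step, which you rightly single out as the main obstacle. Your proposed splitting $\WSte = \WSta \rtimes \Omega_{(\gpalg{S},\theta)}$, with $\Omega_{(\gpalg{S},\theta)}$ the stabiliser of a chamber of $\mathcal{A}_{(\mathbf{S},\theta)}$, only lifts \emph{cocycles} if the section is $\fr$-equivariant: to lift $\bar w \in Z^{1}(\fr,\WSte/\WSta)$ to $w \in \Omega_{(\gpalg{S},\theta)}$ with $w\fr(w)\cdots\fr^{d-1}(w)=1$ one needs $\fr$ to stabilise the chosen chamber. For an arbitrary representative $(\gpalg{S},\theta)$ the appartement $\mathcal{A}_{(\mathbf{S},\theta)}$, with its coarsened polysimplicial structure, need not contain an $\fr$-stable chamber, so the parallel with $\We = \Wa \rtimes \Omega$ (valid for a maximally split torus precisely because such a chamber exists there) breaks down. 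The missing ingredient is Lemma \ref{lemChambreStableSime}: every $\sim_e$-class contains a representative $(\gpalg{S}',\theta')$ whose appartement $\mathcal{A}_{(\mathbf{S}',\theta')}$ does contain an $\fr$-stable chamber; its proof is not formal (it uses the simple transitivity of $\WStacomplet{\gpalg{S}}{\theta}$ on chambers together with Proposition \ref{proCommuteNW} and Lemma \ref{lemgStheta}). One then proves surjectivity for $(\gpalg{S}',\theta')$ and transports it back to $(\gpalg{S},\theta)$ through the commutative square induced by $x \mapsto gx\fr(g)^{-1}$, which is exactly how the paper proceeds.
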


\begin{proof}

Commençons par montrer que l'application
\[H^{1}(\fr,\WSte) \rightarrow H^{1}(\fr,\WSte/\WSta)\]
est surjective. Notons que si $(\mathbf{S}',\theta')$ est $\sim_{r}$-équivalente à $(\mathbf{S},\theta)$ (les deux paires étant reliées par un $g$ vérifiant $g^{-1}\fr(g) \in \NSte$), alors comme dans l'annexe \ref{secCohomologie} l'application $x \mapsto gx\fr(g)^{-1}$ induit des bijections faisant commuter le diagramme
\begin{equation}
\tag{*}
\label{diagram}
\xymatrix{
H^{1}(\fr,\WSte) \ar@{->}[r] \ar@{->}[d]^{\sim} & H^{1}(\fr,\WSte/\WSta) \ar@{->}[d]^{\sim}\\
H^{1}(\fr,\WStecomplet{\gpalg{S}'}{\theta'}) \ar@{->}[r] & H^{1}(\fr,\WStecomplet{\gpalg{S}'}{\theta'}/\WStacomplet{\gpalg{S}'}{\theta'})}
\end{equation}

Il nous suffit donc de montrer la surjectivité de
\[H^{1}(\fr,\WStecomplet{\gpalg{S}'}{\theta'}) \rightarrow H^{1}(\fr,\WStecomplet{\gpalg{S}'}{\theta'}/\WStacomplet{\gpalg{S}'}{\theta'}).\]
D'après le lemme \ref{lemChambreStableSime}, il existe une paire $(\mathbf{S}',\theta')$ $\sim_{r}$-équivalente à $(\mathbf{S},\theta)$ telle que $\mathcal{A}_{(\mathbf{S}',\theta')}$ contienne $C_{(\mathbf{S}',\theta')}$ une chambre $\fr$-stable. Notons $\Omega_{(\mathbf{S}',\theta')}:=\{w \in \WStecomplet{\gpalg{S}'}{\theta'}, w\cdot C_{(\mathbf{S}',\theta')}=C_{(\mathbf{S}',\theta')}\}$. L'isomorphisme $\Omega_{(\mathbf{S}',\theta')} \overset{\sim}{\rightarrow} \WStecomplet{\gpalg{S}'}{\theta'}/\WStacomplet{\gpalg{S}'}{\theta'}$ nous fournit alors une section $\fr$-équivariante à la suite exacte
\[1 \rightarrow \WStacomplet{\gpalg{S}'}{\theta'} \rightarrow \WStecomplet{\gpalg{S}'}{\theta'} \rightarrow \WStecomplet{\gpalg{S}'}{\theta'}/\WStacomplet{\gpalg{S}'}{\theta'} \rightarrow 1.\]
En particulier l'application voulue est bien surjective.

En utilisant le corollaire \ref{corCphiH1}, on obtient une surjection
\[\clr/{\sim_{\G}} \twoheadrightarrow \ker[\tilde{H}^{1}(\fr,\WSte/\WSta) \rightarrow H^{1}(\fr,\We/\Wa)].\]
Il nous reste à montrer que cette application passe au quotient pour la $\sim_{e}$-équivalence et que l'application quotientée est injective. C'est à dire, il faut montrer que si $(\mathbf{S}_{1},\theta_{1}),(\mathbf{S}_{2},\theta_{2}) \in \clr$ alors $(\mathbf{S}_{1},\theta_{1}) \sim_e (\mathbf{S}_{2},\theta_{2})$ si et seulement si $(\mathbf{S}_{1},\theta_{1})$ et $(\mathbf{S}_{2},\theta_{2})$ ont même image dans $\tilde{H}^{1}(\fr,\WSte/\WSta)$. Or le diagramme commutatif \eqref{diagram} (appliqué avec $(\mathbf{S}_{1},\theta_{1})$ à la place de $(\gpalg{S}',\theta')$) permet de nous ramener au cas où $(\mathbf{S}_{1},\theta_{1})=(\mathbf{S},\theta)$. Supposons donc que $(\mathbf{S}_{2},\theta_{2}) \sim_{e}(\mathbf{S},\theta)$, il existe alors $g$ reliant les deux paires tel que $g^{-1}\fr(g) \in \NSta$ et donc l'image de  $(\mathbf{S}_{2},\theta_{2})$ est triviale dans $\tilde{H}^{1}(\fr,\WSte/\WSta)$. Réciproquement, supposons que l'image de  $(\mathbf{S}_{2},\theta_{2})$ est triviale dans $\tilde{H}^{1}(\fr,\WSte/\WSta)$ c'est à dire que si $g$ relie $(\mathbf{S}_{2},\theta_{2})$ et $(\mathbf{S},\theta)$, et que $\bar{w}$ est l'image de $g^{-1}\fr(g)$ dans $\WSte/\WSta$ alors $[\bar{w}]=[1]$ dans $\tilde{H}^{1}(\fr,\WSte/\WSta)$. Ainsi quitte à multiplier $g$ par un élément de $\NSte*$ on peut supposer que $\bar{w}=1$, c'est à dire que $g^{-1}\fr(g) \in \NSta$ et donc que $(\mathbf{S}_{2},\theta_{2}) \sim_{e}(\mathbf{S},\theta)$, ce qui achève la preuve de la proposition.
\end{proof}

Notons que cette proposition dépend du choix d'un point base $(\gpalg{S},\theta)$. Cependant; si $(\gpalg{S}',\theta') \in \pairesSt$ est une autre paire telle que $(\gpalg{S},\theta) \sim_e (\gpalg{S}',\theta')$. Prenons $g \in G^{nr}$ reliant  $(\gpalg{S},\theta)$ et $(\gpalg{S}',\theta')$ tel que $g^{-1}\fr(g) \in \NSta$, nous avons alors le diagramme commutatif suivant
\[
\xymatrix{
\clr/{\sim_{e}} \ar@{->}[r]^-{\sim} \ar@{=}[d] & \ker[\tilde{H}^{1}(\fr,\WSte/\WSta) \rightarrow H^{1}(\fr,\We(\gpalg{S})/\Wa(\gpalg{S}))] \ar@{->}[d]^{x \mapsto gx\fr(g)^{-1}}\\
[\gpalg{S}',\theta']_r/{\sim_{e}} \ar@{->}[r]^-{\sim} & \ker[\tilde{H}^{1}(\fr,\WStecomplet{\gpalg{S}'}{\theta'}/\WStacomplet{\gpalg{S}'}{\theta'}) \rightarrow H^{1}(\fr,\We(\gpalg{S}')/\Wa(\gpalg{S}'))]}
\]

\subsection{Le cas quasi-déployé}

\label{secSthetaQuasidep}

On suppose dans cette partie que $\gpalg{G}$ est non-ramifié (c'est-à-dire que l'on suppose en plus que $\gpalg{G}$ est quasi-déployé). Nous souhaitons réinterpréter sur le dual les groupes de cohomologies apparus dans la section \ref{secParamsime} dans le cas où le groupe est non-ramifié. En particulier, nous allons construire un isomorphisme
\[ \Irr[\pi_{0}(Z(C_{\gpalg*{\G}}(\phi)^{\circ})^{\sigma(\widehat{\vartheta})})]/{\pi_{0}(\phi)^{\sigma(\widehat{\vartheta})}} \overset{\sim}{\longrightarrow} \tilde{H}^{1}(\fr,\WSte/\WSta).\]
\sautintro

Soit $\phi : \Ild \to \Ldual{\G}$ un paramètre inertiel. La conjugaison par n'importe quelle section ensembliste de la suite exacte $C_{\gpalg*{\G}}(\phi)^{\circ} \hookrightarrow \widetilde{C}_{\gpalg*{\G}}(\phi) \twoheadrightarrow \widetilde{\pi}_{0}(\phi)$ donne une action "extérieure" bien définie
\[ \widetilde{\pi}_{0}(\phi) \longrightarrow \Out(C_{\gpalg*{\G}}(\phi)^{\circ}).\]
Alors n'importe quelle section continue $\sigma : \langle \widehat{\vartheta} \rangle \rightarrow \widetilde{\pi}_{0}(\phi)$ nous donne par composition avec l'action précédente, un morphisme de $\langle \widehat{\vartheta} \rangle$ dans $\Out(C_{\gpalg*{\G}}(\phi)^{\circ})$, donc sur la donnée radicielle de $C_{\gpalg*{\G}}(\phi)^{\circ}$.

\bigskip

L'inclusion $Z(\gpalg*{\G})^{\widehat{\vartheta}} \subseteq Z(C_{\gpalg*{\G}}(\phi)^{\circ})^{\widetilde{\pi}_{0}(\phi)} \subseteq Z(C_{\gpalg*{\G}}(\phi)^{\circ})^{\sigma(\widehat{\vartheta})}$ nous fournit alors une application $\Irr[\pi_{0}(Z(C_{\gpalg*{\G}}(\phi)^{\circ})^{\sigma(\widehat{\vartheta})})] \to \Irr[\pi_{0}(Z(\gpalg*{\G})^{\widehat{\vartheta}})]$. Le groupe $\pi_{0}(\phi)^{\sigma(\widehat{\vartheta})}$ agit sur $\Irr[\pi_{0}(Z(C_{\gpalg*{\G}}(\phi)^{\circ})^{\sigma(\widehat{\vartheta})})]$ et l'application précédente se factorise par le quotient car
\[Z(C_{\gpalg*{\G}}(\phi)^{\circ})^{\widetilde{\pi}_{0}(\phi)} \subseteq Z(C_{\gpalg*{\G}}(\phi)^{\circ})^{\sigma(\widehat{\vartheta})\pi_{0}(\phi)^{\sigma(\widehat{\vartheta})}} \subseteq Z(C_{\gpalg*{\G}}(\phi)^{\circ})^{\sigma(\widehat{\vartheta})}.\]
Ceci nous permet de définir une application $\hps$ de la façon suivante

\begin{Def} On nomme $\hps$ l'application
\[\hps : \Irr[\pi_{0}(Z(C_{\gpalg*{\G}}(\phi)^{\circ})^{\sigma(\widehat{\vartheta})})]/{\pi_{0}(\phi)^{\sigma(\widehat{\vartheta})}} \to \Irr[\pi_{0}(Z(\gpalg*{\G})^{\widehat{\vartheta}})].\]
\end{Def}

La classe de $\sim_r$-équivalence du couple $(\phi,\sigma)$ est déterminée par une classe de conjugaison rationnelle de $t$ dans $\gpfini*{\G}$. De plus par construction, nous obtenons le lemme suivant.

\begin{Lem}
\label{lemdonneradicielle}
Soit $t \in \ss*{\gpfini*{G}}$ l'image de $(\phi,\sigma)$ par l'application de la proposition \ref{proBijPhiSigmaRatio}. Alors la donnée radicielle de $C_{\gpalg*{\G}}(\phi)^{\circ}$ munie de l'action de $\widehat{\vartheta}$ est égale à celle de $C_{\gpfinialg*{G}}(t)^{\circ}$ également munie de l'action du Frobenius.
\end{Lem}

Prenons $\textsf{\textbf{S}}^{*}$ un tore maximalement déployé dans $C_{\gpfinialg*{\G}}(t)^{\circ}$. Fixons un sommet hyperspécial $o$ dans $\bt$ ainsi qu'un isomorphisme entre $\quotredalg*{\G}{o}$ et $\gpfinialg*{\G}$. Ainsi $\gpfinialg*{S}$ devient un tore maximal $\fr$-stable de $\quotredalg*{\G}{o}$ et nous pouvons choisir $\gpfinialg{S}$ un tore maximal $\fr$-stable de $\quotredalg{\G}{o}$ en dualité avec $\gpfinialg*{S}$ sur $\res$. Ce dernier se relève en un tore non-ramifié $\mathbf{S}$ de $\gpalg{\G}$. La bijection $(\gpfinialg*{S})^{\fr} \tosim X^{*}(\textbf{S})/(\fr_{\mathbf{S}} -1)X^{*}(\textbf{S})$ (voir section \ref{secintroStheta}) envoie $t$ sur un élément que l'on nomme $\theta$. Nous obtenons de la sorte une paire $(\gpalg{S},\theta) \in \pairesSt$. Notons que cette paire est bien définie à $\sim_e$-équivalence près (une fois le sommet hyperspecial $o$ fixé) puisque toutes les paires obtenues par le procédé précédent induisent un système cohérent minimal qui contient le couple $(o,t)$ (ces systèmes sont donc tous égaux) et sont donc $\sim_e$-équivalentes. Notons également que par construction, $(\mathbf{S},\theta)$ est un représentant de la $\sim_r$-classe que l'on avait choisie.

Notons $X= X_{*}(\gpalg{S})$, $X^{a}(\mathbf{S},\theta):=X \cap \WSta$ et $\bar{X}(\mathbf{S},\theta):=X/X^{a}(\mathbf{S},\theta)$.

\begin{Lem}
\label{lemracineeng}
Le sous-groupe de $X$ engendré par les racines de $C_{\gpalg*{\G}}(\phi)^{\circ}$ est $X^{a}(\mathbf{S},\theta)$.
\end{Lem}

\begin{proof}
Par le lemme \ref{lemdonneradicielle}, la donnée radicielle de $C_{\gpalg*{\G}}(\phi)^{\circ}$ est égale à celle de $C_{\gpfinialg*{G}}(t)^{\circ}$. Or le lemme \ref{lemIdentifWeyl} construit un isomorphisme entre $\WStx{o}$ et $\Wf(\gpfinialg*{S},C_{\quotredalg*{\G}{\sigma}}(t)^{\circ})$ et sa preuve montre qu'il préserve également les systèmes de racines. Le sommet $o$ étant hyperspécial, nous avons un isomorphisme $\WStx{o} \tosim \WStf$, ce qui nous donne le résultat.
\end{proof}

\begin{Pro}
\label{procommirrW}

Nous avons une bijection
\[ \Irr[\pi_{0}(Z(C_{\gpalg*{\G}}(\phi)^{\circ})^{\sigma(\widehat{\vartheta})})]/{\pi_{0}(\phi)^{\sigma(\widehat{\vartheta})}} \overset{\sim}{\longrightarrow} \tilde{H}^{1}(\fr,\WSte/\WSta),\]
rendant commutatif le diagramme suivant
\[ \xymatrix{
\Irr[\pi_{0}(Z(C_{\gpalg*{\G}}(\phi)^{\circ})^{\sigma(\widehat{\vartheta})})]/{\pi_{0}(\phi)^{\sigma(\widehat{\vartheta})}} \ar@{->}[r]^-{\hps} \ar@{->}[d]^{\sim}& \Irr[\pi_{0}(Z(\gpalg*{\G})^{\widehat{\vartheta}})] \ar@{->}[d]^{\sim}\\
\tilde{H}^{1}(\fr,\WSte/\WSta)\ar@{->}[r] & H^{1}(\fr,\We/\Wa)}\]
où l'isomorphisme vertical de droite est celui de l'annexe \ref{secCohomologie}.
\end{Pro}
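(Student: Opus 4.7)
The plan is to apply the cohomological isomorphism of Annex \ref{secCohomologie} not just to $\gpalg*{\G}$ but also to the reductive subgroup $H := C_{\gpalg*{\G}}(\phi)^{\circ}$, equipped with the finite-order automorphism $\sigma(\widehat{\vartheta})$ acting through the outer action $\widetilde{\pi}_0(\phi) \to \Out(H)$. Applied to $(H, \sigma(\widehat{\vartheta}))$, this will provide a natural bijection
\[\Irr[\pi_{0}(Z(H)^{\sigma(\widehat{\vartheta})})] \;\overset{\sim}{\longrightarrow}\; H^{1}(\sigma(\widehat{\vartheta}), W^{e}(H)/W^{a}(H)),\]
where $W^{e}(H)$ and $W^{a}(H)$ denote the extended and affine Weyl groups of $H$ (relative to a suitable maximal torus). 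The strategy is then to (a) identify the right-hand side with $H^{1}(\fr, \WSte/\WSta)$, and (b) show that quotienting the left-hand side by $\pi_{0}(\phi)^{\sigma(\widehat{\vartheta})}$ corresponds exactly to passing from $H^{1}$ to $\tilde{H}^{1}$.

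For (a), I take the torus $\gpfinialg*{S}$ maximally split in $C_{\gpfinialg*{\G}}(t)^{\circ}$ chosen just above Proposition \ref{procommirrW}, and use Lemma \ref{lemdonneradicielle} to identify the based root datum of $H$ with that of $C_{\gpfinialg*{\G}}(t)^{\circ}$, transported back to the ambient $X = X^{*}(\gpalg{S})$. By Lemma \ref{lemracineeng} the coroot lattice of $H$ inside $X$ is $X^{a}(\mathbf{S},\theta)$, and Lemma \ref{lemIdentifWeyl} (extended from $\sigma = o$ to the full apartment) identifies the finite Weyl group of $H$ with $\WStf$. Combining these gives $W^{a}(H) \simeq \WSta$ and $W^{e}(H) \simeq \WSte$, with the action of $\sigma(\widehat{\vartheta})$ on the Weyl side corresponding to the restriction of the Frobenius $\fr$ (this is the key compatibility: both actions factor through the quotient $\widetilde{\pi}_0(\phi) \twoheadrightarrow \langle\widehat{\vartheta}\rangle$ which matches the twist on the group side via the chosen section $\sigma$).

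For (b), observe that $\pi_{0}(\phi) = C_{\gpalg*{\G}}(\phi)/H$ acts on $H$ by outer automorphisms, and on the Weyl side this realizes $\WStf*/\WStf \simeq \pi_{0}(\phi)$, whence $\WSte*/\WSte \simeq \pi_{0}(\phi)$. Two cocycles in $H^{1}(\fr, \WSte/\WSta)$ have the same image in $H^{1}(\fr, \WSte*/\WSta)$ if and only if they differ by cohomological conjugation under an element of $\WSte*/\WSte$ fixed by $\fr$, i.e.\ by $\pi_{0}(\phi)^{\sigma(\widehat{\vartheta})}$. Matching this with the dual-side $\pi_{0}(\phi)^{\sigma(\widehat{\vartheta})}$-action on $\Irr[\pi_{0}(Z(H)^{\sigma(\widehat{\vartheta})})]$ produces the bijection of the proposition.

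Finally, commutativity of the diagram reduces to naturality of the Annex isomorphism with respect to the pair of inclusions $Z(\gpalg*{\G})^{\widehat{\vartheta}} \hookrightarrow Z(H)^{\sigma(\widehat{\vartheta})}$ and $\WSte/\WSta \hookrightarrow \We/\Wa$: both vertical bijections are instances of the same duality construction, one applied to $\gpalg*{\G}$ and one to $H$, so the square commutes by functoriality of the construction in the Annex. The main obstacle is the careful bookkeeping in step (b): one must verify that the $\pi_{0}(\phi)^{\sigma(\widehat{\vartheta})}$-action induced on $\Irr[\pi_{0}(Z(H)^{\sigma(\widehat{\vartheta})})]$ by outer automorphisms is intertwined by the Annex's isomorphism with the natural action of $\WSte*/\WSte$ on $H^{1}(\fr, \WSte/\WSta)$, and that invariants match fibers of the map to the larger $H^{1}$; this relies on the extra naturality of the cohomology isomorphism under twisted equivariant inclusions of reductive groups.
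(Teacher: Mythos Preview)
Your approach is correct and essentially the same as the paper's. The paper makes the argument slightly more explicit by inserting the intermediate lattice quotient $[\bar{X}(\mathbf{S},\theta)/(1-\vartheta)\bar{X}(\mathbf{S},\theta)]_{\text{tor}}$ (where $\bar{X}(\mathbf{S},\theta) = X/X^a(\mathbf{S},\theta)$), identifying it on one side with $\Irr[\pi_0(Z(C_{\gpalg*{\G}}(\phi)^{\circ})^{\sigma(\widehat{\vartheta})})]$ via restriction of characters (as in \cite{DebackerReeder} section 2.5) and on the other with $H^1(\fr,\WSte/\WSta)$ via \cite{DebackerReeder} Corollary 2.4.2; this is exactly what ``applying the Annex isomorphism to $H$'' unpacks to. Your step (b) and the paper's final paragraph are identical: both use $\WSte*/\WSte \simeq \pi_0(C_{\gpfinialg*{\G}}(t)) \simeq \pi_0(\phi)$ and the observation that passing from $H^1$ to $\tilde{H}^1$ is the quotient by the $\fr$-invariants of this group. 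One small correction: in the paper's conventions $X = X_*(\gpalg{S})$, not $X^*(\gpalg{S})$.
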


\begin{proof}
Par le lemme \ref{lemracineeng}, on peut identifier par restriction des caractères, comme dans la section 2.5 de \cite{DebackerReeder}, $\bar{X}(\mathbf{S},\theta)=\Hom(Z(C_{\gpalg*{\G}}(\phi)),\mathbb{C}^{\times})$. De plus, les actions de Frobenius étant compatibles par le lemme \ref{lemdonneradicielle}, $\bar{X}(\mathbf{S},\theta)/(1-\vartheta)\bar{X}(\mathbf{S},\theta) = \Hom(Z(C_{\gpalg*{\G}}(\phi))^{\sigma(\widehat{\vartheta})}),\mathbb{C}^{\times})$ et les éléments de $\bar{X}(\mathbf{S},\theta)/(1-\vartheta)\bar{X}(\mathbf{S},\theta)$ s'annulant sur la composante identité de $Z(C_{\gpalg*{\G}}(\phi))^{\sigma(\widehat{\vartheta})})$ sont exactement les éléments de torsion. D'où une identification
\[[\bar{X}(\mathbf{S},\theta)/(1-\vartheta)\bar{X}(\mathbf{S},\theta)]_{\text{tor}} = \Irr[\pi_{0}(Z(C_{\gpalg*{\G}}(\phi)^{\circ})^{\sigma(\widehat{\vartheta})})].\]

Et on obtient un diagramme commutatif
\[ \xymatrix{
\Irr[\pi_{0}(Z(C_{\gpalg*{\G}}(\phi)^{\circ})^{\sigma(\widehat{\vartheta})})] \ar@{->}[r] \ar@{->}[d]^{\sim}& \Irr[\pi_{0}(Z(\gpalg*{\G})^{\widehat{\vartheta}})] \ar@{->}[d]^{\sim}\\
[\bar{X}(\mathbf{S},\theta)/(1-\vartheta)\bar{X}(\mathbf{S},\theta)]_{\text{tor}} \ar@{->}[r] & [\bar{X}/(1-\vartheta)\bar{X}]_{\text{tor}}}.\]

Maintenant le Corollaire 2.4.2 de \cite{DebackerReeder} nous fournit un isomorphisme $[\bar{X}/(1-\vartheta)\bar{X}]_{\text{tor}} \overset{\sim}{\longrightarrow} H^{1}(\fr,\We/\Wa)$. De la même façon nous obtenons un isomorphisme $\bar{X}(\mathbf{S},\theta)/(1-\vartheta)\bar{X}(\mathbf{S},\theta)]_{\text{tor}} \overset{\sim}{\longrightarrow} H^{1}(\fr,\WSte/\WSta)$, et donc un diagramme commutatif

\[ \xymatrix{
\Irr[\pi_{0}(Z(C_{\gpalg*{\G}}(\phi)^{\circ})^{\sigma(\widehat{\vartheta})})] \ar@{->}[r] \ar@{->}[d]^{\sim}& \Irr[\pi_{0}(Z(\gpalg*{\G})^{\widehat{\vartheta}})] \ar@{->}[d]^{\sim}\\
[\bar{X}(\mathbf{S},\theta)/(1-\vartheta)\bar{X}(\mathbf{S},\theta)]_{\text{tor}} \ar@{->}[r] \ar@{->}[d]^{\sim}& [\bar{X}/(1-\vartheta)\bar{X}]_{\text{tor}}\ar@{->}[d]^{\sim}\\
H^{1}(\fr,\WSte/\WSta)\ar@{->}[r] & H^{1}(\fr,\We/\Wa)}.\]

Maintenant, en remarquant de plus que  que les flèches verticales de gauches du diagramme précédent sont compatibles avec l'action de $\pi_{0}(\phi)^{\sigma(\widehat{\vartheta})}$ et que 
\[\WSte*/\WSte \simeq \WStf*/\WStf \simeq \pi_{0}(C_{\gpfinialg*{\G}}(t)),\]
nous pouvons passer au quotient et l'on obtient le résultat.

\end{proof}

Les propositions \ref{procommirrW} et \ref{proSimeSimr} démontrent la proposition suivante

\begin{Pro}
\label{probijrnr}
Nous avons une bijection
\[\mathcal{C}^{r}(\phi,\sigma)/{\sim_{e}} \overset{\sim}{\longrightarrow} \ker(\hps),\]
où $\mathcal{C}^{r}(\phi,\sigma)$ est la classe de $\sim_r$-équivalence image réciproque de $(\phi,\sigma)$ par l'injection $\pairesSt/{\sim_r} \hookrightarrow \Lpbm{\Ild}$.
\end{Pro}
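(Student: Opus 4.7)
The plan is to assemble this bijection directly from the two main results that immediately precede the statement. Proposition \ref{proSimeSimr} identifies $\mathcal{C}^{r}(\phi,\sigma)/{\sim_e}$ (which was called $\clr/{\sim_e}$ there for any chosen base point $(\gpalg{S},\theta) \in \mathcal{C}^{r}(\phi,\sigma)$) with the kernel of the natural map $\tilde{H}^{1}(\fr,\WSte/\WSta) \rightarrow H^{1}(\fr,\We/\Wa)$. Proposition \ref{procommirrW} then provides a commutative square
\[ \xymatrix{
\Irr[\pi_{0}(Z(C_{\gpalg*{\G}}(\phi)^{\circ})^{\sigma(\widehat{\vartheta})})]/{\pi_{0}(\phi)^{\sigma(\widehat{\vartheta})}} \ar@{->}[r]^-{\hps} \ar@{->}[d]^{\sim} & \Irr[\pi_{0}(Z(\gpalg*{\G})^{\widehat{\vartheta}})] \ar@{->}[d]^{\sim}\\
\tilde{H}^{1}(\fr,\WSte/\WSta) \ar@{->}[r] & H^{1}(\fr,\We/\Wa)
}\]
in which the two vertical maps are bijections. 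Since bijections preserve kernels of maps into pointed sets, passing to kernels on each side yields a bijection $\ker(\hps) \overset{\sim}{\longrightarrow} \ker[\tilde{H}^{1}(\fr,\WSte/\WSta) \rightarrow H^{1}(\fr,\We/\Wa)]$.

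Chaining these two bijections, I obtain
\[ \mathcal{C}^{r}(\phi,\sigma)/{\sim_e} \overset{\sim}{\longrightarrow} \ker[\tilde{H}^{1}(\fr,\WSte/\WSta) \rightarrow H^{1}(\fr,\We/\Wa)] \overset{\sim}{\longleftarrow} \ker(\hps), \]
which is the desired bijection. The only verification that requires a brief comment is that the identification of Proposition \ref{proSimeSimr} is well-defined independently of the choice of base point $(\gpalg{S},\theta)$ in $\mathcal{C}^{r}(\phi,\sigma)$, and that the identification of Proposition \ref{procommirrW} is compatible with this choice. This is precisely the content of the commutative diagram displayed at the end of section \ref{secParamsime}, which shows that changing the base point by a $\sim_e$-equivalence induces a bijection on the cohomology sets that intertwines the maps appearing in both squares.

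There is essentially no obstacle here, since the nontrivial work has already been done: the hard analytic and combinatorial content lives inside Proposition \ref{proSimeSimr} (which required the detailed study of $\WSte$, $\WSta$ via chambers stable under $\fr$ in the preceding lemmas) and Proposition \ref{procommirrW} (which required the identification of the root system of $C_{\gpalg*{\G}}(\phi)^{\circ}$ with $X^{a}(\gpalg{S},\theta)$ via Lemma \ref{lemracineeng} and the results of \cite{DebackerReeder}). Once these are granted, the proof of Proposition \ref{probijrnr} is a one-line composition of canonical bijections.
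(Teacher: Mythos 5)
Votre démonstration est correcte et suit exactement la même voie que l'article, qui déduit la proposition \ref{probijrnr} directement des propositions \ref{procommirrW} et \ref{proSimeSimr} par composition des bijections. Votre remarque sur l'indépendance vis-à-vis du point base (via le diagramme commutatif de la fin de la section \ref{secParamsime}, le choix du sommet hyperspécial $o$ étant fixé) est également conforme à ce que l'article utilise.
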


On déduit de la proposition \ref{probijrnr} et du théorème \ref{theBijSime} le théorème suivant

\begin{The}
\label{thedecomponrps}
Soient $\gpalg{G}$ un groupe réductif connexe non-ramifié.

Soient $\phi : \Ild \to \Ldual{G}$ un paramètre inertiel et  $\sigma \in \overline{\Sigma}( \langle \widehat{\vartheta} \rangle ,\widetilde{\pi}_{0}(\phi))$. Nous avons la décomposition suivante de la catégorie $\rep[\ld][(\phi,\sigma)]{\G}$ 
\[\rep[\ld][(\phi,\sigma)]{G} = \prod_{\alpha \in \ker(\hps)} \rep[\ld][(\phi,\sigma,\alpha)]{G} \]

Toutes ces catégories sont construites à partir de systèmes de classes de conjugaison 0-cohérents minimaux.
\end{The}

\begin{Rem} \label{remdependanceo}La bijection $\mathcal{C}^{r}(\phi,\sigma)/{\sim_{e}} \overset{\sim}{\longrightarrow} \ker(\hps)$ (de la proposition \ref{probijrnr}) semble canonique, mais ne l'est pas (par conséquent la décomposition du théorème \ref{thedecomponrps} ne l'est pas non plus). Elle dépend du choix du sommet hyperspécial $o$ qui permet de sélectionner un représentant $(\gpalg{S},\theta)$ de $\mathcal{C}^{r}(\phi,\sigma)$. 

De plus, de façon générale, il n'est pas possible de la rendre canonique. En effet, prenons l'exemple de $\sl{2}$ et de la classe de conjugaison rationnelle $\left[ \begin{smallmatrix} -1&0\\ 0&1 \end{smallmatrix} \right]$ dans $\pgl{2}[\res]$. Cette dernière, plus le choix d'un sommet hyperspécial donne un système de classes de conjugaison cohérent minimal (en mettant des systèmes de conjugaison vide sur les sommets non-conjugués à ce sommet). Comme il y a deux ensembles de sommets hyperspéciaux, non-conjugués entre eux, on obtient deux systèmes minimaux, et il est impossible de les différencier.

Il peut arriver dans certains cas, que malgré le fait que l'on doive faire le choix d'un sommet hyperspécial, la sous-catégorie construite n'en dépende finalement pas. Prenons l'exemple de $G=\sp{4}(\kk)$ et de $(\phi,\sigma)$ correspondant à la classe de conjugaison rationnelle de $s=diag(1,-1,-1,-1,-1) \in \so{5}(\res)$. Nommons $x_1$ et $x_2$ les deux sommets hyperspéciaux d'une chambre, et $y$ le dernier sommet de cette chambre (non-hyperspécial). Déterminons le système de classes de conjugaison correspondant à cette classe de $\sim_r$-équivalence. Par la section \ref{secSimpler}, il suffit de regarder l'image réciproque de $s$ par les applications $\ss{\quotred*{G}{x_1}} \to \ss{\gpfini*{G}}$,  $\ss{\quotred*{G}{x_2}} \to \ss{\gpfini*{G}}$ et  $\ss{\quotred*{G}{y}} \to \ss{\gpfini*{G}}$. Les sommets $x_1$ et $x_2$ sont aisés, puisque $\quotred*{G}{x_1} \simeq \quotred*{G}{x_2} \simeq \so{5}(\res)$, donc l'image réciproque est la classe de conjugaison de $s$. Pour $y$, nous avons que $\quotred*{G}{y} \simeq \so{3}(\res) \times \so{3}(\res)$ et un calcul montre qu'il y a deux éléments dans l'image réciproque de $s$ : la classe de conjugaison de $s'=(diag(1,-1,-1),diag(1,-1,-1))$ et celle de $t=(t',t')$, où $t'=\begin{pmatrix} 
-1 & 0 & 0\\
0 & 0 & 1\\
0 & 1 & 0 
\end{pmatrix}$. Les éléments $s$ et $s'$ provienne du tore diagonal et sont donc dans le même système minimal. La classe de conjugaison de $t$ est quant à elle elliptique, donc engendre son propre système minimal. Ainsi, nous trouvons que $\rep[\ld][(\phi,\sigma)]{G}$ se décompose en deux sous-catégories. Maintenant, que l'on choisisse $x_1$ ou $x_2$, on obtient le système cohérent minimal contenant $(x_1,s)$ et $(x_2,s)$. Dans cet exemple, la décomposition de $\rep[\ld][(\phi,\sigma)]{G}$ est canonique.

\bigskip

Nous nous attendons également à une compatibilité du théorème \ref{thedecomponrps} à la correspondance de Langlands locale enrichie (à un paramètre de Langlands enrichi $(\varphi,\eta)$ où $\eta \in \Irr(\pi_{0}(C_{\gpalg*{G}}(\varphi)))$, on associe un $\alpha$ en restreignant $\eta$ à $\pi_{0}(Z(C_{\gpalg*{\G}}(\phi)^{\circ})^{\sigma(\widehat{\vartheta})})$). Cette dernière n'est pas non plus canonique mais dépend du choix d'une donnée de Whittaker.
\end{Rem}

\subsection{Décomposition pour une forme intérieure pure}
En suivant les idées de Vogan, supposons toujours que $\gpalg{G}$ est non-ramifié et nous allons traiter toutes ses formes intérieures pures.

\sautintro

Soit $u \in Z^{1}(\fr,G^{nr})$. Ce dernier permet de définir un Frobenius twisté $\fr_u:=\Ad(u) \circ \fr$ donnant lieu à une forme intérieure pure $(G^{nr})^{\fr_u}$ de $\G$. Pour $g \in G^{nr}$, la conjugaison par $g$ induit un isomorphisme
\[ Ad(g) : (G^{nr})^{\fr_u} \longrightarrow (G^{nr})^{\fr_{g*u}} \]
où $g*u=gu\fr(g)^{-1}$. Ainsi les formes intérieures pures sont paramétrées par les $\omega \in H^1(\fr,G^{nr})$, mais à un $\omega$ fixé nous n'avons pas de Frobenius twisté canonique associé. Pour remédier à cela nous allons considérer dans cette partie non pas des paires $(\gpalg{S},\theta)$ mais des triplets $(\gpalg{S},\theta,u)$ avec $u \in Z^{1}(\fr,G^{nr})$.

Considérons les triplets $(\gpalg{S},\theta,n)$ où $n \in N(\gpalg{S})$ donne un cocyle dans $Z^{1}(\fr,G^{nr})$, $\gpalg{S}$ est un $\knr$-tore déployé maximal $\fr_n$-stable de $\gpalg{\G}$ (on rappelle que $\fr_n=\Ad(n) \circ \fr$) et $\theta \in X^{*}(\gpalg{S})/(\fr_{n,\gpalg{S}} -1)X^{*}(\gpalg{S})$, avec $\fr_{n,\gpalg{S}}:=w\vartheta_{\mathbf{S}}\circ \psi$, où $w$ est l'image de $n$ dans $\Wf[S]$. Alors dans ce cas, la paire $(\gpalg{S},\theta)$ est un élément de $\pairesSt$ comme dans la section \ref{secPairesStheta}, mais pour le groupe $(G^{nr})^{\fr_n}$. Notons cet ensemble $\pairesSt((G^{nr})^{\fr_n})$.

Deux triplets $(\gpalg{S}_1,\theta_1,n_1)$ et $(\gpalg{S}_2,\theta_2,n_2)$ sont dit $G^{nr}$-conjugués s'il existe $g \in G^{nr}$ tel que ${}^{g} \gpalg{S}_1=\gpalg{S}_2$, $g\theta_1=\theta_2$ et $g*n_1=n_2$. Prenons $(\gpalg{S},\theta,n)$ un triplet. La paire $(\gpalg{S},\theta)$ appartient alors à $\pairesSt((G^{nr})^{\fr_n})$. Nommons $\omega$ l'image de $n$ par $Z^{1}(\fr,G^{nr}) \to H^{1}(\fr,G^{nr})$. Si $m \in Z^{1}(\fr,G^{nr})$ est un autre cocycle d'image $\omega$, alors il existe $g \in G^{nr}$ tel que $g*n=m$. Alors la paire $({}^{g}\gpalg{S},g\theta)$ appartient à $\pairesSt((G^{nr})^{\fr_m})$. Si $g'\in G^{nr}$ est un autre élément tel que $g'*n=m$, alors $g(g')^{-1} =\fr_m( g(g')^{-1})$, de sorte que $(\gpalg{S},\theta,n)$ définit une paire $(\gpalg{S}_{\omega},\theta_{\omega}) \in \pairesSt(G_{\omega})$ bien définie à $G_{\omega}$-conjugaison près, où $\gpalg{G}_{\omega}$ est une forme intérieure pure de $\gpalg{G}$ associée à $\omega$. Cette application induit une bijection
\[ \{(\gpalg{S},\theta,n)\}/{\sim_{G^{nr}}} \tosim \{ (\omega,(\gpalg{S}_{\omega},\theta_{\omega})), \omega \in H^{1}(\fr,G^{nr}), (\gpalg{S}_{\omega},\theta_{\omega}) \in \pairesSt(G_{\omega})/{\sim_{G_{\omega}}}\}.\]
On peut étendre, de la même façon, la relation d'équivalence $\sim_e$ aux triplets de la manière suivante

\begin{Def}
\label{defEqeinner}
Deux triplets $(\gpalg{S}_1,\theta_1,n_1)$ et $(\gpalg{S}_2,\theta_2,n_2)$ sont dit $\sim_r$-équivalents (resp. $\sim_e$-équivalents) si et seulement s'il existe $m\in \mathbb{N}^{*}$ , $g \in \G^{nr}$ et $h \in G^{nr}$ tels que
\begin{enumerate}
		\item $g*n_2=n_1$
		\item ${}^{h} \mathbf{S}_{1}^{\fr_{n_1}^{m}}={}^{g}\mathbf{S}_{2}^{\fr_{n_2}^{m}}$
		\item $h \theta_{1}\langle m \rangle = g\theta_{2}\langle m \rangle$
		\item $h^{-1}\fr_{n_1}(h) \in \NStecomplet{\gpalg{S}_1}{\theta_1}$ (resp. $h^{-1}\fr_{n_1}(h) \in \NStacomplet{\gpalg{S}_1}{\theta_1} $)
	\end{enumerate}
\end{Def}
Notons que dans la définition précédente, $\theta_{1}\langle m \rangle$ est relatif à $\fr_{n_1}$ et $\theta_{2}\langle m \rangle$ à $\fr_{n_2}$.

Soient $(\gpalg{S}_1,\theta_1,n_1)$ et $(\gpalg{S}_2,\theta_2,n_2)$ deux triplets. Notons respectivement $(\gpalg{S}_{1,\omega_1},\theta_{1,\omega_1}) \in \pairesSt(G_{\omega_1})/{\sim_{G_{\omega_1}}}$ et $(\gpalg{S}_{2,\omega_2},\theta_{2,\omega_2}) \in \pairesSt(G_{\omega_2})/{\sim_{G_{\omega_2}}}$ les paires associées par l'application précédente. Alors  $(\gpalg{S}_1,\theta_1,n_1) \sim_r (\gpalg{S}_2,\theta_2,n_2)$ si et seulement si $\omega_1=\omega_2$ et $(\gpalg{S}_{1,\omega_1},\theta_{1,\omega_1})  \sim_r (\gpalg{S}_{2,\omega_2},\theta_{2,\omega_2})$ (et de même pour $\sim_e$). En effet, fixons par exemple $n_1$ et prenons $G_{\omega_1}=(G^{nr})^{\fr_{n_1}}$. Alors $\omega_1=\omega_2$ si et seulement s'il existe $g\in G^{nr}$ tel que $g*n_2=n_1$. Dans ce cas on peut prendre $(\gpalg{S}_{2,\omega_2},\theta_{2,\omega_2})=({}^{g}\gpalg{S}_2,g\theta_2)$ et $(\gpalg{S}_{1,\omega_1},\theta_{1,\omega_1}) = (\gpalg{S}_1,\theta_1)$ et la condition pour que ces deux paires soient $\sim_r$-équivalentes (resp. $\sim_e$-équivalentes) est exactement la définition \ref{defEqeinner}.  On obtient donc en particulier une bijection
\[ \{(\gpalg{S},\theta,n)\}/{\sim_e} \tosim \{ (\omega,(\gpalg{S}_{\omega},\theta_{\omega})), \omega \in H^{1}(\fr,G^{nr}), (\gpalg{S}_{\omega},\theta_{\omega}) \in \pairesSt(G_{\omega})/{\sim_e}\}.\]
Une classe de $\sim_e$-équivalence de triplets $(\gpalg{S},\theta,n)$ correspond donc à un système d'idempotents cohérent minimal de $G_{\omega}$.

\bigskip

Considérons un triplet $(\gpalg{S},\theta,e)$ ($e$ désigne l'élément neutre de $G^{nr}$) et $n \in \NSte$ donnant un cocyle dans $Z^{1}(\fr,G^{nr})$. Le tore $\gpalg{S}$ est $\fr$-stable et comme $n \in N(\gpalg{S})$ il est également $\fr_n$-stable. La preuve du lemme \ref{lemgStheta} montre qu'il existe $\theta_n \in X^{*}(\gpalg{S})/(\fr_{n,\gpalg{S}} -1)X^{*}(\gpalg{S})$ tel que $\theta \langle m \rangle = \theta_n \langle m \rangle$ pour un certain $m \in \mathbb{N}^{*}$. Notons que ce $\theta_n$ est unique. En effet, s'il existe $\theta'_n \in X^{*}(\gpalg{S})/(\fr_{n,\gpalg{S}} -1)X^{*}(\gpalg{S})$ tel que $\theta \langle m' \rangle = \theta'_n \langle m' \rangle$, alors $\theta_n \langle mm' \rangle = \theta'_n \langle mm' \rangle$ et $\theta_n=\theta'_n$ par injectivité de  $\Tr_{\fr^{mm'}/\fr}$ (lemme \ref{lemTrace}). On vient donc de construire une application qui à $(\gpalg{S},\theta,e)$ associe le triplet $(\gpalg{S},\theta_n,n)$.

Comme $\theta \langle m \rangle = \theta_n \langle m \rangle$, on a en particulier que $\WSte = \WStecomplet{\gpalg{S}}{\theta_n}$ et $\WSta = \WStacomplet{\gpalg{S}}{\theta_n}$. On obtient de la sorte

\begin{Cor}
\label{corH1formepure}
La multiplication par $n^{-1}$ (à droite) nous donne le diagramme commutatif suivant
\[ \xymatrix{
\tilde{H}^{1}(\fr,\WSte/\WSta) \ar@{->}[r] \ar@{->}[d]^{\times n^{-1}}& H^{1}(\fr,\We/\Wa) \ar@{->}[d]^{\times n^{-1}}\\
\tilde{H}^{1}(\fr_n, \WStecomplet{\gpalg{S}}{\theta_n} / \WStacomplet{\gpalg{S}}{\theta_n})\ar@{->}[r] & H^{1}(\fr_n,\We/\Wa)}.\]
\end{Cor}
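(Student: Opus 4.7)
L'idée est que la ``torsion'' $\fr \leadsto \fr_n = \Ad(n)\circ\fr$ par le cocycle $n$ correspond, au niveau des cocycles, à la translation à droite par $n^{-1}$. Comme rappelé juste avant l'énoncé, on dispose des égalités $\WSte = \WStecomplet{\gpalg{S}}{\theta_n}$ et $\WSta = \WStacomplet{\gpalg{S}}{\theta_n}$, de sorte que les deux lignes du diagramme ont bien des groupes de coefficients identiques en haut et en bas, et qu'il suffit de comparer les actions de Frobenius.

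\textbf{Étape 1 : bonne définition des flèches verticales.}
Notons que $n$ définit un cocycle dans $Z^1(\fr,\We)$ (via $N^{nr}\twoheadrightarrow\We$) et, par construction, $n$ se projette sur un élément de $\WSte$. Comme $\WSta$ est distingué dans $\WSte$ (et $\Wa$ dans $\We$), la multiplication à droite par $n^{-1}$ passe aux quotients et définit des bijections ensemblistes
\[\WSte/\WSta \longrightarrow \WSte/\WSta,\quad \We/\Wa \longrightarrow \We/\Wa.\]
De plus, si $z$ est un cocycle pour $\fr$ à valeurs dans $H$ (où $H$ est l'un de ces quotients), alors $zn^{-1}$ satisfait la condition de cocycle pour $\fr_n$ : en effet, si $z' = gz\fr(g)^{-1}$ (équivalence pour $\fr$), alors
\[z'n^{-1} = gz\fr(g)^{-1}n^{-1} = g\,(zn^{-1})\,\bigl(n\fr(g)^{-1}n^{-1}\bigr) = g\,(zn^{-1})\,\fr_n(g)^{-1},\]
ce qui est exactement la relation d'équivalence pour $\fr_n$. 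Ceci montre que $z\mapsto zn^{-1}$ induit des bijections $H^{1}(\fr,\We/\Wa)\tosim H^{1}(\fr_n,\We/\Wa)$ et (de manière analogue, en utilisant $n\in\WSte^{*}$ et la compatibilité avec l'image dans $H^1(\fr,\WSte^{*})$) $\tilde{H}^{1}(\fr,\WSte/\WSta) \tosim \tilde{H}^{1}(\fr_n,\WStecomplet{\gpalg{S}}{\theta_n}/\WStacomplet{\gpalg{S}}{\theta_n})$.

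\textbf{Étape 2 : commutativité.}
Les deux flèches horizontales sont induites par l'inclusion $\WSte\hookrightarrow\We$ (et le passage aux quotients $\WSte/\WSta \to \We/\Wa$) ; elles ne font pas intervenir $n$. Par conséquent, partant de $[z]\in\tilde{H}^{1}(\fr,\WSte/\WSta)$, les deux compositions aboutissent toutes deux à la classe de $zn^{-1}$ dans $H^{1}(\fr_n,\We/\Wa)$, ce qui achève la preuve.

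L'essentiel du travail se réduit donc à la vérification formelle que la translation à droite par $n^{-1}$ transporte bien cocycles et relations d'équivalence ; la principale subtilité est d'observer que cette opération est compatible avec le passage aux quotients par $\WSta$ et $\Wa$, ce qui découle immédiatement de la normalité et du fait que $n\in\WSte\subseteq\We$.
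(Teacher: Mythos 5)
Votre argument est correct et suit exactement la voie que le papier laisse implicite (le corollaire y est énoncé sans démonstration, comme conséquence directe des égalités $\WSte=\WStecomplet{\gpalg{S}}{\theta_n}$, $\WSta=\WStacomplet{\gpalg{S}}{\theta_n}$ et du transport standard des cocycles par torsion) : vous explicitez que $z\mapsto zn^{-1}$ transporte la relation de cobord de $\fr$ vers celle de $\fr_n$, que cela est compatible avec la $\WSte*$-conjugaison définissant les versions $\tilde{H}^1$, et que la commutativité est immédiate puisque les flèches horizontales ne font pas intervenir $n$. La seule vérification que vous signalez sans l'écrire — le fait que $zn^{-1}$ satisfait bien la condition de cocycle $N_d^{\fr_n}(zn^{-1})=1$ — se réduit au télescopage $N_d^{\fr_n}(zn^{-1})=N_d(z)N_d(n)^{-1}$, qui utilise que $n$ est lui-même un cocycle pour $\fr$ ; c'est un détail de routine qui ne compromet pas la preuve.
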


Revenons à nos classes d'équivalence pour $\sim_e$. Rappelons que $\hps$ est l'application $\hps : \Irr[\pi_{0}(Z(C_{\gpalg*{\G}}(\phi)^{\circ})^{\sigma(\widehat{\vartheta})})]/{\pi_{0}(\phi)^{\sigma(\widehat{\vartheta})}} \to \Irr[\pi_{0}(Z(\gpalg*{\G})^{\widehat{\vartheta}})]$. Grâce à l'isomorphisme de Kottwitz $H^{1}(\fr,\G^{nr}) \tosim \Irr[\pi_{0}(Z(\gpalg*{\G})^{\widehat{\vartheta}})]$ nous pouvons identifier un élément $\omega \in H^{1}(\fr,\G^{nr})$ à $\omega \in \Irr[\pi_{0}(Z(\gpalg*{\G})^{\widehat{\vartheta}})]$.

\begin{Pro}
\label{probijclasser}
Soient $\omega \in \Irr[\pi_{0}(Z(\gpalg*{\G})^{\widehat{\vartheta}})]$. Nous avons alors une bijection
\[\mathcal{C}^{r}(\phi,\sigma,\omega)/{\sim_{e}} \overset{\sim}{\longrightarrow} \hps^{-1}(\omega),\]
où $\mathcal{C}^{r}(\phi,\sigma,\omega)$ est la classe de $\sim_r$-équivalence image réciproque de $(\phi,\sigma)$ par l'injection $\pairesSt(G_{\omega})/{\sim_r} \hookrightarrow \Lpbm{\Ild}$, pour $G_{\omega}$ la forme intérieure pure de $G$ associée à $\omega$.

De plus, cette bijection ne dépend que du choix du sommet hyperspécial $o$ fait en \ref{secSthetaQuasidep}.
\end{Pro}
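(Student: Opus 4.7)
The plan is to extend the bijection of Proposition \ref{probijrnr} from the quasi-split case ($\omega = 1$) to arbitrary pure inner forms, using the triplet formalism $(\gpalg{S},\theta,n)$ introduced just before the statement. By the bijection
\[\{(\gpalg{S},\theta,n)\}/{\sim_e} \tosim \{(\omega,(\gpalg{S}_{\omega},\theta_{\omega}))\}\]
noted there, a $\sim_e$-class in $\pairesSt(G_{\omega})$ lying in $\mathcal{C}^{r}(\phi,\sigma,\omega)$ corresponds to a $\sim_e$-class of triplets $(\gpalg{S},\theta,n)$ with $[n]=\omega$ (via Kottwitz) and such that the underlying pair $(\gpalg{S},\theta,e)$ is $\sim_r$-equivalent (as a triplet) to a fixed reference triplet $(\gpalg{S}_0,\theta_0,e)$ in the quasi-split group, chosen via the procedure of Section \ref{secSthetaQuasidep} using the hyperspecial vertex $o$.

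First I would extend Proposition \ref{proCphiT}, Corollary \ref{corCphiH1} and Proposition \ref{proSimeSimr} to the setting of triplets. Starting from the fixed reference $(\gpalg{S}_0,\theta_0,e)$, any $\sim_r$-equivalent triplet $(\gpalg{S}',\theta',n)$ is obtained from an element $g \in G^{nr}$ with $g^{-1}\fr(g) \in \NStecomplet{\gpalg{S}_0}{\theta_0}$, and the cocycle $n$ is precisely the image of $g^{-1}\fr(g)$ under the natural map $\NSte \to G^{nr}$. The same argument used to prove Proposition \ref{proSimeSimr} — injectivity via the assignment $g \mapsto g^{-1}\fr(g)$ modulo $\NSte*$, surjectivity via the $\fr$-stable chamber produced by Lemma \ref{lemChambreStableSime}, and compatibility with $\sim_e$ via the passage to $\NSta$ — then yields a bijection
\[\bigl\{\text{triplets $\sim_r$-equiv. to }(\gpalg{S}_0,\theta_0,e)\bigr\}/{\sim_e} \tosim \tilde{H}^{1}(\fr,\WSte/\WSta),\]
fibered over $H^{1}(\fr,\We/\Wa)$ via the natural map, with the class of $[n] \in H^1(\fr,G^{nr})$ (identified with $H^1(\fr,\We/\Wa)$ via Lemma \ref{lemH1NW}) recording the underlying cocycle.

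Next I would identify the fiber over $\omega$. The commutative diagram of Proposition \ref{procommirrW} and the Kottwitz identification $H^{1}(\fr,G^{nr}) \tosim \Irr[\pi_{0}(Z(\gpalg*{\G})^{\widehat{\vartheta}})]$ together translate the fibration $\tilde{H}^{1}(\fr,\WSte/\WSta) \to H^{1}(\fr,\We/\Wa)$ into the map $\hps$. Combining the two bijections gives
\[\mathcal{C}^{r}(\phi,\sigma,\omega)/{\sim_e} \tosim \hps^{-1}(\omega).\]
The dependence only on the choice of hyperspecial vertex $o$ is immediate: this choice is used to select the reference pair $(\gpalg{S}_0,\theta_0)$ and the identification of Proposition \ref{procommirrW}, and the diagram of Corollary \ref{corH1formepure} shows that changing the cocycle representative of $\omega$ within its class only permutes the identifications compatibly.

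The main obstacle I expect is the clean verification of the triplet version of the surjectivity in Proposition \ref{proSimeSimr}: one must show that every class in $\tilde{H}^{1}(\fr,\WSte/\WSta)$ projecting to $\omega$ is actually realized by a triplet whose underlying pair in $G_\omega$ lies in $\mathcal{C}^{r}(\phi,\sigma,\omega)$, not merely in some larger set. This amounts to producing, from a cocycle $n \in \NSte$ with prescribed class, an element $g \in G^{nr}$ such that $g^{-1}\fr(g) = n$ (using $H^1(\fr,G^{nr})$-vanishing after replacement, cf.\ the lemma of DeBacker–Reeder used in Lemma \ref{lemMemePiMemeSysteme}) and then checking that the resulting triplet indeed lies in the $\sim_r$-class of $(\gpalg{S}_0,\theta_0,e)$ in the triplet sense. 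Once this realization step is settled, the rest reduces to assembling the already established diagrams.
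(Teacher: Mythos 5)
Your overall architecture — fix a reference pair via the hyperspecial vertex $o$, use Proposition \ref{procommirrW} to identify $\hps$ with $\tilde{H}^{1}(\fr,\WSte/\WSta)\to H^{1}(\fr,\We/\Wa)$, and then produce a version of Proposition \ref{proSimeSimr} fibered over $H^{1}(\fr,G^{nr})$ — is the right one and matches the paper's. But the mechanism you propose for reaching the fiber over a nontrivial $\omega$ contains a genuine error. You describe the relevant triplets as those ``$\sim_r$-equivalent to $(\gpalg{S}_0,\theta_0,e)$'' with cocycle $n=g^{-1}\fr(g)$, and in your realization step you propose to produce, from a cocycle $n\in\NSte$ of prescribed class, an element $g\in G^{nr}$ with $g^{-1}\fr(g)=n$, invoking an ``$H^{1}(\fr,G^{nr})$-vanishing''. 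This is impossible precisely when $\omega\neq 1$: the obstruction to solving $g^{-1}\fr(g)=n$ in $G^{nr}$ is exactly the class $[n]\in H^{1}(\fr,G^{nr})\simeq\Irr[\pi_{0}(Z(\gpalg*{\G})^{\widehat{\vartheta}})]$, which is $\omega$ by hypothesis; the DeBacker--Reeder vanishing used in Lemma \ref{lemMemePiMemeSysteme} is $H^{1}(\fr,\paranr{\G}{\sigma})=1$ for a parahoric, not for $G^{nr}$, whose $H^{1}$ is nontrivial in general — that is the whole point of having pure inner forms. For the same reason your proposed domain is wrong: condition (1) of Definition \ref{defEqeinner} ($g*n_2=n_1$) forces $\sim_r$-equivalent triplets to have cohomologous cocycles, so the set of triplets $\sim_r$-equivalent to $(\gpalg{S}_0,\theta_0,e)$ only covers the kernel of $\tilde{H}^{1}(\fr,\WSte/\WSta)\to H^{1}(\fr,\We/\Wa)$, i.e.\ the fiber over $\omega=1$, and not all of $\tilde{H}^{1}(\fr,\WSte/\WSta)$.

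The correct passage to the fiber over $\omega$ does not conjugate the reference pair at all: one keeps the torus $\gpalg{S}$, chooses a cocycle $n\in Z^{1}(\fr,\NSte)$ lifting $\omega$ (possible because $H^{1}(\fr,\NSte)\to H^{1}(\fr,\WSte/\WSta)$ is surjective), twists the Frobenius to $\fr_n=\Ad(n)\circ\fr$ and the character to the unique $\theta_n$ with $\theta_n\langle m\rangle=\theta\langle m\rangle$ (injectivity of $\Tr_{\fr^m/\fr}$, Lemma \ref{lemTrace}), and checks that $[\gpalg{S},\theta_n,n]_r=\mathcal{C}^{r}(\phi,\sigma,\omega)$. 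One then applies Proposition \ref{proSimeSimr} for the twisted Frobenius $\fr_n$ inside $G_{\omega}$ and uses Corollary \ref{corH1formepure} (right multiplication by $n^{-1}$) to carry the resulting kernel over $\fr_n$ to the fiber $\hps^{-1}(\omega)$ over $\fr$. Finally, the independence of the choice of $n$ is not ``immediate'' from Corollary \ref{corH1formepure}: one must compare the bijections obtained from two lifts $n$ and $m$ of $\omega$ by unwinding the $\sim_r$-equivalence of the triplets $(\gpalg{S},\theta_n,n)$ and $(\gpalg{S},\theta_m,m)$ and checking that the element $h^{-1}n\fr(h)$ produced in the process has the same image as $m$ in $\tilde{H}^{1}(\fr,\WSte/\WSta)$; this is a short but genuine computation that your plan omits.
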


\begin{proof}
Considérons $(\gpalg{S},\theta)$ une paire construite à partir de $(\phi,\sigma)$ et $o$ comme dans la section \ref{secSthetaQuasidep}. La proposition \ref{procommirrW} montre que l'on peut identifier $\hps$ à l'application $\tilde{H}^{1}(\fr,\WSte/\WSta) \rightarrow H^{1}(\fr,\We/\Wa)$. On voit donc $\omega$ comme un élément de $H^{1}(\fr,\We/\Wa)$ dans l'image de $\tilde{H}^{1}(\fr,\WSte/\WSta)$. Le lemme \ref{lemH1NW} ainsi que la preuve de la proposition \ref{proSimeSimr} montrent que l'application $H^{1}(\fr, \NSte) \rightarrow H^{1}(\fr,\WSte/\WSta)$ est surjective. En particulier, il existe $n \in \NSte$ donnant un cocycle de $Z^{1}(\fr, \NSte)$ et ayant pour image $\omega$ par l'application $Z^{1}(\fr, \NSte) \to \tilde{H}^{1}(\fr,\WSte/\WSta) \to H^{1}(\fr,\We/\Wa)$.

Nous savons alors construire un triplet $(\gpalg{S},\theta_{n},n)$ tel que $\mathcal{C}^{r}(\phi,\sigma,\omega) = [\gpalg{S},\theta_{n},n]_{r}$ (où $[\gpalg{S},\theta_{n},n]_{r}$ désigne la classe de $\sim_r$-équivalence de $(\gpalg{S},\theta_{n},n)$). La proposition \ref{proSimeSimr} nous donne une bijection entre $[\gpalg{S},\theta_{n},n]_{r}/{\sim_{e}}$ et $\ker[\tilde{H}^{1}(\fr_n,\WStecomplet{\gpalg{S}}{\theta_n}/\WStacomplet{\gpalg{S}}{\theta_n}) \rightarrow H^{1}(\fr_n,\We/\Wa)]$ et donc par le corollaire \ref{corH1formepure} une bijection
\[ [\gpalg{S},\theta_{n},n]_{r}/{\sim_{e}} \tosim \hps^{-1}(\omega).\]

Il nous reste à vérifier que cette bijection est indépendante du choix de $n$. Prenons $m \in Z^{1}(\fr, \NSte)$ un autre relèvement de $\omega$. Nous allons montrer que l'image de $(\gpalg{S},\theta_{m},m)$ par la bijection précédente est l'image de $m$ dans $\tilde{H}^{1}(\fr,\WSte/\WSta)$ ce qui achèvera la preuve. Comme les triplets $(\gpalg{S},\theta_{n},n)$ et $(\gpalg{S},\theta_{m},m)$ sont $\sim_r$-équivalents, il existe $k\in \mathbb{N}^{*}$ , $g \in \G^{nr}$ et $h \in G^{nr}$ tels que $g*m=n$, ${}^{h} (\mathbf{S}^{\fr_{n}^{k}})={}^{g}\mathbf{S}^{\fr_{m}^{k}}$, $h \theta_{n}\langle k \rangle = g\theta_{m}\langle k \rangle$ et $h^{-1}\fr_{n}(h) \in \NStecomplet{\gpalg{S}}{\theta_n}$. Alors par construction de la bijection $[\gpalg{S},\theta_{n},n]_{r}/{\sim_{e}} \tosim \hps^{-1}(\omega)$, l'image de $(\gpalg{S},\theta_{m},m)$ est l'image de $h^{-1}n\fr(h)$ dans $\tilde{H}^{1}(\fr,\WSte/\WSta)$. Nous savons qu'il existe $k_n,k_m \in \mathbb{N}^{*}$ tels que $\theta_n \langle k_n \rangle = \theta \langle k_n \rangle$ et $\theta_m \langle k_m \rangle = \theta \langle k_m \rangle$. Ainsi, il existe $k' \in \mathbb{N}^{*}$ tel que $\fr_{n}^{k'} = \fr_{m}^{k'}=\fr^{k'}$, ${}^{h} (\mathbf{S}^{\fr^{k'}})={}^{g}\mathbf{S}^{\fr^{k'}}$ et $h \theta \langle k' \rangle = g\theta \langle k' \rangle$, de sorte que $g^{-1}h \in \NSte*$. Nous pouvons alors conclure que $h^{-1}n\fr(h)=(h^{-1}g)m\fr(g^{-1}h)$ et $m$ ont même image dans $\tilde{H}^{1}(\fr,\WSte/\WSta)$.
\end{proof}

On déduit de la proposition \ref{probijclasser} précédente et du théorème \ref{theBijSime} le théorème suivant.

\begin{The}
\label{thmDecompoRepPhi}
Soient $\gpalg{G}$ un groupe réductif connexe non-ramifié. Soit $\omega \in \Irr[\pi_{0}(Z(\gpalg*{\G})^{\widehat{\vartheta}})]$, à qui correspond $\gpalg{G}_{\omega}$, une forme intérieure pure de $\gpalg{G}$. 

Soient $\phi : \Ild \to \Ldual{G}$ un paramètre inertiel et  $\sigma \in \overline{\Sigma}( \langle \widehat{\vartheta} \rangle ,\widetilde{\pi}_{0}(\phi))$. Nous avons la décomposition suivante de la catégorie $\rep[\ld][(\phi,\sigma)]{\G_\omega}$ 
\[\rep[\ld][(\phi,\sigma)]{G_{\omega}} = \prod_{\alpha \in \hps^{-1}(\omega)} \rep[\ld][(\phi,\sigma,\alpha)]{G_{\omega}} \]

De plus toutes ces catégories sont construites à partir de systèmes de classes de conjugaison 0-cohérents minimaux.
\end{The}

Notons que cette décomposition n'est pas canonique mais dépend du choix d'un sommet hyperspécial (voir remarque \ref{remdependanceo}).

\subsection{Lien entre \texorpdfstring{$\Zl$}{Zl} et \texorpdfstring{$\Ql$}{Ql}}

Explicitons le lien entre les catégories sur $\Ql$ et sur $\Zl$.

\sautintro

Soit $(\phi,\sigma) \in \Lpbm{\iner}$ et $(\phi',\sigma') \in \Lpbm{\inerl}$ l'image de $(\phi,\sigma)$ par l'application $\Lpbm{\iner} \to \Lpbm{\inerl}$ de la section \ref{seclienzlqlr}. Alors $Z(C_{\gpalg*{\G}}(\phi')^{\circ})^{\sigma'(\widehat{\vartheta})} \subseteq Z(C_{\gpalg*{\G}}(\phi)^{\circ})^{\sigma(\widehat{\vartheta})}$ et l'on obtient une application 
\[\Irr[\pi_{0}(Z(C_{\gpalg*{\G}}(\phi)^{\circ})^{\sigma(\widehat{\vartheta})})] \to \Irr[\pi_{0}(Z(C_{\gpalg*{\G}}(\phi')^{\circ})^{\sigma'(\widehat{\vartheta})})]\]
induisant une application sur les quotients.

Il est alors aisé d'obtenir la proposition suivante.

\begin{Pro}
Soit $\omega \in \Irr[\pi_{0}(Z(\gpalg*{\G})^{\widehat{\vartheta}})]$. Nous avons 
\[\rep[\Zl][(\phi',\sigma',\alpha')]{G_{\omega}} \cap \rep[\Ql]{G_{\omega}} = \prod_{(\phi,\sigma,\alpha)} \rep[\Ql][(\phi,\sigma,\alpha)]{G_{\omega}},\]
où le produit est pris sur les $(\phi,\sigma) \in \Lpbm{\iner}$ s'envoyant sur $(\phi',\sigma')$ par $\Lpbm{\iner} \to \Lpbm{\inerl}$ et $\alpha \in \hps^{-1}(\omega)$ s'envoyant sur $\alpha'$ par l'application précédente.
\end{Pro}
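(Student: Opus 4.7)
The plan is to reduce the statement to a corresponding assertion about systems of $0$-coherent conjugacy classes on the Bruhat-Tits building, and then exploit the elementary fact that the central idempotent $e_{s,\Zl}^{\quotred{\G}{\sigma}}$ decomposes, by its very definition, as the sum $\sum_{s' \backsim_{\lprime} s} e_{s',\Ql}^{\quotred{\G}{\sigma}}$. First, I would recall from Theorem \ref{thmDecompoRepPhi} (and the construction via Theorem \ref{theBijSime}) that each category $\rep[\ld][(\phi,\sigma,\alpha)]{G_{\omega}}$ corresponds to a minimal $0$-coherent system of conjugacy classes $\systConj{S}^{\ld}_{(\phi,\sigma,\alpha)} \in \Smin$, and the sub-category is cut out by the associated system of idempotents $(e_{x}^{\systConj{S},\ld})_{x \in \bts}$.

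Second, I would examine how the natural map $\ss*{\gpfini*{\G}} \to \gpfini*{\G}_{ss,\Zl}$ (taking the $\lprime$-regular part of a semisimple class) interacts with the construction of minimal systems. For each polysimplex $\sigma$ we have a corresponding reduction map $\ss*{\quotred*{G}{\sigma}} \to \quotred*{G}{\sigma}_{ss,\Zl}$, and these are compatible with the transition maps $\varphi^{*}_{\sigma,\tau}$ and $\varphi^{*}_{g,\sigma}$ defining $0$-coherence. Hence if $\systConj{S}^{\Zl}$ is a minimal $\Zl$-system and $\{\systConj{S}^{\Ql,i}\}_{i \in I}$ is the collection of minimal $\Ql$-systems whose elements project onto $\systConj{S}^{\Zl}$ (pointwise along each $\sigma$), then
\[
\systConj{S}^{\Zl} = \bigsqcup_{i \in I} \pi(\systConj{S}^{\Ql,i}),
\]
where $\pi$ is the $\lprime$-reduction. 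Since $e_{s,\Zl}^{\quotred{\G}{\sigma}} = \sum_{s' \backsim_{\lprime} s} e_{s',\Ql}^{\quotred{\G}{\sigma}}$, this translates into the idempotent identity
\[
e_{x}^{\systConj{S}^{\Zl},\Zl} = \sum_{i \in I} e_{x}^{\systConj{S}^{\Ql,i},\Ql} \quad \text{for every } x \in \bts.
\]

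Third, I would apply this idempotent equality to any $V \in \rep[\Ql]{G_{\omega}}$ (viewed inside $\rep[\Zl]{G_{\omega}}$). One immediately gets $V = e_{x}^{\systConj{S}^{\Zl},\Zl} V$ if and only if $V = \sum_i e_{x}^{\systConj{S}^{\Ql,i},\Ql} V$ for every $x$, and since the $\systConj{S}^{\Ql,i}$ are pairwise disjoint minimal $\Ql$-systems, the sum is a direct sum by Proposition \ref{prodecompocategorie}. This gives precisely the factorization
\[
\rep[\Zl][\systConj{S}^{\Zl}]{G_{\omega}} \cap \rep[\Ql]{G_{\omega}} = \prod_{i \in I} \rep[\Ql][\systConj{S}^{\Ql,i}]{G_{\omega}}.
\]

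Finally, I would translate the indexing back to triples $(\phi,\sigma,\alpha)$ via Theorem \ref{thmDecompoRepPhi}. This requires checking that, under the bijection $\mathcal{C}^r(\phi,\sigma,\omega)/{\sim_e} \tosim \hps^{-1}(\omega)$ of Proposition \ref{probijclasser}, the set of $\Ql$-minimal systems lying over the $\Zl$-minimal system $\systConj{S}^{\Zl}_{(\phi',\sigma',\alpha')}$ corresponds exactly to the pairs $(\phi,\sigma)$ above $(\phi',\sigma')$ together with the preimages $\alpha$ of $\alpha'$ under the natural map on $\Irr[\pi_{0}(Z(C_{\gpalg*{\G}}(\bullet)^{\circ})^{\bullet(\widehat{\vartheta})})]$. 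The main obstacle is precisely this compatibility: one must verify that fixing a common hyperspecial vertex $o$ in the construction of Section \ref{secSthetaQuasidep} simultaneously for the $\Zl$ and $\Ql$ parameters makes the two bijections of Proposition \ref{probijclasser} commute with the $\lprime$-reduction maps on both sides. This should follow from the functoriality of the constructions of $\hps$ and of the cocycles $n \in \NSte$, together with the fact that if $(\gpalg{S},\theta)$ is a pair with $\theta$ of order invertible in $\Zl$, then the minimal $\Zl$-system it produces is the $\lprime$-reduction of the minimal $\Ql$-system produced by the same pair.
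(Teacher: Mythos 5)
Your proposal is correct and follows the same mechanism the paper relies on: the paper gives no written proof (it states ``Il est alors aisé d'obtenir la proposition suivante'' after observing that the map on the $\alpha$'s exists), and the implicit argument is exactly yours, namely that the defining identity $e_{s,\Zl}^{\quotred{\G}{\sigma}}=\sum_{s' \backsim_{\lprime} s} e_{s',\Ql}^{\quotred{\G}{\sigma}}$ propagates to the coherent systems and hence to the categories, with the index set identified via the parametrizations of Théorème \ref{thmDecompoRepPhi}. One small imprecision: the displayed equality $\systConj{S}^{\Zl}=\bigsqcup_i \pi(\systConj{S}^{\Ql,i})$ should be $\pi^{-1}(\systConj{S}^{\Zl})=\bigsqcup_i \systConj{S}^{\Ql,i}$ (the images under $\lprime$-reduction need not be disjoint), but the idempotent identity you actually use afterwards is the correct one, so this does not affect the argument.
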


\subsection{Compatibilité à l'induction et à la restriction parabolique}

\label{secCompaInductionParaboliqueAlpha}

Vérifions ici la compatibilité de la décomposition du théorème \ref{thmDecompoRepPhi} aux foncteurs d'induction et de restriction parabolique.

\sautintro

Soit $\gpalg{P}$ un $\kk$-sous-groupe parabolique de $\gpalg{\G}$ de quotient de Levi $\gpalg{M}$ défini sur $\kk$. Considérons $\gpalg*{M}$ un dual de $\gpalg{M}$ sur $\Ql$ muni d'un plongement $\iota:\Ldual{M} \hookrightarrow \Ldual{\G}$. 
Soient $\phi_{M} \in \Lparamm{I_k^{\ld}}{M}$ et $\sigma_{M} \in \Sigma(\langle \widehat{\vartheta} \rangle,\widetilde{\pi}_{0}(\phi_{M}))$. Nous avons vu à la section \ref{secCompaInductionParaboliquePhiSigma} que nous pouvions associer à $\phi_M$ et $\sigma_M$ le paramètre $\phi:=\iota \circ \phi_{M} \in \Lpm{\Ild}$ et $\sigma:=\iota \circ \sigma_{M} \in  \Sigma(\langle \widehat{\vartheta} \rangle,\widetilde{\pi}_{0}(\phi))$.

Nous avons que $\iota(Z(C_{\gpalg*{M}}(\phi_M)^{\circ}) \supseteq Z(C_{\gpalg*{\G}}(\phi)^{\circ})$ donc $\iota$ induit une application
\[\Irr[\pi_{0}(Z(C_{\gpalg*{M}}(\phi_M)^{\circ})^{\sigma_M(\widehat{\vartheta})})] \rightarrow \Irr[\pi_{0}(Z(C_{\gpalg*{\G}}(\phi)^{\circ})^{\sigma(\widehat{\vartheta})})],\]
qui passe au quotient, et rendant le diagramme suivant commutatif

\[ \xymatrix{
\Irr[\pi_{0}(Z(C_{\gpalg*{M}}(\phi_M)^{\circ})^{\sigma_M(\widehat{\vartheta})})]/{\pi_{0}(\phi_M)^{\sigma_M(\widehat{\vartheta})}}  \ar@{->}[r] \ar@{->}[d]& \Irr[\pi_{0}(Z(C_{\gpalg*{\G}}(\phi)^{\circ})^{\sigma(\widehat{\vartheta})})]/{\pi_{0}(\phi)^{\sigma(\widehat{\vartheta})}}  \ar@{->}[d]\\
 \Irr[\pi_{0}(Z(\gpalg*{M})^{\widehat{\vartheta}})]  \ar@{->}[r] &   \Irr[\pi_{0}(Z(\gpalg*{\G})^{\widehat{\vartheta}})] }.\]

Soit $\omega_M \in \Irr[\pi_{0}(Z(\gpalg*{M})^{\widehat{\vartheta}})]$ et $\omega \in \Irr[\pi_{0}(Z(\gpalg*{\G})^{\widehat{\vartheta}})]$ son image par $\Irr[\pi_{0}(Z(\gpalg*{M})^{\widehat{\vartheta}})]  \rightarrow   \Irr[\pi_{0}(Z(\gpalg*{\G})^{\widehat{\vartheta}})] $. Ainsi $\alpha_M \in \psi_{(\phi_M,\sigma_M)}^{-1}(\omega_M) $ est envoyé sur  $\alpha \in \hps^{-1}(\omega)$. On notera par la suite $\iota_{M}^{\G}$, comme dans la section \ref{secCompaInductionParaboliquePhiSigma} l'application qui à $(\phi_M,\sigma_M,\alpha_M)$ associe $(\phi,\sigma,\alpha)$. Notons que comme $\omega$ est l'image de $\omega_M$, alors $M_{\omega_M}$ est un Levi de $G_\omega$ et l'on a un plongement de $\bte[M_{\omega_M}]$ dans $\bte[G_\omega]$.

\bigskip

Réinterprétons l'application $\alpha_M  \rightarrow \alpha$ en terme de groupes de Weyl. Nommons $t \in \ss*{\gpfini*{M}}$ la classe de conjugaison semi-simple associée à $(\phi_M,\sigma_M)$. Fixons $o$ un sommet hyperspécial de $\bte[M]$ donnant un sommet hyperspécial de $\bte[\G]$ (que l'on nomme encore $o$). La section \ref{secSthetaQuasidep} construit alors à partir de $(\phi_M,\sigma_M)$ une paire $(\gpalg{S},\theta)$ où $\gpalg{S}$ est un tore maximal non-ramifié de $\gpalg{M}$. Ce dernier permet de définir les groupes $\WSteM{M}$,$\WStaM{M}$, $\WeM{M}$, $\WaM{M}$ (on rajoute des indices $M$ pour signifier que ces groupes sont définis vis à vis de $M$). Le tore $\gpalg{S}$ peut également être vu comme un tore maximal non-ramifié de $\gpalg{\G}$ et donc définit également les groupes $\WSte$,$\WSta$, $\We$, $\Wa$. Les inclusions $\WSteM{M} \subseteq \WSte$ et $\WStaM{M} \subseteq \WSta$  permettent de définir une application
\[ \tilde{H}^{1}(\fr,\WSteM{M}/\WStaM{M})  \longrightarrow\tilde{H}^{1}(\fr,\WSte/\WSta) .\]
Cette application correspond à 
\[\Irr[\pi_{0}(Z(C_{\gpalg*{M}}(\phi_M)^{\circ})^{\sigma_M(\widehat{\vartheta})})]/{\pi_{0}(\phi_M)^{\sigma_M(\widehat{\vartheta})}} \longrightarrow \Irr[\pi_{0}(Z(C_{\gpalg*{\G}}(\phi)^{\circ})^{\sigma(\widehat{\vartheta})})]/{\pi_{0}(\phi)^{\sigma(\widehat{\vartheta})}}\]
via la proposition \ref{procommirrW}.

\begin{Lem}
\label{lemLeviClasse}
Notons $N$ une forme intérieure pure de $M$ associée à $\omega_M$. Soient $x$ un sommet dans l'immeuble de Bruhat-Tits étendu de $N$ et $s \in \ss*{\quotred*{N}{x}}$. Nommons $(\phi_M,\sigma_M,\alpha_M)$ les données associées à $s$. Notons également $(\phi,\sigma,\alpha)$ les données associées à $s$ vu comme un élément de $\ss*{\quotred*{\G}{x}}$ (où $x$ est vu ici comme un élément de $\bte[G_\omega]$). Alors $(\phi,\sigma,\alpha) = \iota_{M}^{\G} (\phi_M,\sigma_M,\alpha_M)$.
\end{Lem}

\begin{proof}
Cela découle aisément de la définition de l'application $\iota_{M}^{\G}$, puisque à $s$ on associe une paire $(\mathbf{S}',\theta')$, avec $\mathbf{S}'$ un tore maximal non-ramifié de $N$, et lorsque $s$ est vu comme un élément de $\ss*{\quotred*{\G}{x}}$, on peut lui associer la même pair $(\mathbf{S}',\theta')$ mais en voyant cette fois-ci $\mathbf{S}'$ comme un tore maximal non-ramifié de $G_{\omega}$.
\end{proof}

\begin{The}
\label{theInductionParaboliquealpha}
Soit $\gpalg{P}$ un sous-groupe parabolique de $\gpalg{\G}$ ayant pour facteur de Levi $\gpalg{M}$.
\begin{enumerate}
\item 
\[ \rp[ \rep[\ld][(\phi,\sigma,\alpha)]{\G_{\omega}} ] \subseteq \prod_{\substack{(\phi_{M},\sigma_M,\alpha_M) \\\iota_{M}^{\G} (\phi_M,\sigma_M,\alpha_M) =(\phi,\sigma,\alpha)}} \rep[\ld][(\phi_{M},\sigma_M,\alpha_M)]{M_{\omega_M}}\]
\item 
\[ \ip[ \rep[\ld][(\phi_{M},\sigma_M,\alpha_M)]{M_{\omega_M}} ] \subseteq \rep[\ld][\iota_{M}^{\G} (\phi_M,\sigma_M,\alpha_M)]{G_\omega}\]
\end{enumerate}

\end{The}

\begin{proof}
Cela découle du lemme \ref{lemLeviClasse} et des propositions \ref{proInducParaSyst}, \ref{proResParaSyst}.
\end{proof}

\begin{The}
\label{theEquivalencephisigmaalpha}
Si $C_{\gpalg*{\G}}(\phi) \subseteq \iota(\gpalg*{M})$ le foncteur d'induction parabolique $\ip$ réalise une équivalence de catégories entre $\rep[\ld][(\phi_{M},\sigma_M,\alpha_M)]{M_{\omega_M}}$ et $\rep[\ld][\iota_{M}^{\G} (\phi_M,\sigma_M,\alpha_M)]{G_\omega}$.
\end{The}

\begin{proof}
Cela découle du théorème \ref{theInductionParaboliquealpha} et du théorème 4.4.6 dans \cite{lanard}.
\end{proof}

\subsection{Compatibilité à la construction de DeBacker-Reeder}

Soit $\varphi$ un paramètre de Langlands modéré elliptique et en position général comme dans \cite{DebackerReeder} (elliptique signifie que l'image de $\varphi$ n'est pas contenue dans un sous-groupe de Levi propre de $\Ldual{\G}$ et en position générale que le centralisateur de $\varphi(\iner)$ dans $\gpalg*{G}$ est un tore). Soit $\omega \in H^{1}(\kk,\gpalg{G})$. L'isomorphisme de Kottwitz permet de définir une application $\Irr(\cent{\varphi}{\gpalg*{G}}) \rightarrow H^{1}(\kk,\gpalg{G})$ et l'on note $\Irr(\cent{\varphi}{\gpalg*{G}},\omega)$ la fibre sur $\omega$ de cette application. DeBacker et Reeder construisent alors dans \cite{DebackerReeder} un $L$-paquet $\Pi(\varphi,\omega)$ pour $G_{\omega}$, paramétré par $\Irr(\cent{\varphi}{\gpalg*{G}},\omega)$. Nous souhaitons vérifier dans cette partie, que cette construction est compatible à la décomposition du théorème \ref{thmDecompoRepPhi}.

\sautintro

Fixons $o$ un sommet hyperspécial que l'on choisira identique pour la construction de DeBacker-Reeder et la notre, ainsi que $\gpalg{T}$ un tore maximalement déployé de $\G$.

Rappelons brièvement la construction de DeBacker et Reeder. Quitte à conjuguer $\varphi$, on peut supposer que $\varphi(\iner) \subseteq \gpalg*{T}$ et que $\varphi(\text{Frob})=f\widehat{\vartheta}$, avec $f \in N(\gpalg*{T})$. Notons $w$, l'image de $f$ dans $\Wf$. On peut former à partir de ce $\varphi$ (\cite{DebackerReeder} section 4.3) un caractère $\chi$ de niveau zéro de $\gpalg{T}^{\fr}$. Soit $\lambda \in X_w$, où $X_w$ est la préimage dans $X$ de $[X/(1-w\vartheta)X]_{\text{tor}}$. L'évaluation sur une uniformisante $\varpi$, identifie $\lambda$ à $t_\lambda \in \We$, l'opérateur de translation par $\lambda$. DeBacker et Reeder construisent alors, dans la section 2.7, $\dot{w} \in N$, $\dot{w}_\lambda \in N$ et $u_\lambda \in Z^{1}(\fr,N)$ tels que $\dot{w}$ relève $w$, $\dot{w}_\lambda = p_{\lambda}^{-1}\fr_\lambda(p_\lambda)$ et $t_\lambda \dot{w} = \dot{w}_\lambda	u_\lambda$, où $\fr_{\lambda}= \Ad(u_\lambda) \circ \fr$. On pose $\gpalg{T}_\lambda = {}^{p_\lambda}\gpalg{T}$ et $\chi_\lambda = p_\lambda \chi \in \Irr(\gpalg{T}_{\lambda}^{\fr_\lambda})$. Grâce à la théorie de Deligne-Lusztig, ils forment à partir de $(\gpalg{T}_\lambda,\chi_\lambda)$ une représentation $\pi_\lambda$ de $(G^{nr})^{\fr_\lambda}$.

\begin{The}
\label{thecompadebackerreeder}
La construction de DeBacker-Reeder (\cite{DebackerReeder}) est compatible au théorème \ref{thmDecompoRepPhi}. C'est à dire que $\pi_\lambda$ appartient à $\rep[\Ql][(\phi,\sigma,\alpha)]{G_{\omega}}$ où $\phi=\varphi_{|\iner}$, $\sigma(\widehat{\vartheta})=\varphi(\text{Frob})$ et $\alpha$ correspond à $\lambda$ via la bijection $[X/(1-w\vartheta)X]_{\text{tor}} \overset{\sim}{\longrightarrow} \Irr[\pi_{0}(C_{\gpalg*{\G}}(\varphi))]$ (voir \cite{DebackerReeder} 4.1).
\end{The}

\begin{proof}
Dans cet article, à partir de $\varphi$ nous construisons non pas $(\gpalg{T},\chi)$ mais une paire $(\gpalg{S},\theta)$, où $\theta \in \Irr({}^{0}(\gpalg{S}^{\fr})/(\gpalg{S}^{\fr})^{+})$. Le lien entre les deux est le suivant. Écrivons, comme dans \cite{DebackerReeder} section 2.6, $\dot{w}=p_{0}^{-1} \fr(p_0)$, avec $p_0 \in \para{\G}{o}$. Alors $\gpalg{S}={}^{p_0}\gpalg{T}$. Comme $p_0\chi \in \Irr(\gpalg{S}^{\fr})$ est de niveau zéro, on le voit comme un caractère de $\gpalg{S}^{\fr}/(\gpalg{S}^{\fr})^{+}$, et on a $\theta = (p_0\chi)_{|{}^{0}(\gpalg{S}^{\fr})}$.

Notons qu'avec les hypothèses faites sur $\varphi$, on a que $\cent{\phi}{\gpalg*{G}}=\gpalg*{T}$ et $\sigma(\widehat{\vartheta})=w\widehat{\vartheta}$, de sorte que $C_{\gpfinialg*{G}}(t)^{\circ}=\gpfinialg*{S}$. En particulier $\WSte*=\WSte=X_{*}(\gpalg{S})$, $\WSta=1$ et $\NSte=\gpalg{S}$. L'élément $p_0 \lambda p_{0}^{-1}$ nous donne bien un élément de $H^{1}(\fr,\WSte/\WSta)$, et on peut appliquer le théorème \ref{thmDecompoRepPhi} pour obtenir un système cohérent. Posons $n:=p_0 t_\lambda p_{0}^{-1} \in \NSte$ qui relève $p_0 \lambda p_{0}^{-1}$. La preuve du théorème \ref{thmDecompoRepPhi} construit alors le triplet $(\gpalg{S},\theta_n,n)$. Mais comme $n \in \gpalg{S}$, $\fr=\fr_{n}$ sur $\gpalg{S}$ et donc $\theta_n = \theta$. On obtient un triplet $(\gpalg{S},\theta,n)$ qui donne un système cohérent. Posons $g:=p_\lambda p_{0}^{-1}$. Alors 
\begin{align*}
g*n &= g n \fr(g)^{-1} = (p_\lambda p_{0}^{-1})(p_0 t_\lambda p_{0}^{-1})(\fr(p_{0}p_\lambda^{-1}))\\
&=p_\lambda t_\lambda \dot{w} \fr(p_\lambda)^{-1} = p_\lambda \dot{w}_\lambda	u_\lambda \fr(p_\lambda)^{-1}\\
&=p_\lambda p_{\lambda}^{-1}\fr_\lambda(p_\lambda)	u_\lambda \fr(p_\lambda)^{-1} = p_\lambda p_{\lambda}^{-1}u_\lambda \fr(p_\lambda) u_\lambda^{-1} u_\lambda \fr(p_\lambda)^{-1}\\
&=u_\lambda
\end{align*}

La définition \ref{defEqeinner} montre (avec $h=1$) que $(\gpalg{S},\theta,n) \sim_e ({}^g \gpalg{S},g\theta,u)$. Or ${}^g \gpalg{S}={}^{p_\lambda p_{0}^{-1}} \gpalg{S}={}^{p_\lambda} \gpalg{T}=\gpalg{T}_\lambda$ et de même $g\theta = (\chi_\lambda)_{|{}^{0}(\gpalg{T}_{\lambda}^{\fr_{\lambda}})}$. Les deux constructions sont donc compatibles.
\end{proof}

\subsection{Lien avec \texorpdfstring{\cite{datFunctoriality}}{[Dat17a]}}

Dat prédit dans \cite{datFunctoriality}, une décomposition de la catégorie $\rep[\ld][\phi]{G}$ indexée par $\sigma$ et des données cohomologiques liées à un groupe réductif non-ramifié $\Gps$. Rappelons ici la construction de ces groupes $\Gps$, ainsi que le lien avec le théorème \ref{thmDecompoRepPhi}. Nous verrons en particulier que ce dernier peut être reformulé avec les groupes $\Gps$, corroborant, dans le cas du niveau zéro, les prédictions de \cite{datFunctoriality}.

\sautintro

Soit $\phi : \Ild \to \Ldual{\G}$ un paramètre inertiel. Appelons alors $\mathbf{G}_{\phi}^{split,\circ}$ un groupe réductif sur $\kk$ dual de $C_{\gpalg*{\G}}(\phi)^{\circ}$. Nous avons vu dans la section \ref{secSthetaQuasidep} que l'on a une action $\widetilde{\pi}_{0}(\phi) \to \Out(C_{\gpalg*{\G}}(\phi)^{\circ})$. Le choix d'un épinglage de $\mathbf{G}_{\phi}^{split,\circ}$ nous donne une section $\Out(\mathbf{G}_{\phi}^{split,\circ}) \longrightarrow \Aut(\mathbf{G}_{\phi}^{split,\circ})$, et donc un morphisme
\[ \widetilde{\pi}_{0}(\phi) \longrightarrow \Out(C_{\gpalg*{\G}}(\phi)^{\circ})=\Out(\mathbf{G}_{\phi}^{split,\circ}) \longrightarrow \Aut(\mathbf{G}_{\phi}^{split,\circ}).\]
On peut alors former un $\kk$-groupe réductif (non-connexe)
\[ \mathbf{G}_{\phi}^{split}:=\mathbf{G}_{\phi}^{split,\circ} \rtimes \pi_{0}(\phi)\]
muni d'une action de $\widetilde{\pi}_{0}(\phi)$ donnée par
\[ \theta : \widetilde{\pi}_{0}(\phi) \longrightarrow \Aut(\mathbf{G}_{\phi}^{split})\]
(où $\widetilde{\pi}_{0}(\phi)$ agit sur $\pi_{0}(\phi)$ par conjugaison). Alors n'importe quelle section continue $\sigma : \langle \widehat{\vartheta} \rangle \rightarrow \widetilde{\pi}_{0}(\phi)$ nous donne une $\kk$-forme $\Gps$ de $\mathbf{G}_{\phi}^{split}$ telle que l'action de $\langle \widehat{\vartheta} \rangle$ sur $\Gps(\knr)=\mathbf{G}_{\phi}^{split}(\knr)$ soit l'action naturelle twistée par $\theta \circ \sigma$. La composante neutre $\Gpsc$ de $\Gps$ est alors un groupe réductif connexe non-ramifié et l'on a
\[\Gps(\kk)=\Gpsc(\kk) \rtimes \pi_{0}(\phi)^{\sigma(\widehat{\vartheta})}.\]

Notons également que si $c \in \pi_{0}(\phi)$, la conjugaison par $(1,c)$ dans $\mathbf{G}_{\phi}^{split}(\kalg)$ induit un $\kk$-isomorphisme $\Gps \overset{\sim}{\longrightarrow} \Gphisigma{\phi}{\sigma^{c}}$, de sorte que la classe d'isomorphisme de $\Gps$ sur $\kk$ ne dépend que de l'image de $\sigma$ dans $\overline{\Sigma}(\langle \widehat{\vartheta} \rangle,\widetilde{\pi}_{0}(\phi))$.

\bigskip

On définit l'ensemble $\tilde{H}^{1}(\fr,\Gpscnr):=Im[H^{1}(\fr,\Gpscnr) \rightarrow H^{1}(\fr,\Gpsnr)]$. Alors $\tilde{H}^{1}(\fr,\Gpscnr)$ n'est autre que $H^{1}(\fr,\Gpscnr)$ quotienté par l'action de $\pi_{0}(\phi)^{\sigma(\widehat{\vartheta})}$. 

Grâce à l'isomorphisme de Kottwitz, nous avons des bijections
\[H^{1}(\fr,\G^{nr}) \overset{\sim}{\rightarrow} \Irr[\pi_{0}(Z(\gpalg*{\G})^{\widehat{\vartheta}})] \text{ et }H^{1}(\fr,\Gpscnr) \overset{\sim}{\rightarrow} \Irr[\pi_{0}(Z(C_{\gpalg*{\G}}(\phi)^{\circ})^{\sigma(\widehat{\vartheta})})].\]
Ainsi, nous avons également une bijection
\[ \Irr[\pi_{0}(Z(C_{\gpalg*{\G}}(\phi)^{\circ})^{\sigma(\widehat{\vartheta})})]/{\pi_{0}(\phi)^{\sigma(\widehat{\vartheta})}} \overset{\sim}{\longrightarrow} \tilde{H}^{1}(\fr,\Gpscnr).\]

L'application $\hps$ nous fournit alors une application $\tilde{H}^{1}(\fr,\Gpscnr) \rightarrow H^{1}(\fr,\G^{nr})$, rendent le diagramme suivant commutatif
\[ \xymatrix{
\Irr[\pi_{0}(Z(C_{\gpalg*{\G}}(\phi)^{\circ})^{\sigma(\widehat{\vartheta})})]/{\pi_{0}(\phi)^{\sigma(\widehat{\vartheta})}} \ar@{->}[r]^-{\hps} \ar@{->}[d]^{\sim}& \Irr[\pi_{0}(Z(\gpalg*{\G})^{\widehat{\vartheta}})] \ar@{->}[d]^{\sim}\\
\tilde{H}^{1}(\fr,\Gpscnr)\ar@{->}[r] & H^{1}(\fr,\G^{nr})}\]

Nous obtenons de la sorte la proposition suivante

\begin{Pro}
\label{proDecompoDat}
Soient $\gpalg{G}$ un groupe réductif connexe non-ramifié et $\omega \in H^{1}(\fr,\G^{nr})$.

La décomposition du théorème \ref{thmDecompoRepPhi} peut être réinterpréter en une décomposition indexée par $(\phi,\sigma,\alpha)$ où $\phi : \Ild \to \Ldual{G}$ est un paramètre inertiel, $\sigma \in \overline{\Sigma}( \langle \widehat{\vartheta} \rangle ,\widetilde{\pi}_{0}(\phi))$ et $\alpha$ est dans l'image réciproque de $\omega$ par l'application $\tilde{H}^{1}(\fr,\Gpscnr) \rightarrow H^{1}(\fr,\G^{nr})$.
\end{Pro}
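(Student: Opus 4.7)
The plan is to carry out the three translations displayed just before the statement and then invoke Theorem~\ref{thmDecompoRepPhi} directly; no new geometric input is needed.

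First I would establish the identification $\tilde{H}^{1}(\fr,\Gpscnr) = H^{1}(\fr,\Gpscnr)/\pi_{0}(\phi)^{\sigma(\widehat{\vartheta})}$ asserted just after the definition. Since $\Gpsnr$ is an extension of $\pi_{0}(\phi)^{\sigma(\widehat{\vartheta})}$ by $\Gpscnr$ and both are $\fr$-stable, the long exact sequence of pointed sets in non-abelian Galois cohomology shows that the image of $H^{1}(\fr,\Gpscnr)$ in $H^{1}(\fr,\Gpsnr)$ is exactly the orbit space of the natural action of $\pi_{0}(\phi)^{\sigma(\widehat{\vartheta})} = \pi_{0}(\Gps^{\fr})$ on $H^{1}(\fr,\Gpscnr)$ (the usual twisting action), which gives the claim.

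Second, I would apply the Kottwitz isomorphism to $\gpalg{G}$ and to $\Gpsc$, both of which are unramified (the second by construction of $\Gps$ through the choice of an epinglage and its splitness). This yields the two bijections $H^{1}(\fr,\G^{nr}) \tosim \Irr[\pi_{0}(Z(\gpalg*{\G})^{\widehat{\vartheta}})]$ and $H^{1}(\fr,\Gpscnr) \tosim \Irr[\pi_{0}(Z(C_{\gpalg*{\G}}(\phi)^{\circ})^{\sigma(\widehat{\vartheta})})]$, where on the right I use that the dual of $\Gpsc$ is $C_{\gpalg*{\G}}(\phi)^{\circ}$ with its natural $\sigma(\widehat{\vartheta})$-action (this is exactly how $\Gps$ was constructed). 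Passing to the quotient by $\pi_{0}(\phi)^{\sigma(\widehat{\vartheta})}$ on both sides, together with the identification of the previous paragraph, produces the vertical isomorphisms of the commutative square displayed just before the statement.

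Third, I would check that the natural map $\tilde{H}^{1}(\fr,\Gpscnr) \to H^{1}(\fr,\G^{nr})$, induced by any inner twist realization $\Gpsc \hookrightarrow \gpalg{G}_{\omega}$ compatible with the dual inclusion $Z(\gpalg*{\G}) \hookrightarrow Z(C_{\gpalg*{\G}}(\phi)^{\circ})$, corresponds under Kottwitz to the restriction of characters $\Irr[\pi_{0}(Z(C_{\gpalg*{\G}}(\phi)^{\circ})^{\sigma(\widehat{\vartheta})})] \to \Irr[\pi_{0}(Z(\gpalg*{\G})^{\widehat{\vartheta}})]$, which is precisely the map $\hps$. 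This is the functoriality of Kottwitz with respect to the dual of an inner twist; it will be the main (though standard) verification. Once this commutative diagram is set up, the fibres of $\hps$ over $\omega$ correspond bijectively to the fibres of $\tilde{H}^{1}(\fr,\Gpscnr) \to H^{1}(\fr,\G^{nr})$ over $\omega$, and substituting into Theorem~\ref{thmDecompoRepPhi} rewrites the indexing set as claimed.
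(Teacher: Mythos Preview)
Your overall strategy matches the paper's: the proposition is obtained directly from Theorem~\ref{thmDecompoRepPhi} once one has identified $\hps^{-1}(\omega)$ with the fibre over $\omega$ of a map $\tilde{H}^{1}(\fr,\Gpscnr) \to H^{1}(\fr,\G^{nr})$, via Kottwitz on both sides and the quotient interpretation of $\tilde{H}^{1}$. Steps one and two of your plan are exactly what the paper does.

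Your third step, however, is slightly off. You propose to construct the map $\tilde{H}^{1}(\fr,\Gpscnr) \to H^{1}(\fr,\G^{nr})$ geometrically, via ``any inner twist realization $\Gpsc \hookrightarrow \gpalg{G}_{\omega}$'', and then verify that under Kottwitz it becomes $\hps$. But there is in general no such embedding: $\Gpsc$ is an abstract unramified group manufactured from the dual side (its dual is $C_{\gpalg*{\G}}(\phi)^{\circ}$), and it need not sit inside $\gpalg{G}_{\omega}$ as a subgroup or twisted Levi. The paper avoids this entirely by \emph{defining} the map $\tilde{H}^{1}(\fr,\Gpscnr) \to H^{1}(\fr,\G^{nr})$ as the transport of $\hps$ through the two Kottwitz isomorphisms; the commutative square then holds by construction, and there is nothing to check. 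So your ``main (though standard) verification'' is both unnecessary and not quite well-posed as stated. Once you drop that geometric detour, your argument is the paper's.
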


Dans la section 2.1.2 de \cite{datFunctoriality}, Dat prédit une telle décomposition lorsque $C_{\gpalg*{\G}}(\phi)^{\circ}$ est connexe, puis propose la construction des $\Gps$ dans la section 2.1.4 pour traiter du cas non-connexe. Ainsi, la proposition \ref{proDecompoDat} corrobore ces spéculations dans le cas du niveau zéro.

\bigskip

L'intérêt d'exprimer cette décomposition à l'aide des groupes $\Gps$ est le suivant. Dans \cite{datFunctoriality}, il est conjecturé que la fonctorialité de Langlands pourrait s'étendre, de manière fonctorielle, en un transfert sur les représentations non-irréductibles, et sous certaines hypothèses fournir des équivalences de catégories. Par exemple, lorsque $C_{\gpalg*{\G}}(\phi)$ est connexe, le plongement ${}^{L}(\gpalg{G}_{\phi,1})_{\alpha} \hookrightarrow \Ldual{G}_{\omega}$ devrait induire une équivalence de catégories entre $\rep[\ld][1]{(G_{\phi,1})_{\alpha}}$ et $\rep[\ld][(\phi,1,\alpha)]{G_\omega}$.

%% Appendice
\appendix

\section{Cohomologie des groupes \texorpdfstring{$p$}{p}-adiques}

\label{secCohomologie}

Rappelons dans cette annexe, quelques éléments sur la cohomologie des groupes $p$-adiques dont nous avons besoin dans cet article.

\sautintro

Soit $U$ un groupe muni d'un endomorphisme $\fr$ tel que tout élément est fixé par une puissance de $\fr$. Munissons $U$ de la topologie discrète. Alors le groupe $\hat{\mathbb{Z}}$ des entiers profinis, de générateur topologique $\fr$, agit continûment sur $U$ et on note
\[ H^{1}(\fr,U):=H^{1}(\hat{\mathbb{Z}},U)\]
la cohomologie continue de $U$.

\bigskip

Pour un entier $d\geq 1$ et un $g \in U$ on note $N_{d}(g):=g\fr(g)\cdots \fr^{d-1}(g) \in U$. On peut alors voir les cocycles continus comme les éléments de
\[Z^{1}(\fr,U):=\{u \in U | N_{d}(u)=1 \text{ pour un certain } d\geq 1\}.\]
Alors $H^{1}(\fr,U)$ est le quotient de $Z^{1}(\fr,U)$ sous la $U$-action : $g*u=gu\fr(g)^{-1}$.

\bigskip

Dans le cas qui nous intéresse d'un groupe réductif connexe $\gpalg{\G}$ non-ramifié nous retrouvons la cohomologie Galoisienne. Notons $\fr$ l'automorphisme de $G^{nr}$ donné par l'action d'un Frobenius inverse. Alors la surjection naturelle $\Gal(\kalg/\kk)\rightarrow \Gal(\kalg/\kk) / \iner$ induit un isomorphisme
\[ H^{1}(\fr,G^{nr}) \simeq H^{1}(\kk,\gpalg{\G})\]
(voir par exemple \cite{DebackerReeder} section 2.2).

\bigskip

Prenons $\mathbf{T}$ un tore maximal $\knr$-déployé défini sur $\kk$ et maximalement déployé dans $\gpalg{\G}$. On note $N^{nr}$ le normalisateur de $T^{nr}$ dans $G^{nr}$, $\We:=N^{nr}/{}^{0}T^{nr}$ le groupe de Weyl étendu de $T^{nr}$ dans $G^{nr}$, où ${}^{0}T^{nr}$ est le sous-groupe borné maximal de $T^{nr}$, et $\Wa$ le groupe de Weyl affine, qui est le sous-groupe de $\We$ engendré par les réflexions des murs des chambres de $\mathcal{A}$. L'appartement $\mathcal{A}$ contient une chambre $C$ $\fr$-stable car $\mathbf{T}$ est maximalement déployé. Notons alors $N^{nr}_{C}:=\{n \in N^{nr} | n \cdot C=C\}$ et $\Omega_{C} := \{w \in \We | w \cdot C=C\}$. Notons que $\Omega_{C}$ est l'image de $N^{nr}_{C}$ dans $\We$ et que comme $\Wa$ agit simplement transitivement sur l'ensemble des chambres on a un isomorphisme $\Omega_{C} \tosim \We/\Wa$.

\begin{Pro}[\cite{DebackerReeder} Corollaire 2.4.3]
Les applications $N^{nr}_{C} \twoheadrightarrow \Omega_{C}$ et $N^{nr}_{C} \hookrightarrow G^{nr}$ induisent des isomorphismes sur les $H^{1}(\fr,\cdot)$ et donc un isomorphisme
\[ H^{1}(\fr,G^{nr}) \simeq H^{1}(\fr,\Omega_{C}).\]
\end{Pro}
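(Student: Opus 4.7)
The plan is to split the statement into its two constituent isomorphisms, both of which come from short exact sequences fitted into the Bruhat-Tits machinery. Since $N^{nr}$ fits into $1 \to T^{nr} \to N^{nr} \to \Wf \to 1$ and $N^{nr}_C$ fits into the analogous sequence $1 \to {}^{0}T^{nr} \to N^{nr}_C \to \Omega_C \to 1$ (using that $N^{nr}_C$ is the stabilizer of $C$ in $N^{nr}$ and that $T^{nr}/{}^{0}T^{nr}$ is the translation lattice which is disjoint from $\Omega_C$), the claim $H^1(\fr, N^{nr}_C) \tosim H^1(\fr, \Omega_C)$ follows from the long exact sequence of non-abelian cohomology once one shows that $H^i(\fr, {}^{0}T^{nr}) = 0$ for $i=0,1$ in the appropriate sense (meaning the connecting map is well-defined and surjective/injective as needed). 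For the vanishing, I would use that ${}^{0}T^{nr}$ carries the Moy-Prasad filtration $\{T^{nr,r}\}_{r\ge 0}$ by pro-$p$ open subgroups, reducing by successive approximation to the graded pieces: the top quotient ${}^{0}T^{nr}/T^{nr,+}$ is the group of $\resalg$-points of an unramified torus over $\res$, whose $H^1$ vanishes by Lang-Steinberg, while each graded piece $T^{nr,r}/T^{nr,r^+}$ is a finite-dimensional $\resalg$-vector space on which $\fr$ acts by $q$ times a finite-order operator, so again Lang-Steinberg applies.

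For the second isomorphism $H^1(\fr, N^{nr}_C) \tosim H^1(\fr, G^{nr})$, the approach is geometric via the action on $\bt[\knr]$. For surjectivity, given $g \in Z^1(\fr, G^{nr})$, form the twisted Frobenius $\fr_g := \Ad(g)\circ \fr$, which still acts on $\bt[\knr]$ as an automorphism of finite order on each bounded subset and preserving the polysimplicial structure. By the Bruhat-Tits fixed point theorem applied to $\fr_g$ on the CAT(0) space $\bt[\knr]$ (or more directly by the observation that $\fr_g$ is again a Frobenius for some inner form, whose building has a $\fr_g$-stable chamber), there exists a chamber $C'$ fixed setwise by $\fr_g$. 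Using the transitivity of $G^{nr}$ on the chambers, pick $h\in G^{nr}$ with $h C' = C$; then $hg\fr(h)^{-1}$ normalizes $C$ and lies in $N^{nr}_C$, producing the desired representative of the cohomology class. For injectivity, if $n_1,n_2 \in N^{nr}_C \cap Z^1(\fr,\cdot)$ satisfy $n_1 = g n_2 \fr(g)^{-1}$ for some $g \in G^{nr}$, then $g C$ is another $\fr_{n_1}$-stable chamber; by the argument above combined with the uniqueness statement that the stabilizer of $C$ acts transitively on $\fr_{n_1}$-fixed chambers adjacent to it, one can modify $g$ by an element of $N^{nr}_C$ to make it trivial, showing $[n_1]=[n_2]$ already in $\tilde H^1(\fr, N^{nr}_C)$.

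The main obstacle is the surjectivity step for the second isomorphism: one must produce a $\fr_g$-stable chamber in $\bt[\knr]$ for an arbitrary twisted Frobenius. I expect this to go through cleanly via the Bruhat-Tits fixed point theorem applied to the (bounded, as one checks) orbit of any chamber under $\fr_g$, exactly as in the proof of \cite{DebackerReeder} section 2.4. The vanishing statement $H^1(\fr, {}^{0}T^{nr})=0$ is standard once one combines Hilbert 90 for unramified tori with the Moy-Prasad filtration, but requires one to be careful with the pro-object limit. Combining the two isomorphisms plus the identification $\Omega_C \simeq \We/\Wa$ recorded before the proposition delivers the stated bijection $H^1(\fr, G^{nr}) \simeq H^1(\fr, \Omega_C) = H^1(\fr, \We/\Wa)$.
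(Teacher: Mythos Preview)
The paper does not supply its own proof of this proposition: it is quoted verbatim as \cite{DebackerReeder} Corollaire 2.4.3 and used as a black box. So there is nothing in the paper to compare your argument against beyond the citation itself.

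That said, your sketch is essentially a faithful reconstruction of the DeBacker--Reeder argument. Two small points. First, you do not need $H^{0}(\fr,{}^{0}T^{nr})=0$ (it is not zero); what is actually used is $H^{1}(\fr_{n},{}^{0}T^{nr})=1$ for every twist $\fr_{n}$, which is exactly \cite{DebackerReeder} Lemma 2.3.1 and which, via the standard twisting trick in non-abelian cohomology, gives both injectivity and surjectivity of $H^{1}(\fr,N^{nr}_{C})\to H^{1}(\fr,\Omega_{C})$. Second, your injectivity argument for $H^{1}(\fr,N^{nr}_{C})\to H^{1}(\fr,G^{nr})$ invokes the wrong transitivity: it is not the stabilizer of $C$ that acts transitively on nearby $\fr_{n_{1}}$-stable chambers, but rather $(G^{nr})^{\fr_{n_{1}}}$ (the inner form) that acts transitively on \emph{all} $\fr_{n_{1}}$-stable chambers, since these are precisely the chambers of its own building. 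Once you replace $g$ by $k^{-1}g$ with $k\in (G^{nr})^{\fr_{n_{1}}}$ sending $C$ to $gC$, the new $g$ stabilizes $C$; one then uses $H^{1}(\fr_{n_{2}},\para{G}{C})=1$ to further adjust $g$ into $N^{nr}_{C}$. With these corrections your outline matches the source.
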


L'isomorphisme $\Omega_{C} \overset{\sim}{\rightarrow} \We/\Wa$ nous donne donc un isomorphisme $H^{1}(\fr,G^{nr})\simeq H^{1}(\fr,\We/\Wa)$, valable pour l'instant pour un tore maximalement déployé $\mathbf{T}$.

\medskip

Notons que l'on peut simplifier l'isomorphisme $ H^{1}(\fr,G^{nr}) \simeq H^{1}(\fr,\Omega_{C})$ grâce à l'annexe de \cite{PappasRapoport} qui construit une surjection naturelle $G^{nr} \to \Omega_C$. Ceci donne une version géométrique du morphisme de Kottwitz que l'on peut construire de la façon suivante.

Soit $\phi : G_{sc} \to G$ le revêtement simplement connexe de $G$. Nous rappelons que $G_{sc}$ agit transitivement sur l'ensemble des chambres. Soit $g \in G^{nr}$. Prenons $h \in G_{sc}$ tel que $\phi(h)g$ stabilise $C$, qui est unique au fixateur de $C$ dans $G_{sc}$ près. Alors $\phi(h)g$ agit sur $\bar{C}$, la clôture de $C$, par une application affine. La construction même de l'immeuble de Bruhat-Tits (voir \cite[§7.4]{BTI}) nous dit que cette action sur $\bar{C}$ peut-être réalisée par un élément $n\in N^{nr}$. Puisque $n$ stabilise $C$, son image dans $\We$ est dans $\Omega_{C}$. Ceci nous fournit une application $G^{nr} \to \Omega_C$ qui peut être vue comme une version géométrique du morphisme de Kottwitz. Pour voir que c'est un morphisme de groupe naturel, on peut noter que $\phi(G_{sc}^{nr})$ est distingué dans $G^{nr}$. Donc $G^{nr}$ agir naturellement sur l'espace quotient $\bt(G,\knr)/{\phi(G_{sc}^{nr})}$, qui s'identifie au domaine fondamentale $\bar{C}$. Alors, l'action de $G^{nr}$ sur $\bar{C}$ nous donne un morphisme canonique $G^{nr} \to \Omega_C$.

\begin{Lem}
\label{lemNWH1}

Le diagramme suivant est commutatif

\[ \xymatrix{
H^{1}(\fr,N^{nr}) \ar@{->}[r] \ar@{->}[dr]& H^{1}(\fr,G^{nr}) \ar@{->}[d]^{\sim}\\
&H^{1}(\fr,\We/\Wa)} \]
où les flèches provenant de $H^{1}(\fr,N^{nr})$ correspondent aux applications $N^{nr}\hookrightarrow G^{nr}$ et $N^{nr}\twoheadrightarrow \We/\Wa$.

\end{Lem}

\begin{proof}

Remarquons que la version géométrique du morphisme de Kottwitz précédente nous fournit un diagramme commutatif :
\[ \xymatrix{
N^{nr} \ar@{->}[r] \ar@{->}[d]& G^{nr} \ar@{->}[d]\\
\We/\Wa \ar@{->}[r]& \Omega_C}. \]
En effet prenons un $g\in N^{nr}$. Notons que $\Wa$ est l'image de $\phi(G_{sc}) \cap N^{nr}$ dans $\We$. Alors pour ce $g$ nous pouvons prendre un $h \in G_{sc} \cap \phi^{-1}(N^{nr})$ et le morphisme de Kottwitz géométrique se factorise par $N^{nr} \to \We \to \We/\Wa \to \Omega_C$.

On obtient alors le diagramme désiré sur les $H^{1}(\fr,\cdot)$.
\end{proof}

Nous souhaitons étendre les résultats précédents à n'importe quel tore non-ramifié.

\bigskip

Soit donc $\mathbf{T}'$ un tore maximal $\knr$-déployé défini sur $\kk$ et $N'$, $\We'$ ,$\Wap$ définis comme avant pour $\mathbf{T}$. Il existe un $g \in G^{nr}$ tel que $T'^{nr}={}^{g}T^{nr}$ et on a alors $g\fr(g)^{-1} \in N'^{nr}$. Notons que si $n\in N$ alors $gn\fr(g)^{-1} =(gng^{-1})(g\fr(g)^{-1})\in N'^{nr}$. De plus cette application, $n \mapsto gn\fr(g)^{-1}$,  induit une application sur les $Z^{1}(\fr,\cdot)$ et envoie des cocycles cohomologues sur des cocycles cohomologues, donc induit une bijection $H^{1}(\fr,N^{nr})\overset{\sim}{\rightarrow} H^{1}(\fr,N'^{nr})$ de sorte que
\[ \xymatrix{
H^{1}(\fr,N^{nr}) \ar@{->}[rr]^{\sim} \ar@{->}[dr] &  & H^{1}(\fr,N'^{nr}) \ar@{->}[dl]\\
& H^{1}(\fr,G^{nr}) &} \]
commute.
\bigskip

Notons $w'\in \We'$ l'image de $g\fr(g)^{-1}$ par $N'^{nr} \twoheadrightarrow \We'$. Alors $n \mapsto gn\fr(g)^{-1}$ induit une application $\We \rightarrow \We'$, $v \mapsto gv\fr(g)^{-1}=(gvg^{-1})w$. Cette application passe au quotient et induit une bijection $\We/\Wa \overset{\sim}{\longrightarrow} \We'/\Wap$, en effet si $v_{1}=v_{2}v^{0}$ avec $v_{1},v_{2} \in \We$ et $v^{0} \in \Wa$, $(gv_{1}g^{-1})w=(gv_{2}v^{0}g^{-1})w=[(gv_{2}g^{-1})w](w^{-1}gv^{0}g^{-1}w)$ et comme $gv^{0}g^{-1} \in \Wap$ qui est normal dans $\We'$, $(w^{-1}gv^{0}g^{-1}w) \in \Wap$. Cette application passe aux cocycles continus et induit une bijection
\[ H^{1}(\fr,\We/\Wa) \overset{\sim}{\rightarrow} H^{1}(\fr,\We'/\Wap)\]
qui est indépendante du choix de $g$, et on a par construction le diagramme commutatif suivant

\[ \xymatrix{
H^{1}(\fr,N^{nr}) \ar@{->}[r]^{\sim} \ar@{->}[d] & H^{1}(\fr,N'^{nr}) \ar@{->}[d]\\
H^{1}(\fr,\We/\Wa) \ar@{->}[r]^{\sim} & H^{1}(\fr,\We'/\Wap)}\]

\bigskip

On vient donc de construire un isomorphisme, valable pour tout tore maximal non-ramifié $\mathbf{T}'$,
\[ H^{1}(\fr,G^{nr}) \simeq H^{1}(\fr,\We'/\Wap)\]
qui vérifie grâce au lemme \ref{lemNWH1} la proposition suivante

\begin{Pro}
\label{proCommuteNW}
On a une bijection $H^{1}(\fr,G^{nr}) \tosim H^{1}(\fr,\We'/\Wap)$ rendant le diagramme commutatif suivant
\[ \xymatrix{
H^{1}(\fr,N'^{nr}) \ar@{->}[r] \ar@{->}[dr]& H^{1}(\fr,G^{nr}) \ar@{->}[d]^{\sim}\\
&H^{1}(\fr,\We'/\Wap)} \]
\end{Pro}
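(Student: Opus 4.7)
La stratégie est d'obtenir le résultat en combinant le cas déjà traité du tore maximalement déployé $\mathbf{T}$ (lemme \ref{lemNWH1}) avec les identifications de transfert construites dans les paragraphes qui précèdent l'énoncé.

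Concrètement, je définirais la bijection $H^{1}(\fr,G^{nr}) \tosim H^{1}(\fr,\We'/\Wap)$ comme la composée
\[ H^{1}(\fr,G^{nr}) \overset{\sim}{\longrightarrow} H^{1}(\fr,\We/\Wa) \overset{\sim}{\longrightarrow} H^{1}(\fr,\We'/\Wap), \]
où la première flèche est l'isomorphisme fourni par la version géométrique du morphisme de Kottwitz (corollaire 2.4.3 de \cite{DebackerReeder} appliqué à $\mathbf{T}$), et la seconde est l'isomorphisme $v \mapsto (gvg^{-1})w$ construit juste avant la proposition à l'aide d'un $g \in G^{nr}$ vérifiant $T'^{nr}={}^{g}T^{nr}$, indépendant du choix de $g$. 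Cette composition ne dépend pas de $\mathbf{T}$ puisque la seconde bijection est elle-même canonique.

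Il reste à établir la commutativité du triangle. Pour cela, je considérerais le diagramme
\[ \xymatrix{
H^{1}(\fr,N^{nr}) \ar@{->}[r]^{\sim} \ar@{->}[d] \ar@/^2.5pc/[rr] & H^{1}(\fr,N'^{nr}) \ar@{->}[d] \ar@{->}[r] & H^{1}(\fr,G^{nr}) \ar@{->}[d]^{\sim}\\
H^{1}(\fr,\We/\Wa) \ar@{->}[r]^{\sim} & H^{1}(\fr,\We'/\Wap) \ar@{=}[r] & H^{1}(\fr,\We'/\Wap)} \]
dans lequel les carrés intérieurs commutent par construction : le carré de gauche est exactement la commutativité établie dans le paragraphe de transfert (qui relie $n \mapsto gn\fr(g)^{-1}$ côté normalisateurs et $v \mapsto (gvg^{-1})w$ côté groupes de Weyl), et le triangle $H^{1}(\fr,N^{nr}) \to H^{1}(\fr,G^{nr}) \to H^{1}(\fr,\We'/\Wap)$ commute grâce au lemme \ref{lemNWH1} combiné à la définition de la bijection verticale. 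La commutativité du grand triangle extérieur (passant par $H^{1}(\fr,N'^{nr})$ au lieu de $H^{1}(\fr,N^{nr})$) en résulte, puisque la flèche horizontale du haut est surjective (c'est même un isomorphisme).

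Le seul point légèrement délicat est de s'assurer que le choix de $g$ dans le transfert $\mathbf{T} \leadsto \mathbf{T}'$ est compatible avec les flèches de Kottwitz géométriques : ceci a déjà été explicité dans le diagramme
\[ \xymatrix{
H^{1}(\fr,N^{nr}) \ar@{->}[r]^{\sim} \ar@{->}[d] & H^{1}(\fr,N'^{nr}) \ar@{->}[d]\\
H^{1}(\fr,\We/\Wa) \ar@{->}[r]^{\sim} & H^{1}(\fr,\We'/\Wap)} \]
du paragraphe précédent, qui est précisément l'ingrédient manquant. Aucune difficulté technique supplémentaire n'est à prévoir ; la preuve est essentiellement un recollement de diagrammes commutatifs déjà établis.
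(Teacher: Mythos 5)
Votre preuve est correcte et suit essentiellement le même chemin que le texte : la bijection est définie comme la composée de l'isomorphisme de DeBacker--Reeder pour le tore maximalement déployé avec le transfert $v \mapsto (gvg^{-1})w$, et la commutativité du triangle pour $\mathbf{T}'$ s'obtient par recollement du lemme \ref{lemNWH1}, du carré de transfert normalisateurs/groupes de Weyl et de la bijectivité de $H^{1}(\fr,N^{nr}) \to H^{1}(\fr,N'^{nr})$. C'est exactement l'assemblage que le papier sous-entend en énonçant la proposition comme conséquence de la discussion qui la précède.
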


Le résultat précédent étant valable pour tout tore non-ramifié, et pour éviter la multiplication des ' dans les formules, on ne suppose plus que $\mathbf{T}$ est maximalement déployé. Expliquons le lien entre l'isomorphisme $H^{1}(\fr,G^{nr}) \simeq H^{1}(\fr,\We/\Wa)$ et l'isomorphisme de Kottwitz. Notons $X=X_{*}(\mathbf{T})$. L'évaluation en $\varpi$, un élément de $\kk$ de valuation 1, permet d'identifier $\lambda \in X$ avec l'opérateur $t_{\lambda} \in \We$ de translation par $\lambda$. Notons $X^{a}:=X \cap \Wa$ et $\bar{X}:=X/X^{a}$. Alors l'application $X \rightarrow \We$, $\lambda \mapsto t_{\lambda}$ induit un isomorphisme $\bar{X}\overset{\sim}{\rightarrow} \We/\Wa$.

\begin{Lem}[\cite{DebackerReeder} Corollaire 2.4.2]
\label{lemdriso}
On a un isomorphisme
\[ [\bar{X}/(1-\vartheta)\bar{X}]_{\text{tor}} \overset{\sim}{\longrightarrow} H^{1}(\fr,\We/\Wa)\]
où la notation $[\cdot]_{\text{tor}}$ désigne les éléments de torsion.
\end{Lem}

D'après la section 2.5 de \cite{DebackerReeder}, on peut identifier $\bar{X}=\Hom(Z(\gpalg*{G}),\mathbb{C}^{\times})$ par restriction des caractères. De plus $\bar{X}/(1-\vartheta)\bar{X} = \Hom(Z(\gpalg*{G})^{\widehat{\vartheta}},\mathbb{C}^{\times})$ et les éléments de $\bar{X}/(1-\vartheta)\bar{X}$ s'annulant sur la composante identité de $Z(\gpalg*{G})^{\widehat{\vartheta}}$ sont exactement les éléments de torsion. D'où une identification
\[[\bar{X}/(1-\vartheta)\bar{X}]_{\text{tor}} = \Irr[\pi_{0}(Z(\gpalg*{G})^{\widehat{\vartheta}})].\]
Nous retrouvons la forme usuelle de l'isomorphisme de Kottwitz.
\section{Quelques isomorphismes sur les tores et leurs duaux}

\label{secIsoTores}

Dans cette section tous les groupes sont définis sur $\res$. Nous rappelons quelques isomorphismes sur les tores sur $\res$ qui nous sont utiles dans cet article. Nous suivons pour cela \cite{cabanes_enguehard} section 8.2.

\sautintro

Soit $\gpfinialg{T}$ un tore sur $\res$ et notons $X(\gpfinialg{T})$ ses caractères et $Y(\gpfinialg{T})$ ses co-caractères. On fixe un système compatible de racines de l'unité : $\iota : (\mathbb{Q}/\mathbb{Z})_{p'} \overset{\backsim}{\longrightarrow} \resalg^{\times}$ et $\iota': (\mathbb{Q}/\mathbb{Z})_{p'} \rightarrow \Ql^{\times}$ et l'on note
\[\kappa = \iota' \circ \iota^{-1} : \resalg^{\times} \to \Ql^{\times}.\]
Posons également $D=(\mathbb{Q}/\mathbb{Z})_{p'}$. L'application $\iota$ permet de définir un isomorphisme
\[\begin{array}{ccc}
 Y(\gpfinialg{T}) \otimes_{\mathbb{Z}} D & \to & \gpfinialg{T} \\
 \eta \otimes a & \mapsto & \eta(\iota(a)) \\
\end{array}\]
Via cet isomorphisme $\gpfinialg{T}^{\fr}$ peut être décrit comme le noyau de l'endomorphisme $\fr-1$ de $Y(\gpfinialg{T}) \otimes_{\mathbb{Z}} D$ :
\begin{align}
\label{suiteexacteTF}
1 \longrightarrow \gpfinialg{T}^{\fr} \longrightarrow Y(\gpfinialg{T}) \otimes_{\mathbb{Z}} D \overset{\fr-1}{\longrightarrow} Y(\gpfinialg{T}) \otimes_{\mathbb{Z}} D \longrightarrow 1.
\end{align}
Soit $\mathbb{Q}_{p'}$ le sous-groupe additif de $\mathbb{Q}$ des rationnels $r/s$, avec $r,s \in \mathbb{Z}$ et $s$ n'est pas divisible par $p$. Alors le lemme du serpent appliqué à
\[ \xymatrix{
		&&&1	\ar@{->}[d]&\\
		&&&\gpfinialg{T}^{\fr} \ar@{->}[d]&\\
		0  \ar@{->}[r]& Y(\gpfinialg{T}) \ar@{->}[r] \ar@{->}[d]^-{\fr-1}& Y(\gpfinialg{T}) \otimes_{\mathbb{Z}} \mathbb{Q}_{p'} \ar@{->}[r] \ar@{->}[d]^-{\fr-1}& Y(\gpfinialg{T}) \otimes_{\mathbb{Z}} D \ar@{->}[r] \ar@{->}[d]^-{\fr-1}& 0\\
		0  \ar@{->}[r]& Y(\gpfinialg{T}) \ar@{->}[r] \ar@{->}[d]& Y(\gpfinialg{T}) \otimes_{\mathbb{Z}} \mathbb{Q}_{p'} \ar@{->}[r]& Y(\gpfinialg{T}) \otimes_{\mathbb{Z}} D \ar@{->}[r]& 0\\
		&Y(\gpfinialg{T})/(\fr-1)Y(\gpfinialg{T}) \ar@{->}[d]\\
		&0
}\]
donne un isomorphisme entre $\gpfinialg{T}^{\fr}$ et le conoyau de $(\fr-1)$ sur $Y(\gpfinialg{T})$. De façon plus explicite, si $d$ est un entier tel que $\fr^d$ se déploie, alors en posant $\zeta=\iota(1/(q^d-1))$ on a
\[1 \longrightarrow Y(\gpfinialg{T}) \overset{\fr-1}{\longrightarrow} Y(\gpfinialg{T}) \overset{N_1}{\longrightarrow} \gpfinialg{T}^{\fr} \longrightarrow 1.\]
où $N_1(\eta)=N_{\fr^d/\fr}(\eta(\zeta))$.

En appliquant le foncteur $\Hom(\cdot,D)$ à (\ref{suiteexacteTF}) on obtient également la suite exacte
\[1 \longrightarrow X(\gpfinialg{T}) \overset{\fr-1}{\longrightarrow} X(\gpfinialg{T}) \overset{\Res}{\longrightarrow} \Irr(\gpfinialg{T}^{\fr}) \longrightarrow 1\]
où $\Res$ est la restriction de $\gpfinialg{T}$ à $\gpfinialg{T}^{\fr}$ (composée avec $\kappa$).

\bigskip

Prenons maintenant $\gpfinialg{G}$ un groupe réductif connexe sur $\res$ et $\gpfinialg{T}$ un $\res$-tore maximal de $\gpfinialg{G}$. Nous pouvons alors leur associer une donnée radicielle $(X,Y,\phi,\phi^{\vee})$. L'endomorphisme de Frobenius $\fr$ agit sur l'ensemble des sous-groupes connexes unipotents 1-dimensionnels de $\gpfinialg{G}$ qui sont normalisés par $\gpfinialg{T}$ et donc induit une permutation $f$ de $\phi$ définie par $F(X_{\alpha})=X_{f\alpha}$ pour $\alpha \in X$. Nous avons également une action de $\fr$ sur $X$ et sa transposé, notée $\fr^{\vee}$, agit sur $Y$. On obtient alors des données
\[ \fr : X \to X,\ \fr^{\vee} : Y \to Y,\ q : \phi \to \{p^n\}_{n\in \mathbb{N}},\ f:\phi \to \phi\]
\[\fr(f\alpha)=q(\alpha)\alpha,\ \fr^{\vee}(\alpha^{\vee})=q(\alpha)(f\alpha)^{\vee},\ \alpha \in \phi\]
Réciproquement, si $\gpfinialg{G}$ définit une donnée radicielle $(X,Y,\phi,\phi^{\vee})$ alors n'importe quel quadruplet $(\fr,\fr^{\vee},f,q)$ vérifiant les conditions précédentes peut être réalisé par une isogénie $\fr : \gpfinialg{G} \to \gpfinialg{G}$.

Prenons maintenant $\gpfinialg*{G}$ un dual de $\gpfinialg{G}$ (autour des tores $\gpfinialg*{T}$ et $\gpfinialg{T}$). Alors $(\fr^{\vee},\fr,f^{-1},q\circ f^{-1})$ définit un quadruplet qui est réalisé par une isogénie $\fr^{*} : \gpfinialg*{G} \to \gpfinialg*{G}$. Nous dirons alors que $(\gpfinialg{G},\fr)$ et $(\gpfinialg*{G},\fr^{*})$ sont en dualités sur $\res$.

La discussion précédente montre que l'on a un isomorphisme

\[\begin{array}{ccc}
 (\gpfinialg*{T})^{\fr^{*}} & \tosim & \Irr(\gpfinialg{T}^{\fr}) \\
 s & \mapsto & \theta \\
\end{array}\]
où $\theta$ et $s$ sont reliés par la relation (quel que soit $\eta \in Y(\gpfinialg{T})=X(\gpfinialg*{T})$)
\[ \theta(N_{\fr^d/\fr}(\eta(\zeta))) = \kappa(\eta(s)).\]

Cette bijection est donnée à partir d'un choix des tores $\gpfinialg{T}$ et $\gpfinialg*{T}$ mais peut devenir indépendant du choix du tore de la façon suivante

\begin{Pro}[\cite{cabanes_enguehard} Proposition 8.21]
\label{proBijPairesSt}
L'application précédente fournit une bijection entre les classes de $\gpfinialg{G}^{\fr}$-conjugaison de paires $(\gpfinialg{T},\theta)$, où $\theta \in \Irr(\gpfinialg{T}^{\fr})$, et les classes de $(\gpfinialg*{G})^{\fr^{*}}$-conjugaison de paires $(\gpfinialg*{T},s)$, où $s \in (\gpfinialg*{T})^{\fr^{*}}$.

\end{Pro}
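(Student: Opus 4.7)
The plan is to parametrize both sides of the alleged bijection by the same auxiliary set, using the standard Lang--Steinberg dévissage through a fixed reference torus.

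Fix an $\fr$-stable maximal torus $\gpfinialg{T}_{0} \subset \gpfinialg{G}$ and its dual $\gpfinialg*{T}_{0} \subset \gpfinialg*{G}$, with common Weyl group $W$ and identified actions of $\fr$ via the duality of root data. First I would show that the assignment $(\gpfinialg{T},\theta) \mapsto (w,\theta_{w})$, constructed as follows, induces a bijection between $\gpfinialg{G}^{\fr}$-conjugacy classes of pairs $(\gpfinialg{T},\theta)$ and $\fr$-conjugacy classes of pairs $(w,\theta_{w})$, where $w \in W$ and $\theta_{w} \in \Irr(\gpfinialg{T}_{0}^{\fr_{w}})$ with $\fr_{w} := \Ad(n) \circ \fr$ for any lift $n \in N(\gpfinialg{T}_{0})$ of $w$. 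Using the surjectivity of the Lang map for $\gpfinialg{G}$, write $\gpfinialg{T} = {}^{g}\gpfinialg{T}_{0}$ with $g \in \gpfinialg{G}$; then $n := g^{-1}\fr(g) \in N(\gpfinialg{T}_{0})$ and conjugation by $g^{-1}$ gives an isomorphism $\gpfinialg{T}^{\fr} \tosim \gpfinialg{T}_{0}^{\fr_{w}}$ under which $\theta$ pulls back to $\theta_{w}$. A second application of Lang--Steinberg to the connected group $C_{\gpfinialg{G}}(\gpfinialg{T}_{0}/\mathrm{conj})$ shows that the class of $(w,\theta_{w})$ modulo the equivalence $(w,\theta_{w}) \sim (vwv^{-1}\fr(v)^{-1}\cdot v,\, v \cdot \theta_{w})$ for $v \in W$ is independent of the chosen $g$, and conversely any such class comes from some $(\gpfinialg{T},\theta)$.

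Running the same argument for $(\gpfinialg*{G},\fr^{*})$ gives a parallel bijection between $(\gpfinialg*{G})^{\fr^{*}}$-conjugacy classes of pairs $(\gpfinialg*{T},s)$ and $\fr$-conjugacy classes of pairs $(w^{*},s_{w^{*}})$ where $s_{w^{*}} \in (\gpfinialg*{T}_{0})^{\fr^{*}_{w^{*}}}$; here I would use that the Weyl group $W^{*} := N(\gpfinialg*{T}_{0})/\gpfinialg*{T}_{0}$ is canonically isomorphic to $W$ (via the duality of root data) and that, under the recipe $(\fr^{\vee},\fr,f^{-1},q\circ f^{-1})$ recalled in the appendix, the automorphism $\fr_{w}^{*}$ of $\gpfinialg*{T}_{0}$ corresponds to $\fr_{w}$ under the duality in a controlled way (concretely, $\fr_{w}^{*}$ corresponds to $\fr^{*}$ twisted by the image of $w$ in $W^{*} \simeq W$).

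To conclude, I would apply, for each fixed $w$, the pointwise bijection $(\gpfinialg*{T}_{0})^{\fr_{w}^{*}} \tosim \Irr(\gpfinialg{T}_{0}^{\fr_{w}})$ of the preceding paragraph (which is exactly the case of a single torus endowed with the twisted Frobenius) to match $s_{w^{*}}$ with $\theta_{w}$. The last step is to check that this identification is compatible with the $W$-equivalence relations on both sides, so that it descends to a bijection of quotients; this compatibility is a purely combinatorial verification that the natural action of $v \in W$ on $\Irr(\gpfinialg{T}_{0}^{\fr_{w}})$ corresponds, under the pointwise bijection, to its action on $(\gpfinialg*{T}_{0})^{\fr_{w}^{*}}$.

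The main obstacle is precisely this last compatibility check: one must be careful about the fact that going from the group to its dual transposes the action of the Weyl group and inverts the Frobenius twisting (the $w \leftrightarrow w^{-1}$ issue), so a clean bookkeeping of conventions is required. Once the dictionary is set up, the rest follows from Lang--Steinberg and the functoriality of the duality isomorphism $(\gpfinialg*{T}_{0})^{\fr^{*}} \tosim \Irr(\gpfinialg{T}_{0}^{\fr})$ in twisted Frobenii, which is built directly into the construction recalled in Appendix \ref{secIsoTores}.
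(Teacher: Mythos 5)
Your proposal is correct and follows exactly the standard dévissage that the paper relies on: the paper does not reprove this statement but cites it (Cabanès--Enguehard, Prop.\ 8.21), and the discussion immediately following the proposition reinterprets the bijection via the same parametrization by $\fr$-conjugacy classes of pairs $(w,\theta_w)$ relative to a fixed reference pair of dual tori that you use. Only a bookkeeping remark: your displayed $\fr$-conjugation rule should read $(w,\theta_w)\sim(vw\fr(v)^{-1}, v\cdot\theta_w)$, and the independence of the choice of $g$ follows simply from $g^{-1}g'\in N(\gpfinialg{T}_0)$ rather than from a second application of Lang--Steinberg, which is only needed to lift elements of $W$ to cocycles in the normalizer.
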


Notons que l'on peut réinterpréter cette bijection de la façon suivante. Fixons $\gpfinialg{S}$ et $\gpfinialg*{S}$ deux tores maximaux $\fr$-stables en dualités sur $\res$, qui serviront de tores de référence. Prenons une paire $(\gpfinialg{T},\theta)$. Il existe alors $g \in \gpfinialg{G}$ tel que $\gpfinialg{T}={}^{g}\gpfinialg{S}$. Comme $\gpfinialg{T}$ et $\gpfinialg{S}$ sont $\fr$-stables, l'élément $g^{-1}\fr{g}$ normalise $\gpfinialg{S}$ et l'on note $w$ son image dans $\Wf$. Définissons également $\theta_w:=g^{-1}\theta \in \Irr(\gpfinialg{S}^{w\fr})$. La paire $(w,\theta_w)$ est bien définie à $\fr$-conjugaison près et sa classe de $\fr$-conjugaison caractérise la classe de $\gpfinialg{G}^{\fr}$-conjugaison de $(\gpfinialg{T},\theta)$.

Soit $t_w \in (\gpfinialg*{S})^{w\fr^{*}}$ image de $\theta_w$ par la bijection $\Irr(\gpfinialg{S}^{w\fr}) \tosim   (\gpfinialg*{S})^{w\fr^{*}}$. Identifions $\Wf$ au groupe de Weyl de $\gpfinialg*{G}$ relatif à $\gpfinialg*{S}$. Prenons maintenant $g^{*} \in \gpfinialg*{G}$ tel que  $(g^{*})^{-1}\fr(g^{*})$ soit un relèvement de $w$ ($g^{*}$ existe bien par surjectivité de l'application de Lang). Définissons $(\gpfinialg*{T},s)$ par $(\gpfinialg*{T},s):=({}^{g^{*}}\gpfinialg*{S},\Ad(g^{*})(t_w))$. Alors la classe de conjugaison de la paire $(\gpfinialg*{T},s)$ est celle en dualité avec la classe de $(\gpfinialg{T},\theta)$ comme dans la proposition \ref{proBijPairesSt}.

%% Bibliographie
%\bibliographystyle{amsalpha}
\bibliographystyle{hep}
\bibliography{biblio}
\end{document}